
\documentclass[11pt,a4paper,reqno,dvipsnames]{amsart}
\allowdisplaybreaks[4]

\usepackage{amsfonts}
\usepackage{amsmath}
\usepackage{amssymb}
\usepackage{amsthm}
\usepackage{amsxtra}
\usepackage{appendix}
\usepackage{bbm}
\usepackage{braket}
\usepackage{comment}
\usepackage{extarrows}
\usepackage{graphicx}
\usepackage{latexsym}
\usepackage{mathrsfs}
\usepackage{yhmath}
\usepackage{mathtools}
\let\underbrace\LaTeXunderbrace

\setcounter{MaxMatrixCols}{20}
\usepackage{float}
\usepackage{booktabs}
\usepackage{verbatim}
\usepackage{xcolor}
\usepackage[normalem]{ulem}
\usepackage{caption,subcaption}
\usepackage{hyperref} 
\hypersetup{breaklinks, raiselinks}
\hypersetup{colorlinks, citecolor=blue, linkcolor=blue, urlcolor=blue}

\usepackage{bm}

\usepackage{tikz-cd}

\usepackage[initials,lite]{amsrefs} 
\AtBeginDocument{%
    \def\MR#1{}
}

\usepackage{cleveref}



\usepackage{fullpage}

\theoremstyle{plain}
\newtheorem{Theorem}{Theorem}[section]
\newtheorem{Lemma}[Theorem]{Lemma}
\newtheorem{Corollary}[Theorem]{Corollary}
\newtheorem{Proposition}[Theorem]{Proposition}

\theoremstyle{definition}

\newtheorem{Assumptions and Discussion}[Theorem]{Assumptions and Discussion}

\newtheorem{Example}[Theorem]{Example}
\newtheorem{Definition}[Theorem]{Definition}

\newtheorem{Question}[Theorem]{Question}

\newtheorem{Remark}[Theorem]{Remark}

\newtheorem{Observation}[Theorem]{Observation}

\theoremstyle{remark}

\newtheorem{Setting}[Theorem]{Setting}

\newtheorem*{acknowledgment*}{Acknowledgment}

\usepackage[shortlabels]{enumitem}
\SetEnumerateShortLabel{a}{\textup{(\alph*)}}
\SetEnumerateShortLabel{A}{\textup{(\Alph*)}}
\SetEnumerateShortLabel{1}{\textup{(\arabic*)}}
\SetEnumerateShortLabel{i}{\textup{(\roman*)}}
\SetEnumerateShortLabel{I}{\textup{(\Roman*)}}




\def\ceil#1{\left\lceil #1 \right\rceil}
\def\Char{\operatorname{char}}

\def\cl{\operatorname{cl}}

\def\depth{\operatorname{depth}}

\def\dim{\operatorname{dim}}




\def\ffc{\operatorname{ffc}}

\def\floor#1{\left\lfloor #1 \right\rfloor}






\def\KK{{\mathbb K}}







\def\reg{\operatorname{reg}}






\def\depth{\operatorname{depth}}



\def\ZZ{{\mathbb Z}}

\newcommand\bdX{\bm{X}}

\newcommand\calC{\mathcal{C}}

\newcommand\calJ{\mathcal{J}}

\begin{document}

\title{Generalized binomial edge ideals of bipartite graphs}

\author{Yi-Huang Shen}
\address{CAS Wu Wen-Tsun Key Laboratory of Mathematics, School of Mathematical Sciences, University of Science and Technology of China, Hefei, Anhui, 230026, P.R.~China}
\email{yhshen@ustc.edu.cn}

\author{Guangjun Zhu$^{\ast}$}
\address{School of Mathematical Sciences, Soochow University, Suzhou, Jiangsu, 215006, P.R.~China}
\email{zhuguangjun@suda.edu.cn}

\thanks{$^{\ast}$ Corresponding author}
\thanks{2020 {\em Mathematics Subject Classification}.
Primary 13C13, 13C15; Secondary 05E40, 13D02, 13F20}

\thanks{Keywords: Regularity, dimension, depth, generalized binomial edge ideal, bipartite graph, fan graph}

\begin{abstract}
    Connected bipartite graphs whose binomial edge ideals are Cohen--Macaulay have been classified by Bolognini et al. In this paper, we compute the depth, Castelnuovo--Mumford regularity, and dimension of the generalized binomial edge ideals of these graphs.
\end{abstract}

\maketitle

\section{Introduction}

Let $m$ and $n$ be two positive integers. Following the convention, the
notation $[m]$ denotes the set $\{1,2,\ldots, m\}$. 
Let $S=\KK[\bdX]\coloneqq \KK[x_{ij}:i\in[m],j\in[n]]$ be the polynomial ring over the field $\KK$ in $m\times n$ variables.
In \cite{MR3290687}, Ene et al.~introduced the {binomial edge ideal of a pair of graphs}. Specifically, let $G_1$ and $G_2$ be simple graphs on vertex sets $[m]$ and $[n]$ respectively. Suppose that $e=\{i, j\}\in E(G_1)$ and $f=\{t, l\}\in E(G_2)$ are two edges with $i<j$ and $k<l$. Then, one can assign a $2$-minor $p_{(e,f)}=[i,j\,|\,t,l]\coloneqq x_{it}x_{jl}-x_{il}x_{jt}$ to the pair $(e, f)$. The \emph{binomial edge ideal of the pair $(G_1, G_2)$} is defined to be
\[
    \calJ_{G_1,G_2}\coloneqq (p_{(e,f)} : e \in E(G_1), f \in E(G_2))
\]
in $S$. This is a generalization of the classical \emph{binomial edge ideals}
in \cites{MR2669070, MR2782571} when one of $G_1$ and $G_2$ is the complete
graph $K_2$. Meanwhile, the \emph{ideals generated by adjacent minors} in
\cite{MR1627343} turn out to be the binomial edge ideals of a pair of line
graphs.

Much research has been done to understand
the connection between the
algebraic properties of $\calJ_{G_1,G_2}$ and the combinatorial properties of
$G_1$ and $G_2$.
In particular, the combinatorial invariants of $G_1$ and $G_2$ can be related to homological invariants such as depth, Betti numbers, and Castelnuovo--Mumford regularity (regularity for short) of $\calJ_{G_1,G_2}$, see \cites{MR3290687, MR3040610}. In general, however,  the algebraic properties of these ideals are still widely open. For $S/\calJ_{G_1,G_2}$ possessing a decent algebraic property, a good rule of thumb is that one of $G_1$ and $G_2$ is a complete graph. For example, it was proved in \cite[Theorem 1.2, Corollary 2.2 and Proposition 4.1]{MR3290687} that this is an equivalent condition for $\calJ_{G_1,G_2}$ to be radical ideal. Therefore, $\calJ_{G_1,G_2}$ is a prime ideal if and only if $G_1$ and $G_2$ are complete. Furthermore, the condition for $\calJ_{G_1,G_2}$ to be unmixed is that one of the graphs is complete and the other graph satisfies certain numerical conditions for all its subsets with the cut-point property. 
The need to include complete graphs can also be justified by \cite[Theorem 3.1]{MR3859970} and \cite[Theorems 1 and 10]{MR3040610}.

Let $K_m$ be a complete graph on $m$ vertices and $G$ be a simple graph. 
Then, $\calJ_{K_m,G}$ is the \emph{generalized binomial edge ideal} associated with $G$, which was previously introduced by Rauh in \cite{MR3011436} for studying conditional independence ideals. 
Inspired by the progress regarding binomial edge ideals, researchers turn to the study of the generalized binomial edge ideals. For example,
in \cite{MR4233116}, Chaudhry and Irfan considered the depth and regularity of the generalized binomial edge ideals of connected generalized block graphs. In \cite{MR4033090}, Kumar proved that when $G$ is a 
connected graph with $n$ vertices, the regularity of $S/\calJ_{K_m,G}$ is bounded above by $n-1$. Moreover, he showed that if $m\ge n$, then the regularity of $S/\calJ_{K_m,G}$ is exactly $n-1$, and if $m<n$, then the regularity of $S/\calJ_{K_m,G}$ is bounded below by $\max\{m-1, \ell(G)\}$, where $\ell(G)$ is the length of the longest induced path in $G$. In \cite{arXiv:2207.02256}, Katsabekis studied the {cohomological dimension} of $\calJ_{K_m,G}$. For a connected graph $G$, he found out that $\calJ_{K_m,G}$ is a cohomologically complete intersection for $m\ge 3$ if and only if $G$ is the complete graph and $\Char(\KK)>0$. In this case, $\calJ_{K_m,G}$ is Cohen--Macaulay. In \cite{arXiv:2112.15136}, Amata et al.~characterized the unmixedness of the generalized binomial edge ideals associated with non–complete power cycles. Nevertheless, so far, not much is known about the algebraic properties of the generalized binomial edge ideals.

Recall that in \cite{MR3779601} Bolognini et al.~classified  the
Cohen--Macaulay binomial edge ideals of bipartite graphs, giving an explicit
and recursive construction in graph-theoretic terms. As a basic block, they
introduced a family of bipartite graphs $F_p$ whose binomial edge ideals are
Cohen--Macaulay. They also introduced an auxiliary family of non-bipartite
graphs $F_k^{W}(K_n)$, called \emph{$k$-fan graphs}, whose binomial edge
ideals are also Cohen--Macaulay. In one of the most important results, they
proved that if $G$ is a connected bipartite graph, then $J_G$ is
Cohen--Macaulay if and only if $G=G_1*\cdots * G_s$, where $G_i=F_p$ or
$G_i=F_{p_1}\circ\cdots \circ F_{p_t}$ for some $p\ge 1$ and $p_j\ge 3$; see
\Cref{sec:prelim} for the definition of the operations $*$ and $\circ$.
Later, in \cite{MR3991052}, Jayanthan et al.~ studied the regularity of
binomial edge ideals of these graphs by understanding the behavior of the
regularity under the operation $\circ$. In this paper, we are mainly
interested in the depth and Castelnuovo--Mumford regularity of the \emph{generalized}
binomial edge ideal associated with these graphs. Unfortunately, the $*$
operation does not behave as nicely as in \cite{MR3991052}. We compensate for this
by including computations of local cohomology modules and  repetitive 
glue-and-delete operations on related graphs.

The article is organized as follows. In Section \ref{sec:prelim}, we recall some essential definitions and terminology that we will need later.  In Sections \ref{sec:fan_graph} and \ref{sec:F_p}, we study the dimension, depth, and regularity of the generalized binomial edge ideal of the fan graph $F_k^W(K_n)$ and the bipartite graph $F_p$ respectively. We then introduce a class of simple graphs by gluing these two types of graphs together, via the $*$ and $\circ$ operations. 
Some formulas for the depth, regularity, and dimension of the generalized binomial edge ideals within this class are given
in Sections \ref{sec:depth}, \ref{sec:regularity}, and \ref{sec:dim}, respectively.

\section{Preliminaries}
\label{sec:prelim}

In this section, we collect definitions and basic facts that will be used throughout this paper. For more details, the reader is referred to \cites{MR4423525, MR3941158, MR3991052}.

\subsection{Regularity and depth}
In the sequel, let $S_+$ be the unique graded maximal ideal of the standard
graded algebra $S$. The local cohomology modules of a finitely generated
graded $S$-module $M$ with respect to $S_+$ are denoted by $H_{S_+}^i(M)$
for $i\in \ZZ$.

\begin{Definition}
    Let $M$ be a finitely generated graded $S$-module.
    \begin{enumerate}[a]
        \item The \emph{depth} of $M$ is defined as
            \[
                \depth\,(M) \coloneqq \min\{i: H_{S_+}^i(M)\ne 0\}.
            \]
        \item For $i = 0, \dots , \dim(M)$, the $i$\textsuperscript{th} \emph{$a$-invariant} of $M$ is defined as
            \[
                a_i(M) \coloneqq \max\{t : (H_{S_+}^i(M))_t \ne 0\}
            \]
            with the convention that $\max \emptyset = -\infty$.
        \item The \emph{Castelnuovo--Mumford regularity} of $M$ is defined as
            \[
                \reg(M) \coloneqq \max\{a_i(M) + i:0\le i\le \dim(M)\}.
            \]
    \end{enumerate}
\end{Definition}

The following lemmas are often used to compute the depth and regularity of a module.
In particular, since the facts in \Cref{lem:direct_sum} are well-known, they will be used implicitly in this paper.

\begin{Lemma}
    \label{lem:direct_sum}
    Let $M,N$ be two finitely generated graded $S$-modules. Then,
    $\depth(M\oplus N)=\min\{\depth(M),\depth(N)\}$ and
    $\reg(M\oplus N)=\max\{\reg(M),\reg(N)\}$.
\end{Lemma}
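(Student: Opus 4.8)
The plan is to prove the two statements about direct sums of finitely generated graded $S$-modules by reducing both to the behavior of local cohomology under direct sums. The key observation is that local cohomology commutes with finite direct sums: for each $i \in \ZZ$ we have a natural graded isomorphism
\[
    H_{S_+}^i(M \oplus N) \cong H_{S_+}^i(M) \oplus H_{S_+}^i(N).
\]
This follows because $H_{S_+}^i(-)$ is an additive functor (it can be computed, for instance, as the $i$\textsuperscript{th} cohomology of the \v{C}ech complex on a generating set of $S_+$, and forming \v{C}ech complexes and taking cohomology both commute with finite direct sums). I would open the proof by establishing this isomorphism and noting that it is degree-preserving, since all the maps involved are graded.

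With this in hand, the depth statement follows immediately from the definition $\depth(M) = \min\{i : H_{S_+}^i(M) \ne 0\}$. Indeed, $H_{S_+}^i(M \oplus N) \ne 0$ if and only if $H_{S_+}^i(M) \ne 0$ or $H_{S_+}^i(N) \ne 0$, because a direct sum of two modules is nonzero exactly when at least one summand is nonzero. Therefore the smallest $i$ for which $H_{S_+}^i(M \oplus N)$ is nonzero equals the smaller of the two smallest indices for $M$ and $N$ separately, which is precisely $\min\{\depth(M), \depth(N)\}$.

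For the regularity statement, I would argue through the $a$-invariants. Since the isomorphism above is graded, for each $i$ we have $(H_{S_+}^i(M \oplus N))_t \ne 0$ if and only if $(H_{S_+}^i(M))_t \ne 0$ or $(H_{S_+}^i(N))_t \ne 0$. Taking the maximum over $t$ and using the convention $\max \emptyset = -\infty$, this yields $a_i(M \oplus N) = \max\{a_i(M), a_i(N)\}$. Here one should note that $\dim(M \oplus N) = \max\{\dim(M), \dim(N)\}$, so the ranges of indices appearing in the definition of regularity match up correctly once we adopt the convention that $a_i = -\infty$ whenever $i$ exceeds the dimension of the relevant module. Finally,
\[
    \reg(M \oplus N) = \max_i \{a_i(M \oplus N) + i\} = \max_i \{\max\{a_i(M), a_i(N)\} + i\} = \max\{\reg(M), \reg(N)\},
\]
where the last equality rearranges a maximum over pairs. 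Since there is no genuine obstacle here, the only point requiring mild care is the bookkeeping of the $-\infty$ convention and the dimension ranges so that the two maxima are taken over compatible index sets; this is why the lemma is flagged as well-known and used implicitly rather than dwelt upon.
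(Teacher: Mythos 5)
Your proof is correct, and it is exactly the argument one would expect given that this paper defines depth, the $a$-invariants, and regularity directly in terms of the graded local cohomology modules $H_{S_+}^i(-)$: additivity of local cohomology reduces both identities to elementary bookkeeping with $\min$, $\max$, and the $-\infty$ convention. The paper itself offers no proof — it declares the lemma well-known and uses it implicitly — so there is no divergence to report; your write-up simply supplies the standard justification, including the two points that genuinely need a remark (the graded nature of the isomorphism $H_{S_+}^i(M\oplus N)\cong H_{S_+}^i(M)\oplus H_{S_+}^i(N)$, and the fact that $a_i=-\infty$ beyond the dimension so the index ranges in the definition of regularity can be merged).
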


\begin{Lemma}
    [{\cite[Lemmas 2.1 and 3.1]{MR2643966}}]
    \label{depthlemma}
    Let $0\rightarrow M \rightarrow N \rightarrow P \rightarrow 0$ be a short
    exact sequence of finitely generated graded $S$-modules.
    \begin{enumerate}[a]
        \item \label{depthlemma-a} One has $\depth(M)\ge \min \{\depth(N),
            \depth(P)+1\}$. The equality holds if $\depth(N)\neq\depth(P)$.
        \item \label{depthlemma-c} One also has
            $\reg(M)\le\max\{\reg(N),\reg(P)+1\}$. The equality holds if
            $\reg(N)\neq\reg(P)$.
    \end{enumerate}
\end{Lemma}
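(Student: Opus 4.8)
The plan is to deduce everything from the long exact sequence in local cohomology. Applying the functors $H_{S_+}^i(-)$ to the short exact sequence $0\to M\to N\to P\to 0$ yields a graded long exact sequence
\[
\cdots \to H_{S_+}^{i-1}(P) \to H_{S_+}^i(M) \to H_{S_+}^i(N) \to H_{S_+}^i(P) \to H_{S_+}^{i+1}(M) \to \cdots
\]
in which every map is homogeneous of degree $0$. Both statements then reduce to bookkeeping about which graded strands $H_{S_+}^i(-)_t$ vanish, organized by the invariants $\depth$ and $a_i$.

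For part \eqref{depthlemma-a}, I would first set $d\coloneqq\min\{\depth(N),\depth(P)+1\}$ and show $\depth(M)\ge d$: for every $i<d$ one has $i<\depth(N)$ and $i-1<\depth(P)$, so $H_{S_+}^i(N)=0$ and $H_{S_+}^{i-1}(P)=0$, and the three-term exact sequence $H_{S_+}^{i-1}(P)\to H_{S_+}^i(M)\to H_{S_+}^i(N)$ forces $H_{S_+}^i(M)=0$. For the equality under the hypothesis $\depth(N)\neq\depth(P)$, I would split into two cases. If $\depth(N)<\depth(P)$, then at $n\coloneqq\depth(N)$ one has $H_{S_+}^n(P)=0$, so $H_{S_+}^n(M)\to H_{S_+}^n(N)$ is surjective onto a nonzero module, giving $\depth(M)\le n$. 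If instead $\depth(N)>\depth(P)$, then at $p\coloneqq\depth(P)$ one has $H_{S_+}^p(N)=0$, so the connecting map embeds the nonzero $H_{S_+}^p(P)$ into $H_{S_+}^{p+1}(M)$, giving $\depth(M)\le p+1$.

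For part \eqref{depthlemma-c}, the key observation is the degreewise one: from exactness of $H_{S_+}^{i-1}(P)_t\to H_{S_+}^i(M)_t\to H_{S_+}^i(N)_t$, non-vanishing of the middle term forces non-vanishing of one of the outer terms, whence $a_i(M)\le\max\{a_i(N),a_{i-1}(P)\}$. Adding $i$ and taking the maximum over $i$ gives $\reg(M)\le\max\{\reg(N),\reg(P)+1\}$, using $a_{i-1}(P)+i=a_{i-1}(P)+(i-1)+1$. For the equality when $\reg(N)\neq\reg(P)$ I would again split. If $\reg(N)>\reg(P)$, choose $i$ with $a_i(N)+i=\reg(N)$; then $a_i(P)+i\le\reg(P)<\reg(N)$ kills $H_{S_+}^i(P)_{\reg(N)-i}$, so $H_{S_+}^i(M)\to H_{S_+}^i(N)$ is surjective in that degree and $\reg(M)\ge\reg(N)$. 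If $\reg(N)<\reg(P)$, choose $j$ with $a_j(P)+j=\reg(P)$; then the degree-$(\reg(P)-j)$ strands of both $H_{S_+}^j(N)$ and $H_{S_+}^{j+1}(N)$ vanish (by comparison with $\reg(N)$), so the connecting map gives an isomorphism $H_{S_+}^j(P)_{\reg(P)-j}\cong H_{S_+}^{j+1}(M)_{\reg(P)-j}$, hence $\reg(M)\ge\reg(P)+1$.

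I expect the only delicate point to be the equality cases: one must pick the cohomological index and degree $t$ witnessing $\depth$ or $\reg$ of the dominating module and then verify that the neighboring strand in the long exact sequence vanishes \emph{strictly}, which is precisely where the strict hypotheses $\depth(N)\neq\depth(P)$ and $\reg(N)\neq\reg(P)$ enter. Since this is the classical depth/regularity lemma recorded in \cite{MR2643966}, no ingredient beyond the graded long exact sequence is required.
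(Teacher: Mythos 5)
Your proof is correct and complete. Note that the paper itself does not prove this lemma at all—it simply cites it from \cite{MR2643966}—so there is no internal argument to compare against; your proof via the graded long exact sequence
\[
\cdots \to H_{S_+}^{i-1}(P) \to H_{S_+}^i(M) \to H_{S_+}^i(N) \to H_{S_+}^i(P) \to H_{S_+}^{i+1}(M) \to \cdots
\]
is the standard one, and it is exactly the argument that matches the paper's local-cohomology definitions of $\depth$ and $\reg$. Both inequality parts and both equality cases are handled correctly: in each equality case you correctly identify the witnessing cohomological index and degree, and the strict hypotheses $\depth(N)\neq\depth(P)$, $\reg(N)\neq\reg(P)$ are used precisely where needed to kill the adjacent strand (e.g.\ $a_i(P)<a_i(N)$ forces $H_{S_+}^i(P)_{a_i(N)}=0$, giving surjectivity of $H_{S_+}^i(M)_{a_i(N)}\to H_{S_+}^i(N)_{a_i(N)}$; and dually the vanishing of $H_{S_+}^j(N)_{a_j(P)}$ makes the connecting map injective).
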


 \subsection{Notions of simple graphs}

Let $G$ be a simple graph with the vertex set $V(G)$ and the edge set $E(G)$. Let $\widetilde{G}$ denote the complete graph on $V(G)$.
For any subset $A$ of $V(G)$, let $G[A]$ denote the \emph{induced subgraph} of $G$ on
the vertex set $A$, i.e., for $i,j \in A$, $\{i,j\} \in E(G[A])$ if and only
if $\{i,j\}\in E(G)$. At the same time, we denote the induced subgraph of $G$
on $V(G)\setminus A$ by $G\setminus A$.

If $v$ is a vertex in $G$, its \emph{neighborhood} is defined as $N_G(v):=
\{u \in V(G): \{u,v\}\in E(G)\}$.
Let $N_G[v]\coloneqq N_G(v)\cup \{v\}$. The vertex $v$ is called a \emph{free vertex} or \emph{leaf} if the cardinality $|N_G(v)|=1$;
otherwise, it is called an \emph{internal vertex}. We will write $G\setminus
v$ instead of $G\setminus \{v\}$ for simplicity. Meanwhile, for this vertex,
let $G_v$ denote the graph on the vertex set $V(G)$ with edge set
$E(G_v)=E(G)\cup \{\{u,w\} : u,w \in N_G (v)\}$.

Let $c(G)$ denote the number of connected components of $G$. A vertex $v$ is
called a \emph{cut vertex} of $G$ if $c(G)< c(G\setminus v)$. Let $T$ be a
subset of $V(G)$.
As an abuse of notation, we also let $c(T)$ denote the number of connected
components of $G\setminus T$.
If $v$ is a cut vertex of the induced subgraph $G\setminus (T\setminus \{v\})$
for any $v\in T$, then we say that $T$ has the \emph{cut point property}. Set
$ \calC(G) \coloneqq \{\emptyset\}\cup\Set{T: T \text{\ has the cut point
property}}$.

Finally, the graph $G$ is said to be \emph{bipartite} if there exists a decomposition
of $V(G)=V_1\sqcup V_2$ into two disjoint subsets, such
that no two vertices of the same subset are adjacent in $G$.

\subsection{Basic facts regarding generalized binomial ideals}
Throughout what follows, we always assume that $m,n\ge 2$ are two positive
integers.  The following result is well-known.

\begin{Lemma}
    [{\cite[Corollary 4]{MR266912}}]
    \label{lem:generic_CM}
    Let $\bdX=(x_{ij})_{1\le i\le m,1\le j\le n}$ be the matrix of
    indeterminates over $\KK$. Then the ideal $I_t(\bdX)$ in $S=\KK[x_{ij}:i\in [m]
    \text{ and } j\in [n]]$, generated by all $t\times t$ minors of $\bdX$,
    is perfect of height $(m-t+1)(n-t+1)$. In particular,
    $S/\calJ_{K_m,K_n}=S/I_2(\bdX)$ is Cohen--Macaulay of dimension $
    m+n-1$.
\end{Lemma}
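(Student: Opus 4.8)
The plan is to separate the statement into two independent tasks: the computation of $\height I_t(\bdX)$, and the proof that $I_t(\bdX)$ is perfect, i.e.\ that $S/I_t(\bdX)$ is Cohen--Macaulay. Recall that in the regular ring $S$ one has $\grade I_t(\bdX) = \height I_t(\bdX)$ and, by the Auslander--Buchsbaum formula, $\pd(S/I_t(\bdX)) = mn - \depth(S/I_t(\bdX))$; hence $I_t(\bdX)$ is perfect precisely when $S/I_t(\bdX)$ is Cohen--Macaulay. Granting both tasks, the final assertion is immediate: taking $t=2$ gives $\height I_2(\bdX) = (m-1)(n-1)$, so $S/\calJ_{K_m,K_n} = S/I_2(\bdX)$ is Cohen--Macaulay of dimension $mn - (m-1)(n-1) = m+n-1$.

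For the height, I would argue geometrically. The zero locus $V(I_t(\bdX)) \subseteq \A^{mn}$ is the determinantal variety of $m \times n$ matrices of rank at most $t-1$. Parametrizing such a matrix by a choice of its column space together with the coefficients expressing its columns shows that this variety is irreducible of dimension $(t-1)(m+n-t+1)$; equivalently one invokes that the generic determinantal ideal $I_t(\bdX)$ is prime. Since $V(I_t(\bdX))$ is irreducible, $\height I_t(\bdX)$ equals its codimension,
\[
    mn - (t-1)(m+n-t+1) = mn - (t-1)(m+n) + (t-1)^2 = (m-t+1)(n-t+1),
\]
as claimed.

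For perfection, the cleanest route is a Gröbner degeneration. Fix a diagonal term order on $S$, under which the leading term of each $t$-minor $[\,i_1,\dots,i_t \mid j_1,\dots,j_t\,]$ (with $i_1<\cdots<i_t$ and $j_1<\cdots<j_t$) is the product $x_{i_1 j_1}\cdots x_{i_t j_t}$ along its main diagonal. The first step is to verify that the $t$-minors form a Gröbner basis of $I_t(\bdX)$; consequently $\ini(I_t(\bdX))$ is the squarefree monomial ideal generated by these diagonal products. This ideal is the Stanley--Reisner ideal of the simplicial complex $\Delta$ whose faces are the subsets of the poset $[m]\times[n]$ (ordered componentwise) containing no chain of $t$ elements. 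The second step is to prove that $\Delta$ is shellable, hence Cohen--Macaulay; a shelling can be produced by ordering the facets, which correspond to families of non-crossing monotone lattice paths, and checking the standard facet-intersection condition. The final step is the comparison $\depth(S/\ini(I_t(\bdX))) \le \depth(S/I_t(\bdX)) \le \dim(S/I_t(\bdX)) = \dim(S/\ini(I_t(\bdX)))$; when the initial quotient is Cohen--Macaulay the outer terms coincide, so all inequalities are equalities and $S/I_t(\bdX)$ is Cohen--Macaulay.

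The height computation and the final deductions are routine; the genuine obstacle is the perfection, and within the Gröbner approach it is concentrated in the two combinatorial inputs: that the $t$-minors already form a Gröbner basis, and that the resulting initial complex $\Delta$ is shellable. Both are standard in the theory of determinantal rings but are exactly the nontrivial facts one cannot avoid. An alternative, sidestepping the combinatorics, is the classical inductive argument of Hochster and Eagon via principal radical systems: inverting the corner variable $x_{mn}$ and clearing its row and column turns $I_t(\bdX)$ into a $(t-1)$-minor ideal of a smaller generic matrix, which permits an induction on $t$ and $m+n$, with the hyperplane section $I_t(\bdX)+(x_{mn})$ supplying the radical-system bookkeeping; there the delicate point is maintaining perfection across the non-generic hyperplane section.
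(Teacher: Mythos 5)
The paper never proves this lemma: it is quoted verbatim from the classical literature, cited to Hochster--Eagon (\cite{MR266912}, Corollary 4), whose argument is the inductive ``principal radical systems'' method that you mention only as an alternative at the end. Your main route --- computing the height geometrically from the dimension $(t-1)(m+n-t+1)$ of the rank-$\le t-1$ locus, and obtaining perfection by a Gr\"obner degeneration to the Stanley--Reisner ideal of the chain complex on the poset $[m]\times[n]$, followed by shellability and the semicontinuity $\depth(S/\ini(I)) \le \depth(S/I)$ together with $\dim(S/\ini(I)) = \dim(S/I)$ --- is a genuinely different and equally standard proof (it is essentially the Sturmfels/Herzog--Trung/Bruns--Conca approach). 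Your arithmetic checks out: $mn-(t-1)(m+n-t+1)=(m-t+1)(n-t+1)$, and the $t=2$ specialization gives dimension $m+n-1$ as required; the reduction of ``perfect'' to ``Cohen--Macaulay'' via Auslander--Buchsbaum over the regular ring $S$ is also correct. The two inputs you leave as black boxes --- that the $t$-minors are a Gr\"obner basis under a diagonal order, and that the initial complex is shellable --- are exactly the nontrivial combinatorial content, and you identify them honestly; since the paper itself treats the entire lemma as a citation, this is an acceptable level of granularity. What each approach buys: Hochster--Eagon's induction yields generic perfection over $\ZZ$ in one stroke and needs no term-order combinatorics, while the Gr\"obner route gives more refined byproducts (a squarefree initial ideal, hence the Hilbert series and Cohen--Macaulayness simultaneously, plus KRS-type standard monomial theory) at the cost of the two combinatorial lemmas.
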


Suppose that $G$ is a simple graph on $[n]$. From now on, we will study its
generalized binomial edge ideal $\calJ_{K_m,G}$ in $S=\KK[x_{ij}:i\in [m]
\text{ and } j\in [n]]$. We also write this ring as $S_G$ when we want to emphasize
its relation to the graph $G$.

For each subset $T$ of $[n]$, we can introduce the ideal
\[
    P_T(K_m, G)\coloneqq (x_{ij}: (i,j)\in [m]\times
    T)+\calJ_{K_m,\widetilde{G_1}}+\cdots+\calJ_{K_m,\widetilde{G_{c(T)}}}
\]
in $S$, where $G_1,\ldots, G_{c(T)}$ are the connected components of
$G\setminus T$. It is clear from \Cref{lem:generic_CM} that
\begin{align}
    \dim(S/P_T(K_m, G)) &
    =\sum_{t=1}^{c(T)}(m+|V(\widetilde{G_i})|-1)=c(T)(m-1)+
    \sum_{t=1}^{c(T)}|V(\widetilde{G}_i)|\notag
    \\
    & =c(T)(m-1)+|V(G)|-|T|. \label{eqn:dim_prime}
\end{align}
The minimal primes of $\calJ_{K_m,G}$ can be described explicitly by these
ideals.

\begin{Lemma}
    [{\cite[Theorem 7]{MR3011436}}]
    \label{lem:decompo}
    Let $G$ be a finite simple graph. Then,
    \[
        \calJ_{K_m,G}
        =\bigcap_{T\in \calC(G)}P_T(K_m, G)
    \]
    is the minimal primary decomposition of the radical ideal $\calJ_{K_m,G}$.
\end{Lemma}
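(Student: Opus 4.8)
The plan is to establish the statement in three stages: each ideal $P_T(K_m,G)$ is prime, the ideal $\calJ_{K_m,G}$ is contained in every $P_T(K_m,G)$ with $T\in\calC(G)$, and the resulting intersection both equals $\calJ_{K_m,G}$ and is irredundant. I would begin with primality. Writing $G_1,\dots,G_{c(T)}$ for the components of $G\setminus T$, the sets $T,V(G_1),\dots,V(G_{c(T)})$ partition $[n]$, so killing the variables $x_{ij}$ with $j\in T$ identifies $S/P_T(K_m,G)$ with the tensor product over $\KK$ of the determinantal rings $\KK[x_{ij}:j\in V(G_s)]/I_2(\bdX_s)$, where $\bdX_s$ is the column submatrix of $\bdX$ on $V(G_s)$. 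By \Cref{lem:generic_CM} each $I_2(\bdX_s)$ is prime, and since the locus of matrices of rank at most one is geometrically irreducible (an affine cone over a Segre variety), the tensor product stays a domain; hence $P_T(K_m,G)$ is prime.

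Second, the inclusion $\calJ_{K_m,G}\subseteq P_T(K_m,G)$ is verified generator by generator. A generator is $p_{(e,f)}=x_{it}x_{jl}-x_{il}x_{jt}$ with $e=\{i,j\}\in E(K_m)$ and $f=\{t,l\}\in E(G)$. If $t\in T$ or $l\in T$, both monomials lie in the variable part of $P_T(K_m,G)$. Otherwise $t,l\notin T$, and since $\{t,l\}$ is an edge of $G$ the vertices $t,l$ lie in a common component $G_s$ of $G\setminus T$; then $\{t,l\}\in E(\widetilde{G_s})$ and $p_{(e,f)}\in\calJ_{K_m,\widetilde{G_s}}\subseteq P_T(K_m,G)$. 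This gives $\calJ_{K_m,G}\subseteq\bigcap_{T\in\calC(G)}P_T(K_m,G)$.

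The heart of the argument, and the step I expect to be the main obstacle, is the reverse inclusion together with the radicality of $\calJ_{K_m,G}$. I would approach this through a Gröbner basis computation: order the variables so that for each minor $[i,j\,|\,t,l]$ with $i<j$, $t<l$ the leading term is the diagonal $x_{it}x_{jl}$, and verify via Buchberger's criterion that the generators $p_{(e,f)}$ form a Gröbner basis, with all S-pairs reducing to zero. The combinatorial bookkeeping of the overlaps among these $2\times 2$ minors simultaneously generalizes the classical binomial edge ideal case and the determinantal case of a single complete graph, and this is the technically demanding part. The resulting initial ideal is generated by the squarefree monomials $x_{it}x_{jl}$, so it is radical, whence $\calJ_{K_m,G}$ is radical. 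Radicality then yields $\calJ_{K_m,G}=\bigcap_{\frakp}\frakp$, the intersection over its minimal primes, and a localization analysis (or an induction on $|V(G)|$ using a short exact sequence that restricts the graph at a vertex) identifies these minimal primes among the $P_T(K_m,G)$, giving $\bigcap_{T\in\calC(G)}P_T(K_m,G)\subseteq\calJ_{K_m,G}$.

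Finally I would prove irredundancy. The key observation is that the degree-one part of $P_T(K_m,G)$ is exactly the span of $\{x_{ij}:j\in T\}$, since the determinantal summands are generated in degree two; hence $P_T(K_m,G)\subseteq P_{T'}(K_m,G)$ forces $T\subseteq T'$. Tracking components then shows that if some $v\in T$ fails to be a cut vertex of $G\setminus(T\setminus v)$, the component of $G\setminus(T\setminus v)$ containing $v$ stays connected after deleting $v$, so that $P_{T\setminus v}(K_m,G)\subseteq P_T(K_m,G)$ and $P_T(K_m,G)$ is redundant. Conversely, the cut point property makes the surviving primes pairwise incomparable, and the dimension formula \eqref{eqn:dim_prime} separates them when convenient; thus exactly the $T\in\calC(G)$ remain, which is precisely the asserted minimal primary decomposition.
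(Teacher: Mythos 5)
First, a point of reference: the paper gives no proof of this lemma at all --- it is quoted directly from Rauh \cite[Theorem 7]{MR3011436} --- so your attempt is being measured against the cited literature rather than an in-paper argument. Within your proposal, stages one, two, and four are essentially sound: primality of each $P_T(K_m,G)$ via geometric integrality of determinantal rings, the generator-by-generator verification of $\calJ_{K_m,G}\subseteq P_T(K_m,G)$, and the irredundancy analysis (the degree-one comparison forcing $T\subseteq T'$, and the containment $P_{T\setminus\{v\}}(K_m,G)\subsetneq P_T(K_m,G)$ when $v$ fails the cut-point condition) are all correct standard arguments.

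The genuine gap is in stage three, and it is not merely ``demanding bookkeeping'': the claim that the generators $p_{(e,f)}$ form a Gr\"obner basis with all S-pairs reducing to zero is false for general $G$. Already for $m=2$ (the classical binomial edge ideal) take $G=K_{1,3}$ with center $4$ and leaves $1,2,3$. With a diagonal term order, the S-polynomial of $[1,2\,|\,1,4]=x_{11}x_{24}-x_{14}x_{21}$ and $[1,2\,|\,2,4]=x_{12}x_{24}-x_{14}x_{22}$ is $\pm x_{14}(x_{11}x_{22}-x_{12}x_{21})$, whose leading term $x_{11}x_{14}x_{22}$ is divisible by none of the generators' leading terms $x_{11}x_{24},x_{12}x_{24},x_{13}x_{24}$; Buchberger's criterion fails, and no relabeling of columns rescues it, since by Herzog--Hibi--Hreinsd\'ottir--Kahle--Rauh the edge generators form such a Gr\"obner basis if and only if $G$ is closed, and $K_{1,3}$ is closed under no labeling. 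The initial ideal is indeed squarefree, but proving this requires identifying the full Gr\"obner basis of ``admissible path'' binomials (harder still for $m\ge 3$), which is precisely the content your argument assumes away; radicality of $\calJ_{K_m,G}$ is a genuine theorem (it characterizes one of the pair being complete, per \cite{MR3290687}), not an easy computation. Likewise, your identification of the minimal primes --- ``a localization analysis (or an induction \dots)'' --- is a placeholder for the other half of Rauh's theorem. Your parenthetical suggestion is in fact the more promising route: the intersection formula $\calJ_{K_m,G}=\calJ_{K_m,G_v}\cap\bigl((x_{iv}:i\in[m])+\calJ_{K_m,G\setminus v}\bigr)$ of \Cref{rem:exactseq}, iterated over internal vertices, can drive an induction establishing the decomposition; but as written, the proposal does not contain a proof of the key step.
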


The theoretical argument of this paper depends essentially on the study of the
following short exact sequence.

\begin{Lemma}
    [{\cite[Theorem 3.2]{MR4033090}}]
    \label{rem:exactseq}
    Let $G$ be a finite simple graph and $v$ be an internal vertex in $G$.
    Then
    \[
        \calJ_{K_m,G}=\calJ_{K_m,G_v}\cap ((x_{iv}: i\in [m])+\calJ_{K_m,G\setminus v}).
    \]
    Note that
    \[
        \calJ_{K_m,G_v}+((x_{iv}: i\in [m])+\calJ_{K_m,G\setminus v})=(x_{iv}:
        i\in [m])+\calJ_{K_m,G_v\setminus v}.
    \]
    Hence, we have the following short exact sequence:
    \[
        0\to \frac{S}{\calJ_{K_m,G}} \to \frac{S}{\calJ_{K_m,G_v}}\oplus
        \frac{S}{(x_{iv}: i\in [m])+\calJ_{K_m,G\setminus v}}\to
        \frac{S}{(x_{iv}: i\in [m])+\calJ_{K_m,G_v\setminus v}}\to 0,
    \]
    namely,
    \begin{equation}
        0\to \frac{S_G}{\calJ_{K_m,G}} \to
        \frac{S_{G_v}}{\calJ_{K_m,G_v}}\oplus \frac{S_{G\setminus
        v}}{\calJ_{K_m,G\setminus v}}\to \frac{S_{G_v\setminus
        v}}{\calJ_{K_m,G_v\setminus v}}\to 0.
        \label{eqn:SES-1}
    \end{equation}
\end{Lemma}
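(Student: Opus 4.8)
The plan is to establish the two displayed ideal identities; the short exact sequence \eqref{eqn:SES-1} then follows formally. Indeed, all of the rings $S_G,\,S_{G_v},\,S_{G\setminus v},\,S_{G_v\setminus v}$ denote the same ambient ring $S$, so once we know $\calJ_{K_m,G}=I_1\cap I_2$ and $I_1+I_2=(x_{iv}:i\in[m])+\calJ_{K_m,G_v\setminus v}$ for $I_1\coloneqq\calJ_{K_m,G_v}$ and $I_2\coloneqq(x_{iv}:i\in[m])+\calJ_{K_m,G\setminus v}$, the sequence in the statement is exactly the standard short exact sequence $0\to S/(I_1\cap I_2)\to S/I_1\oplus S/I_2\to S/(I_1+I_2)\to 0$ attached to a pair of ideals. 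So the whole content lies in the two equalities.

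The additive identity $I_1+I_2=(x_{iv}:i\in[m])+\calJ_{K_m,G_v\setminus v}$ I would verify by sorting the natural binomial generators by the edge they come from. Writing $\bdx_v\coloneqq(x_{iv}:i\in[m])$, the generators of $\calJ_{K_m,G_v}$ fall into three groups: those indexed by an edge of $G$ missing $v$, which are generators of $\calJ_{K_m,G\setminus v}$ as well; those indexed by an edge $\{u,v\}$ with $u\in N_G(v)$, each of which lies in $\bdx_v$ since both of its terms carry a factor from column $v$; and those indexed by a new clique edge inside $N_G(v)$. Using $E(G_v\setminus v)=E(G\setminus v)\cup\binom{N_G(v)}{2}$, a direct comparison of generators on the two sides (all edges of $G$ missing $v$ appear on both, the edges at $v$ are absorbed into $\bdx_v$, and the clique edges account for the difference between $\calJ_{K_m,G\setminus v}$ and $\calJ_{K_m,G_v\setminus v}$) gives the equality.

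For the intersection identity I would show that both sides are radical ideals cutting out the same variety. Radicality of $\calJ_{K_m,G}$ and of $I_1=\calJ_{K_m,G_v}$ is \Cref{lem:decompo}; the ideal $I_2$ is radical because $S/I_2\cong (S/\bdx_v)\big/\calJ_{K_m,G\setminus v}$ is reduced, again by \Cref{lem:decompo} applied to $G\setminus v$ in the smaller polynomial ring $S/\bdx_v$; hence $I_1\cap I_2$ is radical. Because every ideal in sight is generated by fixed polynomials defined over the prime field, and formation of an ideal from such generators, as well as finite intersection, commutes with the faithfully flat base change $\KK\hookrightarrow\overline\KK$, it suffices to prove the equality of the corresponding varieties over $\overline\KK$. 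Here $V(\calJ_{K_m,G})$ is precisely the locus where the two columns indexed by $t$ and $l$ are linearly dependent for every $\{t,l\}\in E(G)$; passing to $G_v$ imposes the same dependence for the pairs inside the clique $N_G(v)$, and $V(I_2)=V(\bdx_v)\cap V(\calJ_{K_m,G})$. The inclusion $V(I_1)\cup V(I_2)\subseteq V(\calJ_{K_m,G})$ is immediate, and for the reverse inclusion I would take a point of $V(\calJ_{K_m,G})$: if its $v$-th column is zero the point lies in $V(I_2)$, while if that column is a nonzero vector $\bm{p}_v$, then the edge conditions $\{u,v\}$ force every neighbouring column $\bm{p}_u$ $(u\in N_G(v))$ to be proportional to $\bm{p}_v$, whence any two neighbouring columns are proportional to each other and the clique conditions hold automatically, placing the point in $V(I_1)$.

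The genuine content is concentrated in this last proportionality step, which is exactly where the defining feature of $G_v$---that $N_G(v)$ has been completed to a clique---enters; the rest is bookkeeping. The step I expect to require the most care is the upgrade from equality of varieties to equality of ideals, since over the arbitrary field $\KK$ the Nullstellensatz is not directly available: this is precisely why I would invoke radicality of all three ideals through \Cref{lem:decompo} together with the descent along $\KK\hookrightarrow\overline\KK$. A field-independent alternative would instead match minimal primes directly via \Cref{lem:decompo}, sending each $P_T(K_m,G)$ with $v\notin T$ to a minimal prime of $\calJ_{K_m,G_v}$ and each $P_T(K_m,G)$ with $v\in T$ to one of $I_2$; there the delicate point becomes the combinatorial verification that this assignment respects the cut point property, which I expect to be the harder bookkeeping.
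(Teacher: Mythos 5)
Your proof is correct, but there is nothing in the paper to compare it against: \Cref{rem:exactseq} is quoted verbatim from Kumar's paper (the citation [MR4033090, Theorem 3.2]) and no proof is given here, so your argument is a self-contained substitute rather than a parallel of the text. All three of your ingredients are sound. The formal Mayer--Vietoris sequence for a pair of ideals is standard; the generator sorting for the sum identity is exactly right (a minor attached to an edge at $v$ has both of its monomials divisible by a column-$v$ variable, hence lies in $(x_{iv}:i\in[m])$, and $E(G_v\setminus v)=E(G\setminus v)\cup\binom{N_G(v)}{2}$ accounts for the rest); and the geometric core of the intersection identity --- a point of $V(\calJ_{K_m,G})$ whose $v$-th column is nonzero has every neighbouring column proportional to that column, so all clique minors of $G_v$ vanish automatically --- is the genuine content. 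Your descent also closes correctly: for $f$ in the intersection, the Nullstellensatz over $\overline{\KK}$ gives $f^N\in\calJ_{K_m,G}\overline{\KK}[\bdX]$, contraction along the faithfully flat extension $\KK[\bdX]\subseteq\overline{\KK}[\bdX]$ gives $f^N\in\calJ_{K_m,G}$, and radicality over $\KK$ (\Cref{lem:decompo}) gives $f\in\calJ_{K_m,G}$, while the reverse inclusion is clear on generators; the one point worth making fully explicit is that the extended ideals are again the ideals ``of the same name'' over $\overline{\KK}$, which is exactly your prime-field remark. By contrast, the proof in the cited source --- in the spirit of Ohtani's lemma, its $m=2$ ancestor --- is purely ideal-theoretic over the given field: one splits Rauh's minimal primary decomposition (\Cref{lem:decompo}) according to whether the cut set $T$ contains $v$ and identifies the two partial intersections with $\calJ_{K_m,G_v}$ and $(x_{iv}:i\in[m])+\calJ_{K_m,G\setminus v}$; this is precisely the ``field-independent alternative'' you sketch but do not carry out. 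That route needs no base change and no Nullstellensatz but requires the combinatorial matching of cut sets of $G$, $G_v$, and $G\setminus v$; yours trades that bookkeeping for a transparent linear-algebra picture at the cost of the passage to $\overline{\KK}$. Both are legitimate, and your version has the merit of making visible \emph{why} completing $N_G(v)$ to a clique is the correct operation.
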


We end this section by recalling several key results for understanding the
regularity of generalized binomial edge ideals.

\begin{Lemma}
    [{\cite[Proposition 8]{MR3040610}}]
    \label{cor:induced_graph}
    Let $G$ be a graph over $[n]$ and $H$ be an induced subgraph of $G$. Then,
    we have $\reg(S/\mathcal{J}_{K_m,H}) \le \reg(S/\mathcal{J}_{K_m,G})$.
\end{Lemma}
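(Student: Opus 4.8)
The plan is to prove the inequality by reducing to the deletion of a single vertex and then realizing the smaller quotient ring as an algebra retract of the larger one. First I would reduce to the case $H=G\setminus v$: writing $[n]\setminus V(H)=\{v_1,\dots,v_r\}$ and setting $G^{(0)}=G$ and $G^{(j)}=G^{(j-1)}\setminus v_j$, each $G^{(j)}$ is the induced subgraph of $G^{(j-1)}$ on all but one vertex and $G^{(r)}=H$; since the asserted inequality is transitive, it suffices to treat a single deletion. So assume $H=G\setminus v$. Put $\mathfrak{q}_v=(x_{iv}:i\in[m])$ and $S'=S_{G\setminus v}=\KK[x_{ij}:i\in[m],\,j\ne v]\subseteq S=S_G$. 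The central observation is that setting the entire $v$-th column to zero annihilates exactly the minors attached to edges incident to $v$ while leaving intact the minors attached to the edges of $G\setminus v$, whence
\[
    \frac{S}{\calJ_{K_m,G}+\mathfrak{q}_v}\ \cong\ \frac{S'}{\calJ_{K_m,G\setminus v}}.
\]
Write $B=S/\calJ_{K_m,G}$ and $A=S'/\calJ_{K_m,G\setminus v}$. There is a natural surjection $\pi\colon B\to A$ induced by $x_{iv}\mapsto 0$, and a section $\iota\colon A\to B$ induced by the inclusion $S'\hookrightarrow S$, which is well defined precisely because $\calJ_{K_m,G\setminus v}\subseteq\calJ_{K_m,G}$. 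Since $\pi\iota=\mathrm{id}_A$, the algebra $A$ is a $\KK$-algebra retract of $B$. Note that this step imposes no hypothesis on $v$ (free, internal, or isolated), which is why I favor it over the more hands-on routes below.

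The final step is to deduce $\reg(A)\le\reg(B)$ from the retract structure. The guiding principle is that an algebra retract induces a split monomorphism on Koszul homology $\Tor_\bullet(-,\KK)$, so that every graded Betti number of $A$ is dominated by the corresponding one of $B$, and hence $\reg(A)\le\reg(B)$. Here one uses that $\reg$ of $A$ is unchanged whether it is computed over $S'$ or over $S=S'[x_{1v},\dots,x_{mv}]$, because adjoining the $v$-th column tensors the minimal resolution of $A$ with a Koszul complex on the new variables and therefore does not alter any value of $j-i$.

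The main obstacle is to make this $\Tor$-comparison rigorous across the two polynomial rings $S'\subseteq S$: the retraction $\pi$ is $S$-linear, whereas the section $\iota$ is only $S'$-linear (multiplication by $x_{iv}$ carries $\iota(A)$ into $\mathfrak{q}_vB$), so the splitting of $\Tor$ is transparent over $S'$ but $B$ is not a finitely generated $S'$-module, and the naive coordinate forms $x_{iv}$ are genuine zerodivisors on $B$ (they lie in the primes $P_T$ of \Cref{lem:decompo} with $v\in T$), so no filter-regularity argument is available. I would resolve this either by invoking the standard Betti-number comparison for algebra retracts, or, self-containedly, by running the composed-functor spectral sequence $H^p_{\mathfrak{q}_v}\!\bigl(H^q_{S'_+}(-)\bigr)\Rightarrow H^{p+q}_{S_+}(-)$ against the $A$-module decomposition $B=\iota(A)\oplus\mathfrak{q}_vB$, in which each $x_{iv}$ acts strictly upper-triangularly; tracking the top nonvanishing degrees through this spectral sequence should yield $a_i(A)+i\le\reg(B)$ for every $i$, which is exactly the claim. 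An alternative avoiding retracts altogether uses the sequence \eqref{eqn:SES-1} together with the colon identity $\calJ_{K_m,G}:x_{1v}=\calJ_{K_m,G_v}$ (which follows from \Cref{rem:exactseq} once one checks, via \Cref{lem:generic_CM} and \Cref{lem:decompo}, that $x_{1v}$ is a nonzerodivisor modulo $\calJ_{K_m,G_v}$ when $v$ is internal): the standard short exact sequence regularity estimates then bound $\reg(A)$ by the maximum of $\reg(B)$ and $\reg(S_{G_v\setminus v}/\calJ_{K_m,G_v\setminus v})$. The sticking point there is that $G_v\setminus v$ is a subgraph of $G$ but not an \emph{induced} one, so closing the induction demands separate control of these fan-type graphs $G_v$.
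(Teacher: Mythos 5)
You should first note that the paper contains no internal proof of this statement: it is imported verbatim from Proposition 8 of the cited reference of Saeedi Madani and Kiani, where it is established by essentially the route you propose (reduce to deleting one vertex, exhibit the algebra retract, compare Betti numbers), so your attempt reconstructs the standard proof rather than diverging from anything in this paper. Your reduction to a single deletion, the identification $S/(\calJ_{K_m,G}+\mathfrak{q}_v)\cong S'/\calJ_{K_m,G\setminus v}$, and the retract $\pi\iota=\id_A$ are all correct. The only weakness is that you leave the decisive step --- from the retract to the inequality of regularities --- hedged among three alternatives, when your option (a) already closes it rigorously: because your retract is induced by the polynomial-ring retract $S'\hookrightarrow S\twoheadrightarrow S'$, functoriality of $\Tor$ under change of rings gives graded maps
\[
\Tor^{S'}_i(A,\KK)_j\longrightarrow \Tor^{S}_i(B,\KK)_j\longrightarrow \Tor^{S'}_i(A,\KK)_j
\]
whose composite is induced by identity maps on rings and modules and hence is the identity; thus $\beta^{S'}_{ij}(A)\le\beta^{S}_{ij}(B)$ for all $i,j$, and therefore $\reg(A)\le\reg(B)$, since the Betti-number description of regularity agrees with the local-cohomology definition used in the paper (your observation that adjoining the column of variables does not change $j-i$ handles the two-rings issue). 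In particular, the spectral-sequence sketch (left at ``should yield'') and the colon-ideal variant (whose gap you identify yourself, and which would also need $v$ internal) are unnecessary. One minor inaccuracy: your parenthetical claim that the forms $x_{iv}$ are always zerodivisors on $B$ fails in general --- for $G=K_n$ the ideal $\calJ_{K_m,K_n}$ is prime by \Cref{lem:generic_CM} and every $x_{iv}$ is a nonzerodivisor; what matters for your argument is only that they need not be nonzerodivisors, so no filter-regularity shortcut is available.
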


\begin{Corollary}
    \label{lem:reg_easy_bound}
    Let $G$ be a simple graph and $v$ be an internal vertex of $G$. Then,
    \[
        \reg(S/\calJ_{K_m,G})\le
        \max\{\reg(S/\calJ_{K_m,G_v})+1,\reg(S/\calJ_{K_m,G\setminus v})\}.
    \]
\end{Corollary}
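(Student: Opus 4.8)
The plan is to derive the regularity inequality directly from the short exact sequence \eqref{eqn:SES-1} in \Cref{rem:exactseq}, combined with the subadditivity of regularity under short exact sequences recorded in \Cref{depthlemma}\eqref{depthlemma-c}. The exact sequence expresses $S_G/\calJ_{K_m,G}$ as the kernel in
\[
    0\to \frac{S_G}{\calJ_{K_m,G}} \to
    \frac{S_{G_v}}{\calJ_{K_m,G_v}}\oplus \frac{S_{G\setminus v}}{\calJ_{K_m,G\setminus v}}\to \frac{S_{G_v\setminus v}}{\calJ_{K_m,G_v\setminus v}}\to 0.
\]
Applying \Cref{depthlemma}\eqref{depthlemma-c} with $M=S_G/\calJ_{K_m,G}$, $N$ the middle term, and $P$ the right term yields
\[
    \reg\!\left(\frac{S_G}{\calJ_{K_m,G}}\right)\le \max\left\{\reg(N),\ \reg(P)+1\right\}.
\]

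**Next I would** simplify each of the three regularity quantities. For the middle term $N$, the direct-sum formula in \Cref{lem:direct_sum} gives
\[
    \reg(N)=\max\left\{\reg\!\left(\frac{S_{G_v}}{\calJ_{K_m,G_v}}\right),\ \reg\!\left(\frac{S_{G\setminus v}}{\calJ_{K_m,G\setminus v}}\right)\right\}.
\]
For the right term $P = S_{G_v\setminus v}/\bigl((x_{iv}:i\in[m])+\calJ_{K_m,G_v\setminus v}\bigr)$, the variables $x_{iv}$ ($i\in[m]$) do not appear in $\calJ_{K_m,G_v\setminus v}$, so quotienting by this linear regular sequence leaves the regularity unchanged; hence $\reg(P)=\reg(S_{G_v\setminus v}/\calJ_{K_m,G_v\setminus v})$. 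The key observation is that $G_v\setminus v$ is an \emph{induced subgraph} of $G_v$, so \Cref{cor:induced_graph} gives $\reg(P)\le \reg(S_{G_v}/\calJ_{K_m,G_v})$, and therefore $\reg(P)+1\le \reg(S_{G_v}/\calJ_{K_m,G_v})+1$.

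**Combining these** estimates, the term $\reg(P)+1$ is dominated by $\reg(S_{G_v}/\calJ_{K_m,G_v})+1$, and the middle-term contribution $\reg(S_{G_v}/\calJ_{K_m,G_v})$ is in turn dominated by that same quantity, so the whole bound collapses to
\[
    \reg\!\left(\frac{S_G}{\calJ_{K_m,G}}\right)\le \max\left\{\reg\!\left(\frac{S_{G_v}}{\calJ_{K_m,G_v}}\right)+1,\ \reg\!\left(\frac{S_{G\setminus v}}{\calJ_{K_m,G\setminus v}}\right)\right\},
\]
which is exactly the claimed inequality. The only genuinely delicate point is the identification $\reg(P)=\reg(S_{G_v\setminus v}/\calJ_{K_m,G_v\setminus v})$, i.e.\ that adjoining the linear forms $x_{iv}$ does not change regularity; this is standard since these variables form a regular sequence on the quotient and regularity is unaffected by extending the polynomial ring by variables absent from the defining ideal. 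Everything else is a mechanical combination of the two cited lemmas, so I expect no serious obstacle.
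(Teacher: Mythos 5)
Your proposal is correct and follows essentially the same route as the paper's proof: apply \Cref{depthlemma}\ref{depthlemma-c} to the short exact sequence \eqref{eqn:SES-1} and bound $\reg(S_{G_v\setminus v}/\calJ_{K_m,G_v\setminus v})$ by $\reg(S_{G_v}/\calJ_{K_m,G_v})$ via \Cref{cor:induced_graph}, since $G_v\setminus v$ is an induced subgraph of $G_v$. The only difference is that you spell out two steps the paper leaves implicit, namely the direct-sum regularity formula of \Cref{lem:direct_sum} and the fact that quotienting by the linear forms $x_{iv}$, which are regular on $S/\calJ_{K_m,G_v\setminus v}$, does not change the regularity; both are handled correctly.
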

\begin{proof}
    Note that $G_v\setminus v$ is an induced subgraph of $G_v$. So
    $\reg(S/\calJ_{K_m,G_v\setminus v})\le \reg(S/\calJ_{K_m,G_v})$ by
    \Cref{cor:induced_graph}. Now, it remains to apply
    \Cref{depthlemma}\ref{depthlemma-c} to the exact sequence
    (\ref{eqn:SES-1}) in \Cref{rem:exactseq}.
\end{proof}

\begin{Lemma}
    [{\cite[Theorems 3.6 and 3.7]{MR4033090}}]
    \label{lem:reg_min_equal}
    Let $G$ be a connected graph.
    Then $\reg(S/\calJ_{K_m,G})\le
    |V(G)|-1$.
    If additionally $m\ge |V(G)| \ge 2$, then
    $\reg(S/\calJ_{K_m,G})=|V(G)|-1$.
    Moreover, $\reg(S/\calJ_{K_m,K_n})
    =\min\{m-1,n-1\}$.
\end{Lemma}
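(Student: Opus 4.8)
The plan is to write $n=|V(G)|$ throughout and to establish the three assertions in the following order: first the ``Moreover'' statement for the complete graph, which will double as a base case; then the general upper bound $\reg(S/\calJ_{K_m,G})\le n-1$; and finally the reverse inequality $\reg(S/\calJ_{K_m,G})\ge n-1$ under the hypothesis $m\ge n$, whose combination with the upper bound yields the claimed equality.

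By \Cref{lem:generic_CM} the ring $S/\calJ_{K_m,K_n}=S/I_2(\bdX)$ is Cohen--Macaulay of dimension $m+n-1$. For a Cohen--Macaulay standard graded algebra the local cohomology is concentrated in cohomological degree $\dim$, so by the definition of the $a_i$ its regularity equals the degree of the $h$-polynomial (the numerator obtained by writing the Hilbert series over $(1-t)^{\dim}$). For the Segre-type ring $S/I_2(\bdX)$ the $h$-vector is classically $h_j=\binom{m-1}{j}\binom{n-1}{j}$, whose top nonzero entry occurs at $j=\min\{m-1,n-1\}$; hence $\reg(S/\calJ_{K_m,K_n})=\min\{m-1,n-1\}$, which is in particular $\le n-1$.

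For the upper bound I would use a double induction: an outer induction on $n$, and for fixed $n$ a descending induction on the number of missing edges $\binom{n}{2}-|E(G)|$, whose base case is $K_n$, already treated above. If $G$ is connected but not complete it contains an induced path on three vertices, hence an internal vertex $v$ with two non-adjacent neighbours, so that $G_v$ has strictly more edges than $G$. Applying \Cref{depthlemma}\ref{depthlemma-c} and \Cref{lem:direct_sum} to the sequence \eqref{eqn:SES-1} gives
\[
\reg(S_G/\calJ_{K_m,G})\le\max\bigl\{\reg(S_{G_v}/\calJ_{K_m,G_v}),\ \reg(S_{G\setminus v}/\calJ_{K_m,G\setminus v}),\ \reg(S_{G_v\setminus v}/\calJ_{K_m,G_v\setminus v})+1\bigr\}.
\]
Here $\reg(S_{G_v}/\calJ_{K_m,G_v})\le n-1$ by the inner hypothesis, since $G_v$ is connected on $n$ vertices with more edges. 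Writing $G\setminus v$ as the disjoint union of its components $H_1,\dots,H_c$ and using that regularity is additive over the corresponding tensor factors, the outer hypothesis gives $\reg(S_{G\setminus v}/\calJ_{K_m,G\setminus v})=\sum_i\reg(S_{H_i}/\calJ_{K_m,H_i})\le\sum_i(|V(H_i)|-1)=(n-1)-c\le n-2$. The decisive point is that $G_v\setminus v$ is \emph{connected} on $n-1$ vertices: making $N_G(v)$ into a clique links all the components of $G\setminus v$, each of which meets $N_G(v)$ since $G$ is connected. Thus the outer hypothesis yields $\reg(S_{G_v\setminus v}/\calJ_{K_m,G_v\setminus v})\le n-2$, so the ``$+1$'' term is at most $n-1$, and the three-term maximum is $\le n-1$.

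It remains to prove $\reg(S_G/\calJ_{K_m,G})\ge n-1$ when $m\ge n$, and this is the main obstacle. Induced-subgraph monotonicity (\Cref{cor:induced_graph}) is too lossy, as every proper induced subgraph lives on at most $n-1$ vertices and so yields only $\ge n-2$; the missing unit must instead be produced by the connecting map in the long exact sequence of local cohomology attached to \eqref{eqn:SES-1}. I would induct on $n$, the base cases being $G=K_2$ (where $\reg=1$) and, at each level, $G=K_n$ (handled directly by the computation above, which gives $n-1$ precisely because $m\ge n$). For connected non-complete $G$ choose an internal vertex $v$ with two non-adjacent neighbours as before; then $G_v\setminus v$ is connected on $n-1$ vertices, and since $m\ge n>n-1$ the inductive lower bound together with the already-proved upper bound gives $\reg(S_{G_v\setminus v}/\calJ_{K_m,G_v\setminus v})=n-2$, so $H^{i}_{S_+}$ of this module is nonzero in internal degree $n-2-i$ for some $i$. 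The plan is to show that this extremal class is not in the image of $H^{i}_{S_+}$ of the middle term of \eqref{eqn:SES-1}, whence the connecting homomorphism sends it injectively into $H^{i+1}_{S_+}(S_G/\calJ_{K_m,G})$ in the same internal degree, forcing $a_{i+1}(S_G/\calJ_{K_m,G})+(i+1)\ge n-1$. Verifying this non-surjectivity in the critical degree --- equivalently, the vanishing there of the competing graded local cohomology of the $G_v$- and $(G\setminus v)$-parts, which is exactly the place where the hypothesis $m\ge n$ does the work --- is the technical heart of the argument, and is where I expect essentially all the difficulty to lie.
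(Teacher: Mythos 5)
The paper never proves this lemma: it is quoted directly from Kumar \cite{MR4033090}, so there is no internal argument to compare against and your proposal must stand on its own. Its first two parts do stand. The Cohen--Macaulay/$h$-vector computation giving $\reg(S/\calJ_{K_m,K_n})=\min\{m-1,n-1\}$ is correct, and so is the double induction for the upper bound: the choice of an internal vertex $v$ with two non-adjacent neighbours (available whenever $G$ is connected and not complete), the additivity of regularity over the components of $G\setminus v$, and above all the observation that $G_v\setminus v$ is \emph{connected} on $n-1$ vertices, which keeps the ``$+1$'' term under control, are exactly the right ingredients.

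The genuine gap is the third part, and you flag it yourself: the injectivity of the connecting map $H^{i}_{S_+}(S_{G_v\setminus v}/\calJ_{K_m,G_v\setminus v})\to H^{i+1}_{S_+}(S_G/\calJ_{K_m,G})$ in the critical internal degree is never established, so the equality for $m\ge n$ remains unproved. Moreover, the route you propose for closing it cannot succeed with the information you have: you would need $H^{i}_{S_+}$ of the middle term $N$ of \eqref{eqn:SES-1} to vanish in internal degree $(n-2)-i$, but the only thing you know about $N$ is a regularity bound, and regularity $\le n-1$ only forces vanishing in degrees strictly above $(n-1)-i$; it says nothing about degree $(n-2)-i$. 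The repair is much simpler and uses only tools you already invoked: run the \emph{same} double induction as for the upper bound, but carry the exact equality through it. The inner (descending-on-edges) hypothesis, whose base case $K_n$ has regularity exactly $n-1$ because $m\ge n$, gives $\reg(S_{G_v}/\calJ_{K_m,G_v})=n-1$, so by \Cref{lem:direct_sum} the middle term satisfies $\reg(N)=n-1$ (the $G\setminus v$ summand contributes at most $n-2$, as you computed); the outer hypothesis gives $\reg(S_{G_v\setminus v}/\calJ_{K_m,G_v\setminus v})=n-2$ exactly, since $G_v\setminus v$ is connected on $n-1<m$ vertices. As $\reg(N)=n-1\neq n-2=\reg(P)$, the \emph{equality} case of \Cref{depthlemma}\ref{depthlemma-c} applies to \eqref{eqn:SES-1} and yields $\reg(S_G/\calJ_{K_m,G})=\max\{n-1,(n-2)+1\}=n-1$, with no analysis of connecting homomorphisms at all. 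In short: the missing unit comes from knowing the regularity of $G_v$ \emph{exactly}, not from chasing a class out of $G_v\setminus v$.
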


\section{Study of fan graphs}
\label{sec:fan_graph}

In order to study bipartite graphs whose binomial edge ideals are Cohen--Macaulay, Bolognini et al.~in \cite{MR3779601} introduced the fan graphs of complete graphs, which is a family of chordal graphs. In this section, we will study the dimension, depth, and regularity of the quotient ring of the generalized binomial edge ideal of these graphs. Let us start with the definition reformulated in \cite{MR3991052}.

\begin{Definition}
    [{\cite[Definition 3.1]{MR3991052}}]
    Let $K_n$ be a complete graph on the vertex set $[n]$.
    \begin{enumerate}[a]
        \item Let $U=\{u_1,u_2,\ldots,u_r\}$ be a subset of $[n]$. Suppose that for each $i\in [r]$, a new complete graph $K_{a_i}$ with $a_i>i$ is attached to $K_n$ in such a way that $V(K_n)\cap V(K_{a_i})= \{u_1,u_2, \dots,u_i\}$. We say that the resulting graph is obtained by \emph{adding a fan} to $K_n$ on the set $U$ and $\{K_{a_1},\ldots,K_{a_r}\}$ is the \emph{branch} of that fan on $U$.

        \item Let $W$ be a subset of $[n]$ and suppose that $W=W_1\sqcup\cdots\sqcup W_k$ is a partition of $W$. Let $F^W_k(K_n)$ be a graph obtained from $K_n$ by adding a fan on each  $W_i$. The resulting graph $F^W_k(K_n)$ is called a \emph{$k$-fan graph} of $K_n$ on the set $W$. For future reference, for each $i\in [k]$, we assume that $\{K_{a_{i,1}},\ldots,K_{a_{i,r_i}}\}$ is the branch of the fan on $W_i=\{w_{i,1},\ldots,w_{i,r_i}\}$. For notational convenience, we also set $h_{i,j}\coloneqq a_{i,j}-j$.

        \item A branch $\{K_{a_{i,1}},\ldots,K_{a_{i,r_i}}\}$ of the fan on $W_i$ is  called a \emph{pure branch} if $h_{i,j}=1$ for every $j\in [r_i]$. Furthermore, if each branch is pure, then $F^W_k(K_n)$ is said to be a \emph{$k$-pure fan graph} of $K_n$ on $W$.

        \item The complete graph $K_n$ can be considered as a degenerate fan without branches, i.e., with $k=0$.
    \end{enumerate}
\end{Definition}

The following result generalizes the dimensional formula contained in the proof of \cite[Lemma 3.2]{MR3779601} for the case $m=2$.

\begin{Theorem}
    \label{thm:fan_Dimension}
    Let $G=F_k^W(K_n)$ be a $k$-fan graph with $n\ge 2$.
    Then, we have
    \[
        \dim(S/\calJ_{K_m,G})=m+|V(G)|-1+s(m-2)
    \]
    where $s=\min\{|W|,n-1\}$.
\end{Theorem}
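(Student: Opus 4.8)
The plan is to compute the dimension via the minimal primary decomposition in \Cref{lem:decompo}, namely
\[
    \dim(S/\calJ_{K_m,G})=\max_{T\in\calC(G)}\dim(S/P_T(K_m,G)).
\]
By the formula \eqref{eqn:dim_prime}, for each $T\in\calC(G)$ we have $\dim(S/P_T(K_m,G))=c(T)(m-1)+|V(G)|-|T|$. Writing $N\coloneqq|V(G)|$, the task reduces to a purely combinatorial optimization: I must find
\[
    \max_{T\in\calC(G)}\bigl((m-1)c(T)-|T|\bigr),
\]
and then add $N=|V(G)|$ back. First I would verify that the claimed answer rewrites cleanly: the target $m+|V(G)|-1+s(m-2)$ equals $|V(G)|+(m-1)(s+1)-s$, so the optimization should be maximized by a cut set $T$ with $|T|=s$ components-contribution $c(T)=s+1$, i.e. $(m-1)c(T)-|T|=(m-1)(s+1)-s$.

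The key combinatorial step is to understand the cut sets $T$ of the fan graph $F_k^W(K_n)$. The core $K_n$ is complete, so any cut vertex must lie among the fan-attachment vertices in $W$, and more importantly, removing the whole core does not disconnect anything beyond what the branch structure allows. First I would argue that the internal vertices of the core that are \emph{not} used as attachment points can never belong to a set with the cut point property, so effectively $T\subseteq$ (the attachment vertices together with the branch-complete-graph vertices), and the useful cut sets are driven by selecting a subset of the fan vertices $W$. The attachment structure $V(K_n)\cap V(K_{a_i})=\{u_1,\dots,u_i\}$ is nested, so cutting along a prefix of the $W_i$ creates new components (the leftover parts of the attached complete graphs $K_{a_{i,j}}$) one per deleted vertex, up to the constraint that the residual core $K_n\setminus T$ stays connected, which forces $|T\cap V(K_n)|\le n-1$. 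This is exactly where the cap $s=\min\{|W|,n-1\}$ enters: we can productively delete at most $\min\{|W|,n-1\}$ vertices while each deletion increases the component count by one and the residual core remains nonempty and connected.

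Concretely, I would prove the two inequalities separately. For the lower bound, I would exhibit an explicit $T\in\calC(G)$ of size $s$ achieving $c(T)=s+1$: choose $s$ attachment vertices (respecting the nesting so that each is a cut vertex of the successive induced subgraph, certifying the cut point property), such that deleting them splits off the outer portion of each branch as its own component while leaving the core connected; this gives $(m-1)(s+1)-s$. For the upper bound, I would show that for every $T\in\calC(G)$ one has $(m-1)c(T)-|T|\le (m-1)(s+1)-s$. Since $m-1\ge 1$, increasing $|T|$ by one can raise $c(T)$ by at most one, so the quantity $(m-1)c(T)-|T|$ can only increase when a deleted vertex is a genuine cut vertex producing a new component; a careful count bounds the number of such productive deletions by $s$ (the $|W|$ bound from the number of fan vertices, and the $n-1$ bound from keeping the core from being entirely deleted), giving $c(T)\le |T|+1$ with the excess over $|T|$ bounded by $s$. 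The main obstacle is the bookkeeping in this upper bound: one must rule out pathological cut sets that delete core vertices or mix branch and core vertices to inflate $c(T)$, and verify that the nested branch attachments do not allow more than $s$ component-producing cuts. I would handle this by a case analysis on $T\cap V(K_n)$ versus the branch vertices, using that $K_n\setminus(T\cap V(K_n))$ is either empty or a single connected clique, so at most $s$ of the deletions can be "productive" in the sense of strictly increasing the component count.
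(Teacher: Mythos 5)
Your overall strategy coincides with the paper's: both arguments reduce the theorem, via the primary decomposition in \Cref{lem:decompo} and the dimension formula \eqref{eqn:dim_prime}, to maximizing $c(T)(m-1)+|V(G)|-|T|$ over $T\in\calC(G)$, and both identify the optimum as a set of $s=\min\{|W|,n-1\}$ fan vertices with $c(T)=|T|+1$, which gives $m+|V(G)|-1+s(m-2)$ since $m\ge 2$. The one substantive difference is how the combinatorial input is obtained: the paper simply quotes, from the proof of \cite[Lemma 3.2]{MR3779601}, that $\calC(G)$ consists exactly of the disjoint unions $T_1\sqcup\cdots\sqcup T_k$ of initial segments $T_i=\{w_{i,1},\dots,w_{i,j}\}$ of the $W_i$ with $T\subsetneq[n]$ (whence $c(T)=|T|+1$ and $|T|\le s$ are immediate), whereas you propose to re-derive this description of $\calC(G)$ by hand.

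In that re-derivation, one step would fail as written: the claim that ``increasing $|T|$ by one can raise $c(T)$ by at most one.'' This is false for arbitrary vertex additions, even inside a fan graph. In a branch with three attached cliques, take $T'=\{w_{i,2},w_{i,3}\}$: every attached clique still meets the core through $w_{i,1}$, so $c(T')=1$; but adding the single vertex $w_{i,1}$ completes all three prefixes at once, splitting off three outer cliques, so $c$ jumps from $1$ to $4$. The incremental bound is valid only along a prefix-respecting ordering of $T$, and knowing such an ordering exists already presupposes that members of $\calC(G)$ are unions of prefixes---which is the actual crux. The repair is to discard the incremental bookkeeping in favor of the direct structural argument your sketch also gestures at: (i) vertices of $V(K_n)\setminus W$ and the outer branch vertices are simplicial (their neighborhoods are cliques), hence never cut vertices of any relevant induced subgraph, so $T\subseteq W$; (ii) the cut point property forces each $T\cap W_i$ to be a prefix, because $w_{i,j}$ can only disconnect something in $G\setminus(T\setminus\{w_{i,j}\})$ if $w_{i,1},\dots,w_{i,j-1}\in T$, and it also forces $T\subsetneq[n]$; (iii) for such $T$ a direct count gives $c(T)=|T|+1$ and $|T|\le s$. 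With that substitution your plan is correct and is, in effect, the paper's proof with the citation replaced by its verification.
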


\begin{proof}
    When $k=0$, the set $W=\emptyset$ and $G=K_n$ is a complete graph. The
    claimed equality is clear in this case by \Cref{lem:generic_CM} .

    In the following, we assume that $k\ge 1$.  For each $i\in [k]$, with respect
    to $W_i=\{w_{i,1},\ldots,w_{i,r_i}\}$, we set $\calC_i\coloneqq
    \{\emptyset\}\cup \left\{\{w_{i,1},\ldots,w_{i,j}\}: j\in [r_i]\right\}$.
    From the proof of \cite[Lemma 3.2]{MR3779601}, we know that the collection
    of sets with cut point property is
    \[
        \calC(G)=\{T_1\sqcup\cdots \sqcup T_k: T_i\in \calC_i\text{\ and\ }
        T_1\sqcup\cdots\sqcup T_k\subsetneq [n]\}.
    \]
    For any $T\in \calC(G)$, we have the corresponding decomposition $T=
    T_1\sqcup\cdots\sqcup T_k$ with $T_i\in \calC_i$. It is clear that
    $G\setminus T$ consists of the following $|T|+1$ components:
    \begin{enumerate}[a]
        \item the complete graphs $K_{h_{i,j}}$ for each $i\in [k]$ and $j\in [|T_i|]$, and
        \item the fan graph $G'$ of $K_n\setminus T$ on the set $\bigsqcup_i (W_i\setminus T_i)$.
    \end{enumerate}
    It follows from equation \eqref{eqn:dim_prime} that
    \begin{align*}
        \dim \left(S/P_T(K_m, G)\right) & = (|T|+1)(m-1)+|V(G)|-|T|=m+|V(G)|-1+|T|(m-2).
    \end{align*}
    Hence  by \Cref{lem:decompo} we get
    \begin{align*}
        \dim(S/\calJ_{K_m,G}) & =\max\{m+|V(G)|-1+|T|(m-2): T\in \calC(G)\} \\
        & =m+|V(G)|-1+s(m-2)
    \end{align*}
    where $s=\min\{|W|,n-1\}$.
\end{proof}

Next, we study the depth and regularity of the generalized binomial edge ideal of fan graphs in a fairly general situation.

\begin{Theorem}
    \label{thm:fan_general}
    Let $G=F_k^W(K_n)$ be a $k$-fan graph with $n\ge 2$. Let
    $\delta\coloneqq  |\{(i,j):h_{i,j}\ge 2\}|$. Then, we have the following.
    \begin{enumerate}[a]
        \item \label{thm:fan_general_a}
            The depth of $S/\calJ_{K_m,G}$ is given by $|V(G)|+m-1$.
        \item \label{thm:fan_general_b}
            If $m \ge |V(G)|$, then $\reg(S/\calJ_{K_m,G})=|V(G)|-1$.
        \item \label{thm:fan_general_c}
            If $m< |V(G)|$, then
            $\reg(S/\calJ_{K_m,G})\le k+(\delta+1)(m-1)$.
        \item \label{thm:fan_general_d}
            We have $\reg(S/\calJ_{K_m,G})\le \cl(G)(m-1)$, where
            $\cl(G)$ is the number of maximal cliques of $G$. Unless $G=F_1^W(K_n)$ with $W=[n]$, this number of cliques is exactly  $|W|+1$.
        \item \label{thm:fan_general_e}
            Suppose further that  
            $n>1+|W|$ and $h_{i,j}\ge m$ for all $i\in [k]$ and $j\in [r_i]$.
            Then
            $\reg(S/\calJ_{K_m,G})= (\cl(G)-1)(m-1)+\min\{m-1,n-|W|-1\}$.
    \end{enumerate}
\end{Theorem}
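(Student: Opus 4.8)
The plan is to treat all five parts through a single device: peel the first attachment vertex $v=w_{i,1}$ of one chosen fan and feed the short exact sequence \eqref{eqn:SES-1} of \Cref{rem:exactseq} (respectively the bound of \Cref{lem:reg_easy_bound}) into an induction on $|W|$. The essential structural observation is that the three graphs produced all remain fan graphs, up to one harmless split. Since $w_{i,1}$ lies in every clique $K_{a_{i,j}}$ of the $i$-th branch, forming $G_v$ fuses the entire branch into the base, so $G_v=F^{W'}_{k-1}(K_{n+H_i})$ with $H_i=\sum_j h_{i,j}$ and $|W'|=|W|-r_i$, while $G_v\setminus v$ is the same fan on one fewer base vertex; and deleting $v$ detaches exactly the private vertices of $K_{a_{i,1}}$, giving
\[
    G\setminus v \;=\; F \,\sqcup\, K_{h_{i,1}},
\]
where $F$ is a fan graph on $K_{n-1}$ with $|W|-1$ attachment vertices. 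All three have strictly smaller $|W|$, and $K_{h_{i,1}}$ is a clique, so the induction is grounded by $G=K_n$ via \Cref{lem:generic_CM} and \Cref{lem:reg_min_equal}.

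Part \ref{thm:fan_general_b} is immediate from \Cref{lem:reg_min_equal}. For part \ref{thm:fan_general_a} write $N=|V(G)|+m-1$. By the inductive hypothesis $S_{G_v}/\calJ_{K_m,G_v}$ has depth $N$ and $S_{G_v\setminus v}/\calJ_{K_m,G_v\setminus v}$ has depth $N-1$; moreover the disconnected summand $S_{G\setminus v}/\calJ_{K_m,G\setminus v}$ is a tensor product over $\KK$, so by the Künneth formula its depth is $(|V(G)|-1)+2(m-1)=N+(m-2)\ge N$. Hence the middle term of \eqref{eqn:SES-1} has depth $N$ while the right-hand term has depth $N-1$; these differ, so \Cref{depthlemma}\ref{depthlemma-a} forces $\depth(S/\calJ_{K_m,G})=N$. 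It is precisely the disconnection of $G\setminus v$ that lifts its depth above $N$ and makes this comparison strict.

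Parts \ref{thm:fan_general_c} and \ref{thm:fan_general_d} run the same peeling through \Cref{lem:reg_easy_bound} together with additivity of regularity over disjoint unions, $\reg(F\sqcup K_{h_{i,1}})=\reg(F)+\min\{m-1,h_{i,1}-1\}$. One only checks that the stated bounds satisfy $\reg(G)\le\max\{\reg(G_v)+1,\reg(G\setminus v)\}$. For \ref{thm:fan_general_c} this reduces to the elementary inequality $\min\{m-1,h_{i,1}-1\}\le[h_{i,1}\ge 2]\,(m-1)$ together with the bookkeeping $k(G_v)=k-1$ and $\delta(G_v)=\delta-|\{j:h_{i,j}\ge 2\}|$; for \ref{thm:fan_general_d} it reduces to $1\le r_i(m-1)$. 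The maximal cliques of $G$ are exactly the $|W|$ attached cliques $K_{a_{i,j}}$ (pairwise incomparable, since their private vertices differ) together with the base $K_n$, and $K_n$ fails to be maximal only when a branch swallows it, i.e.\ only for $G=F^{[n]}_1(K_n)$; thus $\cl(G)=|W|+1$ in every other case, the one exceptional configuration being handled separately.

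The heart of the theorem is the exact value in part \ref{thm:fan_general_e}, where the crude bound $\cl(G)(m-1)$ of \ref{thm:fan_general_d} must be sharpened in its last clique. For the lower bound, the private vertices of the cliques $K_{a_{i,j}}$ are mutually non-adjacent and are also non-adjacent to the $n-|W|$ base vertices outside $W$, so the induced subgraph on all of these vertices is the disjoint union $K_{n-|W|}\sqcup\bigsqcup_{i,j}K_{h_{i,j}}$. Each $K_{h_{i,j}}$ has $h_{i,j}\ge m$ vertices and hence contributes $m-1$, so \Cref{cor:induced_graph} and additivity give $\reg(S/\calJ_{K_m,G})\ge|W|(m-1)+\min\{m-1,n-|W|-1\}$. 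For the matching upper bound I would re-run the peeling induction and verify that the hypotheses $h_{i,j}\ge m$ and $n>1+|W|$ are preserved by both $G_v$ and $F$; the point is that every intermediate fusion enlarges the base far beyond $m$, so the term $\min\{m-1,\,\cdot\,\}$ saturates at $m-1$ at each merge, and only the terminal base $K_n$ yields the genuine $\min\{m-1,n-|W|-1\}$. Writing $\gamma(G)=|W|(m-1)+\min\{m-1,n-|W|-1\}$, the recursions $\reg(G_v)+1\le\gamma(G)$ and $\reg(G\setminus v)=\reg(F)+(m-1)\le\gamma(G)$ then close. The main obstacle is exactly this last verification: keeping every non-terminal minimum saturated at $m-1$ so that the induction descends to the sharp value, rather than stalling at the coarser $\cl(G)(m-1)$ of part \ref{thm:fan_general_d}.
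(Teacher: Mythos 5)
Your proposal is correct, and for parts \ref{thm:fan_general_a}--\ref{thm:fan_general_d} it is essentially the paper's own argument: the same peeling of $v=w_{i,1}$, the same identification of $G_v$, $G_v\setminus v$, and $G\setminus v$ as fan graphs (the last with a split-off clique $K_{h_{i,1}}$), and the same use of \eqref{eqn:SES-1} through \Cref{depthlemma} and \Cref{lem:reg_easy_bound}; the only cosmetic difference is that you induct on $|W|$ where the paper inducts on $\sum_{i,j}h_{i,j}$, and since both parameters strictly drop for all three derived graphs, either grounds the recursion. The genuine divergence is in part \ref{thm:fan_general_e}. The paper proves the equality entirely inside the induction: it computes $\reg(S/\calJ_{K_m,G\setminus v})$ exactly from the inductive hypothesis, shows $\reg(S/\calJ_{K_m,G_v})$ is strictly smaller using part \ref{thm:fan_general_d}, and then concludes $\reg(S/\calJ_{K_m,G})=\reg(S/\calJ_{K_m,G\setminus v})$ from the equality case of \Cref{depthlemma}\ref{depthlemma-c} together with \Cref{cor:induced_graph}. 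You instead separate an inductive upper bound from a one-shot lower bound: the induced subgraph on the private vertices of the attached cliques together with the base vertices outside $W$ is $K_{n-|W|}\sqcup\bigsqcup_{i,j}K_{h_{i,j}}$, whose regularity is $|W|(m-1)+\min\{m-1,n-|W|-1\}$ by \Cref{lem:reg_min_equal} and additivity over disjoint unions, so \Cref{cor:induced_graph} yields the lower bound with no induction at all; this in effect unrolls the paper's recursion (iterated deletion of $v$ terminates at exactly this disjoint union of cliques) and makes the source of the sharp value transparent. Your saturation check for $G_v$ is the right one and does close: the fused base has size $n+\sum_j h_{i,j}\ge n+m$, so the minimum there is $m-1$, whence $\reg(S/\calJ_{K_m,G_v})+1=(|W|-r_i+1)(m-1)+1\le |W|(m-1)+\min\{m-1,n-|W|-1\}$ because $n>1+|W|$ and $m\ge 2$. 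The one point to watch (in both your part \ref{thm:fan_general_d} and the paper's) is the degenerate graph $G=F_1^{[n]}(K_n)$, where $\cl(G_v)=\cl(G)-|W_1|+1$ rather than $\cl(G)-|W_1|$; the inequality $\reg(S/\calJ_{K_m,G_v})+1\le m\le \cl(G)(m-1)$ still absorbs this, so it is bookkeeping, not a gap.
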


\begin{proof}
    \begin{enumerate}[a]
        \item \label{thm:fan_general_a_proof}
            We prove by induction on $N:=\sum_{i=1}^k\sum_{j=1}^{r_i}
            h_{i,j}$. The base case is when $N=0$. Whence, $G=K_n$ and
            $k=\delta=0$.  It follows from \Cref{lem:generic_CM} that
            $\depth(S/\calJ_{K_m,G})=m+n-1$ in this case.

            In the following, we assume that $N\ge 1$. Whence, $k\ge 1$ as well.
            With respect to $v=w_{k,1}\in W_k$, we note that $G_v$ (resp.
            $G_v\setminus v$) is a $(k-1)$-fan of
            $K_{n+h_{k,1}+\cdots+h_{k,r_k}}$  (resp.
            $K_{n+h_{k,1}+\cdots+h_{k,r_k}-1}$) on the set $W\setminus W_k$.
            Thus, by induction, we have
            \begin{align*}
                \depth(S_{G_v}/\calJ_{K_m, G_v}) & =m+|V(G)|-1, \\
                \intertext{and}
                \depth\left(S_{G_v\setminus v}/\calJ_{K_m,G_v\setminus v}\right) & =m+|V(G)|-2.
            \end{align*}
            Meanwhile, $G\setminus v$ is the disjoint union of $K_{h_{k,1}}$
            and $G'$, where $G'$ is a $k$-fan graph of $K_{n-1}$ on the set
            $W\setminus v$ if $r_{k}\ge 2$, or a $(k-1)$-fan of $K_{n-1}$ on
            the set $W\setminus v$ if $r_k=1$. Thus, by induction and
            \Cref{lem:generic_CM}, we have
            \begin{align*}
                \depth\left(S_{G\setminus v}/\calJ_{K_m,G\setminus v}\right) & =(m+h_{k,1}-1)+(m+|V(G)|-h_{k,1}-2) \\
                & =m+|V(G)|-1+(m-2).
            \end{align*}
            Consequently, by applying
            \Cref{depthlemma}\ref{depthlemma-a} to the short exact sequence
            (\ref{eqn:SES-1}) in \Cref{rem:exactseq}, we obtain the claimed
            formula in this case.

        \item The assertion in \ref{thm:fan_general_b} follows from
            \Cref{lem:reg_min_equal}.

        \item \label{thm:fan_general_c_proof}
            In this case, we have $m< |V(G)|$ from the assumption. We
            still prove by induction on the $N$ defined in the part
            \ref{thm:fan_general_a_proof} of the proof and argue in the same way.

            The base case is when $N=0$.  Whence, $G=K_n$ and $k=\delta=0$. It
            follows from \Cref{lem:reg_min_equal} that
            $\reg(S/\calJ_{K_m,K_n})=\min\{m-1,n-1\}= m-1$, establishing the
            claimed inequality in this case.

            In the following, we assume that $N\ge 1$.  Whence, $k\ge 1$ as well.
            We still take $v=w_{k,1}\in W_k$. Using the analysis in part
            \ref{thm:fan_general_a_proof} of the proof, we have  by induction 
            \begin{align}
                \reg(S_{G_v}/\calJ_{K_m,G_v}) &\le k-1+(\delta'+1)(m-1)
                \label{eqn:c_4}
                \intertext{where $\delta':=|\{(i,j):h_{i,j}\ge 2 \text{ and }
                i<k\}| \le \delta$, and}
                \reg\left(S_{G\setminus v}/\calJ_{K_m,G\setminus v}\right)  &
                \le  \min\{h_{k,1}-1,m-1\}+ k+(\delta''+1)(m-1)
                \label{eqn:c_5}
            \end{align}
            where $\delta''\coloneqq |\{(i,j):h_{i,j}\ge 2 \text{ and } (i,j)\ne (k,1)\}| \le \delta$.  Note that the RHS of \eqref{eqn:c_4} $\le k-1+(\delta+1)(m-1)$.  Meanwhile, if $h_{k,1}=1$, then $\delta''=\delta$ and the RHS of \eqref{eqn:c_5} $=k+(\delta+1)(m-1)$. On the other hand, if $h_{k,1}>1$, then $\delta''=\delta-1$ and the RHS of \eqref{eqn:c_5} $\le (m-1)+k+\delta(m-1)=k+(\delta+1)(m-1)$. As a result, by applying \Cref{lem:reg_easy_bound}, we obtain the expected inequality in this case.

        \item \label{thm:fan_general_d_proof}
            The proof here is similar to case \ref{thm:fan_general_c_proof}. 
            We continue to prove by induction on the $N$ defined in part \ref{thm:fan_general_a_proof} of the proof, and argue in the same way.
            In the base case, $N=0$.  Whence, $G=K_n$ and $\cl(G)=1$. It follows from \Cref{lem:reg_min_equal} that $\reg(S/\calJ_{K_m,G})=\min\{m-1,n-1\}\le \cl(G)(m-1)$, which gives
            the desired inequality in the base case.

            For the induction step, we assume that $N\ge 1$. By induction,
            for $v=w_{k,1}\in W_k$, we have
            \begin{align*}
                \reg(S_{G_v}/\calJ_{K_m,G_v})  & \le
                \cl(G_v)(m-1)=(\cl(G)-|W_k|)(m-1).
            \end{align*}
            Meanwhile,  we have
            \begin{align*}
                \reg(S_{G\setminus v}/\calJ_{K_m,G\setminus v}) &
                =\reg(S/\calJ_{K_m,G'})+\reg(S/\calJ_{K_m,K_{h_{k,1}}}) \\
                & \le (\cl(G)-1)(m-1)+\min\{h_{k,1}-1,m-1\}\\
                &\le \cl(G)(m-1).
            \end{align*}
            Hence, by
            applying \Cref{lem:reg_easy_bound},
            we obtain 
            \begin{align*}
                \reg(S/\calJ_{K_m,G}) & \le
                \max\{(\cl(G)-|W_k|)(m-1)+1,\cl(G)(m-1)\} \\
                & =\cl(G)(m-1),
            \end{align*}
            since $|W_k|\ge 1$ and $m\ge 2$. Consequently, we get the
            expected inequality in this case.

        \item We continue the proof by induction on the $N$ defined in part \ref{thm:fan_general_a_proof} of the proof, and argue in the same way as before.
            When $N=0$ in the base case, $\cl(G)=1$ and $|W|=0$. 
            The desired equality obviously exists. For the induction step,
            since the corresponding assumptions for $G'$ are still satisfied,
            we have
            \begin{align*}
                \reg(S_{G\setminus v}/\calJ_{K_m,G\setminus v}) &
                =\reg(S/\calJ_{K_m,G'})+\reg(S/\calJ_{K_m,K_{h_{k,1}}})
                \\
                & =(\cl(G)-2)(m-1)+\min\{m-1,n-1-(|W|-1)-1\}\\
                & \qquad +\min\{h_{k,1}-1,m-1\} \\
                & =(\cl(G)-1)(m-1)+\min\{m-1,n-|W|-1\}.
            \end{align*}
            Since $m\ge 2$ and $r_k\ge 1$, from the proof of in 
            part \ref{thm:fan_general_d_proof}, we have
            \begin{align*}
                \reg(S_{G_v}/\calJ_{K_m,G_v})&\le (\cl(G)-|W_k|)(m-1)\\
                &< (\cl(G)-1)(m-1)+\min\{m-1,n-|W|-1\}\\
                & =\reg(S_{G\setminus v}/\calJ_{K_m,G\setminus v}).
            \end{align*}
            So, according to \Cref{lem:reg_easy_bound}, we have
            \begin{align*}
                \reg(S/\calJ_{K_m,G}) &=\reg(S_{G\setminus
                v}/\calJ_{K_m,G\setminus v})\\
                & =(\cl(G)-1)(m-1)+\min\{m-1,n-|W|-1\}.
            \end{align*}
            This gives the expected inequality in this case.
            \qedhere
    \end{enumerate}
\end{proof}

We can draw two quick conclusions from the previous arguments.

\begin{Corollary}
    Let $G=F_k^W(K_n)$ be a $k$-fan graph with $n\ge 2$.
    Then the following are equivalent:
    \begin{enumerate}[a]
        \item the ring $S/\calJ_{K_m,G}$ is Cohen--Macaulay;
        \item the ideal $\calJ_{K_m,G}$ is unmixed;
        \item either $m=2$ or $W=\emptyset$.
    \end{enumerate}
\end{Corollary}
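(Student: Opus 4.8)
The plan is to extract everything from the two explicit formulas already established: the depth formula $\depth(S/\calJ_{K_m,G}) = m + |V(G)| - 1$ from \Cref{thm:fan_general}\ref{thm:fan_general_a}, and the dimension formula $\dim(S/\calJ_{K_m,G}) = m + |V(G)| - 1 + s(m-2)$ from \Cref{thm:fan_Dimension}, where $s = \min\{|W|, n-1\}$. Since $S/\calJ_{K_m,G}$ is Cohen--Macaulay exactly when its depth equals its dimension, a direct comparison of the two formulas shows that condition (a) holds if and only if $s(m-2) = 0$. Because $n \ge 2$ forces the equivalence $s = 0 \iff W = \emptyset$, this vanishing is in turn equivalent to the statement that $m = 2$ or $W = \emptyset$, which is precisely (c). Thus the equivalence of (a) and (c) drops out immediately once the two formulas are available.

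To route the argument through (b), I would first invoke the standard fact that a Cohen--Macaulay graded quotient of the polynomial ring $S$ is unmixed, which gives the implication (a) $\Rightarrow$ (b) at no cost. For (b) $\Rightarrow$ (c) I would argue contrapositively: assuming $m > 2$ and $W \ne \emptyset$, I would produce two minimal primes of $\calJ_{K_m,G}$ of different dimensions. By \Cref{lem:decompo} the minimal primes are exactly the ideals $P_T(K_m, G)$ for $T \in \calC(G)$, and the computation carried out inside the proof of \Cref{thm:fan_Dimension} records that $\dim(S/P_T(K_m, G)) = m + |V(G)| - 1 + |T|(m-2)$. Since $W \ne \emptyset$ we have $k \ge 1$, and the description $\calC(G) = \{T_1 \sqcup \cdots \sqcup T_k : T_i \in \calC_i \text{ and } T_1 \sqcup \cdots \sqcup T_k \subsetneq [n]\}$ shows that both $\emptyset$ and the singleton $\{w_{1,1}\}$ belong to $\calC(G)$, where $n \ge 2$ guarantees the proper-subset condition for the latter. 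The corresponding dimensions then differ by $m - 2 > 0$, so $\calJ_{K_m,G}$ is not unmixed. Combined with (a) $\Leftrightarrow$ (c) and (a) $\Rightarrow$ (b), this completes the cycle.

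I expect no genuinely hard step: the corollary is a bookkeeping consequence of the depth and dimension computations that precede it. The only point deserving a moment of care is checking that $\calC(G)$ really contains sets of two distinct cardinalities when $W \ne \emptyset$ --- that is, that the singleton $\{w_{1,1}\}$ has the cut point property and meets the proper-containment constraint $\{w_{1,1}\} \subsetneq [n]$. Both are immediate from the explicit description of $\calC(G)$ recalled in the proof of \Cref{thm:fan_Dimension} together with the standing assumption $n \ge 2$, so this is a formality rather than an obstacle.
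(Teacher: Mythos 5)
Your proposal is correct and follows essentially the same route as the paper: the paper's proof simply cites the proofs of \Cref{thm:fan_Dimension} and \Cref{thm:fan_general}\ref{thm:fan_general_a}, i.e., exactly the comparison of the depth formula with the dimension formula, together with the computation $\dim(S/P_T(K_m,G))=m+|V(G)|-1+|T|(m-2)$ for $T\in\calC(G)$ that yields the (un)mixedness statement. Your write-up just makes explicit the bookkeeping (including the check that $\emptyset$ and $\{w_{1,1}\}$ both lie in $\calC(G)$ when $W\ne\emptyset$) that the paper leaves to the reader.
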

\begin{proof}
    This is clear from the proofs of \Cref{thm:fan_Dimension} and
    \Cref{thm:fan_general}\ref{thm:fan_general_a}.
\end{proof}

\begin{Corollary}
    \label{cor:FkW_pure}
    Let $G=F_k^W(K_n)$ be a $k$-pure fan  graph   with $n\ge 2$. Then
    $\reg(S/\calJ_{K_m,G})\le k+m-1$.
\end{Corollary}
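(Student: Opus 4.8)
The plan is to obtain the bound as an immediate consequence of the two regularity estimates already proved in \Cref{thm:fan_general}, so essentially no new argument is needed beyond a case split. The key preliminary observation is that for a \emph{pure} fan graph every branch satisfies $h_{i,j}=1$ for all admissible $i,j$, so the invariant
\[
    \delta=|\{(i,j):h_{i,j}\ge 2\}|
\]
appearing in \Cref{thm:fan_general}\ref{thm:fan_general_c} vanishes, i.e. $\delta=0$. With this in hand I would distinguish the two regimes according to the relative sizes of $m$ and $|V(G)|$, exactly the dichotomy under which parts \ref{thm:fan_general_b} and \ref{thm:fan_general_c} of that theorem are stated.

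First I would treat the case $m<|V(G)|$. Here \Cref{thm:fan_general}\ref{thm:fan_general_c} applies directly and gives
\[
    \reg(S/\calJ_{K_m,G})\le k+(\delta+1)(m-1)=k+(m-1)=k+m-1,
\]
using $\delta=0$; this is precisely the claimed inequality. In the complementary case $m\ge |V(G)|$ I would invoke \Cref{thm:fan_general}\ref{thm:fan_general_b}, which yields the exact value $\reg(S/\calJ_{K_m,G})=|V(G)|-1$. Since the hypothesis $m\ge |V(G)|$ forces $|V(G)|-1\le m-1$, and $k\ge 0$, we conclude $|V(G)|-1\le m-1\le k+m-1$, again within the asserted bound.

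Combining the two cases then establishes $\reg(S/\calJ_{K_m,G})\le k+m-1$ in general. I do not expect a genuine obstacle here: the substantive content lives entirely in \Cref{thm:fan_general}, and the only things to verify are the reduction $\delta=0$ for pure fans and the elementary inequality $|V(G)|-1\le k+m-1$ in the regime $m\ge|V(G)|$. The one point worth stating carefully is that the two parts of \Cref{thm:fan_general} jointly exhaust all values of $m$, so the case split is genuinely complete.
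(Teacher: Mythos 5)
Your proof is correct and follows exactly the paper's argument: the paper likewise notes that $\delta=0$ for a pure fan and then cites \Cref{thm:fan_general}\ref{thm:fan_general_b} and \ref{thm:fan_general_c}, with your write-up merely making the case split $m\ge |V(G)|$ versus $m<|V(G)|$ and the elementary inequality $|V(G)|-1\le k+m-1$ explicit.
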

\begin{proof}
    Since $G$ is a pure fan, the $\delta$ defined in \Cref{thm:fan_general} is
    exactly $0$. Thus, the claimed inequality follows from
    \Cref{thm:fan_general} \ref{thm:fan_general_b} and \ref{thm:fan_general_c}.
\end{proof}

In contrast to  \cite[Theorem 3.4]{MR3991052}, we don't have equality  in general in \Cref{cor:FkW_pure}.
The exact formula for $\reg(S/\calJ_{K_m,G})$ will be presented in \Cref{thm:FkW_pure_reg}. To prove it, we need to make some preparations.

\begin{Observation}
    \label{obs:fan_graph}
    Let $G=F_k^W(K_n)$ be a $k$-pure fan graph. Then $|V(G)|=n+|W|$. Suppose that this is a fan graph with respect to $W=W_1\sqcup \cdots \sqcup W_k$, where $W_i=\{w_{i,1},\dots,w_{i,r_i}\}$. Now take $v=w_{k,1}$. Then $G_v$ can be thought of as $F_{k-1}^{W\setminus W_k}(K_{n+r_k})$ and the graph $G_v\setminus v$ can be thought of as $F_{k-1}^{W\setminus W_k}(K_{n+r_k-1})$. Furthermore, the graph $G\setminus v$ can be thought of as the disjoint union of a fan graph $G'$ with an isolated vertex. If $r_k=1$, then $G'\coloneqq F_{k-1}^{W\setminus \{v\}}(K_{n-1})$. If instead $r_k\ge 2$, then $G'\coloneqq F_{k}^{W\setminus \{v\}}(K_{n-1})$.
\end{Observation}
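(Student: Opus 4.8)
The plan is to verify each assertion by directly tracking vertices and edges, exploiting the defining feature of a pure fan: since $h_{i,j}=a_{i,j}-j=1$ for every $i,j$, each attached clique $K_{a_{i,j}}$ contributes exactly one \emph{new} vertex beyond the shared vertices $w_{i,1},\ldots,w_{i,j}$. Write $z_{i,j}$ for this unique new vertex of $K_{a_{i,j}}$. First I would settle the vertex count: summing the one new vertex per clique over $j\in[r_i]$ and $i\in[k]$ produces $\sum_{i=1}^k r_i=|W|$ new vertices, so $|V(G)|=n+|W|$.

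Next I would pin down the neighborhood of $v=w_{k,1}$. Since $v\in K_n$ it is adjacent to all of $[n]\setminus\{v\}$, and since $w_{k,1}$ lies in the attachment set $\{w_{k,1},\ldots,w_{k,j}\}$ of every clique $K_{a_{k,j}}$, it is adjacent to each of $z_{k,1},\ldots,z_{k,r_k}$. Crucially, because $W=W_1\sqcup\cdots\sqcup W_k$ is a disjoint union, $v$ belongs to none of the cliques of the fans on $W_1,\ldots,W_{k-1}$, so no new vertex $z_{i,j}$ with $i<k$ is adjacent to $v$. Hence $N_G(v)=([n]\setminus\{v\})\cup\{z_{k,1},\ldots,z_{k,r_k}\}$.

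From this the descriptions of $G_v$ and $G_v\setminus v$ follow. Completing $N_G(v)$ joins the $z_{k,j}$'s to one another and to all of $[n]\setminus\{v\}$; together with the edges already present at $v$, this turns $[n]\cup\{z_{k,1},\ldots,z_{k,r_k}\}$ into a clique $K_{n+r_k}$, while the fans on $W_1,\ldots,W_{k-1}$ are untouched, their new vertices avoiding $N_G(v)$. Thus $G_v=F_{k-1}^{W\setminus W_k}(K_{n+r_k})$. Since $v$ sits inside this enlarged base clique and is not an attachment vertex of any surviving fan (again by disjointness of $W$), deleting it yields $G_v\setminus v=F_{k-1}^{W\setminus W_k}(K_{n+r_k-1})$.

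Finally I would analyze $G\setminus v$. Deleting $v$ isolates $z_{k,1}$, whose only neighbor was $v$ (as $K_{a_{k,1}}=K_2$). If $r_k=1$ then $W_k=\{v\}$ and the entire $W_k$-fan reduces to this isolated point, leaving $F_{k-1}^{W\setminus\{v\}}(K_{n-1})$. If $r_k\ge 2$, each surviving clique $K_{a_{k,j}}$ with $j\ge 2$ restricts to a clique on $\{w_{k,2},\ldots,w_{k,j},z_{k,j}\}$; reindexing via $w_{k,\ell+1}$ shows these cliques form a fan on $W_k\setminus\{v\}=\{w_{k,2},\ldots,w_{k,r_k}\}$ over the base $K_{n-1}$ in which each clique again has exactly one new vertex, hence is pure. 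Therefore $G\setminus v$ is the disjoint union of $G'=F_k^{W\setminus\{v\}}(K_{n-1})$ with the isolated vertex $z_{k,1}$. The only real work lies in this edge-by-edge bookkeeping---especially confirming that the completion defining $G_v$ absorbs precisely the $r_k$ new vertices of the $W_k$-fan while leaving every other branch intact, and that the reindexing in the case $r_k\ge 2$ preserves purity; everything else is immediate from the definitions.
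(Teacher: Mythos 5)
Your verification is correct, and it fills in exactly the definitional bookkeeping that the paper treats as self-evident: the paper states this as an Observation with no proof at all, relying on the reader to check, as you do, that purity ($h_{i,j}=1$) means each branch clique adds one new vertex, that $N_G(v)=([n]\setminus\{v\})\cup\{z_{k,1},\dots,z_{k,r_k}\}$, and that completing or deleting this neighborhood produces the claimed fan structures. In particular, your handling of the two cases for $G\setminus v$ (the clique $K_{a_{k,1}}=K_2$ leaving an isolated vertex, and the reindexed pure branch on $W_k\setminus\{v\}$ when $r_k\ge 2$) is precisely the intended argument.
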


\begin{Lemma}
    \label{lem:fan_kn_v<m+k}
    Let $G=F_k^W(K_n)$ be a $k$-pure fan graph. Then we have the following
    results.
    \begin{enumerate}[a]
        \item \label{lem:fan_kn_v<m+k_a}
            If $i\le |V(G)|+m-2$, then the local cohomology module
            $H_{S_+}^{i}(S/\calJ_{K_m,G})=0$.
        \item \label{lem:fan_kn_v<m+k_b}
            If $|V(G)|\le m+k$, then $a_{|V(G)|+m-1}(S/\calJ_{K_m,G})\ge -m$.
        \item \label{lem:fan_kn_v<m+k_c}
            If $|V(G)|\ge m+k$, then $a_{|V(G)|+m-1}(S/\calJ_{K_m,G})\ge
            -|V(G)|+k$.
    \end{enumerate}
\end{Lemma}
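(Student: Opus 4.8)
The plan is to settle part~\ref{lem:fan_kn_v<m+k_a} immediately from the depth formula, and to prove parts~\ref{lem:fan_kn_v<m+k_b} and~\ref{lem:fan_kn_v<m+k_c} together by induction on $k$, driving everything through the short exact sequence~\eqref{eqn:SES-1} of \Cref{rem:exactseq}. Throughout I set $N\coloneqq |V(G)|=n+|W|$ and $d\coloneqq N+m-1$.

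Part~\ref{lem:fan_kn_v<m+k_a} is just a reformulation of the depth. By \Cref{thm:fan_general}\ref{thm:fan_general_a} we have $\depth(S/\calJ_{K_m,G})=N+m-1=d$, and the definition of depth via local cohomology forces $H_{S_+}^{i}(S/\calJ_{K_m,G})=0$ for all $i<d$, which is precisely the range $i\le N+m-2$ asserted. Consequently $d$ is the lowest cohomological degree in which cohomology survives, so $a_d(S/\calJ_{K_m,G})$ is exactly the invariant appearing in~\ref{lem:fan_kn_v<m+k_b} and~\ref{lem:fan_kn_v<m+k_c}.

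For the induction, the base case $k=0$ is $G=K_n$, where $S/\calJ_{K_m,G}=S/I_2(\bdX)$ is the generic determinantal ring, Cohen--Macaulay of dimension $d$ by \Cref{lem:generic_CM}. Here I would invoke the classical value of its $a$-invariant, namely $a_d(S/I_2(\bdX))=-\max\{m,n\}$ (for instance via the identification of $S/I_2(\bdX)$ with the Segre coordinate ring of $\PP^{m-1}\times\PP^{n-1}$, whose canonical module begins in degree $\max\{m,n\}$). When $N=n\le m$ this equals $-m$, giving~\ref{lem:fan_kn_v<m+k_b}, and when $N=n\ge m$ it equals $-n=-N+k$, giving~\ref{lem:fan_kn_v<m+k_c}. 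For the inductive step $k\ge 1$, I take the internal vertex $v=w_{k,1}$ and read off from \Cref{obs:fan_graph} that $G_v=F_{k-1}^{W\setminus W_k}(K_{n+r_k})$ and $G_v\setminus v=F_{k-1}^{W\setminus W_k}(K_{n+r_k-1})$ are $(k-1)$-pure fans on $N$ and $N-1$ vertices, while $G\setminus v$ is the disjoint union of a $(k-1)$- or $k$-pure fan $G'$ on $N-2$ vertices with an isolated vertex. Using \Cref{thm:fan_general}\ref{thm:fan_general_a} and the fact that the isolated vertex contributes a polynomial factor in $m$ variables (hence raises depth by $m$), I compute
\begin{align*}
    \depth\bigl(S_{G_v}/\calJ_{K_m,G_v}\bigr) & = d, \\
    \depth\bigl(S_{G\setminus v}/\calJ_{K_m,G\setminus v}\bigr) & = N+2m-3=d+(m-2), \\
    \depth\bigl(S_{G_v\setminus v}/\calJ_{K_m,G_v\setminus v}\bigr) & = d-1.
\end{align*}
Since $m\ge 2$, both summands of the middle term of~\eqref{eqn:SES-1} have depth at least $d$, so their $(d-1)$-st local cohomology vanishes. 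Hence in the long exact sequence of local cohomology the connecting homomorphism
\[
    H_{S_+}^{d-1}\bigl(S_{G_v\setminus v}/\calJ_{K_m,G_v\setminus v}\bigr)\longrightarrow H_{S_+}^{d}\bigl(S/\calJ_{K_m,G}\bigr)
\]
is injective; being degree-preserving, it yields $a_d(S/\calJ_{K_m,G})\ge a_{d-1}(S_{G_v\setminus v}/\calJ_{K_m,G_v\setminus v})$. Now $G_v\setminus v$ is a $(k-1)$-pure fan on $N-1$ vertices with depth $(N-1)+m-1=d-1$, so the inductive hypothesis applies: if $N\le m+k$ then $N-1\le m+(k-1)$ and~\ref{lem:fan_kn_v<m+k_b} gives $a_{d-1}\ge -m$; if $N\ge m+k$ then $N-1\ge m+(k-1)$ and~\ref{lem:fan_kn_v<m+k_c} gives $a_{d-1}\ge -(N-1)+(k-1)=-N+k$. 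Either bound transfers to $a_d(S/\calJ_{K_m,G})$, completing the induction.

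The hard part is the engine of the inductive step: arranging that the connecting map is injective in the single relevant degree $d$. This rests on the depth of the $G\setminus v$ summand being at least $d$, which is exactly where the extra polynomial factor from the isolated vertex---present because the branch is pure, so $h_{k,1}=1$ and $K_{h_{k,1}}=K_1$---together with $m\ge 2$ enters; a secondary delicate point is the base case, where I must know (at least a lower bound for) the $a$-invariant of the generic determinantal ring $S/I_2(\bdX)$.
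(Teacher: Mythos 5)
Your proposal is correct and takes essentially the same route as the paper's proof: part~(a) from the depth formula of \Cref{thm:fan_general}, and parts~(b),(c) by induction through the exact sequence \eqref{eqn:SES-1} at $v=w_{k,1}$, using the vanishing of $H_{S_+}^{|V(G)|+m-2}$ of both middle summands (the isolated-vertex factor plus $m\ge 2$) to get injectivity of the connecting map and transfer the $a$-invariant bound from $G_v\setminus v$. The only cosmetic differences are that the paper inducts on $|W|$ rather than on $k$, and it obtains the base-case $a$-invariant of $S/\calJ_{K_m,K_n}$ from Cohen--Macaulayness together with the regularity formula of \Cref{lem:reg_min_equal} rather than from the classical Segre-product computation $a=-\max\{m,n\}$; both yield the same value.
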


\begin{proof}
    The nullity in \ref{lem:fan_kn_v<m+k_a} follows from the fact that the $\depth\,(S/\calJ_{K_m,G})=|V(G)|+m-1$ in \Cref{thm:fan_general}\ref{thm:fan_general_a} and the graded version of \cite[Theorem 6.2.7]{MR1613627} or \cite[Proposition 3.5.4 (b)]{MR1251956}.

    For the remaining \ref{lem:fan_kn_v<m+k_b} and \ref{lem:fan_kn_v<m+k_c}, we prove by induction on $|W|$. In the base case  we have $|W|=0$. Whence, $k=0$, $G=K_n$ and $|V(G)|=n$. The quotient ring $S/\calJ_{K_m,G}$ is Cohen--Macaulay of depth $m+n-1$ by \Cref{lem:generic_CM}. It has the  regularity $\min\{n-1,m-1\}$ by \Cref{lem:reg_min_equal}.  As $\reg(S/\calJ_{K_m,G})=a_{n+m-1}(S/\calJ_{K_m,G})+n+m-1$ by the Cohen--Macaulayness, the inequality in \ref{lem:fan_kn_v<m+k_b} holds when $n\le m$. For the same reason, the inequality in \ref{lem:fan_kn_v<m+k_c} also holds when $n\ge m$.

    In the induction step, we assume that $|W|\ge 1$. Whence, $k\ge 1$.
    Without loss of generality, we assume that
    $W_k=\{w_{k,1},\dots,w_{k,r_k}\}$ is a decomposition component of $W$. We
    apply the facts in \Cref{obs:fan_graph} with respect to $v=w_{k,1}$.
    Thus,  $H_{S_+}^{|V(G)|+m-2}(S_{G_v}/\calJ_{K_m, G_v})=0$  by \Cref{thm:fan_general}\ref{thm:fan_general_a}.
    Furthermore, $H_{S_+}^{|V(G)|+m-2}(S_{G\setminus v}/\calJ_{K_m, G\setminus
    v})=H_{S_+}^{|V(G)|-2}(S_{G'}/\calJ_{K_m, G'})=0$, since $m\ge 2$.
    As a result, we have
    \[
        0\to H_{S_+}^{|V(G)|+m-2}(S'/\calJ_{K_m,G_v\setminus v}) \to
        H_{S_+}^{|V(G)|+m-1}(S/\calJ_{K_m,G})
    \]
    from the long exact sequence of local cohomology modules induced from the
    short sequence \eqref{eqn:SES-1} in \Cref{rem:exactseq}. In particular,
    \begin{equation}
        a_{|V(G)|+m-1}(S/\calJ_{K_m,G})\ge a_{|V(G)|+m-2}( S_{G_v\setminus
        v}/\calJ_{K_m,G_v\setminus v}).
        \label{eqn:compare_a_inv_general}
    \end{equation}
    Note that by induction, if $|V(G)|\le m+k$, then
    \[
        a_{|V(G)|+m-2}(S_{G_v\setminus v}/\calJ_{K_m,G_v\setminus v})\ge
        -m
    \]
    If instead $|V(G)|\ge m+k$, then
    \[
        a_{|V(G)|+m-2}(S_{G_v\setminus v}/\calJ_{K_m,G_v\setminus v})\ge
        -(|V(G)|-1)+(k-1).
    \]
    Combining them with \eqref{eqn:compare_a_inv_general} gives the desired inequality in each case.
\end{proof}

We first determine the regularity in the baby case, where we add whiskers to a complete graph.

\begin{Proposition}
    \label{prop:FkW_pure}
    Let $G=F_k^W(K_n)$ be a $k$-pure fan graph with $r_1=\cdots=r_k=1$. Then, $\reg(S/\calJ_{K_m,G})=\min\{m,n\}+k-1$.
\end{Proposition}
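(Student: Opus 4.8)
The plan is to prove the two bounds $\reg(S/\calJ_{K_m,G})\ge\min\{m,n\}+k-1$ and $\reg(S/\calJ_{K_m,G})\le\min\{m,n\}+k-1$ separately. Throughout, note that $G$ is $K_n$ with $k$ whiskers attached at $k$ distinct vertices, so that $|V(G)|=n+k$ and $|W|=k$.

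For the lower bound, I would simply read off the answer from the local cohomology estimates already prepared in \Cref{lem:fan_kn_v<m+k}. Since $|V(G)|=n+k$, the hypothesis $|V(G)|\le m+k$ there is exactly $n\le m$ and $|V(G)|\ge m+k$ is exactly $n\ge m$. If $n\le m$, part (b) gives $a_{|V(G)|+m-1}(S/\calJ_{K_m,G})\ge-m$, so by the definition of regularity $\reg(S/\calJ_{K_m,G})\ge(|V(G)|+m-1)-m=n+k-1=\min\{m,n\}+k-1$. If $n\ge m$, part (c) gives $a_{|V(G)|+m-1}(S/\calJ_{K_m,G})\ge-|V(G)|+k=-n$, so $\reg(S/\calJ_{K_m,G})\ge(|V(G)|+m-1)-n=m+k-1=\min\{m,n\}+k-1$. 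Thus the lower bound holds in every case.

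For the upper bound, I would induct on $k$ (allowing $n\ge1$; the cases $n=1$ collapse to $K_1$ or $K_2$ and are checked directly, as is the base case $k=0$, where $G=K_n$ and $\reg(S/\calJ_{K_m,G})=\min\{m-1,n-1\}$ by \Cref{lem:reg_min_equal}). For $k\ge1$, $n\ge2$, I take the internal vertex $v=w_{k,1}$ and feed the short exact sequence \eqref{eqn:SES-1} of \Cref{rem:exactseq} into \Cref{depthlemma}\ref{depthlemma-c}. By \Cref{obs:fan_graph} one has $G_v=F_{k-1}^{W\setminus W_k}(K_{n+1})$ and $G_v\setminus v=F_{k-1}^{W\setminus W_k}(K_n)$, while $G\setminus v$ is the disjoint union of $G'=F_{k-1}^{W\setminus\{v\}}(K_{n-1})$ with an isolated vertex. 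The induction hypothesis then supplies $\reg(S/\calJ_{K_m,G_v})=\min\{m,n+1\}+k-2$, $\reg(S/\calJ_{K_m,G_v\setminus v})=\min\{m,n\}+k-2$, and $\reg(S/\calJ_{K_m,G\setminus v})=\reg(S/\calJ_{K_m,G'})=\min\{m,n-1\}+k-2$, the last equality because the isolated vertex merely tensors on a polynomial ring of regularity $0$. Plugging these into \Cref{depthlemma}\ref{depthlemma-c} gives $\reg(S/\calJ_{K_m,G})\le\max\{\min\{m,n+1\},\ \min\{m,n-1\},\ \min\{m,n\}+1\}+k-2$, and a one-line check ($\min\{m,n+1\}\le\min\{m,n\}+1$ and $\min\{m,n-1\}\le\min\{m,n\}$) shows the maximum equals $\min\{m,n\}+1$, yielding exactly $\min\{m,n\}+k-1$.

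The step I expect to carry the weight is the upper bound when $m>n$. If one uses the convenient two-term estimate of \Cref{lem:reg_easy_bound} instead, the cokernel regularity is replaced by $\reg(S/\calJ_{K_m,G_v})=n+k-1$, and the bound degrades to $n+k$, one too large. The resolution is to keep the genuine cokernel $S/\calJ_{K_m,G_v\setminus v}$, whose regularity $\min\{m,n\}+k-2$ is strictly below $\reg(S/\calJ_{K_m,G_v})$ precisely when $m>n$; this is what pulls the estimate back down to the exact value. Accordingly, the care needed is to track all three regularities through \Cref{depthlemma}\ref{depthlemma-c} rather than discarding the sharpest one.
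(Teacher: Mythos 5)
Your proof is correct, and it uses the same two pillars as the paper's --- the $a$-invariant estimates of \Cref{lem:fan_kn_v<m+k} for the lower bound and the exact sequence \eqref{eqn:SES-1} fed into \Cref{depthlemma}\ref{depthlemma-c} for the upper bound --- but routes them differently, and in places more cleanly. For the lower bound, you apply \Cref{lem:fan_kn_v<m+k}\ref{lem:fan_kn_v<m+k_b} and \ref{lem:fan_kn_v<m+k_c} directly to $G$ itself, observing that $|V(G)|=n+k$ turns the hypotheses into exactly $n\le m$ and $n\ge m$; the paper instead passes to induced subgraphs (a $k$-pure fan of $K_m$ when $m\ge k$, of $K_k$ when $m\le k$), applies \Cref{lem:fan_kn_v<m+k} there, and then invokes \Cref{cor:induced_graph} --- your version eliminates both the induced-subgraph detour and the $m\ge k$ versus $m\le k$ subcase split. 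For the upper bound, you run a single induction on $k$ valid for all $m,n$, tracking all three regularities $\min\{m,n+1\}+k-2$, $\min\{m,n\}+k-2$, $\min\{m,n-1\}+k-2$ through \Cref{depthlemma}\ref{depthlemma-c}; the paper instead quotes \Cref{cor:FkW_pure} when $n\ge m$ and only runs the induction when $n<m$, where it extracts the \emph{equality} $\reg(S/\calJ_{K_m,G})=n+k-1$ in one stroke (since the middle and cokernel regularities differ), so no separate lower bound is needed in that case. What each buys: yours is shorter and more symmetric in $m$ and $n$; the paper's recycles an already-proved bound and avoids invoking local cohomology in the regime $n<m$. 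Your closing remark is also on target: the two-term estimate of \Cref{lem:reg_easy_bound} would give only $n+k$ when $m>n$, and it is precisely the retention of the cokernel term $S/\calJ_{K_m,G_v\setminus v}$, of regularity $\min\{m,n\}+k-2$, that recovers the sharp value --- this is the same point that forces the paper to use the full three-term statement in its case (b).
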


\begin{proof}
    Depending on whether $n\ge m$, we have two cases.
    \begin{enumerate}[a]
        \item \label{prop:FkW_pure_a}
            Suppose that $n\ge m$. In this case, we prove that
            $\reg(S/\calJ_{K_m,G})=m+k-1$. In view of \Cref{cor:FkW_pure}, it
            suffices to show that $\reg(S/\calJ_{K_m,G})\ge m+k-1$.

            In the first subcase, we assume that $m\ge k$.  The graph $G$
            clearly has an induced subgraph $G'$, which is a $k$-pure fan
            graph of $K_m$ on $W$.  It follows from
            \Cref{lem:fan_kn_v<m+k}\ref{lem:fan_kn_v<m+k_b} that
            $a_{(m+k)+m-1}(S/\calJ_{K_m,G'})\ge -m$. Hence
            \[
                \reg(S/\calJ_{K_m,G'})\ge
                a_{(m+k)+m-1}(S/\calJ_{K_m,G'})+(m+k)+m-1\ge m+k-1.
            \]
            As a result,
            $\reg(S/\calJ_{K_m,G})\ge m+k-1$ by \Cref{cor:induced_graph}.

            In the second subcase, we assume that $m\le k$. Since $k\le n$,
            the graph $G$ has an induced subgraph $G'$, which is a $k$-pure
            fan graph of $K_k$ on $W$. It follows from
            \Cref{lem:fan_kn_v<m+k}\ref{lem:fan_kn_v<m+k_c} that
            $a_{k+m-1+k}(S/\calJ_{K_m,G'})\ge -k$. Hence
            \[
                \reg(S/\calJ_{K_m,G'})\ge
                a_{k+m-1+k}(S/\calJ_{K_m,G'})+(k+m-1+k)\ge m+k-1.
            \]
            As a result, we
            still have $\reg(S/\calJ_{K_m,G})\ge m+k-1$ by
            \Cref{cor:induced_graph}.

        \item Suppose instead that $n\le m$. In this case, we show that
            $\reg(S/\calJ_{K_m,G})=n+k-1$.  We will prove by induction on $k$.
            The base case is when $k=0$. Since the regularity is now
            $\min\{n-1,m-1\}=n-1$ by \Cref{lem:reg_min_equal}, the assertion
            holds in this case.

            In the following induction step, we assume that $k\ge 1$.  If
            $n=m$, then the assertion follows from the case \ref{prop:FkW_pure_a}.
            Therefore,  in the following, we will also assume that $n<m$. Applying
            the facts in \Cref{obs:fan_graph} by taking $v=w_{k,1}$, we have
            from induction that
            \begin{align*}
                \reg(S_{G_v}/\calJ_{K_m,G_v})& =(n+1)+(k-1)-1=n+k-1, \\
                \reg( S_{G_v\setminus v}/\calJ_{K_m,G_v\setminus v})&=n+(k-1)-1=n+k-2, \\
                \intertext{and}
                \reg(S_{G\setminus v}/\calJ_{K_m,G\setminus
                v})& =(n-1)+(k-1)-1=n+k-3.
            \end{align*}
            Applying   \Cref{depthlemma}\ref{depthlemma-c} to the short exact
            sequence \eqref{eqn:SES-1} in \Cref{rem:exactseq}, we can obtain
            \[
                \reg(S/\calJ_{K_m,G})=\max\{\max\{n+k-1,n+k-3\},n+k-2+1\}=n+k-1.
            \]
            This completes our induction argument. \qedhere
    \end{enumerate}
\end{proof}

Now we are ready to determine the regularity of the generalized binomial edge ideal of the pure fan graph.

\begin{Theorem}
    \label{thm:FkW_pure_reg}
    Let $G=F_k^W(K_n)$ be a $k$-pure fan graph. Then, we have $\reg(S/\calJ_{K_m,G})=\min\{|V(G)|-1,m+k-1\}$.
\end{Theorem}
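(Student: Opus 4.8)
The plan is to prove the two inequalities separately, and almost all of the real work has already been done in the preparatory lemmas, so the argument is essentially a matter of packaging. For the upper bound I would simply combine the two ceilings already available: \Cref{cor:FkW_pure} gives $\reg(S/\calJ_{K_m,G})\le m+k-1$ for any pure fan graph, while $G$ is connected and hence \Cref{lem:reg_min_equal} gives $\reg(S/\calJ_{K_m,G})\le |V(G)|-1$. Taking the minimum of the two yields $\reg(S/\calJ_{K_m,G})\le \min\{|V(G)|-1,\,m+k-1\}$ with no further computation.

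The genuine content is the matching lower bound, which I would extract directly from the top $a$-invariant estimates in \Cref{lem:fan_kn_v<m+k}. The key point is that $\depth(S/\calJ_{K_m,G})=|V(G)|+m-1$ by \Cref{thm:fan_general}\ref{thm:fan_general_a} (equivalently, the vanishing in \Cref{lem:fan_kn_v<m+k}\ref{lem:fan_kn_v<m+k_a}), so $i=|V(G)|+m-1$ is the smallest cohomological degree carrying nonzero local cohomology; in particular $i\le \dim(S/\calJ_{K_m,G})$, so this degree genuinely contributes to $\reg=\max_i\{a_i+i\}$. I would then split into two cases according to the sign of $|V(G)|-(m+k)$. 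If $|V(G)|\le m+k$, then $\min\{|V(G)|-1,m+k-1\}=|V(G)|-1$, and \Cref{lem:fan_kn_v<m+k}\ref{lem:fan_kn_v<m+k_b} gives $a_{|V(G)|+m-1}\ge -m$, whence
\[
    \reg(S/\calJ_{K_m,G})\ge a_{|V(G)|+m-1}(S/\calJ_{K_m,G})+(|V(G)|+m-1)\ge -m+(|V(G)|+m-1)=|V(G)|-1.
\]
If instead $|V(G)|\ge m+k$, then $\min\{|V(G)|-1,m+k-1\}=m+k-1$, and \Cref{lem:fan_kn_v<m+k}\ref{lem:fan_kn_v<m+k_c} gives $a_{|V(G)|+m-1}\ge -|V(G)|+k$, whence
\[
    \reg(S/\calJ_{K_m,G})\ge (-|V(G)|+k)+(|V(G)|+m-1)=m+k-1.
\]
In each case the lower bound meets the upper bound, giving the claimed equality.

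Because the hard estimates are already isolated in \Cref{lem:fan_kn_v<m+k}, the principal obstacle does not lie in this theorem but in that lemma, namely the inductive control of the top $a$-invariant through the long exact sequence of local cohomology attached to the short exact sequence \eqref{eqn:SES-1}. Within the present argument the only delicate point is the boundary $|V(G)|=m+k$, where both cases apply; there they return the same value $|V(G)|-1=m+k-1$, so the case split is consistent and no gap appears. I would also remark that the self-contained nature of \Cref{lem:fan_kn_v<m+k}, which holds for arbitrary pure fan graphs, makes the special case \Cref{prop:FkW_pure} unnecessary for the present proof, though it serves as a useful sanity check.
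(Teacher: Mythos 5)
Your proof is correct, and it takes a genuinely shorter route than the paper. The paper proves this theorem by a two-stage induction: first \Cref{prop:FkW_pure} settles the whisker case $r_1=\cdots=r_k=1$ (itself split into $n\ge m$, handled via $a$-invariants of induced subgraphs, and $n<m$, handled by induction on $k$), and then an induction on $|W|-k$ runs the deletion/contraction regularities of $G_v$, $G_v\setminus v$, $G\setminus v$ through the sequence \eqref{eqn:SES-1} and \Cref{depthlemma}\ref{depthlemma-c}; the $a$-invariant estimate \Cref{lem:fan_kn_v<m+k}\ref{lem:fan_kn_v<m+k_c} is invoked only at the boundary $|V(G)|=m+k$, where the exact-sequence bound fails to be tight. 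You observe instead that \Cref{lem:fan_kn_v<m+k}\ref{lem:fan_kn_v<m+k_b} and \ref{lem:fan_kn_v<m+k_c} already produce the matching lower bound in \emph{every} case, not just at the boundary: since $\depth(S/\calJ_{K_m,G})=|V(G)|+m-1\le\dim(S/\calJ_{K_m,G})$, the term $a_{|V(G)|+m-1}+(|V(G)|+m-1)$ legitimately enters the maximum defining the regularity, and the two cases $|V(G)|\le m+k$ and $|V(G)|\ge m+k$ of the lemma align exactly with the two branches of $\min\{|V(G)|-1,m+k-1\}$ (agreeing at the overlap). Combined with the upper bounds from \Cref{cor:FkW_pure} and \Cref{lem:reg_min_equal}, this closes the argument with no induction at the theorem level and, as you note, renders \Cref{prop:FkW_pure} unnecessary. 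What you give up is what the paper's inductive route produces along the way — explicit regularity values for the intermediate graphs $G_v$, $G_v\setminus v$, $G\setminus v$, in the same deletion/contraction style reused throughout the later sections on the $*$ and $\circ$ operations — but as a proof of this statement alone, your packaging is cleaner: all the inductive work is concentrated once and for all in \Cref{lem:fan_kn_v<m+k}.
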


\begin{proof}
    We prove by induction on $|W|-k$. The base case is when $|W|=k$. Whence
    $r_1=\cdots=r_k=1$. We can then apply \Cref{prop:FkW_pure} since
    $|V(G)|=n+k$.

    In the induction step, we assume that $|W| >k$. Without loss of
    generality, we may assume that $r_k\ge 2$. Whence, we apply the facts in
    \Cref{obs:fan_graph} by taking $v=w_{k,1}$. By induction, we obtain that
    \begin{align*}
        \reg(S_{G_v}/\calJ_{K_m,G_v})&=\min\{|V(G)|-1,m+k-2\}, \\
        \reg(S_{G_v\setminus v}/\calJ_{K_m,G_v\setminus v})&=\min\{|V(G)|-2,m+k-2\},\\
        \intertext{and}
        \reg(S_{G\setminus v}/\calJ_{K_m,G\setminus v})&=\min\{|V(G)|-2,m+k-1\}.
    \end{align*}
    \begin{enumerate}[a]
        \item If $|V(G)|\le m+k-1$, then the three regularities above are $|V(G)|-1$, $|V(G)|-2$ and $|V(G)|-2$ respectively. Thus, after applying \Cref{depthlemma}\ref{depthlemma-c} to the short exact sequence \eqref{eqn:SES-1}  in \Cref{rem:exactseq}, we obtain that $\reg(S_G/\calJ_{K_m,G})=|V(G)|-1$, as predicted.

        \item If $|V(G)|\ge m+k+1$, then the three regularities above are $m+k-2$, $m+k-2$ and $m+k-1$ respectively. By the same reason, we deduce that $\reg(S_G/\calJ_{K_m,G})=m+k-1$ in the expected formula.

        \item In the last case, we suppose that $|V(G)|=m+k$. From
            \Cref{cor:FkW_pure} we have $\reg(S/\calJ_{K_m,G}) \le m+k-1$.
            Meanwhile, it follows from
            \Cref{lem:fan_kn_v<m+k}\ref{lem:fan_kn_v<m+k_c} that
            \[
                \reg(S_G/\calJ_{K_m,G})\ge
                a_{|V(G)|+m-1}(S/\calJ_{K_m,G})+|V(G)|+m-1\ge m+k-1.
            \]
            Therefore, the expected equality also holds in this case. This concludes our induction argument. 
            \qedhere
    \end{enumerate}
\end{proof}

\section{Study of the bipartite graph $F_p$}
\label{sec:F_p}

In this section, we will provide exact formulas for the dimension, depth and regularity of the generalized binomial edge ideal of the following bipartite graph $F_p$.

\begin{Definition}
    For an integer $p\ge 1$, let $F_p$ denote the graph on the vertex set $[2p]$ with the edge set $E(F_p)=\{\{2i,2j-1\} : 1\le i\le p \text{ and } i\le j \le p\}$.
\end{Definition}

Following the important work of Bolognini et al.~in \cite{MR3779601}, this class of graphs provides the building blocks for the bipartite graphs, whose binomial edge ideals are Cohen--Macaulay.

\begin{Observation}
    \label{obs:f_p_v}
    Let $G=F_p$ and $v=2p-1\in V(G)$. We observe the following results.
    \begin{enumerate}[i]
        \item The graph $G_v$ can be thought of as the $1$-pure fan graph $F_1^W(K_{p+1})$  where $W=V(G)\setminus N_{G}[v]$. In particular, $|W|=p-1$.
        \item The graph $G_v\setminus v$ can be thought of as the $1$-pure fan graph $F_1^W(K_{p})$ with $|W|=p-1$.
        \item The graph $G\setminus v$ can be thought of as the disjoint union of $F_{p-1}$ and the isolated vertex $2p$.
    \end{enumerate}
\end{Observation}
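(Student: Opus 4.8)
The plan is to read off all three identifications directly from the defining edge set of $F_p$, once the neighborhood of $v=2p-1$ is pinned down. First I would note that $v=2p-1$ is the odd vertex $2j-1$ with $j=p$, so by the description of $E(F_p)$ its neighbors are exactly the $2i$ with $i\le p$, i.e.\ all even vertices $\{2,4,\dots,2p\}$. Consequently $N_G[v]=\{2,4,\dots,2p\}\cup\{2p-1\}$ has $p+1$ elements, and $W\coloneqq V(G)\setminus N_G[v]=\{1,3,\dots,2p-3\}$ consists of the remaining $p-1$ odd vertices, which already gives $|W|=p-1$.

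For (i), the crucial point is that $G_v$ is formed by adding all edges among $N_G(v)$, i.e.\ among the even vertices; since $v$ is adjacent to all of them, $N_G[v]$ becomes a clique $K_{p+1}$, the intended base of the fan. No added edge is incident to an odd vertex, so each $2j-1$ with $j\in[p-1]$ keeps exactly its $F_p$-neighborhood $\{2,4,\dots,2j\}$. The decisive structural observation is that these neighborhoods form a nested chain $\{2\}\subset\{2,4\}\subset\cdots\subset\{2,\dots,2p-2\}$ of initial segments of $2,4,\dots,2p-2$; ordering the fan set as $u_\ell=2\ell$, the set $\{u_1,\dots,u_\ell,\,2\ell-1\}=\{2,4,\dots,2\ell,\,2\ell-1\}$ spans a $K_{\ell+1}$, so each attached complete graph has size $a_{1,\ell}=\ell+1$ and purity holds since $h_{1,\ell}=a_{1,\ell}-\ell=1$. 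This presents $G_v$ as the $1$-pure fan $F_1^W(K_{p+1})$, with the $p-1$ vertices of $V(G)\setminus N_G[v]$ serving as the added fan vertices.

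Parts (ii) and (iii) then follow by tracking deletions against this picture. Removing $v$ from $G_v$ deletes a single vertex of the base clique while leaving the whole fan untouched---the added vertices $1,3,\dots,2p-3$ and their attachment sets $\{2,\dots,2j\}$ never involve $v=2p-1$---so $G_v\setminus v$ is the same pure fan over the smaller base $K_p$ on $\{2,4,\dots,2p\}$, namely $F_1^W(K_p)$ with $|W|=p-1$. For (iii) I would first observe that $2p$ is a leaf of $F_p$: written as $2i$ with $i=p$, its only possible neighbor is $2j-1$ with $j\ge p$, forcing $j=p$ and the single edge $\{2p,2p-1\}$. Hence deleting $v$ isolates $2p$, while the induced subgraph on $\{1,2,\dots,2p-2\}$ has edge set $\{\{2i,2j-1\}:1\le i\le j\le p-1\}$, which is literally $E(F_{p-1})$; this identifies $G\setminus v$ with $F_{p-1}\sqcup\{2p\}$.

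I expect no serious obstacle here, as all three claims reduce to elementary bookkeeping with the edge set. The one point meriting care is the dual role of $v=2p-1$: in $G_v$ it is absorbed into the base clique rather than appearing as an added fan vertex, which is precisely why the base is $K_{p+1}$ and why $|W|$ stays at $p-1$ instead of $p$. I would also check the boundary case $p=1$ (where $W=\emptyset$ and the fan degenerates to $K_2$) so that the stated conventions are consistent.
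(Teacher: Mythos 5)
Your proposal is correct and is essentially the intended argument: the paper states this as an Observation without proof, and your direct bookkeeping with $E(F_p)$ --- identifying $N_G[v]=\{2,4,\dots,2p\}\cup\{v\}$ as the base clique $K_{p+1}$, the nested neighborhoods $\{2,4,\dots,2j\}$ of the remaining odd vertices as the pure branches with $h_{1,\ell}=1$, and $2p$ as a leaf whose only neighbor is $v$ --- is precisely the verification the authors leave implicit. Your cautionary remarks (the dual role of $v$, which is absorbed into the base clique so that $|W|$ is $p-1$ rather than $p$, and the degenerate case $p=1$) correctly address the only points where the loose phrasing of the Observation could cause confusion.
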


First, we consider the dimension and depth of the generalized binomial edge ideal of $F_p$.

\begin{Theorem}
    \label{thm:fp_dim_depth}
    Suppose that $p\ge 1$ is a positive integer. Then we have
    \begin{enumerate}[a]
        \item  \label{thm:fp_dim_depth_a}
            $\dim(S/\calJ_{K_m,F_p})=m+2p-1+(p-1)(m-2)$ and
        \item  \label{thm:fp_dim_depth_b} $\depth(S/\calJ_{K_m,F_p})=m+2p-1$.
    \end{enumerate}
    In particular, the quotient ring $S/\calJ_{K_m,F_p}$ is Cohen--Macaulay if and only if $p=1$ or $m=2$.
\end{Theorem}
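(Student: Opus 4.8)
The plan is to prove both formulas together by induction on $p$, applying the short exact sequence \eqref{eqn:SES-1} of \Cref{rem:exactseq} at the internal vertex $v=2p-1$ (which has degree $p$, hence is internal once $p\ge 2$) and reading the three resulting modules off \Cref{obs:f_p_v}. In the base case $p=1$ we have $F_1=K_2$, so $S/\calJ_{K_m,F_1}=S/I_2(\bdX)$ is Cohen--Macaulay of dimension $m+1$ by \Cref{lem:generic_CM}; this matches both $m+2p-1+(p-1)(m-2)$ and $m+2p-1$ at $p=1$. For the inductive step $p\ge 2$, \Cref{obs:f_p_v} tells us that $G_v=F_1^W(K_{p+1})$ and $G_v\setminus v=F_1^W(K_p)$ are $1$-pure fan graphs with $|W|=p-1$, while $G\setminus v$ is $F_{p-1}$ together with one isolated vertex. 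The isolated vertex makes $S_{G\setminus v}/\calJ_{K_m,G\setminus v}$ a polynomial extension of $S_{F_{p-1}}/\calJ_{K_m,F_{p-1}}$ in $m$ variables, so both its depth and its dimension exceed those of $S_{F_{p-1}}/\calJ_{K_m,F_{p-1}}$ by exactly $m$.

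For part \ref{thm:fp_dim_depth_b} I would compute the depths of the three modules. \Cref{thm:fan_general}\ref{thm:fan_general_a} gives $\depth(S_{G_v}/\calJ_{K_m,G_v})=m+2p-1$ and $\depth(S_{G_v\setminus v}/\calJ_{K_m,G_v\setminus v})=m+2p-2$, while the induction hypothesis and the isolated-vertex remark give $\depth(S_{G\setminus v}/\calJ_{K_m,G\setminus v})=(m+2(p-1)-1)+m=2m+2p-3$. Since $m\ge 2$, the depth of the middle term $N$ of \eqref{eqn:SES-1} is $\min\{m+2p-1,\,2m+2p-3\}=m+2p-1$, and the depth of the right-hand term is $m+2p-2$. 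These differ, so the equality clause of \Cref{depthlemma}\ref{depthlemma-a} yields $\depth(S/\calJ_{K_m,F_p})=\min\{m+2p-1,\,(m+2p-2)+1\}=m+2p-1$.

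For part \ref{thm:fp_dim_depth_a} I would use that any short exact sequence $0\to M\to N\to P\to 0$ satisfies $\dim N=\max\{\dim M,\dim P\}$, since $\Supp N=\Supp M\cup\Supp P$. \Cref{thm:fan_Dimension} (with $s=p-1$ both times) gives $\dim(S_{G_v}/\calJ_{K_m,G_v})=m+2p-1+(p-1)(m-2)=pm+1$ and $\dim(S_{G_v\setminus v}/\calJ_{K_m,G_v\setminus v})=pm$, and induction with the isolated-vertex remark gives $\dim(S_{G\setminus v}/\calJ_{K_m,G\setminus v})=((p-1)m+1)+m=pm+1$. Hence the middle term has dimension $pm+1$ and the right-hand term has dimension $pm$. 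As $pm+1>pm$, the identity $\dim N=\max\{\dim M,\dim P\}$ forces $\dim(S/\calJ_{K_m,F_p})=pm+1=m+2p-1+(p-1)(m-2)$, closing the induction.

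The Cohen--Macaulay claim is then immediate: the equality $\depth=\dim$ reads $m+2p-1=pm+1$, i.e.\ $(m-2)(p-1)=0$, which holds exactly when $m=2$ or $p=1$. The one step needing care is the dimension computation: the short exact sequence by itself only gives $\dim M\le\dim N$, and it is the strict inequality $\dim N=pm+1>pm=\dim P$ that promotes this to an equality through $\dim N=\max\{\dim M,\dim P\}$. It is worth recording the arithmetic identity $m+2p-1+(p-1)(m-2)=pm+1$ explicitly, since it is precisely the coincidence that both fan-graph summands of $N$ have the common dimension $pm+1$ (rather than different values) that pins down $\dim N$.
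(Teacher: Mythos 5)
Your proposal is correct and follows essentially the same route as the paper: induction on $p$, the exact sequence \eqref{eqn:SES-1} at $v=2p-1$ with the three modules identified via \Cref{obs:f_p_v}, depths from \Cref{thm:fan_general}\ref{thm:fan_general_a} combined with \Cref{depthlemma}\ref{depthlemma-a}, and dimensions from \Cref{thm:fan_Dimension} combined with the fact $\dim N=\max\{\dim M,\dim P\}$ (the paper's \Cref{dim_lem}). Your explicit remarks on the isolated vertex contributing $m$ to both depth and dimension, and on the coincidence $m+2p-1+(p-1)(m-2)=pm+1$ for both summands of the middle term, are exactly what the paper's computation uses implicitly.
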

\begin{proof}
    We prove the assertions by induction on $p$. The case $p=1$ follows
    from \Cref{lem:generic_CM}. Next, assume that $p\ge 2$.
    Take $v=2p-1$ as in \Cref{obs:f_p_v}. Using induction, we derive from
    \Cref{thm:fan_Dimension} and \Cref{thm:fan_general}\ref{thm:fan_general_a}
    that
    \begin{itemize}
        \item $\dim(S_{G_v}/\calJ_{K_m,G_v})=m+2p-1+(p-1)(m-2)$ and
            $\depth(S_{G_v}/\calJ_{K_m,G_v})=m+2p-1$;
        \item $\dim(S_{G_v\setminus v}/\calJ_{K_m,G_v\setminus v}) =m+2p-2+
            (p-1)(m-2)$ and $\depth(S_{G_v\setminus v}/\calJ_{K_m,G_v\setminus
            v})=m+2p-2$;
        \item $\dim(S_{G\setminus v}/\calJ_{K_m,G\setminus v})= m+(m+2p-3)
            +(p-2)(m-2)=m+2p-1+(p-1)(m-2)$ and $\depth(S_{G\setminus
            v}/\calJ_{K_m,G\setminus v})=m+m+2p-3=m+2p-1+(m-2)$.
    \end{itemize}
    Since 
    \[
        \max\{\dim(S_{G_v}/\calJ_{K_m,G_v}),\dim(S_{G\setminus v}/\calJ_{K_m,G\setminus v})\}>\dim(S_{G_v\setminus v}/\calJ_{K_m,G_v\setminus v}),
    \]
    it follows from the short exact sequence \eqref{eqn:SES-1} in \Cref{rem:exactseq} that 
    \begin{align*}
        \dim(S/\calJ_{K_m,F_p})&=\max\{\dim(S_{G_v}/\calJ_{K_m,G_v}),\dim(S_{G\setminus v}/\calJ_{K_m,G\setminus v})\}\\
        &=m+2p-1+(p-1)(m-2).
    \end{align*} 
    On the other hand, the desired assertion regarding depth can be obtained by applying \Cref{depthlemma}\ref{depthlemma-a} to the short exact sequence \eqref{eqn:SES-1} in \Cref{rem:exactseq}.
\end{proof}

Next, we compute the regularity of the generalized binomial edge ideal of the $F_p$ graph. This is a path when $p=1$ or $2$. Fortunately, the depth and regularity of generalized binomial edge ideals of paths have already been studied.

\begin{Lemma}
    [{\cite[Corollaries 3.4 and 3.6]{MR4233116}}]
    \label{lem:reg_path}
    Let $P_n$ be a path graph on the vertex set $[n]$ with $n\ge 2$. Then
    $\depth(S/\calJ_{K_m,P_n})= m+n-1$ and $\reg(S/\calJ_{K_m,P_n})= n-1$.
\end{Lemma}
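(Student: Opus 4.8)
The plan is to separate the cheap regularity upper bound from the genuinely inductive content, which all lives in the depth statement. Since $P_n$ is connected on $n$ vertices, \Cref{lem:reg_min_equal} immediately gives $\reg(S/\calJ_{K_m,P_n})\le n-1$ (with equality already when $m\ge n$). So for regularity it remains only to prove the lower bound $\reg(S/\calJ_{K_m,P_n})\ge n-1$, and I will extract this from the depth computation together with a local-cohomology comparison, exactly in the spirit of \Cref{lem:fan_kn_v<m+k}. Thus the whole lemma rests on showing $\depth(S/\calJ_{K_m,P_n})=m+n-1$.

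For the depth I would induct on $n$ via the short exact sequence \eqref{eqn:SES-1} of \Cref{rem:exactseq}, taking $v=2$, the internal vertex adjacent to the leaf $1$. The three graphs appearing are: $G\setminus v$, the disjoint union of the isolated vertex $1$ with the path $P_{n-2}$ on $\{3,\dots,n\}$, whose quotient has depth $m+\depth(S/\calJ_{K_m,P_{n-2}})=2m+n-3$ by induction (the isolated vertex contributes a polynomial ring in $m$ variables); $G_v\setminus v$, the path $P_{n-1}$, of depth $m+n-2$ by induction; and $G_v$, the triangle on $\{1,2,3\}$ with pendant path $3-4-\cdots-n$. Granting $d:=\depth(S_{G_v}/\calJ_{K_m,G_v})\ge m+n-1$, the middle term of \eqref{eqn:SES-1} has depth $\min\{d,\,2m+n-3\}$, which for $m\ge 2$ is $\ge m+n-1>m+n-2$ and hence differs from the depth $m+n-2$ of the right-hand term; the equality clause of \Cref{depthlemma}\ref{depthlemma-a} then yields $\depth(S/\calJ_{K_m,P_n})=m+n-1$. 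The base case $P_2=K_2$ is \Cref{lem:generic_CM}.

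Granting the depth, the regularity lower bound is clean. Because $\depth=m+n-1$, the module $H_{S_+}^i(S/\calJ_{K_m,P_n})$ vanishes for $i<m+n-1$, so it suffices to show $a_{m+n-1}(S/\calJ_{K_m,P_n})\ge -m$; then $\reg\ge a_{m+n-1}+(m+n-1)\ge n-1$. I obtain this by induction from the long exact sequence of local cohomology attached to \eqref{eqn:SES-1} with the same $v=2$: both summands of the middle term have depth strictly larger than $m+n-2$ (using $d\ge m+n-1$ and $2m+n-3>m+n-2$), so $H_{S_+}^{m+n-2}$ of the middle term vanishes and we get an injection $H_{S_+}^{m+n-2}(S/\calJ_{K_m,P_{n-1}})\hookrightarrow H_{S_+}^{m+n-1}(S/\calJ_{K_m,P_n})$. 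Hence $a_{m+n-1}(S/\calJ_{K_m,P_n})\ge a_{m+n-2}(S/\calJ_{K_m,P_{n-1}})\ge -m$ by induction, the base case $K_2$ giving $a_{m+1}=\reg-\dim=1-(m+1)=-m$ by Cohen--Macaulayness. Combining the two bounds gives $\reg=n-1$.

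The main obstacle is the depth input $d\ge m+n-1$ for the triangle-with-pendant-path graph $G_v$, since this graph is no longer a path: iterating \eqref{eqn:SES-1} on it produces cliques glued along a path, and such \emph{block graphs} form the smallest family closed under $G\mapsto(G_v,\,G\setminus v,\,G_v\setminus v)$ starting from $P_n$. The clean way to handle this is to enlarge the claim to all connected block graphs, for which I expect $\depth=m+|V|-1$ (consistent with \Cref{thm:fan_general}\ref{thm:fan_general_a} on the overlapping fan-graph cases), and to run the induction simultaneously over that family, checking that each $G_v$ and $G\setminus v$ stays a block graph. This is precisely the generality in which the cited depth and regularity formulas for connected generalized block graphs are established, so the statement may simply be quoted; the argument above reduces everything to that single depth estimate.
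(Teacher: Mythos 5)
The paper offers no proof of this lemma at all: it is imported verbatim from Chaudhry--Irfan \cite[Corollaries 3.4 and 3.6]{MR4233116}, so any self-contained argument is necessarily a different route from the paper's. Your reconstruction is correct in architecture and, fittingly, uses exactly the machinery this paper deploys for its own analogues (\Cref{thm:fan_general}\ref{thm:fan_general_a}, \Cref{lem:fan_kn_v<m+k}, \Cref{prop:FkW_pure}, \Cref{thm:fp_dim_depth}): the upper bound $\reg\le n-1$ from \Cref{lem:reg_min_equal}, the deletion sequence \eqref{eqn:SES-1} at the vertex $v=2$ for the depth, and the injection of local cohomology giving $a_{m+n-1}(S/\calJ_{K_m,P_n})\ge a_{m+n-2}(S/\calJ_{K_m,P_{n-1}})\ge -m$ for the lower bound (your base-case computation $a_{m+1}(S/\calJ_{K_m,K_2})=-m$ and the inequality $\reg\ge a_{m+n-1}+(m+n-1)\ge n-1$ are both right). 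The only place your write-up stops short of a proof is the input $\depth(S_{G_v}/\calJ_{K_m,G_v})\ge m+n-1$, which you explicitly flag: $G_v$ is a triangle with a pendant path, so the induction cannot stay inside the class of paths. Your proposed repair is the correct one and does close. Block graphs are stable under all three operations $G\mapsto G_v$, $G\mapsto G\setminus v$, $G\mapsto G_v\setminus v$ (completing $N_G(v)$ merges all blocks through $v$ into one clique; vertex deletion preserves the property that every block is a clique), and for a connected block graph that is not a clique one takes $v$ to be a cut vertex and inducts on the pair (number of vertices, number of blocks): by induction the $G_v$ summand of the middle term of \eqref{eqn:SES-1} has depth $m+|V(G)|-1$, the $G\setminus v$ summand has depth $c(m-1)+|V(G)|-1\ge m+|V(G)|-1$ since $c\ge 2$ and $m\ge 2$, the right-hand term has depth $m+|V(G)|-2$, and the equality clause of \Cref{depthlemma}\ref{depthlemma-a} finishes, with \Cref{lem:generic_CM} as base case. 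Carried out, this gives a complete self-contained proof, which is more than the paper provides; what the paper's treatment buys instead is economy, and note that your final fallback --- simply quoting the block-graph results of \cite{MR4233116} --- collapses to exactly the paper's citation and makes the rest of your argument redundant, so you should commit to one option or the other.
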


As for the regularity in the general case, we need to make some preparations. 

\begin{Lemma}
    \label{prop:Fp}
    For $G=F_p$ with $p\ge 1$, we have the following results:
    \begin{enumerate}[a]
        \item \label{prop:Fp_a}
            the local cohomology module $H_{S_+}^{i}(S/\calJ_{K_m,G})=0$ for $i\le m+2p-2$;
        \item \label{prop:Fp_b}
            if $m\ge 2p-2$, then $a_{2p+m-1}(S/\calJ_{K_m,G})\ge -m$;
        \item \label{prop:Fp_c}
            if $m\le 2p-2$, then $a_{2p+m-1}(S/\calJ_{K_m,G})\ge -2p+2$.
    \end{enumerate}
\end{Lemma}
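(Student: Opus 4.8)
The plan is to prove all three parts by induction on $p$, using the short exact sequence \eqref{eqn:SES-1} from \Cref{rem:exactseq} with the internal vertex $v = 2p-1$, exactly as set up in \Cref{obs:f_p_v}. This is the same strategy that worked for the analogous \Cref{lem:fan_kn_v<m+k} on pure fan graphs, and the three graphs $G_v$, $G_v \setminus v$, and $G \setminus v$ appearing in the sequence are all either pure fan graphs or disjoint unions involving $F_{p-1}$, so their local cohomology is governed by results already established.

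For part \ref{prop:Fp_a}, the vanishing $H_{S_+}^i(S/\calJ_{K_m,G}) = 0$ for $i \le m+2p-2 = \depth(S/\calJ_{K_m,F_p}) - 1$ is immediate from \Cref{thm:fp_dim_depth}\ref{thm:fp_dim_depth_b} together with the characterization of depth as the least index of nonvanishing local cohomology (the graded version of Grothendieck vanishing, cited already in the proof of \Cref{lem:fan_kn_v<m+k}). So part \ref{prop:Fp_a} requires no induction.

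For parts \ref{prop:Fp_b} and \ref{prop:Fp_c}, I would run the induction on $p$, with the base case $p=1$ following from \Cref{lem:generic_CM}: here $F_1 = K_2$, so $S/\calJ_{K_m,F_1}$ is Cohen--Macaulay of depth $m+1$ with regularity $\min\{m-1,1\}$, and reading off the top $a$-invariant $a_{m+1}$ from $\reg = a_{m+1} + (m+1)$ gives the claimed bounds. For the inductive step, the key input is the long exact sequence of local cohomology attached to \eqref{eqn:SES-1}. By \Cref{obs:f_p_v}, the graphs $G_v = F_1^W(K_{p+1})$ and $G \setminus v = F_{p-1} \sqcup \{2p\}$ have $H_{S_+}^{2p+m-2} = 0$: for $G_v$ this holds because $\depth(S_{G_v}/\calJ_{K_m,G_v}) = m + (p+1) + (p-1) - 1 = m+2p-1 > 2p+m-2$ by \Cref{thm:fan_general}\ref{thm:fan_general_a}, and for $G \setminus v$ the isolated vertex shifts the index so that the relevant cohomology is $H_{S_+}^{m+2p-3}(S_{F_{p-1}}/\calJ_{K_m,F_{p-1}})$, which vanishes by part \ref{prop:Fp_a} applied to $F_{p-1}$ (valid since $m \ge 2$). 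The long exact sequence therefore yields an injection
\[
    0 \to H_{S_+}^{2p+m-2}(S_{G_v\setminus v}/\calJ_{K_m,G_v\setminus v}) \to H_{S_+}^{2p+m-1}(S/\calJ_{K_m,G}),
\]
so that $a_{2p+m-1}(S/\calJ_{K_m,G}) \ge a_{2p+m-2}(S_{G_v\setminus v}/\calJ_{K_m,G_v\setminus v})$. Since $G_v\setminus v = F_1^W(K_p)$ is a pure fan graph with $|V| = 2p-1$ and $k=1$, I can bound the right-hand $a$-invariant directly by \Cref{lem:fan_kn_v<m+k}\ref{lem:fan_kn_v<m+k_b} and \ref{lem:fan_kn_v<m+k_c}, comparing $|V(G_v\setminus v)| = 2p-1$ against $m+k = m+1$: when $m \ge 2p-2$ one gets the $-m$ bound and when $m \le 2p-2$ one gets the $-(2p-2)+0 = -2p+2$ bound, matching the two cases \ref{prop:Fp_b} and \ref{prop:Fp_c}.

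The main obstacle I anticipate is bookkeeping the index shifts and cohomological degrees correctly so that the vanishing statements line up to produce the desired injection rather than merely a four-term exact piece; in particular one must verify that both $H_{S_+}^{2p+m-2}(S_{G_v}/\cdots)$ and $H_{S_+}^{2p+m-2}(S_{G\setminus v}/\cdots)$ vanish so that the connecting map gives an honest injection at degree $2p+m-1$. A secondary subtlety is that for part \ref{prop:Fp_c} one could alternatively invoke \Cref{lem:fan_kn_v<m+k} directly on an induced pure-fan subgraph of $F_p$ (bypassing the induction on $p$), and it is worth checking which route is cleaner; but the short-exact-sequence reduction to $F_1^W(K_p)$ seems most uniform and reuses the already-proven fan-graph $a$-invariant estimates verbatim.
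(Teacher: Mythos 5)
Your strategy coincides with the paper's own proof: part \ref{prop:Fp_a} from $\depth(S/\calJ_{K_m,F_p})=m+2p-1$ plus graded Grothendieck vanishing, and parts \ref{prop:Fp_b}, \ref{prop:Fp_c} from the long exact sequence of local cohomology attached to \eqref{eqn:SES-1} at $v=2p-1$, yielding the injection $H_{S_+}^{2p+m-2}(S_{G_v\setminus v}/\calJ_{K_m,G_v\setminus v})\hookrightarrow H_{S_+}^{2p+m-1}(S/\calJ_{K_m,G})$ and then \Cref{lem:fan_kn_v<m+k}\ref{lem:fan_kn_v<m+k_b} and \ref{lem:fan_kn_v<m+k_c} applied to $G_v\setminus v=F_1^W(K_p)$, which has $2p-1$ vertices and $k=1$. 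The paper does this directly, without induction on $p$; your induction is harmless but carries no content, since the only inputs to your inductive step (part \ref{prop:Fp_a} for $F_{p-1}$ and the fan-graph $a$-invariant bounds) are established independently of any hypothesis on $F_{p-1}$. Your separate base case $p=1$ is actually a point in your favor: $F_1=P_2$ has no internal vertex, so \eqref{eqn:SES-1} is unavailable there, a degenerate case the paper glosses over.

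There is, however, one concrete error: your vanishing argument for $G\setminus v=F_{p-1}\sqcup\{2p\}$. In the generalized setting each vertex carries $m$ variables, so the isolated vertex $2p$ contributes a polynomial factor $\KK[x_{1,2p},\dots,x_{m,2p}]$ of dimension $m$, and the K\"unneth shift is by $m$, not by $1$. The module relevant to $H_{S_+}^{2p+m-2}(S_{G\setminus v}/\calJ_{K_m,G\setminus v})$ is therefore $H_{S_+}^{2p-2}(S_{F_{p-1}}/\calJ_{K_m,F_{p-1}})$, not $H_{S_+}^{m+2p-3}(S_{F_{p-1}}/\calJ_{K_m,F_{p-1}})$. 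The claim you actually wrote is false: $m+2p-3$ equals $\depth(S_{F_{p-1}}/\calJ_{K_m,F_{p-1}})$ by \Cref{thm:fp_dim_depth}\ref{thm:fp_dim_depth_b}, local cohomology at the depth never vanishes, and that index lies outside the range $i\le m+2(p-1)-2$ covered by part \ref{prop:Fp_a}. The repair is immediate: either note that $2p-2\le m+2p-4$ when $m\ge 2$, so part \ref{prop:Fp_a} for $F_{p-1}$ kills $H_{S_+}^{2p-2}$, or simply compute $\depth(S_{G\setminus v}/\calJ_{K_m,G\setminus v})=(m+2p-3)+m=2m+2p-3>2p+m-2$, which is in effect how the paper argues. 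A cosmetic slip in the same spirit: in part \ref{prop:Fp_c} the bound from \Cref{lem:fan_kn_v<m+k}\ref{lem:fan_kn_v<m+k_c} reads $-|V(G_v\setminus v)|+k=-(2p-1)+1$, not $-(2p-2)+0$; the value $-2p+2$ is of course the same.
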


\begin{proof}
    The nullity in \ref{prop:Fp_a} follows from the fact that $\depth(S/\calJ_{K_m,G})=m+2p-1$ by \Cref{thm:fp_dim_depth}\ref{thm:fp_dim_depth_b} and the graded version of \cite[Theorem 6.2.7]{MR1613627} or \cite[Proposition 3.5.4 (b)]{MR1251956}.

    In the following, we take $v=2p-1\in V(G)$ as in \Cref{obs:f_p_v}.
    Now, consider the long exact sequence of local cohomology modules induced
    from the short sequence \eqref{eqn:SES-1} in \Cref{rem:exactseq}. From
    \Cref{thm:fan_general}\ref{thm:fan_general_a} and
    \Cref{thm:fp_dim_depth}\ref{thm:fp_dim_depth_b},
    we derive that
    \[
        0\to H_{S_+}^{2p+m-2}(S_{G_v\setminus v}/\calJ_{K_m,G_v\setminus v})
        \to H_{S_+}^{2p+m-1}(S_G/\calJ_{K_m,G}).
    \]
    Consequently,
    \[
        a_{2p+m-1}(S_G/\calJ_{K_m,G})\ge
        a_{2p+m-2}(S_{F_1^W(K_{p})}/\calJ_{K_m,F_1^W(K_{p})}).
    \]
    It remains to apply \ref{lem:fan_kn_v<m+k_b} and \ref{lem:fan_kn_v<m+k_c}
    of \Cref{lem:fan_kn_v<m+k}.
\end{proof}

Now, we are ready to give the regularity formula for the generalized binomial edge ideal of the $F_p$ graph.

\begin{Theorem}
    \label{thm:reg_Fp}
    Let $G=F_p$ with $p\ge 1$. Then, we have $\reg(S/\calJ_{K_m,G})=\min\{2p-1,m+1\}$.
\end{Theorem}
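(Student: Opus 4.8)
The plan is to induct on $p$ and pin down $\reg(S/\calJ_{K_m,F_p})=\min\{2p-1,m+1\}$ by squeezing the regularity between a lower bound coming from the local cohomology estimates of \Cref{prop:Fp} and an upper bound extracted from the short exact sequence \eqref{eqn:SES-1}. For the base cases $p=1$ and $p=2$ the graph $F_p$ is the path $P_{2p}$ (namely $K_2$ and $P_4$), so \Cref{lem:reg_path} gives $\reg=2p-1$; since $m\ge 2$ forces $m+1\ge 2p-1$ in this range, this is exactly $\min\{2p-1,m+1\}$. For the inductive step I would assume $p\ge 3$ and take $v=2p-1$, which is an internal vertex of $F_p$ (its degree is $p\ge 3$), so that \eqref{eqn:SES-1} is available.

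For the upper bound I would feed \Cref{obs:f_p_v} into \eqref{eqn:SES-1}. By \Cref{thm:FkW_pure_reg} applied to the $1$-pure fan graphs $G_v=F_1^W(K_{p+1})$ and $G_v\setminus v=F_1^W(K_p)$, together with the induction hypothesis applied to $G\setminus v=F_{p-1}\sqcup\{2p\}$ (whose quotient ring differs from that of $F_{p-1}$ only by a polynomial extension in the variables attached to the isolated vertex, hence has the same regularity), one obtains $\reg(S_{G_v}/\calJ_{K_m,G_v})=\min\{2p-1,m\}\le m$, $\reg(S_{G_v\setminus v}/\calJ_{K_m,G_v\setminus v})=\min\{2p-2,m\}\le m$, and $\reg(S_{G\setminus v}/\calJ_{K_m,G\setminus v})=\min\{2p-3,m+1\}\le m+1$. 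Applying \Cref{depthlemma}\ref{depthlemma-c} to \eqref{eqn:SES-1} then yields $\reg(S/\calJ_{K_m,F_p})\le m+1$. On the other hand, $F_p$ is connected on $2p$ vertices, so \Cref{lem:reg_min_equal} gives $\reg(S/\calJ_{K_m,F_p})\le 2p-1$. Combining the two estimates gives $\reg(S/\calJ_{K_m,F_p})\le\min\{2p-1,m+1\}$.

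For the matching lower bound I would invoke \Cref{prop:Fp}: part \ref{prop:Fp_b} gives $a_{2p+m-1}(S/\calJ_{K_m,F_p})\ge -m$ when $m\ge 2p-2$, hence $\reg\ge 2p-1$; part \ref{prop:Fp_c} gives $a_{2p+m-1}(S/\calJ_{K_m,F_p})\ge -2p+2$ when $m\le 2p-2$, hence $\reg\ge m+1$. Since $\min\{2p-1,m+1\}$ equals $2p-1$ exactly when $m\ge 2p-2$ and equals $m+1$ exactly when $m\le 2p-2$, these two estimates together give $\reg(S/\calJ_{K_m,F_p})\ge\min\{2p-1,m+1\}$, and the theorem follows by comparing with the upper bound.

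I expect the main subtlety to be that the short exact sequence alone does \emph{not} determine the regularity at the two boundary values $m=2p-2$ and $m=2p-3$. There the two relevant regularities $\reg(N)$ and $\reg(P)$ entering \Cref{depthlemma}\ref{depthlemma-c} coincide, so the equality clause of that lemma is unavailable and \eqref{eqn:SES-1} yields only an upper bound. This is precisely why the lower bound cannot be read off from the recursion and must instead come from the explicit $a$-invariant estimates of \Cref{prop:Fp}, which themselves rest on \Cref{lem:fan_kn_v<m+k}. Away from these two boundary values the sequence already forces equality, but packaging the argument as ``two upper bounds together with \Cref{prop:Fp}'' sidesteps the delicate case analysis of the equality condition.
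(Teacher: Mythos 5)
Your proof is correct, and its skeleton is the paper's: induction on $p$ with base cases $P_2$, $P_4$ via \Cref{lem:reg_path}, the same internal vertex $v=2p-1$ from \Cref{obs:f_p_v}, the short exact sequence \eqref{eqn:SES-1} combined with \Cref{thm:FkW_pure_reg} and the inductive hypothesis to get $\reg(S/\calJ_{K_m,F_p})\le m+1$, and the $a$-invariant estimates of \Cref{prop:Fp} for the lower bound. You differ from the paper in how the pieces are assembled, in two ways. First, you obtain $\reg(S/\calJ_{K_m,F_p})\le 2p-1$ from Kumar's general bound \Cref{lem:reg_min_equal} applied to the connected graph $F_p$ on $2p$ vertices, whereas the paper reads this bound off a second application of \Cref{depthlemma}\ref{depthlemma-c} to the same exact sequence; both are legitimate. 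Second, and more substantively, you invoke \Cref{prop:Fp} uniformly: part \ref{prop:Fp_b} gives $\reg\ge 2p-1$ for all $m\ge 2p-2$ and part \ref{prop:Fp_c} gives $\reg\ge m+1$ for all $m\le 2p-2$, and since these regimes cover every $m$ and in each one the bound equals $\min\{2p-1,m+1\}$, the squeeze closes at once. The paper instead runs a four-case analysis: for $m\le 2p-4$ and $m\ge 2p-1$ it uses the equality clause of \Cref{depthlemma}\ref{depthlemma-c} (available there because the two relevant regularities differ), and reserves \Cref{prop:Fp} for the boundary values $m=2p-3$ and $m=2p-2$. Your packaging buys brevity and eliminates the equality-clause bookkeeping; the paper's buys the mild extra insight that the exact sequence alone forces the answer away from the boundary, with local cohomology genuinely needed only at two values of $m$ --- which is precisely the subtlety your closing remark identifies.
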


\begin{proof}
    When $p=1$ or $2$, one has $G=P_2$ or $P_4$, respectively. Whence,
    $\reg(S/\calJ_{K_m,G})=2p-1$ by \Cref{lem:reg_path}.  Meanwhile, we have
    assumed from the beginning that $m\ge 2$. So $m+1\ge 3\ge 2p-1$.

    We will now assume that $p\ge 3$. Take $v=2p-1\in V(G)$ as in
    \Cref{obs:f_p_v}. We can derive the following:
    \begin{itemize}
        \item $\reg(S_{G_v}/\calJ_{K_m,G_v})=\min\{2p-1,m\}$ by
            \Cref{thm:FkW_pure_reg};
        \item  $\reg(S_{G_v\setminus v}/\calJ_{K_m,G_v\setminus
            v})=\min\{2p-2,m\} $ by \Cref{thm:FkW_pure_reg};
        \item $\reg(S_{G\setminus v}/\calJ_{K_m,G\setminus
            v})=\min\{2p-3,m+1\}$ by inductive hypothesis.
    \end{itemize}
    Therefore, it follows from
    \Cref{depthlemma}\ref{depthlemma-c} and the short exact sequence
    \eqref{eqn:SES-1} in \Cref{rem:exactseq} that
    \[
        \reg(S/\calJ_{K_m,G})  \le \max\{\max
            \{\reg(S/\calJ_{K_m,G_v}),\reg(S'/\calJ_{K_m,G\setminus v})\},
        \reg(S'/\calJ_{K_m,G_v\setminus v})+1\}.
    \]
    It is then clear  that we have both
    \begin{align*}
        \reg(S/\calJ_{K_m,G})  &\le \max\{\max\{2p-1,2p-3\},(2p-2)+1\}= 2p-1,
        \intertext{and}
        \reg(S/\calJ_{K_m,G})  &\le \max\{\max\{m,m+1\},m+1\}= m+1.
    \end{align*}
    Thus,
    \begin{equation}
        \reg(S/\calJ_{K_m,G})  \le \min\{2p-1,m+1\}.
        \label{eqn:Fp_upper_bound}
    \end{equation}
    To show that equality holds in \eqref{eqn:Fp_upper_bound}, we have four
    cases.
    \begin{enumerate}[i]
        \item \label{thm:reg_Fp_proof_i}
            When $m\le 2p-4$, then the three regularities above are $m$, $m$ and $m+1$ respectively.  In particular,
            \[
                \max \{\reg(S_{G_v}/\calJ_{K_m,G_v}),\reg(S_{G\setminus
                v}/\calJ_{K_m,G\setminus v})\}\ne \reg(S_{G_v\setminus
            v}/\calJ_{K_m,G_v\setminus v}).
        \]
        It follows from \Cref{depthlemma}\ref{depthlemma-c} that $\reg(S/\calJ_{K_m,G}) =\max\{\max\{m,m+1\},m+1\}= m+1$, confirming the assertion in this case.

    \item \label{thm:reg_Fp_proof_ii}
        If $m=2p-3$, then by \Cref{prop:Fp}\ref{prop:Fp_c}  we have
        \begin{align*}
            \reg(S/\calJ_{K_m,G}) &\ge (2p+m-1) +
            a_{2p+m-1}(S/\calJ_{K_m,G}) \\
            &\ge 2p+m-1-2p+2 =m+1.
        \end{align*}
        Therefore, $\reg(S/\calJ_{K_m,G})=m+1$ by the inequality in
        \eqref{eqn:Fp_upper_bound}, confirming the assertion in this case.

    \item If $m=2p-2$, one can argue as in \ref{thm:reg_Fp_proof_ii} by
        \Cref{prop:Fp}\ref{prop:Fp_b}.
        Therefore, $\reg(S/\calJ_{K_m,G})=2p-1$ by the inequality in
        \eqref{eqn:Fp_upper_bound}, confirming the assertion in this case.

    \item When $m\ge 2p-1$, similarly to  \ref{thm:reg_Fp_proof_i}, we can see that $\reg(S/\calJ_{K_m,G}) =\max\{\max\{2p-1,2p-3\},(2p-2)+1\}= 2p-1$, confirming the assertion in this case.  \qedhere
\end{enumerate}
\end{proof}

\section{Depth results involving $\circ$ operations and $*$ operations}
\label{sec:depth}

Starting from this section, we will study some simple graphs obtained from the fan graphs and $F_p$ graphs by using the $\circ$ operation or the $*$ operation. The main task of this section is to provide some exact formulas for the depth of the generalized binomial edge ideals of these graphs. 
We start by recalling from \cite{MR3779601} the two special
gluing operations  mentioned earlier.

\begin{Definition}
    \label{circ_*_operations}
    For $i= 1,2$, let $G_i$ be a graph with 
    a leaf $f_i$. Furthermore, let $v_i$ be the neighbor vertex of $f_i$ in $G_i$.
    \begin{enumerate}[a]
        \item Let $G$ be the graph obtained from $G_1$ and $G_2$ by  first removing the leaves $f_1, f_2$, and then identifying the vertices $v_1$ and $v_2$.  In this case, we say that $G$ is obtained from $G_1$ and $G_2$ by the \emph{$\circ$ operation} and write $G=(G_1,f_1) \circ (G_2,f_2)$ or simply $G=G_1 \circ G_2$. If $v_1$ and $v_2$ are identified as the vertex $v$ in $G$, then we also write $G= G_1\circ_v G_2$. Unless otherwise specified, when we perform the $\circ$ operation in this way, we always implicitly assume that neither $G_1$ nor $G_2$ is the path graph $P_2$ of two vertices.

        \item Let $H$ be the graph obtained from $G_1$ and $G_2$ by identifying the vertices $f_1$ and $f_2$. In this case, we say that $H$ is obtained from $G_1$ and $G_2$ by the \emph{$*$ operation} and write $H=(G_1,f_1)*(G_2,f_2)$ or simply $H= G_1 * G_2$. If we denote the identified vertex in $H$ by $f$, then we also write $H= G_1 *_f G_2$.
    \end{enumerate}
\end{Definition}

To avoid confusion, we provide the fan graphs and the $F_p$ graphs with additional ``marked'' structures.

\begin{Definition}
    \begin{enumerate}[a]
        \item Let $G=F_k^W(K_n)$ be a $k$-fan with $k\ge 1$. When $k=1$, we say that $G$ is a \emph{marked} $1$-fan with the \emph{marked leaf} $f$ where $f$ is the unique leaf of $G$. When $k\ge 2$, we say that $G$ is a \emph{marked} $k$-fan with the \emph{marked leaves} $f_1$ and $f_2$, if $f_1$ and $f_2$ are two fixed leaves of $G$. Let $\mathbf{Fan}$ be the class of marked fans 
            and $\mathbf{Fan}^p$ be the subclass of marked pure fans.
        \item Let $\mathbf{F}$ be the class of graphs $F_p$ with $p\ge 1$. Each such $F_p$ is automatically marked with its two  unique leaves.
    \end{enumerate}
\end{Definition}

Next, we glue  the above marked graphs together  by the operations introduced in
\Cref{circ_*_operations}.

\begin{Definition}
    \label{FFan}
    Let $\mathbf{FFan}$ be the smallest class of simple graphs satisfying the
    following conditions:
    \begin{enumerate}[a]
        \item \label{def:FFan_a}
            $\mathbf{F}\cup\mathbf{Fan}\subset \mathbf{FFan}$;
        \item $\mathbf{FFan}$ is closed with respect to the $*$ operation and
            the $\circ$ operation using only the marked leaves.
    \end{enumerate}
    Moreover, if we replace $\mathbf{Fan}$ in \ref{def:FFan_a} with $\mathbf{Fan}^p$, then we obtain the subclass $\mathbf{FFan}^p$.
    Note that if $G_1,G_2,G_3\in \mathbf{F}\cup\mathbf{Fan}$, then $(G_1
    \circledast_1 G_2)\circledast_2 G_3=G_1 \circledast_1 (G_2 \circledast_2
    G_3)$ whenever each $\circledast_i$ is either a $*$ operation or a $\circ$
    operation using only the marked leaves.  Therefore, for each $G\in
    \mathbf{FFan}$, we can find $G_1,\dots,G_t$ such that
    \begin{enumerate}[a]
        \item each $G_i$ belongs to $\mathbf{F}\cup \mathbf{Fan}$;
        \item $G=G_1 \circledast_1 \cdots \circledast_{t-1} G_t$ where each
            $\circledast_i$ is either a $*$ operation or a $\circ$ operation
            using only marked leaves.
    \end{enumerate}
    Notice that $F_1=P_2$ and $F_2=P_4=F_1 * F_1 * F_1$. Thus, the above decomposition of $G$ is not unique in general. Without loss of generality, we can assume that $t$ is the minimum in which this happens. Then, we will say that $G$ has $t$ \emph{$\mathbf{FFan}$-components} and write $t=\ffc(G)$.

\end{Definition}

If $G$ is a connected bipartite graph, then the binomial edge ideal $J_G$ is Cohen--Macaulay if and only if $G\in \mathbf{FFan}$ by \cite[Theorem 6.1]{MR3779601}. It is clear that in this case, every $\mathbf{FFan}$ component of $G$ will necessarily belong to $\mathbf{F}$.

\begin{Remark}
    If we do not specify the marked leaves, then the $*$ operation and the $\circ$ operation will not be associative. For example, let $G_2$ be a fan graph with at least $3$ leaves in different branches. Let $G_1,G_3,G_4$ be all $\mathbf{F}$ graphs. Suppose that $G=((G_1*G_2)*G_3)*G_4$, where we use  all the leaves in $G_2$. Obviously, $G\ne (G_1*G_2)*(G_3*G_4)$, since in $G_3*G_4$ we cannot use leaves from $G_2$.
\end{Remark}

In the following, we first present  three basic scenarios when using
the $\circ$ operation.

\begin{Observation}
    \label{obs:circ}
    For $i=1,2$, let $G_i=F_{n_i}$ with $n_i\ge 2$. Meanwhile, for $i=3,4$,
    let $G_i= F_{k_i}^{W_i}(K_{n_i})$ be a $k_i$-fan graph on $W_i=W_{i1}
    \sqcup \cdots \sqcup W_{ik_i}$ with $k_i\ge 1$ and $n_i\ge 3$. 
    Here, we consider $G=G_{i_1}\circ_v G_{i_2}$. Let $H$ be the complete graph on the vertex set $N_G[v]$.
    \begin{enumerate}[a]
        \item   \label{obs:circ_a}If $G= G_1\circ_v G_2$,  then
            \begin{itemize}
                \item $G_v=F_2^W(H)$ is a $2$-pure fan graph of $H$ on
                    $W\coloneqq N_G(v)$,
                \item $G_v\setminus v=F_2^W(H\setminus v)$ is a $2$-pure fan
                    of $H\setminus v$ on $W$, and
                \item $G\setminus v=F_{n_1-1}\sqcup F_{n_2-1}$.
            \end{itemize}
        \item \label{obs:circ_b}
            Suppose that
            $G=(G_3,f_3)\circ_v (G_4,f_4)$ and $f_i\in W_{i1}$ for $i=3,4$.
            Then, it is easy to observe that
            \begin{itemize}
                \item $G_v$ is a $(k_3+k_4-2)$-fan graph of $H$ on $W
                    \coloneqq (W_3 \setminus W_{31}) \sqcup (W_4 \setminus
                    W_{41})$,
                \item $G_v\setminus v$ is a $(k_3+k_4-2)$-fan graph of
                    $H\setminus v$ on $W$, and
                \item $G\setminus v$ is the disjoint union of two fan graphs.
            \end{itemize}
        \item  \label{obs:circ_c}
            Suppose that $G=(G_1,f_1)\circ_v (G_4,f_4)$ and $f_4\in W_{41}$.
            Then, it is easy to observe that
            \begin{itemize}
                \item $G_v=F_{k_4}^{W}(H)$ is a $k_4$-fan graph of $H$ on $W
                    \coloneqq N_{G_1\setminus f_1}(v) \sqcup (W_4\setminus
                    W_{41})$,
                \item $G_v\setminus v=F_{k_4}^{W}(H\setminus v)$ is a
                    $k_4$-fan of $H\setminus v$ on $W$, and
                \item $G\setminus v$ is the disjoint union of $F_{n_1-1}$ and
                    a fan graph.
            \end{itemize}
    \end{enumerate}
    Furthermore, in each case we have  $|V(G_v)|= |V(G)|= |V(G_1)|+ |V(G_2)|-3$, and $|V(G_v\setminus v)|=|V(G\setminus v)|= |V(G)|-1$.
\end{Observation}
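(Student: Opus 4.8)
The plan is to establish all three cases uniformly, reading each structural assertion off the definition of the $\circ$ operation in \Cref{circ_*_operations} and of the construction $G\mapsto G_v$. Recall that $G=G_{i_1}\circ_v G_{i_2}$ arises by deleting the marked leaves $f_{i_1},f_{i_2}$ and then identifying their neighbors $v_{i_1},v_{i_2}$ into the single vertex $v$. The first step is to record the local picture at $v$: since the two sides meet only in $v$, the neighborhood decomposes as the disjoint union $N_G(v)=N_{G_{i_1}\setminus f_{i_1}}(v_{i_1})\sqcup N_{G_{i_2}\setminus f_{i_2}}(v_{i_2})$, and $H$ is by definition the complete graph on $N_G[v]$.

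For $G\setminus v$, the vertex $v$ is the only vertex shared by the two sides, so removing it disconnects them and $G\setminus v=(G_{i_1}\setminus\{f_{i_1},v_{i_1}\})\sqcup(G_{i_2}\setminus\{f_{i_2},v_{i_2}\})$. I would then identify each summand. For a factor $F_{n_i}$, the third item of \Cref{obs:f_p_v} shows that $F_{n_i}\setminus v_i$ is $F_{n_i-1}$ together with an isolated vertex, which is precisely the marked leaf $f_i$; hence $F_{n_i}\setminus\{f_i,v_i\}=F_{n_i-1}$. For a factor $F_{k_i}^{W_i}(K_{n_i})$, the marked leaf forces $h_{i1,1}=1$, and deleting that leaf together with the hub $v_i=w_{i1,1}$ leaves the complete base $K_{n_i-1}$ with the remaining cliques reattached, which is again a fan graph. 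This gives the claimed form of $G\setminus v$ in (a), (b), and (c).

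The substance of the statement is the identification of $G_v$. The key point is that forming $G_v$ merely completes $N_G(v)$ into a clique; on the $G_i$-side this has exactly the effect of passing to $(G_i\setminus f_i)_{v_i}$, and the two sides are then welded together by the cross edges that make all of $N_G(v)$ mutually adjacent, turning $N_G[v]$ into the complete graph $H$. I would therefore analyze $(G_i\setminus f_i)_{v_i}$ one factor at a time. For $F_{n_i}$, completing the neighborhood of $v_i$ produces the $1$-pure fan of $K_{n_i}$ on the fan set $N_{G_i\setminus f_i}(v_i)$ (the far vertices forming a single pure branch). For $F_{k_i}^{W_i}(K_{n_i})$, every new vertex of the first branch is adjacent to $v_i=w_{i1,1}$ and hence lies in $N_G(v)$, so completing $N_G(v)$ absorbs the whole first branch into $H$; by contrast, the new vertices of the branches indexed $\ell\ge 2$ are \emph{not} adjacent to $v$ and are left untouched, so they persist as fan branches attached to the distinguished vertices $W_{i\ell}\subseteq V(H)$. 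Assembling the two sides, the branches that survive are one pure branch for each $F_{n_i}$ factor and the $k_i-1$ branches for each $F_{k_i}^{W_i}(K_{n_i})$ factor, which produces a $2$-pure fan in (a), a $(k_3+k_4-2)$-fan in (b), and a $k_4$-fan in (c), each with base $H$ and with the fan set $W$ exactly as stated.

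It remains to describe $G_v\setminus v$ and to check the numerology. Since the deleted vertex $v$ is the hub of the fan $G_v$ and in particular does not lie in the fan set $W$, removing it simply replaces the base $H$ by $H\setminus v$ and keeps the branches, giving the stated fans. The vertex counts are immediate: the $\circ$ operation discards two leaves and merges two vertices, so $|V(G)|=|V(G_1)|+|V(G_2)|-3$, while $G\mapsto G_v$ preserves the vertex set and deleting $v$ lowers the count by one. I expect the only delicate part to be the bookkeeping in case (b): for a possibly non-pure fan one must verify that the \emph{entire} first branch, including higher cliques with $h_{i1,j}\ge 2$ whose new vertices are all adjacent to $v$, is absorbed into $H$, that each branch with $\ell\ge 2$ survives with its original clique sizes, and that completing $N_G(v)$ introduces no spurious edges among the surviving new vertices.
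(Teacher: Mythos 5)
Your proposal is correct and follows exactly the route the paper intends: this statement is an unproved Observation, and its implicit justification is precisely your direct verification from the definitions (splitting $N_G(v)$ as the disjoint union of the two factors' neighborhoods, noting that the entire first branch of a fan factor is absorbed into the clique $H$ because all of its new vertices and all of their neighbors lie in $N_G[v]$, while branches not meeting $N_G(v)$ survive untouched, and then counting vertices). The only detail worth flagging is that invoking \Cref{obs:f_p_v} for an arbitrary marked leaf of $F_{n_i}$ tacitly uses the automorphism of $F_p$ that swaps its two leaves, which is immediate from the edge description of $F_p$.
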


\begin{Lemma}
    \label{lem:depth_circ_trick}
    Let $M$ be a positive integer. Suppose that for every $H$ in
    $\mathbf{FFan}$ with $\ffc(H)\le M$, we always have
    $\depth(S_H/\calJ_{K_m,H}) =m+|V(H)|-1$. Now, let $G_1$ and $G_2$ be two
    members of $\mathbf{FFan}$ such that $\ffc(G_1)+\ffc(G_2)\le M+1$. Then,
    for $G=G_1\circ G_2$, we also have $\depth(S_G/\calJ_{K_m,G})=m+|V(G)|-1$.
\end{Lemma}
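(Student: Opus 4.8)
The plan is to apply the short exact sequence machinery of \Cref{rem:exactseq} at the single vertex where $G_1$ and $G_2$ are glued, compute the depths of the three resulting quotients using the inductive hypothesis (together with additivity of depth over disjoint unions), and then feed everything into the equality case of \Cref{depthlemma}\ref{depthlemma-a}. First I would fix notation: write $G=G_1\circ_v G_2$, where $v$ is the vertex obtained by deleting the marked leaves $f_1,f_2$ and identifying their neighbors $v_1,v_2$. Since by convention neither $G_1$ nor $G_2$ is $P_2$, the vertex $v$ retains neighbors coming from both sides, hence is internal in $G$, so \Cref{rem:exactseq} applies and produces the exact sequence \eqref{eqn:SES-1} for this choice of $v$.

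The heart of the argument is the identification of $G_v$, $G\setminus v$, and $G_v\setminus v$. Let $C_1\subseteq G_1$ and $C_2\subseteq G_2$ be the $\mathbf{F}\cup\mathbf{Fan}$ building blocks containing $v$. Because the operations producing $G_v$, $G\setminus v$, and $G_v\setminus v$ only alter $v$ and its neighborhood, they affect only $C_1$ and $C_2$ and leave the remaining blocks (which are glued to $C_1,C_2$ at marked leaves or their neighbors distinct from $v$) untouched. Applying \Cref{obs:circ} to the pair $(C_1,C_2)$ then yields: (i) $G_v$ is obtained from $G_1,G_2$ by replacing the two blocks $C_1,C_2$ with the single fan graph $(C_1\circ_v C_2)_v$, so $G_v\in\mathbf{FFan}$ with $\ffc(G_v)\le \ffc(G_1)+\ffc(G_2)-1\le M$ and $|V(G_v)|=|V(G)|$; (ii) likewise $G_v\setminus v\in\mathbf{FFan}$ with the same bound on $\ffc$ and $|V(G_v\setminus v)|=|V(G)|-1$; and (iii) $G\setminus v=(G_1\setminus\{f_1,v_1\})\sqcup(G_2\setminus\{f_2,v_2\})$, where each factor lies in $\mathbf{FFan}$ with $\ffc\le\ffc(G_i)\le M$ and $|V|=|V(G_i)|-2$.

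With these identifications the depths follow at once. The inductive hypothesis gives $\depth(S_{G_v}/\calJ_{K_m,G_v})=m+|V(G)|-1$ and $\depth(S_{G_v\setminus v}/\calJ_{K_m,G_v\setminus v})=m+|V(G)|-2$, while additivity of depth over the disjoint union (the tensor-product-over-$\KK$ computation already used in \Cref{thm:fan_general}\ref{thm:fan_general_a}) combined with the hypothesis yields
\[
\depth(S_{G\setminus v}/\calJ_{K_m,G\setminus v})=\sum_{i=1,2}\bigl(m+|V(G_i)|-3\bigr)=2m+|V(G)|-3=(m+|V(G)|-1)+(m-2).
\]
Since $m\ge 2$, the middle term $N$ of \eqref{eqn:SES-1} has $\depth N=m+|V(G)|-1$, whereas the right-hand term $P$ has $\depth P=m+|V(G)|-2$. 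As $\depth N\ne\depth P$, the equality case of \Cref{depthlemma}\ref{depthlemma-a} gives $\depth(S_G/\calJ_{K_m,G})=\min\{\depth N,\depth P+1\}=m+|V(G)|-1$, which is exactly the claimed formula.

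The main obstacle is the structural step, namely verifying claims (i)--(iii) rigorously; this is pure graph bookkeeping but it is where all the care is required. One must confirm that forming $G_v$ merges precisely the two blocks $C_1,C_2$ into a single fan via \Cref{obs:circ} without disturbing the gluings to the remaining blocks, that consequently $\ffc(G_v)$ and $\ffc(G_v\setminus v)$ are bounded by $\ffc(G_1)+\ffc(G_2)-1\le M$, and that each piece of $G\setminus v$ is in fact connected and again lies in $\mathbf{FFan}$ with $\ffc\le M$ (so that the hypothesis may be invoked on it). The crux is checking that the neighbor $v_i$ of the marked leaf is never the vertex at which the rest of $G_i$ is attached, so that deleting $v_i$ does not fragment the graph in an uncontrolled way. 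Once these structural facts are established, the depth computation and the invocation of \Cref{depthlemma}\ref{depthlemma-a} are entirely routine.
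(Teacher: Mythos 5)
Your proposal is correct and follows essentially the same route as the paper's proof: the same choice of gluing vertex $v$, the same structural identifications of $G_v$, $G_v\setminus v$, and $G\setminus v$ via \Cref{obs:circ}, the same depth computations from the inductive hypothesis, and the same application of \Cref{depthlemma}\ref{depthlemma-a} to the sequence \eqref{eqn:SES-1}. The only difference is that you spell out details the paper leaves implicit (that $v$ is internal, that only the two blocks containing $v$ are altered, and the explicit equality-case check $\depth N\ne\depth P$), which is a faithful filling-in rather than a different argument.
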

\begin{proof}
    We have   $\ffc(G)\le \ffc(G_1)+\ffc(G_2)$.
    Suppose that $G= G_1 \circ_v G_2$.
    It follows from \Cref{obs:circ} that both $G_v$ and $G_v\setminus v$
    belong to $\mathbf{FFan}$, such that 
    $\ffc(G_v)\le \ffc(G)-1$ and 
    $\ffc(G_v\setminus v) \le \ffc(G)-1$.
    Meanwhile, $G\setminus v$ is the disjoint union of $G_1'$
    and $G_2'$, where $G_i'\coloneqq G_i\setminus \{f_i,v_i\}\in
    \mathbf{FFan}$ such that $\ffc(G_i')\le \ffc(G)$ for $i=1,2$. Thus, by assumption, we
    have
    \begin{align*}
        \depth(S_{G_v}/\calJ_{K_m,G_v}) & =m+|V(G)|-1,\\
        \depth(S_{G_v\setminus v}/\calJ_{K_m,G_v\setminus v}) & =m+|V(G)|-2,
        \\
        \intertext{and}
        \depth(S_{G\setminus v}/\calJ_{K_m,G\setminus v}) & =m+|V(G)|-1+(m-2).
    \end{align*}
    Applying
    \Cref{depthlemma}\ref{depthlemma-a} to the short exact sequence \eqref{eqn:SES-1} in \Cref{rem:exactseq}, we  obtain the desired formula.
\end{proof}

Next, we consider the application of $*$ operations. As a preparation, we need
the terminology of clique sum.

\begin{Definition}
    [{\cite[Definition 3.10]{JKS}}]
    Let $G_1$ and $G_2$ be two subgraphs of a graph $G$. If $G_1 \cap G_2 =
    K_\ell$, the complete graph on $\ell$ vertices with $G_1 \ne K_\ell$ and
    $G_2 \ne K_{\ell}$, then $G$ is called the \emph{clique sum} of $G_1$ and
    $G_2$ along the complete graph $K_{\ell}$, denoted by $G_1 \cup_{K_\ell}
    G_2$. If $\ell = 1$, the clique sum of $G_1$ and $G_2$ along a vertex $v$
    is denoted by $G_1 \cup_v G_2$ for short.
\end{Definition}

Obviously, gluing via the $*$ operation is a special clique sum in the case of $\ell=1$; the latter does not insist on involving leaves. Clique sum in the following scenario often occurs when we are dealing with gluing via the $*$ operations.

\begin{Observation}
    \label{obs:clique_sum}
    Let $G=G_1\cup_u G_2$ be the clique sum of two graphs $G_1$ and $G_2$.
    Suppose that $G_{1}= F_{k}^{W}(K_{n})$ is a $k$-fan graph of $K_{n}$ on $W$
    and $u\in V(K_n)\setminus W$. Suppose also that $u$ is a leaf
    of $G_2$ with the unique neighbor point $v$. Let $H$ be the complete
    graph on vertex set $N_G[v]$ and 
    $U=N_G(v)$.
    \begin{enumerate}[a]
        \item If $G_2=F_1$, then $G=F_{k+1}^{W\cup\{u\}}(K_n)$.
        \item Suppose that $G_2=F_p$ for some $p\ge 2$. Then $G_v$ is obtained
            from the fan graphs $F_{k+1}^{W\cup \{u\}}(K_{n})$ and
            $F_{2}^{U}(H)$ by the $\circ$ operation. Similarly, $G_v\setminus
            v= F_{k+1}^{W\cup \{u\}}(K_{n}) \circ F_{2}^{U}(H\setminus v)$.
            Furthermore, $G\setminus v$ is the disjoint union of $G_1$ and
            $F_{p-1}=G_2\setminus\{u,v\}$.
        \item Suppose that $G_2=F_{k'}^{W'}(K_{n'})$ is a fan graph with
            $k'\ge 1$ with $W'=W_1'\sqcup \cdots \sqcup W_{k'}'$ and $v\in
            W_1'$. Then $G_v = F_{k+1}^{W\cup \{u\}}(K_{n}) \circ
            F_{k'}^{(W'\setminus W_1') \cup \{u\}}(H)$ and $G_v\setminus v =
            F_{k+1}^{W\cup \{u\}}(K_{n}) \circ F_{k'}^{(W'\setminus W_1')\cup
            \{u\}}(H\setminus v)$. Furthermore, $G\setminus v$ is the disjoint
            union of the fan graphs $G_1$ and $G_2\setminus\{u,v\}$.
    \end{enumerate}
    Note that in each case, $|V(G_v)|=|V(G)|$ and $|V(G_v\setminus v)|=|V(G\setminus v)|=|V(G)|-1$.
\end{Observation}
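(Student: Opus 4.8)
The plan is to verify each of the three asserted structural descriptions by a direct graph-theoretic inspection, organized around one common reduction. Since $u$ is the only vertex shared by $G_1$ and $G_2$ in $G = G_1 \cup_u G_2$, and $v \neq u$ lies only in $G_2$, the neighborhood satisfies $N_G(v) = N_{G_2}(v)$, and it contains $u$ because $u$ is the unique leaf-neighbor of $v$. Hence $H$, the complete graph on $N_G[v]$, is assembled entirely from vertices of $G_2$ together with $u$, and passing from $G$ to $G_v$ only alters edges among the vertices of $N_G(v)$. This confines all the real work to the $G_2$-side, while $G_1$ is carried along unchanged and re-glued at $u$; the vertex-count assertions at the end are then immediate, since $G_v$ and $G$ share a vertex set and deleting $v$ drops exactly one vertex.

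For part (a), $F_1 = P_2$ is a single edge, so $G_1 \cup_u F_1$ merely attaches a pendant vertex $v$ at $u$. As $u \in V(K_n) \setminus W$, this pendant is precisely a new pure single-whisker branch on the singleton $\{u\}$, upgrading the $k$-fan $F_k^W(K_n)$ to the $(k+1)$-fan $F_{k+1}^{W \cup \{u\}}(K_n)$.

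For part (b), I would first identify the two leaves of $F_p$ as the vertices $1$ and $2p$, take $v$ to be the neighbor of the glued leaf, and compute that $N_{F_p}(v)$ is exactly the set of odd vertices, so completing $N_G(v)$ yields the clique $H \cong K_{p+1}$. The decisive remark is that the remaining even vertices of $F_p$ attach to nested subsets of these odd vertices, and therefore---once the odd vertices are completed into a clique---assemble into a single pure fan branch on $N_G(v) \setminus \{u\}$. Thus $G_v$ is $G_1$ glued at $u$ to $H$ equipped with this even-vertex pure fan; regarding the extra whisker at $u$ on each side as a marked leaf exhibits this gluing as the $\circ$ operation $F_{k+1}^{W \cup \{u\}}(K_n) \circ F_2^U(H)$, whose second factor is the $2$-fan with branches the whisker at $u$ and the even-vertex fan. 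The formula for $G_v \setminus v$ is identical with $H$ replaced by $H \setminus v$, and $G \setminus v = G_1 \sqcup (G_2 \setminus \{u,v\})$ because deleting $v$ isolates the leaf $u$ from the rest of $G_2$, leaving $F_{p-1}$ and attaching $u$ only through $G_1$.

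For part (c) the same template applies, and this is where the bookkeeping becomes the main obstacle. Here $u$ is the first whisker of a branch, which I would take to be branch $1$, so that $v = w_{1,1}'$; I then determine that $N_G(v)$ consists of $V(K_{n'}) \setminus \{v\}$ together with every new vertex of branch $1$, while---crucially---no new vertex of the branches $2, \dots, k'$ is adjacent to $v$. Completing $N_G(v)$ therefore absorbs the whole first branch of $G_2$ into the clique $H$, whereas the branches $2, \dots, k'$ are untouched and survive as fans attached along $W_2' \sqcup \cdots \sqcup W_{k'}' \subseteq V(K_{n'}) \subseteq V(H)$. Recognizing this residual structure as $F_{k'}^{(W' \setminus W_1') \cup \{u\}}(H)$---the branches past the first, augmented by a whisker at $u$---and gluing at $u$ via $\circ$ yields the stated identity, with $G_v \setminus v$ and $G \setminus v$ handled as in part (b). The steps I expect to guard most carefully are checking that completing $N_G(v)$ neither over-merges the higher branches nor leaves any branch-$1$ structure outside $H$, and confirming that $G_2 \setminus \{u,v\}$ is again a genuine fan: deleting $w_{1,1}'$ shifts each attachment $K_{a_{1,j}'}$ of branch $1$ to one sharing $\{w_{1,2}', \dots, w_{1,j}'\}$, which is still a valid fan branch because $a_{1,j}' - 1 > j - 1$.
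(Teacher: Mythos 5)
Your proposal is correct: the paper states this result as an Observation with no written proof, and your direct graph-theoretic verification (the whiskers at $u$ serving as the marked leaves consumed by the $\circ$ operation, the completion of $N_G(v)$ absorbing exactly the first branch in case (c) while leaving branches $2,\dots,k'$ intact, and the shifted attachments $a'_{1,j}-1>j-1$ showing $G_2\setminus\{u,v\}$ is again a fan) is precisely the inspection the paper leaves implicit. All three cases and the final vertex counts check out, so this matches the paper's intended approach.
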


\begin{Lemma}
    \label{lem:clique_sum}
    Let $M$ be a positive integer. Suppose that for every $H$ in
    $\mathbf{FFan}$ with $\ffc(H)\le M$, we always have
    $\depth(S_H/\calJ_{K_m,H})=m+|V(H)|-1$.
    Let $G=G_1\cup_u G_2$ be the
    clique sum of two graphs $G_1$ and $G_2$.
    Suppose  further that $G_{1}=F_{k}^{W}(K_{n})$ is a $k$-fan graph of
    $K_{n}$ on $W$ and $u\in V(K_n)\setminus W$. At the same time, suppose
    that $G_2\in \mathbf{FFan}$ with $\ffc(G_2)\le M$ and $u$ is a marked
    leaf of $G_2$.  Then, $\depth(S_G/\calJ_{K_m,G})=m+|V(G)|-1$.
\end{Lemma}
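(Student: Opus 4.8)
The plan is to run the argument of \Cref{lem:depth_circ_trick} almost verbatim: apply the short exact sequence \eqref{eqn:SES-1} to a carefully chosen internal vertex and read off the three depths using \Cref{obs:clique_sum}. First I would let $v$ be the unique neighbor of the marked leaf $u$ inside $G_2$. Since $u$ is a leaf of $G_2$, the vertex $v$ is internal in $G_2$, and because the clique sum identifies only the vertex $u$ and leaves $N_{G_2}(v)$ untouched, $v$ stays internal in $G$; hence \eqref{eqn:SES-1} with respect to $v$ is available. The one degenerate situation $G_2=F_1=P_2$ I would dispose of at the outset: there $G=F_{k+1}^{W\cup\{u\}}(K_n)$ is again a single fan, so the depth formula is immediate from \Cref{thm:fan_general}\ref{thm:fan_general_a}, and in any other case $v$ is genuinely internal.

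Assume now $G_2\neq F_1$. From \Cref{obs:clique_sum} I would record that $G_v$ and $G_v\setminus v$ are each a $\circ$ operation of the enlarged fan $F_{k+1}^{W\cup\{u\}}(K_n)$ with a member of $\mathbf{FFan}$ assembled from $G_2$, and that $|V(G_v)|=|V(G)|$ while $|V(G_v\setminus v)|=|V(G)|-1$. The first factor has a single $\mathbf{FFan}$-component and the second has at most $\ffc(G_2)\le M$ of them, so their component counts add up to at most $M+1$. This is exactly the hypothesis of \Cref{lem:depth_circ_trick} (note that I cannot apply the inductive hypothesis to $G_v$ directly, as $\ffc(G_v)$ may reach $M+1$), and it yields
\[
\depth(S_{G_v}/\calJ_{K_m,G_v})=m+|V(G)|-1,\qquad \depth(S_{G_v\setminus v}/\calJ_{K_m,G_v\setminus v})=m+|V(G)|-2.
\]

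For the remaining term, \Cref{obs:clique_sum} gives the disjoint union $G\setminus v=G_1\sqcup(G_2\setminus\{u,v\})$. Here $G_1=F_k^W(K_n)$ is a fan, whose quotient has depth $m+|V(G_1)|-1$ by \Cref{thm:fan_general}\ref{thm:fan_general_a}, while $G_2\setminus\{u,v\}\in\mathbf{FFan}$ with $\ffc\le M$, so the hypothesis supplies depth $m+|V(G_2)|-3$. As depth is additive along a disjoint union (the quotient is a tensor product over $\KK$) and $|V(G_1)|+|V(G_2)|=|V(G)|+1$, I would get
\[
\depth(S_{G\setminus v}/\calJ_{K_m,G\setminus v})=m+|V(G)|-1+(m-2).
\]
Feeding these into \Cref{depthlemma}\ref{depthlemma-a}: since $m\ge 2$, the middle module of \eqref{eqn:SES-1} has depth $m+|V(G)|-1$, whereas the right-hand module has depth $m+|V(G)|-2$; as these differ, the equality clause gives $\depth(S_G/\calJ_{K_m,G})=\min\{m+|V(G)|-1,(m+|V(G)|-2)+1\}=m+|V(G)|-1$, as required.

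I expect the real difficulty to lie not in this homological bookkeeping, which parallels \Cref{lem:depth_circ_trick} exactly, but in the structural step for a \emph{general} $G_2\in\mathbf{FFan}$ rather than the single blocks $F_1$, $F_p$, or one fan isolated in \Cref{obs:clique_sum}. Concretely, I must verify that $G_v$ and $G_v\setminus v$ really split as a $\circ$ product of the enlarged fan with an $\mathbf{FFan}$ graph whose component count is at most $M$, and that $G_2\setminus\{u,v\}$ remains in $\mathbf{FFan}$. This forces one to track how the part of $G_2$ lying beyond the block containing $v$ is transported through the completion $(\cdot)_v$ and the deletion of $v$, and to control the corresponding $\ffc$ estimates; the cleanest route is likely to argue along the $\mathbf{FFan}$-decomposition $G_2=G_2^{(1)}\circledast\cdots\circledast G_2^{(s)}$, isolate the block $G_2^{(1)}$ meeting $u$, and invoke the associativity recorded in \Cref{FFan}.
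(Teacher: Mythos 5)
Your proposal is correct and takes essentially the same route as the paper, whose entire proof reads: ``With \Cref{obs:clique_sum}, we can argue as in the proof of \Cref{lem:depth_circ_trick}.'' In fact you supply the details the paper leaves implicit: the degenerate case $G_2=F_1$, the key bookkeeping point that the depth hypothesis cannot be applied to $G_v$ directly (since $\ffc(G_v)$ may reach $M+1$) so that \Cref{lem:depth_circ_trick} must be invoked for $G_v$ and $G_v\setminus v$, and the extension of \Cref{obs:clique_sum} from a single block to a general $G_2\in\mathbf{FFan}$ via its block decomposition.
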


\begin{proof}
    With \Cref{obs:clique_sum}, we can argue as in the proof of
    \Cref{lem:depth_circ_trick}.
\end{proof}

 Finally, we are ready to present the promised depth result.

\begin{Theorem}
    \label{thm:full_depth_star_circ}
    For each $G\in \mathbf{FFan}$, we have
    $\depth(S/\calJ_{K_m,G})=m+|V(G)|-1$.
\end{Theorem}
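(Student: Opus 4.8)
The plan is to argue by strong induction on the number of $\mathbf{FFan}$-components $\ffc(G)$. When $\ffc(G)=1$ we have $G\in\mathbf{F}\cup\mathbf{Fan}$, and the formula is already available: \Cref{thm:fan_general}\ref{thm:fan_general_a} covers the marked fans, while \Cref{thm:fp_dim_depth}\ref{thm:fp_dim_depth_b} covers the graphs $F_p$ (recall $|V(F_p)|=2p$). For the inductive step I assume the formula for every member of $\mathbf{FFan}$ with at most $M$ components and treat $G$ with $\ffc(G)=M+1$. Using the decomposition into building blocks recorded in \Cref{FFan}, I peel off one block and write $G=G_1\circledast G_2$ with $G_1\in\mathbf{F}\cup\mathbf{Fan}$ and $G_2\in\mathbf{FFan}$, $\ffc(G_2)\le M$, where $\circledast$ is the final gluing operation.

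If $\circledast$ is the $\circ$ operation, then the inductive hypothesis is precisely the running assumption of \Cref{lem:depth_circ_trick}, and that lemma applied with this $M$ gives $\depth(S_G/\calJ_{K_m,G})=m+|V(G)|-1$ at once.

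The case $\circledast=*$ is the crux, and it is the step I expect to be the main obstacle: as flagged in the introduction, the $*$ operation does not lower $\ffc$ cleanly, and the identified vertex $f$ is a \emph{leaf} of both $G_1$ and $G_2$, so \Cref{lem:clique_sum} does not apply to $G$ itself. My plan is to run the short exact sequence \eqref{eqn:SES-1} at the internal vertex $v$ that is the neighbour of $f$ inside $G_1$ (thus $v=2p-1$ when $G_1=F_p$, via \Cref{obs:f_p_v}, and $v=w_{i,1}$ when $G_1$ is a fan). Completing the neighbourhood $N_G(v)$ absorbs the pendant piece carrying $f$ into the complete part, so that in both $G_v$ and $G_v\setminus v$ the vertex $f$ becomes a \emph{core} vertex of a fan; this exhibits $G_v$ and $G_v\setminus v$ as clique sums of a fan with $G_2$ of exactly the shape demanded by \Cref{lem:clique_sum}, giving $\depth(S_{G_v}/\calJ_{K_m,G_v})=m+|V(G)|-1$ and $\depth(S_{G_v\setminus v}/\calJ_{K_m,G_v\setminus v})=m+|V(G)|-2$ (using $|V(G_v)|=|V(G)|$). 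Meanwhile $G\setminus v$ is the disjoint union of $G_2$ with a strictly smaller building block, so additivity of depth over the tensor product, together with the known depths of the two pieces, gives $\depth(S_{G\setminus v}/\calJ_{K_m,G\setminus v})=2m+|V(G)|-3\ge m+|V(G)|-1$ because $m\ge 2$. Feeding these three values into \Cref{depthlemma}\ref{depthlemma-a}, where $\depth(S_{G_v\setminus v}/\calJ_{K_m,G_v\setminus v})$ is strictly smaller than the depth of the middle term and hence the equality clause applies, yields $\depth(S_G/\calJ_{K_m,G})=m+|V(G)|-1$ and closes the induction.

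The delicate point, which I would check case by case, is the structural claim that after completing $N_G(v)$ the identified vertex $f$ really lands in the complete core rather than in the fan set $W$, for every admissible shape of $G_1$ — the graphs $F_p$ as well as marked fans with $k=1$ and with $k\ge2$ — so that \Cref{lem:clique_sum} genuinely governs $G_v$ and $G_v\setminus v$. Once this identification is secured, the remaining bookkeeping is routine.
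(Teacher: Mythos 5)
Your proposal is correct and takes essentially the same route as the paper's own proof: induction on $\ffc(G)$, peeling off a single block, settling the $\circ$ case by \Cref{lem:depth_circ_trick}, and settling the $*$ case by running the sequence \eqref{eqn:SES-1} at the neighbour $v$ of the identified leaf inside the single block, so that $G_v$ and $G_v\setminus v$ become clique sums governed by \Cref{lem:clique_sum} while $G\setminus v$ splits as a disjoint union, after which \Cref{depthlemma}\ref{depthlemma-a} concludes. The only difference is cosmetic: you take the peeled-off $\mathbf{F}\cup\mathbf{Fan}$ block to be $G_1$ whereas the paper calls it $G_2$, and your ``delicate point'' (that the identified leaf lands in the complete core, not in the fan set $W$) is exactly what the paper records in \Cref{obs:clique_sum}.
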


\begin{proof}
    We prove by induction on $\ffc(G)$. The base case is when $\ffc(G)=1$,
    which follows from \Cref{thm:fan_general}\ref{thm:fan_general_a} and
    \Cref{thm:fp_dim_depth}\ref{thm:fp_dim_depth_b}. In the following, we assume
    that $\ffc(G)\ge 2$. Suppose that $G=(G_1,f_1)\circledast (G_2,f_2)$ with
    $\ffc(G_1)=\ffc(G)-1$ and $G_2\in \mathbf{F}\cup \mathbf{Fan}$.
    Obviously, we have two cases to check.
    \begin{enumerate}[a]
        \item Suppose that $\circledast=\circ$. In this case, we can apply
            \Cref{lem:depth_circ_trick}.
        \item Suppose that $\circledast=*$. In this case, let $v$ be the
            unique neighbor point of $f_2$ in $G_2$. Suppose that $H$ is the
            complete graph on $N_{G_2}[v]$. Then $G_v$ is the clique sum $G_1
            \cup_{f_2} G_2'$ where $G_2'$ is a $k'$-fan graph of $H$ on some
            subset $W\subset V(G_2)$ for some positive integer $k'$.
            Obviously, $G_v\setminus v$ is the clique sum $G_1 \cup_{f_2}
            (G_2'\setminus v)$ where $G_2'\setminus v$ is a $k'$-fan graph of
            $H\setminus v$ on the same $W$. Thus, by \Cref{lem:clique_sum}, we
            have $\depth(S/\calJ_{K_m,G_v}) =m+|V(G)|-1$ and
            $\depth(S'/\calJ_{K_m,G_v\setminus v})= m+|V(G)|
            -2$. Meanwhile, $G\setminus v$ is the disjoint union of $G_1$ and
            $G_2\setminus \{f_2,v\}$. Since $G_2\setminus \{f_2,v\}$
            still belongs to $\mathbf{F}\cup \mathbf{Fan}$, we have
            $\depth(S_{G\setminus v}/\calJ_{K_m,G\setminus v})=m+|V(G_1)|-1+m
            +|V(G_2)|-2-1=m+|V(G)|-1+(m-2)$ by induction.
            Applying \Cref{depthlemma}\ref{depthlemma-a} to the exact sequence (\ref{eqn:SES-1})  in \Cref{rem:exactseq}, we obtain
            the desired formula.  \qedhere
    \end{enumerate}
\end{proof}

\begin{Corollary}
    \label{cor:compare_a_inv}
    Suppose that $G$ is a graph in $\mathbf{FFan}$ and $v$ is an internal vertex in $G$. Assume that both $G_v$ and $G_v\setminus v$ belong to $\mathbf{FFan}$. Suppose further that $G\setminus v$ is the disjoint union of $G_1$ and $G_2$, both of which also belong to $\mathbf{FFan}$. Then, we have an exact sequence of local cohomology modules:
    \[
        0\to H_{S_+}^{m+|V(G)|-2}(S/\calJ_{K_m,G_v\setminus v}) \to H_{S_+}^{m+|V(G)|-1}(S/\calJ_{K_m,G}).
    \]
\end{Corollary}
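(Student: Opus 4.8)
The plan is to read off the claimed injection from the long exact sequence of local cohomology attached to the short exact sequence \eqref{eqn:SES-1} in \Cref{rem:exactseq} for the internal vertex $v$. Writing $d\coloneqq m+|V(G)|-1$, the whole argument reduces to checking that the two modules mapping \emph{into} the relevant cohomology of $S_{G_v\setminus v}/\calJ_{K_m,G_v\setminus v}$ vanish in degree $d-1$, so that the connecting homomorphism becomes injective. First I would record the depths of the modules in \eqref{eqn:SES-1}. Since $G$, $G_v$, and $G_v\setminus v$ all lie in $\mathbf{FFan}$ by hypothesis, \Cref{thm:full_depth_star_circ} gives $\depth(S/\calJ_{K_m,G})=d$; and using $|V(G_v)|=|V(G)|$ together with $|V(G_v\setminus v)|=|V(G)|-1$, it also gives $\depth(S_{G_v}/\calJ_{K_m,G_v})=d$ and $\depth(S_{G_v\setminus v}/\calJ_{K_m,G_v\setminus v})=d-1$.

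The one term needing extra care is $S_{G\setminus v}/\calJ_{K_m,G\setminus v}$, because $G\setminus v$ is disconnected and hence not a member of $\mathbf{FFan}$, so \Cref{thm:full_depth_star_circ} cannot be applied to it directly. Here I would use that the two components $G_1,G_2$ involve disjoint sets of variables, whence
\[
    \frac{S_{G\setminus v}}{\calJ_{K_m,G\setminus v}}\cong \frac{S_{G_1}}{\calJ_{K_m,G_1}}\otimes_{\KK}\frac{S_{G_2}}{\calJ_{K_m,G_2}},
\]
and the depth of a tensor product of two finitely generated graded $\KK$-algebras is the sum of their depths (Künneth for local cohomology). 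Applying \Cref{thm:full_depth_star_circ} to $G_1,G_2\in\mathbf{FFan}$ and using $|V(G_1)|+|V(G_2)|=|V(G)|-1$ then yields
\[
    \depth\left(\frac{S_{G\setminus v}}{\calJ_{K_m,G\setminus v}}\right)=(m+|V(G_1)|-1)+(m+|V(G_2)|-1)=d+(m-2)\ge d,
\]
since $m\ge 2$.

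Finally I would feed these depths into the segment
\begin{multline*}
    H_{S_+}^{d-1}\left(\frac{S_{G_v}}{\calJ_{K_m,G_v}}\right)\oplus H_{S_+}^{d-1}\left(\frac{S_{G\setminus v}}{\calJ_{K_m,G\setminus v}}\right)\\
    {}\to H_{S_+}^{d-1}\left(\frac{S_{G_v\setminus v}}{\calJ_{K_m,G_v\setminus v}}\right)\xrightarrow{\ \partial\ } H_{S_+}^{d}\left(\frac{S}{\calJ_{K_m,G}}\right)
\end{multline*}
of the long exact sequence coming from \eqref{eqn:SES-1}, where I have used that $H_{S_+}^{i}$ commutes with finite direct sums. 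Because $H_{S_+}^{i}(M)=0$ for $i<\depth(M)$, both summands on the first line vanish: $H_{S_+}^{d-1}(S_{G_v}/\calJ_{K_m,G_v})=0$ (depth $d$) and $H_{S_+}^{d-1}(S_{G\setminus v}/\calJ_{K_m,G\setminus v})=0$ (depth $\ge d$). Hence $\ker\partial=0$ and $\partial$ is injective, which is precisely the asserted exact sequence $0\to H_{S_+}^{m+|V(G)|-2}(S/\calJ_{K_m,G_v\setminus v})\to H_{S_+}^{m+|V(G)|-1}(S/\calJ_{K_m,G})$. The only genuinely non-formal step is the depth of the disconnected graph $G\setminus v$: one must pass to its connected components and invoke tensor-product additivity rather than apply \Cref{thm:full_depth_star_circ} to $G\setminus v$ itself; everything else is bookkeeping with the long exact sequence and the vanishing-below-depth principle.
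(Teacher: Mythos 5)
Your proof is correct and follows essentially the same route as the paper: both compute the depths of all four modules in the sequence \eqref{eqn:SES-1} via \Cref{thm:full_depth_star_circ} (passing to the components $G_1,G_2$ and summing for the disconnected graph $G\setminus v$), and then read off the injectivity of the connecting map from the vanishing $H_{S_+}^{m+|V(G)|-2}$ of the middle term. The only difference is expository: you spell out the K\"unneth-type additivity of depth for the disjoint union and the relevant segment of the long exact sequence, which the paper uses implicitly.
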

\begin{proof}
    Since these graphs all belong to $\mathbf{FFan}$ and  $|V(G)|=|V(G\setminus v)|+1=|V(G_1)|+|V(G_2)|+1$, we have
    \begin{align*}
        \depth(S/\calJ_{K_m,G})               & =
        \depth(S/\calJ_{K_m,G_v})=m+|V(G)|-1,                               \\
        \depth(S/\calJ_{K_m, G_v\setminus v}) & = m+|V(G)|-2,               \\
        \intertext{and}
        \depth(S/\calJ_{K_m, G\setminus v})   & =
        \depth(S/\calJ_{K_m, G_1})+ \depth(S/\calJ_{K_m, G_2})              \\
        & = m+|V(G_1)|-1+m+|V(G_2)|-1 \\
        & =2m+|V(G)|-3>m+|V(G)|-2.
    \end{align*}
    Thus, the desired exact sequence of local cohomology modules follows from the short exact sequence (\ref{eqn:SES-1}) in \Cref{rem:exactseq}.
\end{proof}

\section{Regularity results involving $\circ$ operations and $*$ operations}
\label{sec:regularity}

In this section, we will compute the regularity of the generalized binomial edge ideals of graphs in $\mathbf{FFan}$.
To take advantage of the formula in \Cref{thm:FkW_pure_reg}, we focus on the graphs in $\mathbf{FFan}^p$.

\subsection{Glue via the $\circ$ operation}
First of all, we compute the regularity of the generalized binomial edge ideal of a graph in $\mathbf{FFan}^p$, which is obtained from $\mathbf{F} \cup \mathbf{Fan}^p$ by a single $\circ$ operation.

\begin{Remark}
    Suppose that $G=F_k^W(K_n)$ is a $k$-pure fan of $K_n$ on $W$. If $n=2$, since we always have $k\le n$, it is easy to see that $G$ can also be viewed as a path graph, or a $1$-pure fan of $K_3$ on some $W'$. 
    Because of this simple observation, we are only interested in the case when $n\ge 3$ in this subsection. The case when the gluing involves a path graph will be  treated in the next subsection.
\end{Remark}

\begin{Definition}
    To simplify the notations and arguments below, when we say that a graph $G$ is a \emph{$k$-pure pseudo fan graph}, we mean that it is either a $k$-pure fan graph $F_k^W(K_n)$, or the $F_p$ graph with $k=2$.
\end{Definition}

The following result is a generalization of \cite[Propositions 4.3 and 4.4]{MR3991052} where $m=2$.

\begin{Proposition}
    \label{prop:reg_Fan_Fan_circ}
    Let $G=(G_1,f_1) \circ (G_2,f_2)$, where $G_i$ is a $k_i$-pure pseudo fan graph with $|V(G_i)|-k_i\ge 3$ and $k_i\ge 1$ for each $i$. Suppose also that $v_i$ is the neighbor point of $f_i$ in $G_i$ and $G_i\setminus \{v_i,f_i\}$ is a $q_i$-pure pseudo fan graph for each $i$. In the end, assume that $v_1$ and $v_2$ are identified as the vertex $v$ in $G$. Then, we have
    \begin{align}
        \reg(S/\calJ_{K_m,G}) & \le \max\Big\{
            \min\{|V(G_1)|-3,m+q_1-1\}+\min\{|V(G_2)|-3,m+q_2-1\}, \notag \\
            & \quad \min\{|V(G)|-1,m+k_1+k_2-3\},\min\{|V(G)|-1,m+k_1+k_2-2\}
            \label{eqn:fan_fan_circ_reg_upper_bound}
        \Big\}.
    \end{align}
    Furthermore, if $q_i=k_i$ for each $i$, then we have an equality in \eqref{eqn:fan_fan_circ_reg_upper_bound}.
\end{Proposition}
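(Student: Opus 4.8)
The plan is to apply the short exact sequence \eqref{eqn:SES-1} of \Cref{rem:exactseq} with respect to the identified vertex $v$, which is internal in $G$, and to read off the three graphs $G_v$, $G_v\setminus v$, and $G\setminus v$ from the structural description in \Cref{obs:circ}. In each of its three scenarios (both factors in $\mathbf{F}$, both in $\mathbf{Fan}^p$, or one of each) the vertex $v$ absorbs the two marked branches into a single clique $H$, while the remaining branches of $G_1$ and $G_2$ persist unchanged; thus $G_v$ and $G_v\setminus v$ are $(k_1+k_2-2)$-pure fan graphs with $|V(G_v)|=|V(G)|$ and $|V(G_v\setminus v)|=|V(G)|-1$, and $G\setminus v$ is the disjoint union $(G_1\setminus\{v_1,f_1\})\sqcup(G_2\setminus\{v_2,f_2\})$ of the two $q_i$-pure pseudo fans. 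The first step is therefore to verify these identifications, and in particular to confirm the fan count $k_1+k_2-2$ and the preservation of purity in each case (here one uses that the surviving branches are exactly the non-marked branches of the inputs).

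For the upper bound I would then compute the regularity of each term. By \Cref{thm:FkW_pure_reg} one has $\reg(S/\calJ_{K_m,G_v})=\min\{|V(G)|-1,m+k_1+k_2-3\}$ and $\reg(S/\calJ_{K_m,G_v\setminus v})=\min\{|V(G)|-2,m+k_1+k_2-3\}$, so the shifted term $\reg(S/\calJ_{K_m,G_v\setminus v})+1$ equals $\min\{|V(G)|-1,m+k_1+k_2-2\}$. By \Cref{lem:direct_sum} together with \Cref{thm:FkW_pure_reg} and \Cref{thm:reg_Fp}, the disjoint union gives $\reg(S/\calJ_{K_m,G\setminus v})=\min\{|V(G_1)|-3,m+q_1-1\}+\min\{|V(G_2)|-3,m+q_2-1\}$, which is precisely the first entry of \eqref{eqn:fan_fan_circ_reg_upper_bound}. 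Plugging these into \Cref{depthlemma}\ref{depthlemma-c} applied to \eqref{eqn:SES-1} yields the claimed bound; note that the $G_v$-entry never exceeds the shifted $G_v\setminus v$-entry, so the governing maximum is between the first and the last entries.

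For the equality under the hypothesis $q_i=k_i$ I would supply the matching lower bound from two independent sources. Since $G\setminus v$ is an induced subgraph of $G$, \Cref{cor:induced_graph} gives $\reg(S/\calJ_{K_m,G})\ge\reg(S/\calJ_{K_m,G\setminus v})$, which recovers the first entry. To reach the last entry I would invoke \Cref{cor:compare_a_inv}: as $G,G_v,G_v\setminus v$ and the two components of $G\setminus v$ all lie in $\mathbf{FFan}$, there is an injection $H_{S_+}^{m+|V(G)|-2}(S/\calJ_{K_m,G_v\setminus v})\hookrightarrow H_{S_+}^{m+|V(G)|-1}(S/\calJ_{K_m,G})$, hence $a_{m+|V(G)|-1}(S/\calJ_{K_m,G})\ge a_{m+|V(G)|-2}(S/\calJ_{K_m,G_v\setminus v})$. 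Because $G_v\setminus v$ is a $(k_1+k_2-2)$-pure fan, \Cref{lem:fan_kn_v<m+k}\ref{lem:fan_kn_v<m+k_b} and \ref{lem:fan_kn_v<m+k_c} bound this $a$-invariant below; splitting on whether $|V(G)|\le m+k_1+k_2-1$ or $|V(G)|\ge m+k_1+k_2-1$ and restoring the cohomological shift $m+|V(G)|-1$ gives $\reg(S/\calJ_{K_m,G})\ge\min\{|V(G)|-1,m+k_1+k_2-2\}$. Together the two lower bounds give $\reg(S/\calJ_{K_m,G})\ge\max\{A,C\}$, matching the upper bound, where $A$ and $C$ denote the first and last entries of \eqref{eqn:fan_fan_circ_reg_upper_bound}.

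The main obstacle I expect is the local cohomology lower bound in the equality step. One must check carefully that the hypotheses of \Cref{cor:compare_a_inv} genuinely hold — that all five graphs belong to $\mathbf{FFan}$ and that the strict depth inequality forcing the injection is in place — and, crucially, that $G_v\setminus v$ is an honest \emph{pure} fan so that \Cref{lem:fan_kn_v<m+k} applies; it is here that the assumption $q_i=k_i$ is used to guarantee that the branch carrying the marked leaf does not degenerate and that the first entry of the bound is actually attained. The remaining difficulty is the uniform bookkeeping of the fan count and purity of $G_v$ and $G_v\setminus v$ across the three cases of \Cref{obs:circ}; once these structural facts are secured, the argument reduces to the numerical case analysis comparing $|V(G)|$ with $m+k_1+k_2-1$.
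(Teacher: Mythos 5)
Your proof is correct, and its upper-bound half is the same as the paper's: the identification of $G_v$, $G_v\setminus v$ and $G\setminus v$ through \Cref{obs:circ}, the three regularity values computed from \Cref{thm:FkW_pure_reg} and \Cref{thm:reg_Fp}, and \Cref{depthlemma}\ref{depthlemma-c} applied to \eqref{eqn:SES-1}. The equality step, however, takes a genuinely different route. Writing $A$, $B$, $C$ for the three entries of \eqref{eqn:fan_fan_circ_reg_upper_bound}, the paper argues by cases on $m+k_1+k_2-|V(G)|$: when this quantity is $\le -1$ or $\ge 2$ it checks that the middle and right terms of \eqref{eqn:SES-1} have different regularities, so the sharpness clause of \Cref{depthlemma}\ref{depthlemma-c} gives equality outright, and only in the boundary cases $m+k_1+k_2-|V(G)|\in\{0,1\}$ does it invoke the local cohomology injection of \Cref{cor:compare_a_inv} together with \Cref{lem:fan_kn_v<m+k}\ref{lem:fan_kn_v<m+k_c}. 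You instead establish two lower bounds valid in all cases: $\reg(S/\calJ_{K_m,G})\ge A$ from \Cref{cor:induced_graph} (a tool the paper's proof of this proposition never uses), and $\reg(S/\calJ_{K_m,G})\ge C$ from \Cref{cor:compare_a_inv} combined with both parts \ref{lem:fan_kn_v<m+k_b} and \ref{lem:fan_kn_v<m+k_c} of \Cref{lem:fan_kn_v<m+k}; since $B\le C$, this meets the upper bound $\max\{A,B,C\}$. Your version is more uniform, eliminates the case analysis, and isolates exactly where $q_i=k_i$ enters, namely to place the two components of $G\setminus v$ inside $\mathbf{FFan}$ so that \Cref{cor:compare_a_inv} is applicable.

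Two small caveats. First, you cite \Cref{lem:direct_sum} for $\reg(S/\calJ_{K_m,G\setminus v})=\min\{|V(G_1)|-3,m+q_1-1\}+\min\{|V(G_2)|-3,m+q_2-1\}$; that lemma concerns direct sums and would give a maximum, not a sum. The sum is nevertheless the correct value: for a disjoint union of graphs the quotient ring is a tensor product over $\KK$ of the two quotient rings, and regularity is additive under such tensor products — this is what the paper uses implicitly whenever it writes such sums. Second, in the degenerate situation $k_1=k_2=1$ the graphs $G_v$ and $G_v\setminus v$ are complete graphs and therefore do not literally belong to $\mathbf{FFan}$, so the hypotheses of \Cref{cor:compare_a_inv} fail verbatim; the conclusion still holds because its proof only needs the depth formulas, which are valid for complete graphs by \Cref{lem:generic_CM} (and \Cref{lem:fan_kn_v<m+k} covers the degenerate $k=0$ case). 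The paper's own case (ii) has the identical gap, so this does not count against you, but it deserves a remark in a write-up.
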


\begin{proof}
    It is clear that $q_i\in \{k_i-1,k_i\}$ for each $i$. Note that the graph $G\setminus v$ is the disjoint union of a $q_1$-pure pseudo fan graph and a $q_2$-pure pseudo fan graph. Furthermore, $|V(G)|=|V(G_1)|+|V(G_2)|-3$. Thus, it follows from \Cref{thm:FkW_pure_reg}, \Cref{thm:reg_Fp} and \Cref{obs:circ} that
    \begin{align*}
        \reg(S/\calJ_{K_m,G_v}) & =\min\{|V(G)|-1,m+k_1+k_2-2-1\}, \\
        \reg(S/\calJ_{K_m,G_v\setminus v}) & =\min\{|V(G)|-2,m+k_1+k_2-2-1\},\\
        \intertext{and}
        \reg(S/\calJ_{K_m,G\setminus v}) & =\min\{|V(G_1)|-2-1,m+q_1-1\}\\ 
        &\qquad +\min\{|V(G_2)|-2-1,m+q_2-1\}.
    \end{align*}
    Applying \Cref{depthlemma}\ref{depthlemma-c} to the short exact sequence \eqref{eqn:SES-1} in \Cref{rem:exactseq}, we have
    \begin{equation}
        \reg(S/\calJ_{K_m,G})\le \max\{\reg(S/\calJ_{K_m,G\setminus
        v}),\reg(S/\calJ_{K_m,G_v}),\reg(S/\calJ_{K_m,G_v\setminus v})+1\},
        \label{eqn:fan_fan_circ_reg_upper_bound_coarse}
    \end{equation}
    where the RHS of \eqref{eqn:fan_fan_circ_reg_upper_bound_coarse} is exactly the RHS of \eqref{eqn:fan_fan_circ_reg_upper_bound}. This proves the first part of \Cref{prop:reg_Fan_Fan_circ}.

    In the rest of the proof, we assume that $q_i=k_i$ for each $i$. There are two cases to consider for the expected equality.
    \begin{enumerate}[i]
        \item \label{prop:reg_Fan_Fan_circ_i}
            If $m+k_1+k_2-|V(G)|\notin \{0,1\}$, then we claim that
            \[
                \max\{\reg(S/\calJ_{K_m,G_v}),\reg(S/\calJ_{K_m,G\setminus
                v})\}>
                \reg(S/\calJ_{K_m,G_v\setminus v}).
            \]
            With this, we have an equality in \eqref{eqn:fan_fan_circ_reg_upper_bound_coarse} by \Cref{depthlemma}\ref{depthlemma-c}. To confirm the claim, we check with two subcases by elementary arguments.
            \begin{enumerate}[a]
                \item \label{prop:reg_Fan_Fan_circ_i_a}
                    Suppose that $m+k_1+k_2-|V(G)|\le -1$. In this case, we have $\reg(S/\calJ_{K_m,G_v\setminus v})=m+k_1+k_2-3$. Next, we show that $\reg(S/\calJ_{K_m,G\setminus v})>m+k_1+k_2-3$ by checking with the four possibilities when using the above formula for $\reg(S/\calJ_{K_m,G_v\setminus v})$. 
                    \begin{itemize}
                        \item Since $|V(G)|=|V(G_1)|+|V(G_2)|-3$, we have $|V(G_1)|-3+|V(G_2)|-3>m+k_1+k_2-3$ by our assumption. 
                        \item Meanwhile, $m+k_1-1+m+k_2-1>m+k_1+k_2-3$, because $m\ge 2$. 
                        \item At the same time, for $\{i,j\}=\{1,2\}$, we have $|V(G_i)|-2-1+m+k_j-1>m+k_1+k_2-3$, since $|V(G_i)|-k_i \ge 3$.
                    \end{itemize}
                    In summary, we have $\reg(S/\calJ_{K_m,G\setminus v})>m+k_1+k_2-3$. As a result, the claim holds in this subcase.
                \item Suppose that $m+k_1+k_2-|V(G)|\ge 2$. In this case, $\reg(S/\calJ_{K_m,G_v})=|V(G)|-1> \reg(S/\calJ_{K_m,G_v\setminus v}) =|V(G)|-2$.  As a result, the claim still holds in this subcase.
            \end{enumerate}
        \item  \label{prop:reg_Fan_Fan_circ_ii}
            If instead  $m+k_1+k_2-|V(G)|\in \{0,1\}$, then we claim that we still have an equality in \eqref{eqn:fan_fan_circ_reg_upper_bound_coarse}. Note that in this case, $\reg(S/\calJ_{K_m,G_v\setminus v})=\reg(S/\calJ_{K_m,G_v})=m+k_1+k_2-3$, and $\reg(S/\calJ_{K_m, G\setminus v})\le |V(G_1)|-3+|V(G_2)|-3\le m+k_1+k_2-3$. Thus, we get
            \begin{equation}
                \reg(S/\calJ_{K_m,G})\le m+k_1+k_2-2 = \text{RHS of \eqref{eqn:fan_fan_circ_reg_upper_bound_coarse}}.
                \label{eqn:fan_fan_circ_reg_upper_bound_sharp}
            \end{equation}
            On the other hand, we obtain $|V(G_v\setminus v)|=|V(G)|-1\ge m+(k_1+k_2-2)$ by the condition $m+k_1+k_2-|V(G)|\in \{0,1\}$. It follows from \Cref{lem:fan_kn_v<m+k}\ref{lem:fan_kn_v<m+k_c} and \Cref{cor:compare_a_inv} that 
            \begin{align*}
                a_{m+|V(G)|-1}(S_G/\calJ_{K_m,G}) &\ge
                a_{m+|V(G)|-2}(S_{G_v\setminus
                v}/\calJ_{K_m,G_v\setminus v})\\
                &\ge -|V(G)|+1+(k_1+k_2-2).
            \end{align*}
            Therefore,
            \begin{align*}
                \reg(S_G/\calJ_{K_m,G}) & \ge
                a_{m+|V(G)|-1}(S_G/\calJ_{K_m,G})+(m+|V(G)|-1) \\
                & \ge -|V(G)|+k_1+k_2-1+m+|V(G)|-1 \\
                & =m+k_1+k_2-2.
            \end{align*}
            Combining this with \eqref{eqn:fan_fan_circ_reg_upper_bound_sharp}, we have $\reg(S_G/\calJ_{K_m,G})=  m+k_1+k_2-2$, which establishes the claim in this case. 
            \qedhere
    \end{enumerate}
\end{proof}

\begin{Remark}
    \label{rmk:reg_Fan_Fan_circ}
    Under the assumptions of \Cref{prop:reg_Fan_Fan_circ}, suppose additionally that $n_i\ge 3$.
    \begin{enumerate}[a]
        \item If $G_i=F_{k_i}^{W_i}(K_{n_i})$ is a $k_i$-pure fan graph, then $|V(G_i)|-k_i\ge n_i\ge 3$. Moreover, if $k_i=q_i$, then the deletion graph $G_i\setminus \{v_i,f_i\}=F_{k_i}^{W_i\setminus \{v_i\}}(K_{n_i-1})$ is still a $k_i$-pure fan graph. Thus, $|W_i|-1\ge k_i$. From this, we can conclude that $|V(G_i)|-k_i \ge n_i+|W_i|-k_i \ge n_i+1\ge 4$. As a result, $k_i\le |V(G_i)|- 4$.

        \item On the other hand, if $G_i=F_{n_i}$, it is also clear that $k_i=q_i=2$ and $|V(G_i)|-k_i=|V(G_i)|-2\ge 4$.
    \end{enumerate}
\end{Remark}

We can draw two conclusions from \Cref{prop:reg_Fan_Fan_circ} and its proof. The first one is direct.

\begin{Corollary}
    \label{cor:reg_Fan_fan_upper_bound}
    Under the assumptions of \Cref{prop:reg_Fan_Fan_circ}, we have
    \[
        \reg(S/\calJ_{K_m,G})\le (m+q_1-1)+(m+q_2-1).
    \]
    Moreover, if $m\le \min\{|V(G_1)|-q_1-2,|V(G_2)|-q_2-2\}$, then equality holds here.
\end{Corollary}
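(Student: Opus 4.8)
The plan is to read both bounds directly off \Cref{prop:reg_Fan_Fan_circ} and the intermediate computations in its proof, so that no new short exact sequence is required; this is what the phrase ``the first one is direct'' anticipates. For the upper bound I would show that each of the three quantities inside the maximum in \eqref{eqn:fan_fan_circ_reg_upper_bound} is at most $(m+q_1-1)+(m+q_2-1)$. The first quantity is $\min\{|V(G_1)|-3,m+q_1-1\}+\min\{|V(G_2)|-3,m+q_2-1\}$, which is bounded by $(m+q_1-1)+(m+q_2-1)$ since each $\min$ is at most its second entry. For the remaining two, recall from the opening line of the proof of \Cref{prop:reg_Fan_Fan_circ} that $q_i\in\{k_i-1,k_i\}$, whence $k_1+k_2\le q_1+q_2+2$; combined with $m\ge 2$ this gives $m+k_1+k_2-2\le m+q_1+q_2\le 2m+q_1+q_2-2=(m+q_1-1)+(m+q_2-1)$, and the same bound holds a fortiori for $\min\{|V(G)|-1,m+k_1+k_2-3\}$. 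Taking the maximum yields the asserted inequality.

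For the equality under the extra hypothesis, the key observation is that $G\setminus v$ is by definition the induced subgraph of $G$ on $V(G)\setminus\{v\}$, so \Cref{cor:induced_graph} gives $\reg(S/\calJ_{K_m,G\setminus v})\le\reg(S/\calJ_{K_m,G})$. The proof of \Cref{prop:reg_Fan_Fan_circ} already records that $\reg(S/\calJ_{K_m,G\setminus v})=\min\{|V(G_1)|-3,m+q_1-1\}+\min\{|V(G_2)|-3,m+q_2-1\}$, this being a sum over the two disjoint pseudo-fan components of $G\setminus v$. The hypothesis $m\le\min\{|V(G_1)|-q_1-2,|V(G_2)|-q_2-2\}$ is precisely the statement that $m+q_i-1\le|V(G_i)|-3$ for $i=1,2$, so each $\min$ collapses to $m+q_i-1$, giving $\reg(S/\calJ_{K_m,G\setminus v})=(m+q_1-1)+(m+q_2-1)$. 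Combining this induced-subgraph inequality with the upper bound from the first part forces equality.

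I do not expect a serious obstacle here, as the content is essentially bookkeeping once one notices that the lower bound can be harvested from the single induced subgraph $G\setminus v$ rather than from the equality case of \Cref{prop:reg_Fan_Fan_circ}. It is worth emphasizing that this route avoids the hypothesis $q_i=k_i$ that was needed for equality in \eqref{eqn:fan_fan_circ_reg_upper_bound}: here the numerical condition on $m$ alone pins down $\reg(S/\calJ_{K_m,G\setminus v})$, and monotonicity of regularity under passage to induced subgraphs supplies the matching lower bound. The only point to verify carefully is that the value of $\reg(S/\calJ_{K_m,G\setminus v})$ quoted from the proof of \Cref{prop:reg_Fan_Fan_circ} is genuinely this sum of two minima, which holds because $G\setminus v$ is the disjoint union of a $q_1$-pure pseudo fan and a $q_2$-pure pseudo fan.
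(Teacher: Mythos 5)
Your proposal is correct. The first half is essentially identical to the paper's argument: the paper also bounds the right-hand side of \eqref{eqn:fan_fan_circ_reg_upper_bound} by $(m+q_1-1)+(m+q_2-1)$ using $k_i\le q_i+1$ and $m\ge 2$, exactly as you do. For the equality statement, however, you take a genuinely different and arguably cleaner route. The paper shows that the hypothesis $m\le\min\{|V(G_1)|-q_1-2,|V(G_2)|-q_2-2\}$ forces $m+k_1+k_2-|V(G)|\le -1$, which places the situation in subcase \ref{prop:reg_Fan_Fan_circ_i}\ref{prop:reg_Fan_Fan_circ_i_a} of the proof of \Cref{prop:reg_Fan_Fan_circ}; there $\max\{\reg(S/\calJ_{K_m,G_v}),\reg(S/\calJ_{K_m,G\setminus v})\}>\reg(S/\calJ_{K_m,G_v\setminus v})$, so \Cref{depthlemma}\ref{depthlemma-c} applied to the sequence \eqref{eqn:SES-1} yields equality in \eqref{eqn:fan_fan_circ_reg_upper_bound}, whose right-hand side is then $(m+q_1-1)+(m+q_2-1)$. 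You instead harvest the lower bound from the single induced subgraph $G\setminus v$: its regularity is recorded in the proof of \Cref{prop:reg_Fan_Fan_circ} (without any $q_i=k_i$ assumption) as $\min\{|V(G_1)|-3,m+q_1-1\}+\min\{|V(G_2)|-3,m+q_2-1\}$, the numerical hypothesis collapses both minima to their second entries, and \Cref{cor:induced_graph} then gives $\reg(S/\calJ_{K_m,G})\ge (m+q_1-1)+(m+q_2-1)$. This avoids the exact-sequence machinery altogether, and it also sidesteps a subtle point in the paper's citation: subcase \ref{prop:reg_Fan_Fan_circ_i}\ref{prop:reg_Fan_Fan_circ_i_a} was formally established under the additional hypothesis $q_i=k_i$, which is not assumed in the corollary (the computations there do extend to $q_i=k_i-1$, but this requires checking). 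Your argument needs no such verification, at the modest cost of not identifying, as the paper's proof implicitly does, that the regularity of $G$ is attained by $\reg(S/\calJ_{K_m,G\setminus v})$ within the exact-sequence bound; since only the value is claimed, nothing is lost.
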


\begin{proof}
    Notice that
    \begin{align*}
        \text{RHS of \eqref{eqn:fan_fan_circ_reg_upper_bound}} & \le
        \max\{(m+q_1-1)+(m+q_2-1),m+k_1+k_2-3,m+k_1+k_2-2\} \\
        & =(m+q_1-1)+(m+q_2-1),
    \end{align*}
    since $k_1\le q_1+1$, $k_2\le q_2+1$ and $m\ge 2$.

    Moreover, if $m\le \min\{|V(G_1)|-q_1-2,|V(G_2)|-q_2-2\}$, then
    \begin{align*}
        m+k_1+k_2-|V(G)| & \le m+q_1+1+q_2+1-|V(G_1)|-|V(G_2)|+3+(m-2) \\
        & =(m+q_1+2-|V(G_1)|)+(m+q_2+2-|V(G_2)|)-1    \\
        & \le -1.
    \end{align*}
    This leads to the subcase \ref{prop:reg_Fan_Fan_circ_i}\ref{prop:reg_Fan_Fan_circ_i_a} in the proof of \Cref{prop:reg_Fan_Fan_circ}, where we have
    \[
        \max\{\reg(S/\calJ_{K_m,G_v}),\reg(S/\calJ_{K_m,G\setminus v})\} > \reg(S/\calJ_{K_m,G_v\setminus v})
    \]
    Hence, we have an equality in \eqref{eqn:fan_fan_circ_reg_upper_bound}, whose RHS is exactly $(m+q_1-1)+(m+q_2-1)$.
\end{proof}

The second one is more technical.

\begin{Corollary}
    \label{rem:reg_fan_fan}
    Under the assumptions of \Cref{prop:reg_Fan_Fan_circ}, suppose additionally that $q_i=k_i$ for each $i$.
    Furthermore, assume that $|V(G_2)|-k_2\ge 4$ and $m\le |V(G_1)| -k_1$.
    Then, we have
    \begin{align*}
        \reg(S/\calJ_{K_m,G_v}) &<
        \reg(S/\calJ_{K_m,G})=\reg(S/\calJ_{K_m,G\setminus v})\\
        &=\reg(S/\calJ_{K_m,G_1\setminus
        \{v_1,f_1\}})+\reg(S/\calJ_{K_m,G_2\setminus \{v_2,f_2\}}).
    \end{align*}
\end{Corollary}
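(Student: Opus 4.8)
The plan is to reduce the statement to the computations already carried out inside the proof of \Cref{prop:reg_Fan_Fan_circ}, specialized to the subcase singled out by the extra hypotheses. First I would record the three regularities appearing there under the standing assumption $q_i=k_i$. Writing $|V(G)|=|V(G_1)|+|V(G_2)|-3$, these are
\[
\reg(S/\calJ_{K_m,G_v})=\min\{|V(G)|-1,\,m+k_1+k_2-3\},\qquad \reg(S/\calJ_{K_m,G_v\setminus v})=\min\{|V(G)|-2,\,m+k_1+k_2-3\},
\]
together with $\reg(S/\calJ_{K_m,G\setminus v})=\min\{|V(G_1)|-3,\,m+k_1-1\}+\min\{|V(G_2)|-3,\,m+k_2-1\}$. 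Since the $\circ$ operation makes $G\setminus v$ the disjoint union $(G_1\setminus\{v_1,f_1\})\sqcup(G_2\setminus\{v_2,f_2\})$, additivity of regularity over $\KK$ identifies this last quantity with $\reg(S/\calJ_{K_m,G_1\setminus\{v_1,f_1\}})+\reg(S/\calJ_{K_m,G_2\setminus\{v_2,f_2\}})$; this disposes of the rightmost equality in the assertion with no further work.

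Next I would check that the hypotheses force us into subcase \ref{prop:reg_Fan_Fan_circ_i}\ref{prop:reg_Fan_Fan_circ_i_a} of that proof, namely $m+k_1+k_2-|V(G)|\le -1$. Indeed,
\[
m+k_1+k_2-|V(G)|=\bigl(m-(|V(G_1)|-k_1)\bigr)+\bigl(3-(|V(G_2)|-k_2)\bigr)\le 0+(-1)=-1,
\]
using $m\le|V(G_1)|-k_1$ and $|V(G_2)|-k_2\ge 4$. In this range both $G_v$-regularities collapse to the common value $m+k_1+k_2-3$, while the case check in subcase \ref{prop:reg_Fan_Fan_circ_i_a}---running through the possible values of the two minima and invoking $m\ge 2$ and $|V(G_i)|-k_i\ge 3$---yields the strict inequality $\reg(S/\calJ_{K_m,G\setminus v})>m+k_1+k_2-3$.

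Finally I would feed these facts into the short exact sequence \eqref{eqn:SES-1}. The middle module has regularity $\max\{\reg(S/\calJ_{K_m,G_v}),\reg(S/\calJ_{K_m,G\setminus v})\}=\reg(S/\calJ_{K_m,G\setminus v})$, which is strictly larger than the regularity $m+k_1+k_2-3$ of the right-hand module; hence the equality clause of \Cref{depthlemma}\ref{depthlemma-c} applies and gives $\reg(S/\calJ_{K_m,G})=\max\{\reg(S/\calJ_{K_m,G\setminus v}),\,m+k_1+k_2-2\}$. As $\reg(S/\calJ_{K_m,G\setminus v})>m+k_1+k_2-3$ is a strict inequality between integers, it forces $\reg(S/\calJ_{K_m,G\setminus v})\ge m+k_1+k_2-2$, so the maximum is attained at the first entry and $\reg(S/\calJ_{K_m,G})=\reg(S/\calJ_{K_m,G\setminus v})$. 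The remaining strict inequality is then immediate, since $\reg(S/\calJ_{K_m,G_v})=m+k_1+k_2-3<\reg(S/\calJ_{K_m,G\setminus v})=\reg(S/\calJ_{K_m,G})$.

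I expect the only delicate point to be the bookkeeping that guarantees the maximum in \eqref{eqn:fan_fan_circ_reg_upper_bound_coarse} is realized by the $G\setminus v$ term rather than by the shifted $G_v\setminus v$ term. This is precisely where the sharpened hypothesis $|V(G_2)|-k_2\ge 4$ (in place of the weaker $|V(G_i)|-k_i\ge 3$ of \Cref{prop:reg_Fan_Fan_circ}) is used: it is what pushes $m+k_1+k_2-|V(G)|$ strictly below $0$, thereby separating $\reg(S/\calJ_{K_m,G\setminus v})$ from both $G_v$-regularities and turning the inequality of \Cref{prop:reg_Fan_Fan_circ} into the chain of equalities claimed here.
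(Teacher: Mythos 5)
Your proposal is correct and follows essentially the same route as the paper: both verify that the hypotheses force $m+k_1+k_2\le |V(G)|-1$ (subcase \ref{prop:reg_Fan_Fan_circ_i}\ref{prop:reg_Fan_Fan_circ_i_a} of \Cref{prop:reg_Fan_Fan_circ}), collapse both $G_v$-regularities to $m+k_1+k_2-3$, use the strict bound $\reg(S/\calJ_{K_m,G\setminus v})\ge m+k_1+k_2-2$ from that subcase, and conclude via \Cref{depthlemma}\ref{depthlemma-c}. The only cosmetic difference is that you re-run the exact-sequence argument explicitly where the paper simply cites the equality clause of \Cref{prop:reg_Fan_Fan_circ}, which was itself proved by that very argument.
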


\begin{proof}
    From the assumptions we see that $m+k_1+k_2\le |V(G_1)|+(|V(G_2)|-4)=|V(G)|-1$. Whence,  we have
    \begin{align*}
        \reg(S/\calJ_{K_m,G_v})
        =m+k_1+k_2-3=
        \reg(S/\calJ_{K_m,G_v\setminus v}).
    \end{align*}
    Furthermore,  we obtain from the subcase
    \ref{prop:reg_Fan_Fan_circ_i}\ref{prop:reg_Fan_Fan_circ_i_a} in the proof
    of  \Cref{prop:reg_Fan_Fan_circ} that
    \[
        \reg(S/\calJ_{K_m,G\setminus v})= \reg(S/\calJ_{K_m,G_1\setminus
        \{v,f_1\}})+\reg(S/\calJ_{K_m,G_2\setminus \{v,f_2\}})
        \ge  m+k_1+k_2-2.
    \]
    Consequently, the equality in \eqref{eqn:fan_fan_circ_reg_upper_bound}
    implies that
    \[
        \reg(S/\calJ_{K_m,G_v})<\reg(S/\calJ_{K_m,G})=\reg(S/\calJ_{K_m,
        G\setminus v}),
    \]
    as claimed.
\end{proof}

\begin{Remark}
    If $G_1=F_{n_1}$ and $G_2=F_{n_2}$, then
    \Cref{cor:reg_Fan_fan_upper_bound} says that when $m\le
    2\min\{n_1,n_2\}-4$, one has $\reg(S/\calJ_{K_m,G})= 2m+2$. This condition
    is sharp. For instance, we can take $n_1=m=3$ and $n_2=4$. Obviously,
    $2n_1-4<m<2n_2-4$.  At the same time, $\reg(S/\calJ_{K_m},G)= 7<2m+2$ by
    \Cref{prop:reg_Fan_Fan_circ}.
\end{Remark}

In the following, we conclude this subsection by considering the relation between the regularity of $G=G_1\circ G_2$ and the regularities of the induced subgraphs of $G_1$ and $G_2$.

\begin{Theorem}
    \label{thm:circ_lower_upper_bound}
    Let $G=(G_1,f_1) \circ (G_2,f_2)$, where $G_i$ is a $k_i$-pure pseudo fan graph with $|V(G_i)|-k_i\ge 3$ and $k_i\ge 1$ for each $i$. Suppose  further that $v_i$ is the neighbor point of $f_i$ in $G_i$ for each $i$, such that $v_1$ and $v_2$ are identified as the vertex $v$ in $G$. Then, we have
    \begin{equation}
        \sum\limits_{i=1}^2\reg(S/\calJ_{K_m,G_i\setminus \{f_i,v_i\}})\le
        \reg(S/\calJ_{K_m,G})\le\sum\limits_{i=1}^2\reg(S/\calJ_{K_m,G_i}).
        \label{eqn:circ_lower_upper_bound}
    \end{equation}
\end{Theorem}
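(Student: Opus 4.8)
The plan is to prove the two inequalities separately, using as a common engine a uniform regularity formula for pseudo fan graphs. Concretely, every $k_i$-pure pseudo fan graph $G_i$ satisfies
\[
    \reg(S/\calJ_{K_m,G_i})=\min\{|V(G_i)|-1,\,m+k_i-1\},
\]
which is \Cref{thm:FkW_pure_reg} when $G_i$ is a pure fan graph and coincides with \Cref{thm:reg_Fp} when $G_i=F_p$ (here $|V(F_p)|=2p$ and $k_i=2$, so $\min\{2p-1,m+1\}$ fits the same pattern). Write $A_i:=\reg(S/\calJ_{K_m,G_i})$, $B_i:=|V(G_i)|-1$ and $C_i:=m+k_i-1$, so that $A_i=\min\{B_i,C_i\}$. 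The right-hand inequality of \eqref{eqn:circ_lower_upper_bound} is then precisely $\reg(S/\calJ_{K_m,G})\le A_1+A_2$.

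For the upper bound I would invoke the coarse estimate already established in \Cref{prop:reg_Fan_Fan_circ} (its hypotheses hold here, since $G_i\setminus\{f_i,v_i\}$ is again a pseudo fan graph with some $q_i$ by \Cref{obs:circ}): this bounds $\reg(S/\calJ_{K_m,G})$ by the maximum of the deletion term $\min\{B_1-2,m+q_1-1\}+\min\{B_2-2,m+q_2-1\}$ and the two fan terms $\min\{|V(G)|-1,m+k_1+k_2-3\}$ and $\min\{|V(G)|-1,m+k_1+k_2-2\}$. I would then check each of the three is $\le A_1+A_2$. The deletion term is immediate, since $q_i\le k_i$ and $B_i-2<B_i$ force each summand to be at most $A_i$. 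For the larger fan term, using $|V(G)|-1=B_1+B_2-2$ and $m+k_1+k_2-2=C_1+C_2-m$ together with $A_1+A_2=\min\{B_1+B_2,\,B_1+C_2,\,C_1+B_2,\,C_1+C_2\}$, an elementary comparison shows each of these four terms is at least one of the two terms defining the fan term (using $C_i-m\le B_i$, i.e. $|V(G_i)|\ge k_i$, which follows from $|V(G_i)|-k_i\ge 3$), whence $A_1+A_2\ge\min\{B_1+B_2-2,\,C_1+C_2-m\}$. This yields $\reg(S/\calJ_{K_m,G})\le A_1+A_2$.

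For the lower bound, the idea is to read the induced-subgraph monotonicity of \Cref{cor:induced_graph} in the downward direction. By the definition of the $\circ$ operation, $v$ is the unique vertex glued from $G_1$ and $G_2$, so deleting it splits $G$ into the disjoint union $G\setminus v=(G_1\setminus\{f_1,v_1\})\sqcup(G_2\setminus\{f_2,v_2\})$ with no edges across (cf.\ \Cref{obs:circ}). Since the coordinate ring of a disjoint union is the tensor product over $\KK$ of the coordinate rings of the components, regularity is additive, giving $\reg(S/\calJ_{K_m,G\setminus v})=\sum_{i=1}^2\reg(S/\calJ_{K_m,G_i\setminus\{f_i,v_i\}})$. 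As $G\setminus v$ is an induced subgraph of $G$, \Cref{cor:induced_graph} gives $\reg(S/\calJ_{K_m,G\setminus v})\le\reg(S/\calJ_{K_m,G})$, which is exactly the left-hand inequality.

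Both halves are short once the pieces are assembled; the one genuine idea is using \Cref{cor:induced_graph} as a source of a \emph{lower} bound via the induced deletion graph $G\setminus v$, combined with additivity of regularity over disjoint unions (used implicitly elsewhere in the paper). The main point to verify carefully is that deleting $v$ really produces the two prescribed deletion graphs with no connecting edges; this is immediate from the $\circ$ construction, but it is precisely what makes the additivity and the induced-subgraph step line up with the left-hand side of \eqref{eqn:circ_lower_upper_bound}.
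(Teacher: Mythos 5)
Your proof is correct and follows essentially the same route as the paper's: the lower bound via \Cref{cor:induced_graph} applied to the induced subgraph $G\setminus v=(G_1\setminus\{f_1,v_1\})\sqcup(G_2\setminus\{f_2,v_2\})$ together with additivity of regularity over disjoint unions, and the upper bound by comparing the coarse estimate of \Cref{prop:reg_Fan_Fan_circ} with $\sum_{i=1}^2\reg(S/\calJ_{K_m,G_i})$. The only difference is bookkeeping: the paper splits into three cases according to where $m$ sits relative to $\min_i\{|V(G_i)|-k_i\}$ and $\max_i\{|V(G_i)|-k_i\}$ (invoking \Cref{lem:reg_min_equal} in the last case), whereas you treat all cases at once via the identity $A_1+A_2=\min\{B_1+B_2,\,B_1+C_2,\,C_1+B_2,\,C_1+C_2\}$ and the inequality $C_i-m\le B_i$.
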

\begin{proof}
    The first inequality in \eqref{eqn:circ_lower_upper_bound} is obvious by \Cref{cor:induced_graph}, since $(G_1\setminus \{f_1,v_1\}) \sqcup (G_2\setminus \{f_2,v_2\}) = G \setminus v$ is an induced subgraph of $G$.

    Next, we consider the second inequality in \eqref{eqn:circ_lower_upper_bound}. Suppose that $G_i\setminus \{v_i, f_i\}$ is a $q_i$-pure pseudo fan graph for each $i$.
    By \Cref{prop:reg_Fan_Fan_circ}, we have
    \begin{align*}
        \reg(S/\calJ_{K_m,G}) & \le \max\Big\{
            \min\{|V(G_1)|-3,m+q_1-1\}+\min\{|V(G_2)|-3,m+q_2-1\}, \\
            & \qquad \min\{|V(G)|-1,m+k_1+k_2-3\},
        \min\{|V(G)|-1,m+k_1+k_2-2\}\Big\}.
    \end{align*}
    For simplicity,  let $A\coloneqq \min\{|V(G_t)|-k_t:t\in [2]\}$ and
    $B\coloneqq \max\{|V(G_t)|-k_t:t\in [2]\}$. Suppose that
    $\{j_1,j_2\}=\{1,2\}$ such that $A=|V(G_{j_1})|-k_{j_1}$ and
    $B=|V(G_{j_2})|-k_{j_2}$. By \Cref{thm:FkW_pure_reg}, \Cref{thm:reg_Fp}
    and the fact that $|V(G)|=|V(G_1)|+|V(G_2)|-3$, we can get
    \begin{align*}
        \sum\limits_{i=1}^2\reg(S/\calJ_{K_m,G_i}) &
        =\min\{|V(G_1)|-1,m+k_1-1\}+\min\{|V(G_2)|-1,m+k_2-1\}  \\
        & =\begin{cases}
            2m+k_1+k_2-2,             & \text{if $m\le A$,}        \\
            m+k_{j_2}+|V(G_{j_1})|-2, & \text{if $A+1\le m\le B$,} \\
            |V(G)|+1,                 & \text{if $ m\ge B+1$.}
        \end{cases}
    \end{align*}
    So we prove the second inequality in \eqref{eqn:circ_lower_upper_bound} with respect to these three cases.

    Firstly, note that $q_t\le k_t$ for each $t$. Thus $\min\{|V(G_1)|-3,m+q_1-1\}+\min\{|V(G_2)|-3,m+q_2-1\}\le (m+q_1-1)+(m+q_2-1)\le 2m+k_1+k_2-2$. Meanwhile, $\max\{\min\{|V(G)|-1,m+k_1+k_2-3\},\min\{|V(G)|-1,m+k_1+k_2-2\}\}\le \max\{m+k_1+k_2-3,m+k_1+k_2-2\}=m+k_1+k_2-2<2m+k_1+k_2-2$. This implies that $\reg(S/\calJ_{K_m,G})\le 2m+k_1+k_2-2$ for all $m$. In particular, when $m\le A$, the second inequality in \eqref{eqn:circ_lower_upper_bound} holds.

    Secondly, note that $\min\{|V(G_1)|-3, m+q_1-1\}+\min\{|V(G_2)|-3,m+q_2-1\}\le m+q_{j_2}-1+|V(G_{j_1})|-3 <m+k_{j_2}+|V(G_{j_1})|-2$. Furthermore, $\max\{\min\{ |V(G)|-1,m+k_1+k_2-3\},\min\{|V(G)|-1,m+k_1+k_2-2\}\}\le \max\{m+k_1+k_2-3, m+k_1+k_2-2\}=m+k_1+k_2-2=m+k_{j_2}+k_{j_1}-2< m+k_{j_2}+|V(G_{j_1})|-2$.  This implies that $\reg(S/\calJ_{K_m,G})\le m+k_{j_2}+|V(G_{j_1})|-2$ for all $m$. In particular, if $A+1\le m\le B$, the second inequality in \eqref{eqn:circ_lower_upper_bound} holds.

    Finally, it follows from \Cref{lem:reg_min_equal} that $\reg(S/\calJ_{K_m,G}) \le |V(G)|-1< |V(G)|+1$. In particular, when $m\ge B+1$, the second inequality in \eqref{eqn:circ_lower_upper_bound} holds.
\end{proof}

\subsection{Glue via the $*$ operation}
In this subsection, we study the regularity of the generalized binomial edge ideal of a graph in $\mathbf{FFan}^p$, which is obtained from $\mathbf{F} \cup \mathbf{Fan}^p$ by a single $*$ operation. This case is more complicated than that of the previous subsection.

In contrast to \cite[Theorem 3.1]{MR3941158} for the case of the binomial edge ideal with $m=2$, we don't have the additive formula
\[
    \reg(S/\calJ_{K_m,G_1*G_2})=\reg(S/\calJ_{K_m,G_1})+\reg(S/\calJ_{K_m,G_2})
\]
in general. In fact, we will show in \Cref{exam:333_star} that
\[
    \reg(S/\calJ_{K_4,F_3*F_3})\le 8 < 2 \reg(S/\calJ_{K_4,F_3})=10.
\]
Therefore, we turn to expect a subadditive formula as
\[
    \reg(S/\calJ_{K_m,G_1*G_2})\le \reg(S/\calJ_{K_m,G_1})+\reg(S/\calJ_{K_m,G_2}),
\]
at least for the graphs considered in this paper. For this we need some preparations.

\begin{Setting}
    \label{setting:fan_path_clique_sum}
    Let $G=G_1\cup_v G_2$ be the clique sum of two graphs $G_1$ and $G_2$.
    Suppose that $G_{1}=F_{k}^{W}(K_{n})$ is
    a $k$-pure fan graph of $K_{n}$ on $W$ and $v\in V(K_{n})\setminus W$.
    Meanwhile, suppose that $G_2=P_t$ is a path with $t$ vertices for $t\ge
    2$ and $v$ is a leaf of $G_2$.
\end{Setting}

\begin{Observation}
    \label{obs:fan_path_clique_sum}
    With \Cref{setting:fan_path_clique_sum}, let $u$ be the unique neighbor
    point of $v$ in $P_t$. It is clear that $G_v$ has the form
    $F_k^{W}(K_{n+1})\cup_u P_{t-1} $ and $G_v\setminus v$ has the form $
    F_k^{W}(K_{n})\cup_u P_{t-1}$. At the same time, $G\setminus v$ is the
    disjoint union of  $F_k^W(K_{n-1})$ with $P_{t-1}$. Furthermore, $G=
    F_{k+1}^{W\cup\{v\}}(K_{n})*P_{t-1} =F_{k+1}^{W\cup\{v\}}(K_{n})*\underbrace{ F_1*F_1*
    \cdots*F_1}_{t-2}\in \mathbf{FFan}^p$.
\end{Observation}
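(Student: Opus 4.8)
The plan is to verify the four structural descriptions directly from the definitions of $G_v$, vertex deletion, and the fan and path constructions, since each assertion is ultimately a matter of tracking which edges are present. The controlling step is to compute $N_G(v)$. Because $v\in V(K_n)\setminus W$, in the pure fan $G_1=F_k^W(K_n)$ the vertex $v$ lies in the complete core but in no branch, so $N_{G_1}(v)=V(K_n)\setminus\{v\}$; in $G_2=P_t$ the leaf $v$ has the single neighbor $u$. Hence in the clique sum $G=G_1\cup_v G_2$ one has $N_G(v)=(V(K_n)\setminus\{v\})\cup\{u\}$, so that $N_G[v]=V(K_n)\cup\{u\}$.

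From here the first three claims follow by inspection. Forming $G_v$ adds all edges within $N_G(v)$; since $V(K_n)\setminus\{v\}$ is already complete, the only new edges join $u$ to every vertex of $V(K_n)\setminus\{v\}$, so $N_G[v]=V(K_n)\cup\{u\}$ becomes a $K_{n+1}$, while the branches on $W\subseteq V(K_n)\setminus\{v\}$ and the portion of $P_t$ from $u$ onward (which is $P_{t-1}$) are untouched. This exhibits $G_v=F_k^W(K_{n+1})\cup_u P_{t-1}$ as a clique sum along $u$, and deleting $v$ collapses $K_{n+1}$ to $K_n$, giving $G_v\setminus v=F_k^W(K_n)\cup_u P_{t-1}$. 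For $G\setminus v$ I would instead argue inside $G$ itself: there the path tail is joined to the fan only through $v$, so removing $v$ severs the two pieces, shrinks the core to $K_{n-1}$, and truncates the path to $P_{t-1}$, yielding $G\setminus v=F_k^W(K_{n-1})\sqcup P_{t-1}$. The identity $|V(G)|=n+|W|+t-1$ furnishes a consistency check on all three vertex counts.

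The substantive claim is the $*$-decomposition $G=F_{k+1}^{W\cup\{v\}}(K_n)*P_{t-1}$. Here I would use that $v\notin W$, so enlarging $W$ to $W\cup\{v\}$ introduces a new singleton branch on $\{v\}$; by purity this branch attaches exactly one new leaf $v'$ to $v$ (the pure singleton branch being a single $K_2$). Thus $F_{k+1}^{W\cup\{v\}}(K_n)$ is just $F_k^W(K_n)$ with a marked leaf $v'$ at $v$, and identifying $v'$ with an endpoint of $P_{t-1}$ under the $*$ operation rebuilds the pendant path of $t$ vertices $v,v',\ldots$ hanging from the fan at $v$, which is precisely $F_k^W(K_n)\cup_v P_t=G$. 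Since $F_1=P_2$ and the $*$ operation concatenates paths at their leaves, $P_{t-1}=\underbrace{F_1*F_1*\cdots*F_1}_{t-2}$; as every block lies in $\mathbf{F}\cup\mathbf{Fan}^p$ and all gluings use marked leaves, $G\in\mathbf{FFan}^p$.

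The main obstacle is the careful bookkeeping in this last step rather than any genuine difficulty: one must confirm that the new branch contributes a single leaf (invoking purity), that $v'$ is a legitimate marked leaf of the $(k+1)$-fan, and that the $*$-identification creates no edges beyond merging $v'$ with the path endpoint. The degenerate case $t=2$, where $P_{t-1}$ reduces to a single vertex and the $*$ product is simply $F_{k+1}^{W\cup\{v\}}(K_n)$ itself, should be flagged separately.
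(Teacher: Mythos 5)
Your proof is correct and follows exactly the reasoning the paper leaves implicit: the paper states this as an Observation without proof, and your verification — computing $N_G[v]=V(K_n)\cup\{u\}$, noting that forming $G_v$ only adds edges from $u$ to the core, and recognizing $F_{k+1}^{W\cup\{v\}}(K_n)$ as $F_k^W(K_n)$ with a pendant leaf at $v$ so that the $*$-product with $P_{t-1}$ rebuilds the pendant path $P_t$ — is precisely the intended bookkeeping. Your separate flagging of the degenerate case $t=2$ (where $P_{t-1}$ is a single vertex and the product collapses to $F_{k+1}^{W\cup\{v\}}(K_n)$, consistent with the base case of Proposition \ref{prop:fan_path_clique_sum}) is a sensible addition.
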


\begin{Proposition}
    \label{prop:fan_path_clique_sum}
    Under the assumptions of \Cref{setting:fan_path_clique_sum}, we have
    \begin{align}
        \reg(S/\calJ_{K_m,G}) & \le
        \reg(S/\calJ_{K_m,G_1})+\reg(S/\calJ_{K_m,G_2}).
        \label{eqn:path_fan_reg_bound}
    \end{align}
    Furthermore, if $m\ge k+1$, then we have an equality here.
\end{Proposition}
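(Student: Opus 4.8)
The plan is to induct on $t$, the number of vertices of the path $G_2=P_t$, using the short exact sequence \eqref{eqn:SES-1} of \Cref{rem:exactseq} at the clique-sum vertex $v$ (which is internal in $G$ since $v$ has a neighbor in $P_t$ as well as in $K_n$). The structural \Cref{obs:fan_path_clique_sum} is what powers the induction: both $G_v=F_k^{W}(K_{n+1})\cup_u P_{t-1}$ and $G_v\setminus v=F_k^{W}(K_{n})\cup_u P_{t-1}$ again satisfy \Cref{setting:fan_path_clique_sum} with the same $k$ and $W$ but with the path shortened to $P_{t-1}$, while $G\setminus v$ is the disjoint union $F_k^{W}(K_{n-1})\sqcup P_{t-1}$. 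In the base case $t=2$ the graph $G=F_{k+1}^{W\cup\{v\}}(K_n)$ is a $(k+1)$-pure fan, so \Cref{thm:FkW_pure_reg} delivers both the bound and the equality simultaneously: there $\reg(S/\calJ_{K_m,G_1})+\reg(S/\calJ_{K_m,P_2})=\min\{|V(G_1)|-1,m+k-1\}+1=\min\{|V(G_1)|,m+k\}=\reg(S/\calJ_{K_m,G})$.

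For the inductive step of \eqref{eqn:path_fan_reg_bound} I would feed the three regularities into \Cref{depthlemma}\ref{depthlemma-c}. By induction $\reg(S/\calJ_{K_m,G_v\setminus v})\le\reg(S/\calJ_{K_m,F_k^{W}(K_n)})+\reg(S/\calJ_{K_m,P_{t-1}})=\reg(S/\calJ_{K_m,G_1})+(t-2)$, since $F_k^{W}(K_n)=G_1$; similarly $\reg(S/\calJ_{K_m,G_v})$ and $\reg(S/\calJ_{K_m,G\setminus v})$ are read off from \Cref{thm:FkW_pure_reg} and \Cref{lem:reg_path} after comparing $\min$-formulas whose vertex counts differ from $|V(G_1)|$ by $+1$ and $-2$. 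The crucial point — the reason one cannot just quote \Cref{lem:reg_easy_bound} — is that the unavoidable ``$+1$'' must be attached to $S/\calJ_{K_m,G_v\setminus v}$, whose regularity is only $\reg(S/\calJ_{K_m,G_1})+(t-2)$, so that $\reg(S/\calJ_{K_m,G_v\setminus v})+1$ lands exactly on $\reg(S/\calJ_{K_m,G_1})+(t-1)$. Putting the $+1$ on $G_v$ instead, as \Cref{lem:reg_easy_bound} does, overshoots by one in the range $|V(G_1)|\le m+k-1$. A short elementary check shows the other two terms are also $\le \reg(S/\calJ_{K_m,G_1})+(t-1)$, closing the inductive step for the inequality.

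For the equality under $m\ge k+1$ I would pair this inequality with a matching lower bound from local cohomology. Since $G\in\mathbf{FFan}^p$ by \Cref{obs:fan_path_clique_sum}, \Cref{thm:full_depth_star_circ} gives $\depth(S/\calJ_{K_m,G})=m+|V(G)|-1$, so $H^{m+|V(G)|-1}_{S_+}(S/\calJ_{K_m,G})\ne 0$ and $\reg(S/\calJ_{K_m,G})\ge a_{m+|V(G)|-1}(S/\calJ_{K_m,G})+m+|V(G)|-1$. \Cref{cor:compare_a_inv} at the same $v$ then gives $a_{m+|V(G)|-1}(S/\calJ_{K_m,G})\ge a_{m+|V(G)|-2}(S/\calJ_{K_m,G_v\setminus v})$, and since $G_v\setminus v$ is the shorter instance with the same $G_1$, one induces on $t$, the base case being \Cref{lem:fan_kn_v<m+k}\ref{lem:fan_kn_v<m+k_b}--\ref{lem:fan_kn_v<m+k_c} applied to the $(k+1)$-pure fan $F_{k+1}^{W\cup\{v\}}(K_n)$. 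Splitting on whether $|V(G_1)|\le m+k$ or $|V(G_1)|\ge m+k$, the two branches of \Cref{lem:fan_kn_v<m+k} give $a$-invariant lower bounds $-m$ and $k-|V(G_1)|$; after adding $m+|V(G)|-1$ these match the two branches of $\min\{|V(G_1)|-1,m+k-1\}+(t-1)$ exactly.

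The main obstacle is the lower bound, not the inequality. The tempting shortcut — extracting equality directly from the ``equality if $\reg(N)\ne\reg(P)$'' clause of \Cref{depthlemma}\ref{depthlemma-c} — works only when $|V(G_1)|\le m+k-1$; in the complementary regime $|V(G_1)|\ge m+k$, which genuinely arises under $m\ge k+1$ (already at $m=k+1$, where $|V(G_1)|\ge 2k+1=m+k$), the regularities of $G_v$ and $G_v\setminus v$ coincide and the sequence alone cannot pin down the answer. That is precisely where the local-cohomology route through \Cref{cor:compare_a_inv} and \Cref{lem:fan_kn_v<m+k} becomes indispensable, and the most delicate bookkeeping is tracking which branch of each $\min$ is active and checking that both regimes agree with the claimed formula.
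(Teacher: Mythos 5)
Your proof is correct, and its skeleton is the same as the paper's: induction on $t$ with the exact sequence \eqref{eqn:SES-1} at the clique-sum vertex, the base case $t=2$ read off from \Cref{thm:FkW_pure_reg}, the upper bound obtained by applying \Cref{depthlemma}\ref{depthlemma-c} directly to the sequence so that the ``$+1$'' sits on $S/\calJ_{K_m,G_v\setminus v}$ (your observation that \Cref{lem:reg_easy_bound} would overshoot by one in the range $|V(G_1)|\le m+k-1$ is exactly why the paper, too, bypasses that corollary), and the lower bound via the $a$-invariant chain coming from \Cref{cor:compare_a_inv}, which the paper packages as \Cref{lem:fan_path_clique_sum_base} and \Cref{lem:fan_path_clique_sum}.

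The one place you genuinely deviate is the equality in the regime $|V(G_1)|\le m+k-1$: the paper closes it with the inductive equalities in \eqref{eqn:path_fan_reg_1}--\eqref{eqn:path_fan_reg_2} and the $\reg(N)\neq\reg(P)$ clause of \Cref{depthlemma}\ref{depthlemma-c} --- this is precisely, and only, where the hypothesis $m\ge k+1$ enters --- whereas you run the local-cohomology route in both regimes, invoking branch \ref{lem:fan_kn_v<m+k_b} of \Cref{lem:fan_kn_v<m+k} for the $(k+1)$-pure fan $F_{k+1}^{W\cup\{v\}}(K_n)$ when $|V(G_1)|\le m+k$ and branch \ref{lem:fan_kn_v<m+k_c} when $|V(G_1)|\ge m+k$. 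Your arithmetic there checks out: the $a$-invariant bounds $-m$ and $k-|V(G_1)|$, shifted by $m+|V(G)|-1=m+|V(G_1)|+t-2$, give regularity lower bounds $|V(G_1)|+t-2$ and $m+k+t-2$, matching the two branches of $\min\{|V(G_1)|-1,m+k-1\}+(t-1)$ exactly, so equality follows from your upper bound in both cases. A byproduct you could make explicit: that route never uses $m\ge k+1$ (none of \Cref{cor:compare_a_inv}, \Cref{lem:fan_path_clique_sum_base}, or \Cref{lem:fan_kn_v<m+k} requires it), so your argument actually establishes the equality for every $m\ge 2$; the hypothesis $m\ge k+1$ is an artifact of the paper's SES-based handling of the low regime rather than a genuine necessity.
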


Note that the RHS of \eqref{eqn:path_fan_reg_bound} is
$\min\{|V(G_1)|-1,m+k-1\}+t-1$ by \Cref{thm:FkW_pure_reg} and
\Cref{lem:reg_path}. Furthermore, we also allow the degenerate case $k=0$ case in \Cref{prop:fan_path_clique_sum} where
$G_1$ is the complete graph $K_n$.

\begin{proof}
    We prove by induction for $t\ge 2$. If  $t=2$, this is covered by \Cref{thm:FkW_pure_reg} since $G=F_{k+1}^{W\cup\{v\}}(K_{n})$. In particular, we have an equality at this step. In the following, we shall assume that $t\ge 3$. From \Cref{obs:fan_path_clique_sum} and induction it follows that
    \begin{align}
        \reg(S/\calJ_{K_m,G_v}) & \le \min\{|V(G_1)|,m+k-1\}+t-2
        \label{eqn:path_fan_reg_1} \\
        \intertext{and}
        \reg(S/\calJ_{K_m,G_v\setminus v}) & \le \min\{|V(G_1)|-1,m+k-1\}+t-2.
        \label{eqn:path_fan_reg_2}
    \end{align}
    Meanwhile,
    \[
        \reg(S/\calJ_{K_m,G\setminus v})=\min\{|V(G_1)|-2,m+k-1\}+t-2
    \]
    by \Cref{thm:FkW_pure_reg} and \Cref{lem:reg_path}. It follows from
    \Cref{depthlemma}\ref{depthlemma-c} that
    \begin{align*}
        \reg(S/\calJ_{K_m,G}) & \le \max\{|V(G_1)|+t-2,|V(G_1)|-1+t-2+1,|V(G_1)|-2+t-2\} \\
        & = |V(G_1)|+t-2,
        \intertext{and}
        \reg(S/\calJ_{K_m,G}) & \le \max\{m+k-1+t-2,m+k-1+t-2+1,m+k-1+t-2\} \\
        & =m+k+t-2.
    \end{align*}
    Therefore, the inequality in \eqref{eqn:path_fan_reg_bound} holds.

    In the rest of the proof, we assume additionally that $m\ge k+1$. Thus, by induction, we have equalities in \eqref{eqn:path_fan_reg_1} and \eqref{eqn:path_fan_reg_2}. If $|V(G_1)|\le m+k-1$, then
    \begin{gather*}
        \max\{
            \reg(S/\calJ_{K_m,G_v}),
            \reg(S/\calJ_{K_m,G\setminus v})
        \}=\max\{|V(G_1)|+t-2,|V(G_{1})|-2+t-2\}\\
        =|V(G_1)|+t-2 \ne
        \reg(S/\calJ_{K_m,G_v\setminus v})=|V(G_1)|-1+t-2.
    \end{gather*}
    Hence the equality in \eqref{eqn:path_fan_reg_bound}  holds by
    \Cref{depthlemma}\ref{depthlemma-c}.

    In the remaining part, we will assume that $|V(G_1)|\ge m+k$. Whence,
    \begin{align*}
        \reg(S/\calJ_{K_m,G}) & \ge
        a_{m+|V(G)|-1}(S/\calJ_{K_m,G})+ m+|V(G)|-1 \\
        &\ge (-|V(G)|+k+t-1)+m+|V(G)|-1=m+k+t-2,
    \end{align*}
    by \Cref{lem:fan_path_clique_sum}.
    Note that in this case the RHS of \eqref{eqn:path_fan_reg_bound} is also given by $m+k+t-2$.  Thus, we still get an equality here.
\end{proof}

In addition to studying the regularity directly, we also need information about the $a$-invariants.

\begin{Lemma}
    \label{lem:fan_path_clique_sum_base}
    Under the assumptions of \Cref{setting:fan_path_clique_sum}, we have
    \[
        a_{m+|V(G)|-1}(S/\calJ_{K_m,G})\ge a_{m+|V(G)|-1-(t-2)}(S/\calJ_{K_m,F_{k+1}^{W\cup\{v\}}(K_{n})}).
    \]
\end{Lemma}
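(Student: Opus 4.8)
The plan is to induct on the number of vertices $t$ of the path $G_2=P_t$, using the recursive description of $G_v$, $G_v\setminus v$ and $G\setminus v$ recorded in \Cref{obs:fan_path_clique_sum} together with the local cohomology injection supplied by \Cref{cor:compare_a_inv}. Throughout I would write $d\coloneqq m+|V(G)|-1$, which equals $\depth(S/\calJ_{K_m,G})$ since $G\in\mathbf{FFan}^p\subset\mathbf{FFan}$ by \Cref{obs:fan_path_clique_sum} and \Cref{thm:full_depth_star_circ}. The base case $t=2$ is immediate: \Cref{obs:fan_path_clique_sum} gives $G=F_{k+1}^{W\cup\{v\}}(K_n)$, so the asserted inequality reads $a_d(S/\calJ_{K_m,G})\ge a_d(S/\calJ_{K_m,F_{k+1}^{W\cup\{v\}}(K_n)})$ and holds as an equality because the shift $t-2$ vanishes.

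For the induction step $t\ge 3$, first I would note that $v$ is an internal vertex of $G$, and then verify the hypotheses of \Cref{cor:compare_a_inv}. By \Cref{obs:fan_path_clique_sum}, the graphs $G$, $G_v=F_k^W(K_{n+1})\cup_u P_{t-1}$ and $G_v\setminus v=F_k^W(K_n)\cup_u P_{t-1}$ all lie in $\mathbf{FFan}$, while $G\setminus v=F_k^W(K_{n-1})\sqcup P_{t-1}$ is a disjoint union of two members of $\mathbf{FFan}$. \Cref{cor:compare_a_inv} then furnishes the injection
\[
    0\to H_{S_+}^{m+|V(G)|-2}(S/\calJ_{K_m,G_v\setminus v})\to H_{S_+}^{m+|V(G)|-1}(S/\calJ_{K_m,G}),
\]
from which I would read off
\[
    a_{m+|V(G)|-1}(S/\calJ_{K_m,G})\ge a_{m+|V(G)|-2}(S/\calJ_{K_m,G_v\setminus v}).
\]

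The crucial structural observation is that $G_v\setminus v=F_k^W(K_n)\cup_u P_{t-1}$ is itself an instance of \Cref{setting:fan_path_clique_sum}: the fan $F_k^W(K_n)$ is unchanged, the path is the shorter $P_{t-1}$, and $u\in V(K_n)\setminus W$ is the new gluing leaf. Applying the induction hypothesis with $t$ replaced by $t-1$, and using $|V(G_v\setminus v)|=|V(G)|-1$ so that the relevant top degree is $m+|V(G_v\setminus v)|-1=m+|V(G)|-2$, gives
\[
    a_{m+|V(G)|-2}(S/\calJ_{K_m,G_v\setminus v})\ge a_{m+|V(G)|-2-(t-3)}(S/\calJ_{K_m,F_{k+1}^{W\cup\{u\}}(K_n)}).
\]
Since $F_{k+1}^{W\cup\{u\}}(K_n)\cong F_{k+1}^{W\cup\{v\}}(K_n)$ (the fan graph depends only on the branch data, not on which vertex of $K_n\setminus W$ carries the extra whisker), the right-hand $a$-invariant equals that of $F_{k+1}^{W\cup\{v\}}(K_n)$, and the homological degree simplifies to $m+|V(G)|-1-(t-2)$. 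Chaining the two displayed inequalities then yields exactly the claim.

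I expect the only delicate point to be the degree-shift bookkeeping through the induction — tracking how the vertex count drops by one and the path shortens by one so that the shift $t-2$ decreases to $t-3$ while the homological index also drops by one. The genuinely conceptual step is recognizing $G_v\setminus v$ as a fresh instance of \Cref{setting:fan_path_clique_sum}, which is what lets the induction on $t$ close, and confirming the $\mathbf{FFan}$-membership of $G$, $G_v$, $G_v\setminus v$ and the two components of $G\setminus v$ needed to invoke \Cref{cor:compare_a_inv}.
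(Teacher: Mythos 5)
Your proposal is correct and follows essentially the same route as the paper's proof: both identify $G_v\setminus v$ as a fresh instance of \Cref{setting:fan_path_clique_sum} with the shorter path $P_{t-1}$, invoke \Cref{cor:compare_a_inv} to get $a_{m+|V(G)|-1}(S/\calJ_{K_m,G})\ge a_{m+|V(G)|-2}(S/\calJ_{K_m,G_v\setminus v})$, and close by induction on $t$ with base case $G=F_{k+1}^{W\cup\{v\}}(K_n)$ at $t=2$. The paper states this tersely; you have merely made the degree bookkeeping and the $\mathbf{FFan}$-membership checks explicit.
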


\begin{proof}
    Note that $G_v\setminus v$ is isomorphic to $G'\coloneqq
    F_{k}^{W}(K_{n})\cup_v P_{t-1}$ with $|V(G')|=|V(G)|-1$.  It follows from
    \Cref{cor:compare_a_inv} that
    \[
        a_{m+|V(G)|-1}(S/\calJ_{K_m,G})\ge a_{m+|V(G')|-1}(S/\calJ_{K_m,G'}).
    \]
    By induction on $t$, we can obtain the desired inequality, since
    $F_{k+1}^{W\cup\{v\}}(K_{n})$ is isomorphic to $F_{k}^{W}(K_{n})\cup_v
    P_2$.
\end{proof}

\begin{Corollary}
    \label{lem:fan_path_clique_sum}
    Under the assumptions of \Cref{setting:fan_path_clique_sum}, we also assume that $|V(G_1)|\ge m+k$. Then, $a_{m+|V(G)|-1}(S/\calJ_{K_m,G})\ge -|V(G)|+k+t-1$.
\end{Corollary}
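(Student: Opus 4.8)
The plan is to chain the two structural results already in hand: \Cref{lem:fan_path_clique_sum_base}, which transfers the top $a$-invariant of $G$ down to the pure fan graph $G'\coloneqq F_{k+1}^{W\cup\{v\}}(K_n)$, and \Cref{lem:fan_kn_v<m+k}\ref{lem:fan_kn_v<m+k_c}, which supplies a lower bound for the top $a$-invariant of a pure fan graph once its vertex count is large enough. The whole argument is then a direct composition of these two lemmas, the only real work being the bookkeeping of homological indices and vertex counts.

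First I would invoke \Cref{lem:fan_path_clique_sum_base} to reduce the problem to bounding the shifted $a$-invariant $a_{m+|V(G)|-1-(t-2)}(S/\calJ_{K_m,G'})$. The key identities are the vertex counts. Since $G=G_1\cup_v G_2$ glues $G_1$ and the path $P_t$ along the single vertex $v$, one has $|V(G)|=|V(G_1)|+t-1$; and since $G_1=F_k^W(K_n)$ is a pure fan with $v\notin W$, the graph $G'=F_{k+1}^{W\cup\{v\}}(K_n)$ satisfies $|V(G')|=|V(G_1)|+1$. Combining these gives $|V(G)|-(t-2)=|V(G')|$, so the shifted homological index matches exactly the top degree appearing in \Cref{lem:fan_kn_v<m+k}; that is, $m+|V(G)|-1-(t-2)=m+|V(G')|-1$.

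Next I would check the hypothesis of \Cref{lem:fan_kn_v<m+k}\ref{lem:fan_kn_v<m+k_c} for the $(k+1)$-pure fan graph $G'$: the standing assumption $|V(G_1)|\ge m+k$ yields $|V(G')|=|V(G_1)|+1\ge m+(k+1)$. The lemma then gives $a_{m+|V(G')|-1}(S/\calJ_{K_m,G'})\ge -|V(G')|+(k+1)$. Finally, substituting $|V(G')|=|V(G_1)|+1$ and collecting the inequalities produces $a_{m+|V(G)|-1}(S/\calJ_{K_m,G})\ge -|V(G')|+(k+1)=-|V(G_1)|+k=-|V(G)|+k+t-1$, which is precisely the claimed bound. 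I do not anticipate any genuine obstacle; the argument is forced once the identity $|V(G)|-(t-2)=|V(G')|$ is established, and the remainder is elementary arithmetic.
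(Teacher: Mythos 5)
Your proposal is correct and follows exactly the paper's own argument: both reduce via \Cref{lem:fan_path_clique_sum_base} to the pure fan graph $F_{k+1}^{W\cup\{v\}}(K_n)$, verify $|V(G)|-(t-2)=|V(G_1)|+1$ so the homological indices align, and then apply \Cref{lem:fan_kn_v<m+k}\ref{lem:fan_kn_v<m+k_c} (whose hypothesis holds since $|V(G_1)|+1\ge m+(k+1)$) followed by the same arithmetic. There is nothing to add or correct.
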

\begin{proof}
    Let $\widetilde{G}=F_{k+1}^{W\cup\{v\}}(K_{n})$. It is clear that $|V(\widetilde{G})|+t-2=|V(G_1)|+t-1=|V(G)|$. Therefore, it follows from \Cref{lem:fan_kn_v<m+k}\ref{lem:fan_kn_v<m+k_c} and \Cref{lem:fan_path_clique_sum_base} that
    \begin{align*}
        a_{m+|V(G)|-1}(S/\calJ_{K_m,G})
        & \ge a_{m+|V(\widetilde{G})|-1}(S/\calJ_{K_m,\widetilde{G}})
        \ge -|V(\widetilde{G})|+(k+1)\\
        &=-|V(G)|+k+t-1. \qedhere
    \end{align*}
\end{proof}

In the following, we present a subadditive formula when the $*$
operation is applied to a pseudo fan graph with a path graph.

\begin{Proposition}
    \label{prop:fan_path_star_reg}
    Let $G= G_1 *_f P_t$ such that $G_1$ is a $k$-pure pseudo fan graph,
    $|V(G_1)|-k\ge 3$, $t\ge 2$, and $k\ge 1$. Then we have
    \begin{align}
        \reg(S/\calJ_{K_m,G})\le\reg(S/\calJ_{K_m,G_1})
        +\reg(S/\calJ_{K_m,P_t}).
        \label{eqn:fan_path_star_reg-bound}
    \end{align}
    Furthermore, if $m\ge k$, then we have equality here.
\end{Proposition}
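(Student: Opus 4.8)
The plan is to run the short exact sequence \eqref{eqn:SES-1} of \Cref{rem:exactseq} at the vertex $v$ that is the neighbor of the marked leaf $f$ in $G_1$, and to recognize the three resulting graphs as objects whose regularity is already computed in this subsection. First I would record the target: by \Cref{thm:FkW_pure_reg}, \Cref{thm:reg_Fp} and \Cref{lem:reg_path}, the right-hand side of \eqref{eqn:fan_path_star_reg-bound} equals $\min\{|V(G_1)|-1,m+k-1\}+(t-1)$.

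Next I would establish the structural description of these three graphs, in the spirit of \Cref{obs:clique_sum} and \Cref{obs:fan_path_clique_sum}. Since $f$ is a leaf of $G_1$ attached at $v$, passing to $G_v$ collapses the fan branch carrying $f$ into the underlying complete graph, so that $G_v$ and $G_v\setminus v$ are clique sums of the type in \Cref{setting:fan_path_clique_sum}: a $(k-1)$-pure fan glued along a non-fan complete-graph vertex to the path $P_t$, with $|V(\text{fan})|=|V(G_1)|$ and $|V(G_1)|-1$ respectively. (For $G_1=F_p$ one takes $v=2p-1$ and reads off $G_v=F_1^{W}(K_{p+1})\cup_{2p} P_t$ from \Cref{obs:f_p_v}.) Deleting $v$ frees $f$ onto the path, so $G\setminus v$ is the disjoint union of $P_t$ with a pseudo fan on $|V(G_1)|-2$ vertices. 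Feeding these into \Cref{prop:fan_path_clique_sum}, \Cref{thm:FkW_pure_reg}, \Cref{thm:reg_Fp} and \Cref{lem:reg_path} gives
\[
\reg(S/\calJ_{K_m,G_v})\le\min\{|V(G_1)|-1,m+k-2\}+(t-1),\qquad \reg(S/\calJ_{K_m,G_v\setminus v})\le\min\{|V(G_1)|-2,m+k-2\}+(t-1),
\]
\[
\reg(S/\calJ_{K_m,G\setminus v})=\min\{|V(G_1)|-3,m+k-1\}+(t-1),
\]
and \Cref{depthlemma}\ref{depthlemma-c} applied to \eqref{eqn:SES-1} bounds $\reg(S/\calJ_{K_m,G})$ by the maximum of the first and third displays and the second plus one; each of these is at most the target $\min\{|V(G_1)|-1,m+k-1\}+(t-1)$, which proves \eqref{eqn:fan_path_star_reg-bound}.

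For the equality clause when $m\ge k$ I would split into two regimes. If $|V(G_1)|\ge m+k$, the target equals $m+k+t-2$; here I would use the $a$-invariant comparison \Cref{cor:compare_a_inv} (valid since all three graphs lie in $\mathbf{FFan}$) to obtain $a_{m+|V(G)|-1}(S/\calJ_{K_m,G})\ge a_{m+|V(G)|-2}(S/\calJ_{K_m,G_v\setminus v})$, and then apply \Cref{lem:fan_path_clique_sum} to the clique sum $G_v\setminus v$ (whose fan has $|V(G_1)|-1\ge m+(k-1)$ vertices) to bound the latter below by $-|V(G)|+k+t-1$; this yields $\reg(S/\calJ_{K_m,G})\ge m+k+t-2$, matching the upper bound. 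If instead $|V(G_1)|<m+k$, then $m\ge k=(k-1)+1$ makes \Cref{prop:fan_path_clique_sum} sharp, so $\reg(S/\calJ_{K_m,G_v})$ and $\reg(S/\calJ_{K_m,G_v\setminus v})$ are the exact values $|V(G_1)|-1+(t-1)$ and $|V(G_1)|-2+(t-1)$; these differ, so the sharp half of \Cref{depthlemma}\ref{depthlemma-c} forces $\reg(S/\calJ_{K_m,G})$ to equal the target.

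I expect the main obstacle to be the structural step: verifying, uniformly over which branch of $G_1$ the marked leaf occupies and over the fan-versus-$F_p$ dichotomy, that $G_v$ and $G_v\setminus v$ genuinely fall under \Cref{setting:fan_path_clique_sum} (so that \Cref{prop:fan_path_clique_sum} and \Cref{lem:fan_path_clique_sum} apply) and that $G\setminus v$ is the advertised disjoint union. The delicate bookkeeping is tracking the fan parameter dropping from $k$ to $k-1$ while the path length and the vertex counts shift by one, since this is exactly what makes the three regularity estimates collapse to the single target value.
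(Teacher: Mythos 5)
Your proposal is correct and takes essentially the same route as the paper's own proof: the same exact sequence at the neighbor $v$ of the marked leaf, the same identification of $G_v$ and $G_v\setminus v$ as clique sums of a $(k-1)$-pure fan with $P_t$ so that \Cref{prop:fan_path_clique_sum} applies, the same two bounds via \Cref{depthlemma}, and the same case split for equality (sharpness of \Cref{prop:fan_path_clique_sum} plus the $\reg(N)\neq\reg(P)$ trick when $|V(G_1)|\le m+k-1$, versus \Cref{cor:compare_a_inv} and \Cref{lem:fan_path_clique_sum} when $|V(G_1)|\ge m+k$). The only minor imprecision is that the exact value is $\reg(S/\calJ_{K_m,G\setminus v})=\min\{|V(G_1)|-3,m+q-1\}+(t-1)$ with $q\in\{k-1,k\}$ depending on the branch containing $f$, rather than with $k$ itself; since $q\le k$ this only enlarges your upper bound, and in the equality regime the minimum is attained at $|V(G_1)|-3$ either way, so no step is affected.
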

\begin{proof}
    It follows from \Cref{thm:FkW_pure_reg}, \Cref{lem:reg_path}, and \Cref{thm:reg_Fp} that the RHS of \eqref{eqn:fan_path_star_reg-bound} is given by
    \[
        t-1+\begin{cases}
            m+k-1,      & \text{if $m+k\le |V(G_1)|$,} \\
            |V(G_1)|-1, & \text{otherwise.}
        \end{cases}
    \]
    To prove the inequality in \eqref{eqn:fan_path_star_reg-bound}, let $v$ be the neighbor point of $f$ in $G_1$ and $H$ be the complete graph on the vertex set $N_G[v]$. It is not difficult to observe the following facts.
    \begin{enumerate}[a]
        \item Suppose that $G_1$ is the fan graph $F_k^W(K_n)$ with respect to $W=W_{1}\sqcup \cdots \sqcup W_{k}$. With loss of generality, we assume that $v\in W_1$. Then, $G_v$ is the clique sum $F_{k-1}^{W\setminus W_1}(H) \cup_f P_t$, $G_v\setminus v$ is the clique sum $F_{k-1}^{W\setminus W_1}(H\setminus v) \cup_f P_t$, and $f\in H\setminus v$. Furthermore, $G\setminus v$ is the disjoint union of $F_{q}^{W\setminus \{v\}}(K_{n-1})$ and $P_t$, where $q=k-1$ or $k$.
        \item Suppose that $G_1=F_p$. Since $k=2$ in this case, we have $p\ge 3$. Then, $G_v=F_1^{W'}(H)\cup_f P_t$ and $G_v\setminus v=F_1^{W'}(H\setminus v)\cup_f P_t$ such that $H=K_{p+1}$, $W'=N_{F_{p}}(v)\setminus f$, and $f\in H\setminus v$. Furthermore, $G\setminus v$ is the disjoint union of $F_{p-1}$ and $P_t$. In this case, we can set $q=2$ for later discussion.
    \end{enumerate}
    It follows from
    \Cref{prop:fan_path_clique_sum} that
    \begin{align}
        \reg(S/\calJ_{K_m,G_v}) & \le \reg(S/\calJ_{K_m,(G_1)_v})+\reg(S/\calJ_{K_m,P_t}) \label{eqn:Gv_reg_bound} \\
        & =\min\{|V(G_1)|-1,m+k-2\}+t-1,   \notag \\
        \intertext{and}
        \reg(S/\calJ_{K_m,G_v\setminus v}) & \le
        \reg(S/\calJ_{K_m,(G_1)_v\setminus v})+\reg(S/\calJ_{K_m,P_t}) \label{eqn:Gvv_reg_bound} \\
        & =\min\{|V(G_1)|-2,m+k-2\}+t-1.\notag
    \end{align}
    At the same time, we have
    \[
        \reg(S/\calJ_{K_m,G\setminus v})=\min\{|V(G_1)|-3,m+q-1\}+t-1
    \]
    by \Cref{thm:FkW_pure_reg}, \Cref{lem:reg_path}, and \Cref{thm:reg_Fp}.
    It follows from
    \Cref{depthlemma}\ref{depthlemma-c}
    that
    \begin{align*}
        \reg(S/\calJ_{K_m,G}) & \le \max\{|V(G_1)|-1,|V(G_1)|-3,|V(G_1)|-2+1\}+t-1
        \\
        & =|V(G_1)|-1+t-1.\notag
    \end{align*}
    For the same reason, we also have
    \begin{align*}
        \reg(S/\calJ_{K_m,G}) & \le
        \max\{m+k-2,m+q-1,m+k-2+1\}+t-1
        \\
        & =m+k-1+t-1,\notag
    \end{align*}
    since $q\le k$. The expected inequality in \eqref{eqn:fan_path_star_reg-bound} then follows.

    Finally, we assume that $m\ge k$ and show that an equality holds in \eqref{eqn:fan_path_star_reg-bound}. By \Cref{prop:fan_path_clique_sum}, we have equalities in both \eqref{eqn:Gv_reg_bound} and \eqref{eqn:Gvv_reg_bound}.  When $|V(G_1)|\le m+k-1$, then
    \begin{gather*}
        \max\{
            \reg(S/\calJ_{K_m,G_v}),
            \reg(S/\calJ_{K_m,G\setminus v})
        \}=\max\{|V(G_1)|+t-2,|V(G_{1})|+t-4\}\\
        =(|V(G_1)|-1)+t-1 \ne
        \reg(S/\calJ_{K_m,G_v\setminus v})=(|V(G_1)|-2)+t-1.
    \end{gather*}
    Hence $\reg(S/\calJ_{K_m,G})=(|V(G_1)|-1)+t-1$ by \Cref{depthlemma}\ref{depthlemma-c}, which establishes the equality in \eqref{eqn:fan_path_star_reg-bound} in this case.

    On the other hand, when $|V(G_1)|\ge m+k$, we can still prove that the equality  in \eqref{eqn:fan_path_star_reg-bound} holds. Indeed, since $G_v\setminus v = F_{k-1}^{W\setminus W_1}(H\setminus v) \cup_f P_t$, $|V(F_{k-1}^{W\setminus W_1}(H\setminus v))|=|V(G_1)|-1\ge m+(k-1)$, and $|V(G)|=|V(G_v\setminus v)|+1$ in this case, we deduce from \Cref{cor:compare_a_inv} and \Cref{lem:fan_path_clique_sum} that
    \begin{align*}
        \reg(S/\calJ_{K_m,G}) & \ge
        a_{m+|V(G)|-1}(S/\calJ_{K_m,G})+ m+|V(G)|-1 \\
        &\ge a_{m+|V(G_v\setminus v)|-1}(S/\calJ_{K_m,G_v\setminus v})+
        m+|V(G)|-1 \\
        & \ge -|V(G_v\setminus v)|+(k-1)+t-1 +
        m+|V(G)|-1 \\
        & = m+k+t-2.
    \end{align*}
    Note that the RHS of \eqref{eqn:fan_path_star_reg-bound} is also given by $m+k+t-2$. So we still have an equality here.
\end{proof}

The following result provides a rare additive formula when applying the $*$ operation.

\begin{Corollary}
    \label{prop:fp_path_star_reg}
    Let $G= F_{p}*_f P_t$ with $p\ge 1$ and $t\ge 2$. Then, we have
    \begin{align*}
        \reg(S/\calJ_{K_m,G}) & =\reg(S/\calJ_{K_m,F_p})+\reg(S/\calJ_{K_m,P_t}) \\
        & = \min\{2p-1,m+1\}+t-1.
    \end{align*}
\end{Corollary}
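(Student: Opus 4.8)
The plan is to recognize that $F_p$ is, by definition, a $2$-pure pseudo fan graph with associated parameter $k=2$, so that the corollary becomes essentially a direct instance of \Cref{prop:fan_path_star_reg} once the degenerate small values of $p$ are handled separately. The second displayed equality requires no work beyond citing \Cref{thm:reg_Fp} and \Cref{lem:reg_path}, which give $\reg(S/\calJ_{K_m,F_p})=\min\{2p-1,m+1\}$ and $\reg(S/\calJ_{K_m,P_t})=t-1$; hence the substantive content is the first equality.

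First I would dispose of the main range $p\ge 3$. Here $G_1=F_p$ is a $2$-pure pseudo fan graph with $k=2$, and $|V(G_1)|-k=2p-2\ge 4\ge 3$, while $t\ge 2$ and $k=2\ge 1$, so all the hypotheses of \Cref{prop:fan_path_star_reg} are satisfied. Since the standing assumption $m\ge 2$ gives $m\ge 2=k$, the equality clause of \Cref{prop:fan_path_star_reg} applies and immediately yields $\reg(S/\calJ_{K_m,G})=\reg(S/\calJ_{K_m,F_p})+\reg(S/\calJ_{K_m,P_t})$, as claimed.

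The remaining work is the two boundary cases $p=1$ and $p=2$, which fall outside the scope of \Cref{prop:fan_path_star_reg}: the graph $F_1=P_2$ is excluded from the $\circ$/$*$ conventions, and $F_2=P_4$ violates the inequality $|V(G_1)|-k\ge 3$. In both cases $F_p$ is itself a path, so $G=F_p*_f P_t$ is again a path; identifying a leaf of $P_2$ (resp.\ $P_4$) with a leaf of $P_t$ produces $P_{t+1}$ (resp.\ $P_{t+3}$). Then \Cref{lem:reg_path} computes $\reg(S/\calJ_{K_m,G})$ directly as $t$ (resp.\ $t+2$). I would then verify this matches the asserted formula: because $m\ge 2$ forces $\min\{1,m+1\}=1$ and $\min\{3,m+1\}=3$, the right-hand side $\min\{2p-1,m+1\}+t-1$ equals $t$ when $p=1$ and $t+2$ when $p=2$, in agreement with \Cref{lem:reg_path}.

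I expect the only delicate point to be these boundary cases: one must notice that \Cref{prop:fan_path_star_reg} cannot be invoked for $p\in\{1,2\}$ and instead recognize the glued graphs as genuine paths, so that \Cref{lem:reg_path} closes the argument. The main range $p\ge 3$ is otherwise a routine check that the hypotheses of \Cref{prop:fan_path_star_reg} hold, together with the trivial observation $m\ge k$.
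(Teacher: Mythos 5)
Your proposal is correct and follows essentially the same route as the paper's own proof: dispose of $p\in\{1,2\}$ by recognizing $G$ as the path $P_{2p-1+t}$ and invoking \Cref{lem:reg_path}, then for $p\ge 3$ apply the equality clause of \Cref{prop:fan_path_star_reg} with $k=2$ and $m\ge 2=k$, citing \Cref{thm:reg_Fp} and \Cref{lem:reg_path} for the numerical formula. (One negligible imprecision: $P_2$ is excluded only from the $\circ$ convention, not from $*$; but your substantive reason for excluding $p\in\{1,2\}$ — failure of $|V(G_1)|-k\ge 3$ — is the right one.)
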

\begin{proof}
    If $p=1$ or $2$, it is clear that $G=P_{2p-1+t}$. Since $m\ge 2$, the desired formula follows from \Cref{lem:reg_path}. If instead $p\ge 3$,
    since $k=2$  and $|V(F_p)|-2\ge 3$, it remains to apply \Cref{prop:fan_path_star_reg}.
\end{proof}

\begin{Corollary}
    \label{prop:fan_path_circ_reg}
    Let $G= F_{k}^{W}(K_n)\circ_v P_t$ with $n\ge 3$. Then, we have
    \[
        \reg(S/\calJ_{K_m,G})\le \reg(S/\calJ_{K_m,F_{k}^{W}(K_n)})+\reg(S/\calJ_{K_m,P_t})-s,
    \]
    where $s= \begin{cases}
        1, & \text{if $t=2$},\\
        2, & \text{if $t\ge 3$.}
    \end{cases}$
\end{Corollary}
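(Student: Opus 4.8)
The plan is to split on whether $t=2$ or $t\ge 3$, handling the degenerate case by an induced-subgraph estimate and the main case through the short exact sequence \eqref{eqn:SES-1}. Write $G_1=F_k^W(K_n)$, let $f_1$ be the marked leaf of $G_1$ entering the gluing with neighbor $v_1$, and let $f_2,v_2$ be the leaf and its neighbor in $P_t$; after the $\circ$ operation $v_1$ and $v_2$ are identified to the vertex $v$. When $t=2$ the path $P_2$ contributes nothing once its leaf is deleted, so $G=F_k^W(K_n)\setminus f_1$ is an induced subgraph of $F_k^W(K_n)$; then \Cref{cor:induced_graph} gives at once $\reg(S/\calJ_{K_m,G})\le\reg(S/\calJ_{K_m,F_k^W(K_n)})=\reg(S/\calJ_{K_m,F_k^W(K_n)})+\reg(S/\calJ_{K_m,P_2})-1$, which is the claim for $s=1$.

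For $t\ge 3$ I would first record the combinatorics of the three graphs in \eqref{eqn:SES-1} taken with respect to $v$, in the spirit of \Cref{obs:clique_sum} and \Cref{obs:fan_path_clique_sum}. Passing to $G_v$ forces the branch of $G_1$ carrying $f_1$ to be absorbed into the clique $H$ on $N_G[v]$, so that $G_v$ is a fan–path clique sum $F_{k-1}^{W''}(H)\cup_u P_{t-2}$ as in \Cref{setting:fan_path_clique_sum}, where $W''=W\setminus W_1$ and $u$ is the path vertex adjacent to $v$; likewise $G_v\setminus v=F_{k-1}^{W''}(H\setminus v)\cup_u P_{t-2}$. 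Deleting $v$ instead rectifies that branch, so $G\setminus v$ is the disjoint union of a pure fan $F_k^{W'''}(K_{n-1})$ with $|W'''|=|W|-1$ and the path $P_{t-2}=P_t\setminus\{f_2,v_2\}$ (with the convention $\reg(S/\calJ_{K_m,P_1})=0$ when $t=3$). A short vertex count gives $|V(F_{k-1}^{W''}(H))|=|V(F_k^{W'''}(K_{n-1}))|+2=n+|W|$, which is what makes the arithmetic below balance.

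Next I would feed these into the known formulas. Setting $R:=\reg(S/\calJ_{K_m,F_k^W(K_n)})=\min\{n+|W|-1,m+k-1\}$ by \Cref{thm:FkW_pure_reg}, the combination of \Cref{prop:fan_path_clique_sum}, \Cref{thm:FkW_pure_reg} and \Cref{lem:reg_path} yields
\begin{align*}
\reg(S/\calJ_{K_m,G_v}) &\le \min\{n+|W|-1,m+k-2\}+(t-3),\\
\reg(S/\calJ_{K_m,G_v\setminus v}) &\le \min\{n+|W|-2,m+k-2\}+(t-3),\\
\reg(S/\calJ_{K_m,G\setminus v}) &= \min\{n+|W|-3,m+k-1\}+(t-3).
\end{align*}
The first and third quantities are each at most $R+(t-3)$, and since $\min\{n+|W|-2,m+k-2\}+1=R$, the middle quantity plus one is also $R+(t-3)$. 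Applying \Cref{depthlemma}\ref{depthlemma-c} to \eqref{eqn:SES-1} then gives $\reg(S/\calJ_{K_m,G})\le R+(t-3)=\reg(S/\calJ_{K_m,F_k^W(K_n)})+\reg(S/\calJ_{K_m,P_t})-2$, as required.

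The key point, and the main obstacle, is that one must use the sharp form of \Cref{depthlemma}\ref{depthlemma-c} that retains the term $\reg(S/\calJ_{K_m,G_v\setminus v})$, rather than the coarser \Cref{lem:reg_easy_bound}: estimating $\reg(S/\calJ_{K_m,G_v\setminus v})$ by $\reg(S/\calJ_{K_m,G_v})$ loses exactly one unit and produces only the weaker saving $-1$ in place of $-2$. The other delicate step is the purely combinatorial identification of $G_v$ and $G_v\setminus v$ as fan–path clique sums on the enlarged clique $H$, where the hypothesis $n\ge 3$ ensures that $F_k^W(K_n)$ is a genuine (non-path) pure fan so that \Cref{setting:fan_path_clique_sum} and \Cref{prop:fan_path_clique_sum} genuinely apply.
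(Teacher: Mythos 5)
Your proof is correct, but for $t\ge 3$ it takes a genuinely different route from the paper. The paper's proof is essentially a reduction by graph identities: for $t=2$ it notes $G=G_1\setminus f$ (exactly as you do); for $t=3$ it observes that $G$ is literally $G_1$ again, so the bound is trivial since $\reg(S/\calJ_{K_m,P_3})=2$; and for $t\ge 4$ it uses the identity $F_k^W(K_n)\circ P_t=F_k^W(K_n)*P_{t-2}$, after which the claim is an immediate citation of \Cref{prop:fan_path_star_reg} together with $\reg(S/\calJ_{K_m,P_t})=\reg(S/\calJ_{K_m,P_{t-2}})+2$. You instead run the short exact sequence \eqref{eqn:SES-1} at $v$ directly, identify $G_v$ and $G_v\setminus v$ as fan--path clique sums on the enlarged clique $H$, and invoke \Cref{prop:fan_path_clique_sum}, \Cref{thm:FkW_pure_reg}, \Cref{lem:reg_path} and the sharp form of \Cref{depthlemma}\ref{depthlemma-c}; this in effect re-derives, inside the $\circ$-glued graph, the same computation that constitutes the paper's proof of \Cref{prop:fan_path_star_reg}. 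What the paper's route buys is brevity and the conceptual explanation of the saving $s=2$ (the $\circ$ operation absorbs two path vertices); what your route buys is a self-contained argument that makes explicit where the term $\reg(S/\calJ_{K_m,G_v\setminus v})+1$, rather than the coarser \Cref{lem:reg_easy_bound}, is needed.

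Two small inaccuracies, neither fatal. First, $G\setminus v$ is the disjoint union of $P_{t-2}$ with a $q$-pure fan where $q\in\{k-1,k\}$: when $|W_1|=1$ the entire first branch disappears and $q=k-1$, so your displayed formula for $\reg(S/\calJ_{K_m,G\setminus v})$ should be an inequality ``$\le$'' rather than ``$=$''; this is harmless because only upper bounds enter \Cref{depthlemma}\ref{depthlemma-c}. Second, for $t=3$ the path $P_{t-2}=P_1$ is degenerate, so \Cref{prop:fan_path_clique_sum} does not literally apply; your convention $\reg(S/\calJ_{K_m,P_1})=0$ patches this, since then $G_v=F_{k-1}^{W\setminus W_1}(H)$ and \Cref{thm:FkW_pure_reg} gives the needed bound directly --- though the cleanest fix is the paper's observation that for $t=3$ one has $G=G_1$ on the nose.
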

\begin{proof}
    Let $G_1=F_{k}^{W}(K_n)$ and $f$ be the leaf in $G_1$ whose neighbor point is $v$.
    \begin{enumerate}[a]
        \item If $t=2$, then $G=G_1\setminus f$. In particular, $\reg(S/\calJ_{K_m,G})\le \reg(S/\calJ_{K_m,G_1})$  by \Cref{cor:induced_graph}. On the other hand, $\reg(S/\calJ_{K_m,P_2})=1$ by \Cref{lem:reg_path}.
            The claimed inequality is clear in this case.

        \item If $t=3$, then $G=G_1$. The expected inequality is trivial, since $\reg(S/\calJ_{K_m,P_3})=2$.

        \item If $t\ge 4$, then 
            $G=F_{k}^{W}(K_n)*P_{t-2}$. The expected result follows from  \Cref{prop:fan_path_star_reg}, since $\reg(S/\calJ_{K_m,P_t})=\reg(S/\calJ_{K_m,P_{t-2}})+2$. \qedhere
    \end{enumerate}
\end{proof}

From the proofs of \Cref{prop:fan_path_star_reg} and  \Cref{prop:fp_path_star_reg}, it is clear that the clique sums are important for our study of $*$ operations. Therefore,  in the following we will study the clique sums of more complicated combinations.

\begin{Proposition}
    \label{prop:clique_sum_reg}
    Let $G=G_1 \cup_u G_2$  be the clique sum of two graphs $G_1$ and $G_2$.  Suppose that $G_{2}=F_{k_2}^{W_2}(K_{n_2})$ is a $k_{2}$-pure fan graph of $K_{n_2}$ on $W_2$ with $|V(G_2)|-k_2\ge 3$ and $u\in V(K_{n_2})\setminus W_2$, and $G_1$ is a $k_1$-pure pseudo fan graph with $|V(G_1)|-k_1\ge 3$ and $k_1\ge 1$. Suppose in addition that $u$ is a leaf of $G_1$ and $v$ is the unique neighbor point of $u$ in $G_1$.  Then, we have
    \[
        \reg(S/\calJ_{K_m,G})\le \min\{ (m+k_1-1)+(m+k_2-1), m+k_2+|V(G_1)|-2\}.
    \]
\end{Proposition}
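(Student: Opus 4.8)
The plan is to prove the bound by induction on the pair $(k_2,|V(G_2)|)$ ordered lexicographically, keeping $G_1$ and its marked leaf $u$ fixed. It is convenient first to rewrite the right–hand side in factored form: by \Cref{thm:FkW_pure_reg} and \Cref{thm:reg_Fp} one has $\reg(S/\calJ_{K_m,G_1})=\min\{|V(G_1)|-1,m+k_1-1\}$, and therefore $\min\{(m+k_1-1)+(m+k_2-1),\,m+k_2+|V(G_1)|-2\}=(m+k_2-1)+\reg(S/\calJ_{K_m,G_1})$. This quantity is visibly nondecreasing in $k_2$, a fact I will use when collecting the three terms coming out of the short exact sequence.

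For the inductive step (so $k_2\ge 1$), I would apply the short exact sequence \eqref{eqn:SES-1} of \Cref{rem:exactseq} at an internal vertex $w$ lying in the core $K_{n_2}$ of $G_2$ and anchoring one of its branches (say $w=w_{2,1,1}$, so $w\ne u$). Using the local structure of \Cref{obs:fan_graph} inside $G_2$, the vertex $u$ remains in the core throughout, so $G_w=G_1\cup_u (G_2)_w$ and $G_w\setminus w=G_1\cup_u\bigl((G_2)_w\setminus w\bigr)$ are again clique sums of exactly the hypothesised shape in which $G_2$ has been replaced by a pure fan with one fewer branch, while $G\setminus w$ is the disjoint union of an isolated vertex with $G_1\cup_u G_2''$, where $G_2''$ is a pure fan of $K_{n_2-1}$ having at most $k_2$ branches. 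Hence the inductive hypothesis bounds $\reg(S/\calJ_{K_m,G_w})$ and $\reg(S/\calJ_{K_m,G_w\setminus w})$ by $(m+k_2-2)+\reg(S/\calJ_{K_m,G_1})$, and bounds $\reg(S/\calJ_{K_m,G\setminus w})$ by $(m+k_2-1)+\reg(S/\calJ_{K_m,G_1})$ (using monotonicity in $k_2$ together with additivity over disjoint unions, \Cref{lem:direct_sum}). Feeding these into \Cref{depthlemma}\ref{depthlemma-c}, where only the $G_w\setminus w$ term acquires the extra $+1$, returns exactly the asserted bound.

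It remains to settle the base case $k_2=0$, i.e.\ $G=G_1\cup_u K_{n_2}$, where a clique is attached at the leaf $u$ of $G_1$ and the target reads $\reg(S/\calJ_{K_m,G})\le (m-1)+\reg(S/\calJ_{K_m,G_1})$. I would treat this by a secondary induction on $k_1$, applying \eqref{eqn:SES-1} at $v$, the unique neighbor of $u$ in $G_1$. Here $G\setminus v=(G_1\setminus\{u,v\})\sqcup K_{n_2}$, so its regularity equals $\reg(S/\calJ_{K_m,G_1\setminus\{u,v\}})+\min\{n_2-1,m-1\}\le \reg(S/\calJ_{K_m,G_1})+(m-1)$ by \Cref{cor:induced_graph} and \Cref{lem:reg_min_equal}. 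The advantage of choosing $v$ is that $(G_1)_v$ absorbs the branch carrying $u$ into its core, so that both $G_v=(G_1)_v\cup_u K_{n_2}$ and $G_v\setminus v$ become genuine (in general non-pure) fan graphs; indeed $G_v$ is a fan with $k_1$ branches, exactly one of which—the clique $K_{n_2}$ hung at $u$—is impure. Their regularities are then controlled by \Cref{thm:fan_general}, and combining through \Cref{depthlemma}\ref{depthlemma-c} closes the secondary induction; the smallest instances, where $G_1\setminus\{u,v\}$ or $G_2''$ degenerates into a path, are absorbed by \Cref{prop:fan_path_clique_sum} and \Cref{prop:fan_path_star_reg}.

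I expect the main obstacle to be this base case, and specifically the verification that the non-pure fan $G_v=(G_1)_v\cup_u K_{n_2}$ satisfies $\reg(S/\calJ_{K_m,G_v})\le (m-1)+\reg(S/\calJ_{K_m,G_1})$ simultaneously in both regimes of the minimum. The bound $2m+k_1-2$ follows directly from \Cref{thm:fan_general}\ref{thm:fan_general_c}, since the impure branch contributes $\delta=1$; but the bound $m+|V(G_1)|-2$ is delicate when $n_2>m$, where one must see that the large clique branch $K_{n_2}$ contributes only the capped amount $m-1$ rather than $n_2-1$ to the regularity. This forces the use of the sharper clique-counting estimate \Cref{thm:fan_general}\ref{thm:fan_general_d} (or the exact formula \ref{thm:fan_general_e}) rather than the crude bound $\reg\le|V(G_v)|-1$ of \Cref{lem:reg_min_equal}. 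The remaining, purely bookkeeping, chore is to track the inequalities $|V(\cdot)|-k(\cdot)\ge 3$ across the reductions so that the inductive hypotheses stay applicable, diverting the few genuinely small configurations to the path-based results already established.
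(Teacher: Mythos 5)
Your outer induction (peeling a branch of $G_2$ by applying \eqref{eqn:SES-1} at a branch vertex $w\in W_2$) is structurally sound: $G_w$, $G_w\setminus w$ and $G\setminus w$ have the shapes you describe, the lexicographic induction is well-founded, and the arithmetic closes (the missing $|V(\cdot)|-k(\cdot)\ge 3$ verifications can be patched by padding, as in the reduction step of \Cref{thm:full_reg_star_circ_path}). The genuine gap is in the base case $k_2=0$, exactly where you place the difficulty, and the tools you cite there do not close it. After applying \eqref{eqn:SES-1} at $v$, \Cref{depthlemma}\ref{depthlemma-c} forces you to show $\reg(S/\calJ_{K_m,G_v})\le \min\{2m+k_1-2,\,m+|V(G_1)|-2\}$ and $\reg(S/\calJ_{K_m,G_v\setminus v})$ at most one less. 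Treating $G_v=(G_1)_v\cup_u K_{n_2}$ as a single impure fan, \Cref{thm:fan_general}\ref{thm:fan_general_c} gives $k_1+2(m-1)=2m+k_1-2$ (already fatal once the $+1$ is added), part \ref{thm:fan_general_d} gives $\cl(G_v)(m-1)\ge 2(m-1)$, and part \ref{thm:fan_general_e} is inapplicable whenever $k_1\ge 2$ (the pure whisker branches have height $1<m$) or $n_2\le m$ (the impure branch has height $n_2-1<m$). Hence in the regime $m>|V(G_1)|-k_1$ and $3\le n_2\le m$, where the binding component is $m+|V(G_1)|-2$, none of the three estimates suffices. Concretely, take $G_1=F_1^{\{v\}}(K_3)$, $G_2=K_4$, $m=4$: the target is $\min\{7,6\}=6$, but $G_v=K_4\cup_u K_4$ and $G_v\setminus v=K_3\cup_u K_4$ only get the bound $2(m-1)=6$ from \ref{thm:fan_general_d} (and $7$ from \ref{thm:fan_general_c}), so the exact sequence yields $\reg(S/\calJ_{K_m,G})\le\max\{6,4,6+1\}=7$, short of the claim. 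Note also that your diagnosis of the hard regime is inverted: large $n_2$ (namely $n_2\ge m+1$ with $k_1=1$) is precisely when \ref{thm:fan_general_e} rescues you; it is $n_2\le m$, or any $k_1\ge 2$ with $m$ large, that defeats all three bounds.

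The paper closes precisely this hole---and thereby dispenses with the induction on $k_2$ altogether---by never treating $G_v$ as a monolithic impure fan. It observes that $G_v$ is a $\circ$-product of two \emph{pure} fans, $F_{k_1}^{W_1'}(H)\circ F_{k_2+1}^{W_2'}(K_{n_2})$, where $H$ is the clique on $N_G[v]$, and invokes \Cref{prop:reg_Fan_Fan_circ}; that bound caps the contribution of the attached fan at $\min\{|V(G_2)|-2,\,m+k_2-1\}$ rather than letting it enter through a global $(\delta+1)(m-1)$ or $\cl\cdot(m-1)$ term. In the example above it gives $\reg(S/\calJ_{K_m,G_v})\le 4$. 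With this single input, one application of \eqref{eqn:SES-1} at $v$---using the exact value of $\reg(S/\calJ_{K_m,G\setminus v})$ for the disjoint union $(G_1\setminus\{u,v\})\sqcup G_2$ from \Cref{thm:FkW_pure_reg} and \Cref{thm:reg_Fp}---delivers both components of the minimum for arbitrary $k_2\ge 0$ simultaneously, which is the paper's entire proof. If you wish to keep your induction, the repair is the same: in the base case, replace every appeal to \Cref{thm:fan_general} by \Cref{prop:reg_Fan_Fan_circ} applied to this $\circ$-decomposition of $G_v$ and $G_v\setminus v$.
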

\begin{proof}
    Let $H$ be the complete graph on the vertex set $N_G[v]$. Then, $G_v$ can be regarded as a graph obtained by a $\circ$ operation from the fan graphs $F_{k_1}^{W'_1}(H)$ on some $W'_1$  and $F_{k_2+1}^{W'_2}(K_{n_2})$ on some $W'_2$. Note that the numbers of vertices satisfy $|V(F_{k_1}^{W'_1}(H))|=|V(G_1)|+1$ and $|V(F_{k_2+1}^{W'_2}(K_{n_2}))|=|V(G_2)|+1$. Thus,
    by \Cref{prop:reg_Fan_Fan_circ}  we obtain that
    \begin{align*}
        \reg(S/\calJ_{K_m, G_v}) & \le \max\big\{
            \min\{|V(G_1)|-2,m+(k_1-1)-1\}+\min\{|V(G_2)|-2,m+k_2-1\}, \\
            & \qquad \qquad \min\{|V(G)|-1,m+k_1+(k_2+1)-3\}, \\
            & \qquad \qquad \min\{|V(G)|-1,m+k_1+(k_2+1)-2\}
        \big\}.
    \end{align*}
    Meanwhile, $G\setminus v$ is the disjoint union of a $q_1$-pure pseudo
    fan graph $G_1\setminus\{u,v\}$  and $G_2$, where $q_1=k_1-1$ or $k_1$.
    Thus  by \Cref{thm:FkW_pure_reg} we obtain that
    \[
        \reg(S/\calJ_{K_m,G\setminus
        v})=\min\{|V(G_1)|-3,m+q_1-1\}+\min\{|V(G_2)|-1,m+k_2-1\}.
    \]

    We can draw two conclusions from these observations.  First, since $m\ge 2$, we have
    \begin{align*}
        \reg(S/\calJ_{K_m,G_v}) & \le \max\{2m+k_1+k_2-3,m+k_1+k_2-2,m+k_1+k_2-1\} \\
        & =2m+k_1+k_2-3.
    \end{align*}
    Meanwhile, we obtain that
    \[
        \reg(S/\calJ_{K_m,G\setminus v})
        \le (m+q_1-1)+(m+k_2-1)
        \le 2m+k_1+k_2-2.
    \]
    Thus, after applying \Cref{lem:reg_easy_bound}, we have
    $\reg(S/\calJ_{K_m,G})\le (m+k_1-1)+(m+k_2-1)$.

    Second,
    we also have
    \begin{align*}
        \reg(S/\calJ_{K_m,G_v}) & \le \max\{|V(G_1)|-2+m+k_2-1,m+k_1+k_2-2,m+k_1+k_2-1\} \\
        & \le m+k_2+|V(G_1)|-3
    \end{align*}
    since $|V(G_1)|-k_1\ge 3$.
    On the other hand, we have
    \[
        \reg(S/\calJ_{K_m, G\setminus v})
        \le (m+k_2-1)+(|V(G_1)|-3).
    \]
    Thus, after applying \Cref{lem:reg_easy_bound}, we have
    $\reg(S/\calJ_{K_m,G})\le m+k_2+|V(G_1)|-2$.
    \qedhere
\end{proof}

\begin{Remark}
    In \Cref{prop:clique_sum_reg}, if instead
    $G_1=F_1$, then
    $G=F_{k_2+1}^{W\cup\{u\}}(K_n)$ is a $(k_{2}+1)$-pure fan graph. It
    follows from \Cref{thm:FkW_pure_reg} that $\reg(S/\calJ_{K_m,G})=\min\{|V(G)|-1,m+k_2\}$. On the other hand, if $G_1=F_2$, then
    $G=F_{k_2+1}^{W\cup\{u\}}(K_n)*_v P_3$. Thus,  by \Cref{thm:FkW_pure_reg} and
    \Cref{prop:fan_path_star_reg}, we obtain
    $\reg(S/\calJ_{K_m,G})\le m+k_2+2$.
\end{Remark}

We are now ready to give the first main result of this subsection.

\begin{Theorem}
    \label{thm:*_lower_upper_bound}
    Let $G=(G_1,f_1) * (G_2,f_2)$, where $G_i$ is a $k_i$-pure pseudo fan graph with $|V(G_i)|-k_i\ge 3$ and $k_i\ge 1$ for each $i$. Let $v_i$ be the neighbor point of $f_i$ in $G_i$ for $i=1,2$, and $f=f_1=f_2$ in $G$.  Then we have
    \begin{equation}
        \sum\limits_{i=1}^2\reg(S/\calJ_{K_m,G_i\setminus \{f_i,v_i\}})\le \reg(S/\calJ_{K_m,G})\le\sum\limits_{i=1}^2\reg(S/\calJ_{K_m,G_i}).
        \label{eqn:*_lower_upper_bound}
    \end{equation}
\end{Theorem}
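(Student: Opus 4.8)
The statement is symmetric in $G_1$ and $G_2$, and the two inequalities are proved by quite different means. The plan is to dispatch the lower bound by an induced-subgraph comparison and the upper bound through the short exact sequence \eqref{eqn:SES-1} combined with the clique-sum estimate of \Cref{prop:clique_sum_reg}.

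For the lower bound, the key observation is that deleting the three vertices $f$, $v_1$, $v_2$ from $G$ produces exactly the induced subgraph $(G_1\setminus\{f_1,v_1\})\sqcup(G_2\setminus\{f_2,v_2\})$: the identified leaf $f$ is the only vertex common to the two sides, so after its removal no edge joins the surviving $G_1$-vertices to the surviving $G_2$-vertices. Regularity is additive over disjoint unions (the associated quotient ring is a tensor product over $\KK$, as is used repeatedly in this paper) and is monotone under passing to induced subgraphs by \Cref{cor:induced_graph}, so
\[
\sum_{i=1}^2\reg(S/\calJ_{K_m,G_i\setminus\{f_i,v_i\}})=\reg(S/\calJ_{K_m,G\setminus\{f,v_1,v_2\}})\le\reg(S/\calJ_{K_m,G}),
\]
which is the first inequality.

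For the upper bound I would abbreviate $R_i:=\reg(S/\calJ_{K_m,G_i})=\min\{|V(G_i)|-1,\,m+k_i-1\}$, the value supplied by \Cref{thm:FkW_pure_reg} and \Cref{thm:reg_Fp}, so that the goal is $\reg(S/\calJ_{K_m,G})\le R_1+R_2$. By symmetry I may assume $|V(G_1)|-k_1\ge|V(G_2)|-k_2$. If $m>|V(G_1)|-k_1$ then $R_1+R_2=|V(G)|-1$ and the bound is immediate from the connected estimate $\reg(S/\calJ_{K_m,G})\le|V(G)|-1$ of \Cref{lem:reg_min_equal}. Otherwise $m\le|V(G_1)|-k_1$, so $R_1=m+k_1-1$, and I would delete the internal vertex $v=v_1$ (the neighbour of $f$ inside the larger graph) and feed \eqref{eqn:SES-1} into \Cref{lem:reg_easy_bound}. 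Here $G\setminus v_1=(G_1\setminus\{f_1,v_1\})\sqcup G_2$, whose regularity is $\reg(S/\calJ_{K_m,G_1\setminus\{f_1,v_1\}})+R_2\le R_1+R_2$ by monotonicity, while $G_{v_1}$ is the clique sum $G_2\cup_f(G_1)_{v_1}$, in which $(G_1)_{v_1}$ is the $(k_1-1)$-pure fan obtained by absorbing the branch of $f$ and $f$ is a leaf of $G_2$. Applying \Cref{prop:clique_sum_reg} to this clique sum gives $\reg(S/\calJ_{K_m,G_{v_1}})\le\min\{(m+k_1-1)+(m+k_2-2),\,m+k_1+|V(G_2)|-3\}$.

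The decisive point is then the elementary check $\reg(S/\calJ_{K_m,G_{v_1}})+1\le R_1+R_2$: if $R_2=m+k_2-1$ the first term of the minimum equals $R_1+R_2-1$, and if $R_2=|V(G_2)|-1$ the second term equals $R_1+R_2-1$, so in either case both arguments of the maximum in \Cref{lem:reg_easy_bound} are at most $R_1+R_2$, giving $\reg(S/\calJ_{K_m,G})\le R_1+R_2$. The main obstacle is the structural bookkeeping around $G_{v_1}$: one must confirm that $(G_1)_{v_1}$ is genuinely a pure fan with $f$ lying in its clique part and with $|V((G_1)_{v_1})|-(k_1-1)=|V(G_1)|-k_1+1\ge4$ so that \Cref{prop:clique_sum_reg} is applicable, and one must treat separately the degenerate cases $k_1=1$ (where the fan part collapses to a complete graph) and $G_i=F_p$ (where $k_i=2$ and the vertex reductions are read off from \Cref{obs:f_p_v} instead of the pure-fan observation). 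Choosing to delete the neighbour in the larger of the two graphs is exactly what makes the asymmetric bound of \Cref{prop:clique_sum_reg} match the symmetric target $R_1+R_2$.
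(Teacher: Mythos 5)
Your proposal is correct and follows essentially the same route as the paper's own proof: the lower bound via \Cref{cor:induced_graph} applied to the induced subgraph $(G_1\setminus\{f_1,v_1\})\sqcup(G_2\setminus\{f_2,v_2\})$, and the upper bound by deleting the neighbour of $f$ in the graph with the larger value of $|V(G_i)|-k_i$, recognizing $G_{v}$ as the clique sum of the other pseudo fan with a $(k-1)$-pure fan, and feeding \Cref{prop:clique_sum_reg} together with \Cref{lem:reg_easy_bound} and \Cref{lem:reg_min_equal} into the estimate. The only differences are cosmetic: your labeling is opposite to the paper's (your $G_1$ plays the role of its $G_2$), and you compress the paper's three ranges of $m$ into two cases by matching each branch of the minimum in $R_2$ with the corresponding branch of the clique-sum bound, which is a tidy but equivalent bookkeeping of the same computation.
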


\begin{proof}
    The first inequality of \eqref{eqn:*_lower_upper_bound} is trivial by \Cref{cor:induced_graph}. In the following, we deal with  the second inequality of \eqref{eqn:*_lower_upper_bound}.
    For simplicity, let $A\coloneqq \min\{|V(G_i)|-k_i: i\in [2]\}$ and $B\coloneqq \max\{|V(G_i)|-k_i : i\in [2]\}$.
    Without loss of generality, we can suppose that $A=|V(G_1)|-k_1$ and $B=|V(G_2)|-k_2$.
    By \Cref{thm:FkW_pure_reg},   \Cref{thm:reg_Fp} and the fact that $|V(G)|=|V(G_1)|+|V(G_2)|-1$, we can get
    \begin{align}
        \sum\limits_{i=1}^2\reg(S/\calJ_{K_m,G_i})
        & =\min\{|V(G_1)|-1,m+k_1-1\}+\min\{|V(G_2)|-1,m+k_2-1\} \notag \\
        &=\begin{cases}
            2m+k_1+k_2-2,     & \text{if $m\le A$,}        \\
            m+k_2+|V(G_1)|-2, & \text{if $A+1\le m\le B$,} \\
            |V(G)|-1,         & \text{if $ m\ge B+1$.}
        \end{cases}
        \label{eqn:*_lower_upper_bound_right}
    \end{align}
    Since $G_{v_2}=G_1 \cup_f (G_{2})_{v_2}$ is the clique sum of a $k_1$-pure  pseudo
    fan graph and a $(k_2-1)$-pure fan graph satisfying the conditions in \Cref{prop:clique_sum_reg},
    we can get
    \[
        \reg(S/\calJ_{K_m,G_{v_2}})\le (m+k_1-1)+(m+k_2-2).
    \]
    Since $G\setminus v_2$ is the disjoint union of the graph $G_1$ 
    and a $q_2$-pure pseudo fan graph such that $q_2\le k_2$, we have
    \begin{align}
        \reg(S/\calJ_{K_m,G\setminus v_2}) & =\min\{|V(G_1)|-1,m+k_1-1\}+\min\{|V(G_2)|-3,m+q_2-1\}
        \label{eqn:GDv1_formula} \\
        & \le (m+k_1-1)+(m+q_2-1) \notag
    \end{align}
    by \Cref{thm:FkW_pure_reg} and   \Cref{thm:reg_Fp}.  So we have $\reg(S/\calJ_{K_m,G})\le
    2m+k_1+k_2-2$ by \Cref{lem:reg_easy_bound}.  Consequently,
    \[
        \reg(S/\calJ_{K_m,G})\le \min\{2m+k_1+k_2-2,|V(G)|-1\}
    \]
    by \Cref{lem:reg_min_equal}.

    Therefore, in view of
    \eqref{eqn:*_lower_upper_bound_right}, it suffices to assume that $A+1\le
    m\le B$ and prove that $\reg(S/\calJ_{K_m,G})\le m+k_2+|V(G_1)|-2$. Note that
    \[
        \reg(S/\calJ_{K_m,G_{v_2}})\le m+(k_2-1)+|V(G_1)|-2
    \]
    by \Cref{prop:clique_sum_reg}.
    Meanwhile,
    \[
        \reg(S/\calJ_{K_m,G\setminus v_2}) \le (|V(G_1)|-1)+(m+q_2-1) \le m+k_2+|V(G_1)|-2
    \]
    by \eqref{eqn:GDv1_formula}.  The desired result then follows from \Cref{lem:reg_easy_bound},  which completes  our proof.
\end{proof}

From the proof of \Cref{thm:*_lower_upper_bound}, we can make the following quick estimates.

\begin{Corollary}
    \label{cor:F*F}
    Under the assumptions of \Cref{thm:*_lower_upper_bound}, we have the following estimates:
    \begin{enumerate}[a]
        \item   $\reg(S/\calJ_{K_m,F_{k_1}^{W_1}(K_{n_1})*F_{k_2}^{W_2}(K_{n_2})})\le
            2m+k_1+k_2-2$,
        \item  $\reg(S/\calJ_{K_m,F_{n_1}*F_{k_2}^{W_2}(K_{n_2})})\le 2m+k_2$, and
        \item  \label{cor:F*F_c}
            $\reg(S/\calJ_{K_m,F_{n_1}*F_{n_2}})\le 2m+2$.
    \end{enumerate}
\end{Corollary}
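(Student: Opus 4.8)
The plan is to read all three estimates off the intermediate inequality established inside the proof of \Cref{thm:*_lower_upper_bound}. That proof does not merely verify the stated bound $\reg(S/\calJ_{K_m,G})\le\sum_{i=1}^2\reg(S/\calJ_{K_m,G_i})$; along the way it produces the sharper and more uniform estimate $\reg(S/\calJ_{K_m,G})\le\min\{2m+k_1+k_2-2,\,|V(G)|-1\}$, and in particular $\reg(S/\calJ_{K_m,G})\le 2m+k_1+k_2-2$. Everything in \Cref{cor:F*F} is then just this single inequality read off for the correct values of $k_1$ and $k_2$, the only bookkeeping being the convention that an $F_p$ graph is counted as a $2$-pure pseudo fan graph.

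First I would dispose of part (a). Here both factors $G_1=F_{k_1}^{W_1}(K_{n_1})$ and $G_2=F_{k_2}^{W_2}(K_{n_2})$ are genuine pure fan graphs carrying exactly the parameters $k_1$ and $k_2$, so the hypotheses of \Cref{thm:*_lower_upper_bound} hold verbatim and the inequality $\reg(S/\calJ_{K_m,G})\le 2m+k_1+k_2-2$ is precisely the assertion. For part (b) I would take $G_1=F_{n_1}$, which by definition is a $2$-pure pseudo fan graph, so its relevant parameter is $k_1=2$, while $G_2=F_{k_2}^{W_2}(K_{n_2})$ retains its $k_2$; substituting $k_1=2$ into $2m+k_1+k_2-2$ gives $2m+k_2$. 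Part (c) is the same substitution performed on both factors: with $G_1=F_{n_1}$ and $G_2=F_{n_2}$ one sets $k_1=k_2=2$, and $2m+k_1+k_2-2$ collapses to $2m+2$.

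There is no genuine obstacle here; the corollary is a direct specialization of the bound proved for \Cref{thm:*_lower_upper_bound}. The only point requiring care is tracking the parameter $k_i$ through the pseudo fan graph convention, reading $k_i=2$ for every $F_{n_i}$ factor, and noting that the structural hypotheses $|V(G_i)|-k_i\ge 3$ and $k_i\ge 1$ of \Cref{thm:*_lower_upper_bound} are exactly the standing assumptions of \Cref{cor:F*F}, so nothing further needs to be checked.
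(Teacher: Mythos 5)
Your proposal is correct and takes essentially the same route as the paper: the paper's own justification of \Cref{cor:F*F} is precisely to read the intermediate estimate $\reg(S/\calJ_{K_m,G})\le 2m+k_1+k_2-2$ (equivalently, $\le\min\{2m+k_1+k_2-2,\,|V(G)|-1\}$) off the proof of \Cref{thm:*_lower_upper_bound} and then specialize, with $k_i=2$ whenever the factor is an $F_{n_i}$ graph under the pseudo fan convention.
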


\begin{Corollary}
    \label{cor:*_add}
    Let $n_1, n_2\ge 3$ be integers. If $m\le \min\{2n_1-2,2n_2-4\}$, then
    \[
        \reg(S/\calJ_{K_m,F_{n_1}*_f F_{n_2}})=\sum_{i=1}^2
        \reg(S/\calJ_{K_m,F_{n_i}})=2m+2.
    \]
\end{Corollary}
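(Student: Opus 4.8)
The plan is to sandwich $\reg(S/\calJ_{K_m,F_{n_1}*_f F_{n_2}})$ between two matching bounds, both equal to $2m+2$. First I would record the relevant regularities. Each $F_{n_i}$ is a $2$-pure pseudo fan graph with $|V(F_{n_i})|-2=2n_i-2\ge 3$ (as $n_i\ge 3$), so the hypotheses of \Cref{thm:*_lower_upper_bound} and \Cref{cor:F*F} are satisfied. By \Cref{thm:reg_Fp} one has $\reg(S/\calJ_{K_m,F_{n_i}})=\min\{2n_i-1,m+1\}$. The condition $m\le 2n_1-2$ forces $m+1\le 2n_1-1$, while $m\le 2n_2-4$ forces $m+1\le 2n_2-3<2n_2-1$; hence both regularities equal $m+1$, so the common target value is $(m+1)+(m+1)=2m+2$.

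The upper bound is then immediate: \Cref{cor:F*F}\ref{cor:F*F_c} (equivalently, the right-hand inequality of \Cref{thm:*_lower_upper_bound} combined with the computation above) gives $\reg(S/\calJ_{K_m,F_{n_1}*_f F_{n_2}})\le \reg(S/\calJ_{K_m,F_{n_1}})+\reg(S/\calJ_{K_m,F_{n_2}})=2m+2$.

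The delicate point is the lower bound, and here the left-hand inequality of \Cref{thm:*_lower_upper_bound} is \emph{not} strong enough: it only yields $\reg(S/\calJ_{K_m,F_{n_1-1}})+\reg(S/\calJ_{K_m,F_{n_2-1}})=\min\{2n_1-3,m+1\}+(m+1)$, which drops to $2m+1$ or $2m$ precisely when $m\in\{2n_1-3,2n_1-2\}$. Instead I would argue with a single, larger induced subgraph. Let $v_2$ be the neighbor in $F_{n_2}$ of the glued leaf $f$ (after relabeling we may take $f=2n_2$ and $v_2=2n_2-1$, so that \Cref{obs:f_p_v} applies). Deleting $v_2$ detaches an $F_{n_2-1}$ from the $F_{n_2}$-part and isolates $f$ \emph{within} $F_{n_2}$; but in $G$ the vertex $f$ remains attached to $F_{n_1}$ through its other neighbor $v_1$, so the whole of $F_{n_1}$ survives intact. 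Consequently $G\setminus v_2=F_{n_1}\sqcup F_{n_2-1}$. By additivity of regularity over disjoint unions and the values above, $\reg(S/\calJ_{K_m,G\setminus v_2})=\reg(S/\calJ_{K_m,F_{n_1}})+\reg(S/\calJ_{K_m,F_{n_2-1}})=(m+1)+(m+1)=2m+2$, where the second summand uses $\reg(S/\calJ_{K_m,F_{n_2-1}})=\min\{2n_2-3,m+1\}=m+1$. Since $G\setminus v_2$ is an induced subgraph of $G$, \Cref{cor:induced_graph} gives $\reg(S/\calJ_{K_m,G})\ge 2m+2$, which matches the upper bound.

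I expect the main obstacle to be resisting the temptation to read the lower bound directly off \Cref{thm:*_lower_upper_bound}; the correct move is to delete only the single vertex $v_2$ belonging to the factor carrying the tighter constraint $m\le 2n_2-4$, leaving $F_{n_1}$ fully intact. This choice is exactly calibrated to the asymmetry of the hypothesis: keeping $F_{n_1}$ whole makes it contribute $m+1$ (needing only $m\le 2n_1-2$), whereas shrinking $F_{n_2}$ to $F_{n_2-1}$ makes it contribute $m+1$ (needing $m\le 2(n_2-1)-2=2n_2-4$). The only genuinely graph-theoretic step is the careful verification of the identification $G\setminus v_2=F_{n_1}\sqcup F_{n_2-1}$, in particular that $f$ is not isolated in $G\setminus v_2$.
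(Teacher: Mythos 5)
Your proof is correct and is essentially the paper's own argument: the paper likewise deletes the single vertex $v_2$ (the neighbor of $f$ in $F_{n_2}$), identifies $G\setminus v_2=F_{n_1}\sqcup F_{n_2-1}$, gets the lower bound $2m+2$ via \Cref{thm:reg_Fp} and \Cref{cor:induced_graph}, and matches it with the upper bound from \Cref{cor:F*F}. Your additional remark explaining why the generic lower bound of \Cref{thm:*_lower_upper_bound} would be insufficient is a nice clarification but does not change the substance.
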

\begin{proof}
    Suppose that $v_2$ is the neighbor point of $f$ in $G_2$. 
    Note that
    $G\setminus v_2=F_{n_1}\sqcup F_{n_2-1}$. If $m\le \min\{2n_1-2,2n_2-4\}$,
    then $\reg(S/\calJ_{K_m,G\setminus v_2})=2m+2$ by \Cref{thm:reg_Fp}. Since
    $G\setminus v_2$ is an induced subgraph of $G$, this implies that
    $\reg(S/\calJ_{K_m,G})\ge 2m+2$ by \Cref{cor:induced_graph}. At the same
    time, we have $\reg(S/\calJ_{K_m,G}) \le 2m+2$ from \Cref{cor:F*F}.
    Consequently, $\reg(S/\calJ_{K_m,G}) =2m+2$. Meanwhile, it is easy to
    check that $\sum_{i=1}^2 \reg(S/\calJ_{K_m,F_{n_i}}) =2m+2$ if $m\le
    \min\{2n_1-2,2n_2-4\}$.
\end{proof}

\begin{Example}
    \label{exam:333_star}
    Let $m$ be $3$ or $4$, and $G=(G_1,f_1)*(G_2,f_2)$ with $G_1=G_2=F_3$. In this case, the condition of \Cref{cor:*_add} is not satisfied. Indeed, we claim that $\reg(S/\calJ_{K_m,F_3*F_3})\le 8$.
    To confirm this claim, let $v_i$ be the neighbor point of $f_i$ in $G_i$ for each $i$.
    By \Cref{thm:FkW_pure_reg}, \Cref{thm:reg_Fp} and \Cref{prop:reg_Fan_Fan_circ}, we obtain that $\reg(S/\calJ_{K_m,(G_{v_2})_{v_1}}) \le 2m$,   $\reg(S/\calJ_{K_m,(G_{v_2})_{v_1}\setminus v_1})\le m+3$, and $\reg(S/\calJ_{K_m,G_{v_2}\setminus v_1}) \le m+3$. It follows from \Cref{depthlemma}\ref{depthlemma-c} that $\reg(S/\calJ_{K_m,G_{v_2}})\le 8$.  Similarly,  we can get $\reg(S/\calJ_{K_m,(G_{v_2}\setminus v_2)_{v_1}}) \le m+3$,  $\reg(S/\calJ_{K_m,(G_{v_2}\setminus v_2)_{v_1}\setminus v_1})\le 6$, and $\reg(S/\calJ_{K_m,(G_{v_2}\setminus v_2)\setminus v_1}) \le m+3$. It follows from \Cref{depthlemma}\ref{depthlemma-c} that $\reg(S/\calJ_{K_m,G_{v_2}\setminus v_2})\le 7$. Finally, we have $\reg(S/\calJ_{K_m, G\setminus v_2})=m+4$. 
    Thus, \Cref{depthlemma}\ref{depthlemma-c} and the exact sequence (\ref{eqn:SES-1}) in \Cref{rem:exactseq} imply that $\reg(S/\calJ_{K_m,G})\le 8$, as claimed.

    In particular, the inequality in \Cref{cor:F*F}\ref{cor:F*F_c} is strict for $(m,n_1,n_2)=(4,3,3)$. For $(m,n_1,n_2)=(3,3,3)$, it is not yet clear whether the inequality in \Cref{cor:F*F}\ref{cor:F*F_c} is strict or not.
\end{Example}

\begin{Question}
    Does the second inequality of \eqref{eqn:*_lower_upper_bound} hold for the general graphs $G_1$ and $G_2$?
\end{Question}

\subsection{Main result}
In the following subsection, we combine all the graphs and operations that we
have previously considered. Let us begin with a useful lemma.

\begin{Lemma}
    \label{lem:reg_easy_bound_star}
    Suppose that $G=(G_1,f_1)*(G_2,f_2)$. Let $v_i$ be the neighbor point of
    $f_i$ in $G_i$ for $i=1,2$. Suppose also that $f_1$ and $f_2$ are
    identified as $f$ in $G$. Then,
    \begin{align*}
        \reg(S/\calJ_{K_m,G}) \le \max\{ &
            \reg(S/\calJ_{K_m,G\setminus v_1}),
            \reg(S/\calJ_{K_m, G_{v_1}\setminus v_2})+1, \\
            & \reg(S/\calJ_{K_m,(G_{v_1})_{v_2}\setminus f})+2,
            \reg(S/\calJ_{K_m, ((G_{v_1})_{v_2})_{f}})+3
        \}.
    \end{align*}
\end{Lemma}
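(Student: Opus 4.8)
The plan is to apply the elementary regularity estimate of \Cref{lem:reg_easy_bound} three times in succession, peeling off one vertex at each stage in the order $v_1$, then $v_2$, then $f$. First I would apply \Cref{lem:reg_easy_bound} to $G$ at the vertex $v_1$, bounding $\reg(S/\calJ_{K_m,G})$ by the maximum of $\reg(S/\calJ_{K_m,G\setminus v_1})$ and $\reg(S/\calJ_{K_m,G_{v_1}})+1$. I would then bound the surviving term $\reg(S/\calJ_{K_m,G_{v_1}})$ by applying the same estimate to $G_{v_1}$ at $v_2$, and finally bound $\reg(S/\calJ_{K_m,(G_{v_1})_{v_2}})$ by applying it once more to $(G_{v_1})_{v_2}$ at $f$. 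Each of the three applications contributes exactly one deletion term, pushed up by one additional unit for every level of nesting.

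Substituting the third inequality into the second and the result into the first, and tracking the accumulated additive constants, telescopes the three bounds into
\[
    \reg(S/\calJ_{K_m,G})\le \max\bigl\{
        \reg(S/\calJ_{K_m,G\setminus v_1}),\,
        \reg(S/\calJ_{K_m,G_{v_1}\setminus v_2})+1,\,
        \reg(S/\calJ_{K_m,(G_{v_1})_{v_2}\setminus f})+2,\,
        \reg(S/\calJ_{K_m,((G_{v_1})_{v_2})_f})+3
    \bigr\},
\]
which is exactly the asserted inequality. This combining step is pure bookkeeping and carries no real content.

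The step I expect to require genuine care is checking, at each of the three applications, that the vertex being removed is \emph{internal}, since \Cref{lem:reg_easy_bound}---which rests on the short exact sequence \eqref{eqn:SES-1} of \Cref{rem:exactseq}---requires this. For $f$ it is automatic: in $G$ the identified vertex $f=f_1=f_2$ is adjacent to both $v_1$ and $v_2$, and $v_1\ne v_2$ because $G_1$ and $G_2$ meet only at $f$; both edges survive the two edge-additions, so $f$ has degree at least $2$ in $(G_{v_1})_{v_2}$. For $v_1$ and $v_2$ the key observation is that passing to $G_{v_1}$ only inserts edges inside $N_G(v_1)$, while $v_2\notin N_G(v_1)$, so $\deg_G(v_1)=\deg_{G_1}(v_1)$ and $\deg_{G_{v_1}}(v_2)=\deg_{G_2}(v_2)$; thus $v_1$ and $v_2$ are internal precisely when they are internal in $G_1$ and $G_2$, that is, when neither $G_i$ equals $P_2$. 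I would therefore record the mild standing condition $G_i\ne P_2$ under which the three applications go through verbatim, and treat the degenerate case $G_i=P_2$---in which $v_i$ is a leaf of $G$ and $G_{v_i}=G$---separately; isolating and verifying these internality conditions is the only genuine obstacle.
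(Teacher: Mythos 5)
Your proposal is exactly the paper's proof: the paper disposes of this lemma in one line by applying \Cref{lem:reg_easy_bound} successively to $(G_{v_1})_{v_2}$ at $f$, to $G_{v_1}$ at $v_2$, and to $G$ at $v_1$, which is precisely your three-step telescoping argument. Your extra verification that $v_1$, $v_2$, and $f$ are internal vertices (equivalently, that neither $G_i$ is $P_2$) is a detail the paper leaves implicit; it is correct, and in the paper's only application of this lemma (\Cref{thm:full_reg_star_circ_path}, the case where neither factor is a path graph) that condition holds automatically.
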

\begin{proof}
    It suffices to apply \Cref{lem:reg_easy_bound} successively to
    $(G_{v_1})_{v_2}$ with respect to $f$, $G_{v_1}$ with respect to $v_2$,
    and $G$ with respect to $v_1$.
\end{proof}

It is time for the main result of this section. In the following theorem, we
also involve the path graphs in the discussion to make the final result
slightly sharper.  Note that the path graph $P_t=\underbrace{P_2* \cdots *
P_2}_{t-1}$ while $P_2$ is simply $F_1$ and $P_4$ is $F_2$.

\begin{Theorem}
    \label{thm:full_reg_star_circ_path}
    Let $N$ be a positive integer. Suppose that $G=G_1 \circledast_1 \cdots
    \circledast_{N-1} G_N$ where
    each $G_i$ is either the path graph $P_{n_i}$ with $n_i\ge 2$, or a
    $k_i$-pure pseudo fan graph with  $|V(G_i)|\ge 3$.
    Furthermore, each $\circledast_i$ is either a $*$ operation or a $\circ$
    operation using only marked leaves.
    Then, we have
    \begin{equation}
        \reg(S/\calJ_{K_m,G}) \le \sum\limits_{i=1}^{N}\ell_i,
        \label{eqn:main_result}
    \end{equation}
    where $\ell_i\coloneqq \begin{cases}
        n_i-1,   & \text{if $G_i=P_{n_i}$,} \\
        m+k_i-1, & \text{if $G_i$ is a $k_i$-pure pseudo fan
        graph.}
    \end{cases}$
\end{Theorem}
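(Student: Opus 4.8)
The plan is to induct on the number $N=\ffc(G)$ of $\mathbf{FFan}$-components. For the base case $N=1$ the graph $G$ is a single path or pure pseudo fan, and the bound $\reg(S/\calJ_{K_m,G})\le \ell_1$ is exactly, or weaker than, the formulas in \Cref{lem:reg_path}, \Cref{thm:FkW_pure_reg}, and \Cref{thm:reg_Fp}. For the inductive step I would write $G=\tilde G \circledast_{N-1} G_N$ with $\tilde G=G_1\circledast_1\cdots\circledast_{N-2}G_{N-1}$, so that $\ffc(\tilde G)=N-1$. The guiding principle throughout is that the surviving terms of a vertex-splitting short exact sequence are again (disjoint unions of) graphs in $\mathbf{FFan}^p$ with strictly fewer components, to which the inductive hypothesis applies after summing regularities over connected pieces, regularity being additive over disjoint unions. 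I keep the bookkeeping in terms of the total weight $\sum_i\ell_i$, checking that every auxiliary graph produced has weight no larger than $\sum_i\ell_i$ minus the homological shift attached to it.

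For the case $\circledast_{N-1}=\circ$, say $G=\tilde G\circ_v G_N$, I would apply \Cref{lem:reg_easy_bound} at the gluing vertex $v$, reducing to $\reg(S/\calJ_{K_m,G_v})+1$ and $\reg(S/\calJ_{K_m,G\setminus v})$. By the structural description in \Cref{obs:circ} (and \Cref{prop:fan_path_circ_reg} when a path is involved), $G_v$ is again in $\mathbf{FFan}^p$ with $N-1$ components, obtained by merging the two blocks adjacent to $v$ into a single pure fan; a direct weight count shows its weight is $\sum_i\ell_i-(m+1)$, which more than suffices for the shift by $1$. Meanwhile $G\setminus v$ is the disjoint union of $\tilde G\setminus\{f',v'\}$ and $G_N\setminus\{f_N,v_N\}$, each lying in $\mathbf{FFan}^p$ with the parameter of its affected block not exceeding the original, so applying the inductive hypothesis to each connected piece and summing gives $\reg(S/\calJ_{K_m,G\setminus v})\le\sum_i\ell_i$. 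Thus \Cref{lem:reg_easy_bound} closes this case with room to spare.

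For the harder case $\circledast_{N-1}=*$, say $G=\tilde G *_f G_N$ with $v_1,v_2$ the neighbors of $f$ in $\tilde G$ and $G_N$, I would invoke \Cref{lem:reg_easy_bound_star}, which bounds $\reg(S/\calJ_{K_m,G})$ by the maximum of $\reg(S/\calJ_{K_m,G\setminus v_1})$, $\reg(S/\calJ_{K_m,G_{v_1}\setminus v_2})+1$, $\reg(S/\calJ_{K_m,(G_{v_1})_{v_2}\setminus f})+2$, and $\reg(S/\calJ_{K_m,((G_{v_1})_{v_2})_f})+3$. The first term equals the disjoint union $(\tilde G\setminus\{f',v'\})\sqcup G_N$ and is controlled by the same additivity argument as above. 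For the other three, I would use the clique-sum descriptions of \Cref{obs:clique_sum} together with \Cref{prop:clique_sum_reg}, generalized from a single fan to a compound via the inductive hypothesis in the spirit of \Cref{thm:*_lower_upper_bound}, to show that each successive clique fill at $v_1$, then $v_2$, then $f$, absorbs the last block of $\tilde G$ and the block $G_N$ into an enlarged clique-sum, dropping the number of components and lowering the total weight by at least the shift ($1$, $2$, $3$) carried by that term. Each auxiliary graph is then bounded by the inductive hypothesis, and the four estimates combine to give $\reg(S/\calJ_{K_m,G})\le\sum_i\ell_i$.

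The main obstacle is precisely this last step: tracking the three nested clique-fill-and-delete graphs in the $*$ case when $\tilde G$ is a genuine compound rather than a single fan. One must verify that filling $v_1$, then $v_2$, then $f$ keeps the graph inside $\mathbf{FFan}^p$ and merges the two end blocks into a clique sum whose weight decreases by the required amount, which forces a careful generalization of \Cref{obs:clique_sum} and \Cref{prop:clique_sum_reg} to the setting where one side of the clique sum is itself an iterated $*$/$\circ$ product. I would isolate these structural facts as preparatory observations so that the closing induction is a clean weight comparison, and I would treat the degenerate small blocks, where a fan of small $|V(G_i)|$ may be reinterpreted as a path as in the remark preceding \Cref{prop:reg_Fan_Fan_circ}, as separate base-type cases in order to meet the hypothesis $|V(G_i)|-k_i\ge 3$ required by the cited propositions.
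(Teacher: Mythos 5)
Your skeleton --- induction on the number of blocks, splitting the chain at an end junction, \Cref{lem:reg_easy_bound} for $\circ$, \Cref{lem:reg_easy_bound_star} for $*$, and additivity of regularity over disjoint unions --- is indeed the paper's strategy (the paper splits at the first junction $G_1\circledast_1 G_2$ instead of the last; that difference is only cosmetic). But there is a genuine gap in how you set up the induction. First, ``$N=\ffc(G)$'' is a conflation: in the theorem a path $P_{n_i}$ is a \emph{single} block of weight $n_i-1$, whereas $\ffc(P_{n_i})=n_i-1$; since the bound $\sum_i \ell_i$ depends on the chosen presentation, the induction has to run on presentation data, not on $\ffc$. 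Second, and more seriously, your inductive step rests on the claim that every auxiliary graph has strictly fewer components. That is correct when both junction blocks are fans (your weight count $\sum_i\ell_i-(m+1)$ for $G_v$ is then right), but it fails whenever a path block sits at the junction: for $G=\tilde G *_f P_{n_N}$ the graph $((G_{v_1})_{v_2})_f$ still contains the shortened path $P_{n_N-2}$ as a separate block, and in the mirrored situation $G=P_{n_1}*G_2\circledast_2\cdots\circledast_{N-1}G_N$ the graph $G_{v_2}$ equals $\bigl(P_{n_1-1}*[(k_2-1)\text{-pure fan}]\bigr)\circledast_2 G_3\cdots\circledast_{N-1}G_N$, which has just as many blocks as $G$. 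Induction on the number of components alone cannot close these cases. The paper resolves exactly this with a \emph{two-stage} induction --- on $N$ and, when a junction block is a path, a secondary induction on the path length --- together with the reduction $P_{n}\circ H=P_{n-2}*H$ that converts $\circ$-with-path into $*$-with-path. Your citation of \Cref{prop:fan_path_circ_reg} covers only the two-block case and is not a substitute for this secondary induction.

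You have also misplaced the main difficulty. Because the junction block is at an \emph{end} of the chain, each of the four graphs produced by \Cref{lem:reg_easy_bound_star} decomposes as a disjoint union of a single pure fan and a product with at most $N-1$ blocks: the fills at $v_1$, $v_2$, $f$ affect only the two junction blocks and merge them into one fan, leaving the remaining product (in your notation $G_1\circledast_1\cdots\circledast_{N-3}G_{N-2}$) untouched. So no generalization of \Cref{obs:clique_sum} or \Cref{prop:clique_sum_reg} to compound graphs is needed in the inductive step; that clique-sum machinery enters only through the $N=2$ base case (\Cref{cor:F*F}, \Cref{thm:*_lower_upper_bound}, \Cref{prop:fan_path_star_reg}), which the paper has already established. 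The real work you must add is the path bookkeeping described above, not the clique-sum generalization you flag as the obstacle.
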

\begin{proof}
    If some $G_i$ is a $k_i$-pure pseudo fan graph with $|V(G_i)|-k_i\le 2$,
    we can replace this $G_i$ with a $k_i$-pure pseudo fan graph $G_i'$, such
    that $|V(G_i')|-k_i\ge 3$, and $G_i$ is an induced subgraph of $G_i'$ as
    marked graphs. With this reduction, in the subsequent proof, we can 
    freely use the results such as 
    \Cref{cor:reg_Fan_fan_upper_bound}, \Cref{prop:fan_path_star_reg} and
    \Cref{thm:*_lower_upper_bound}, without checking the $|V(G_i)|-k_i\ge 3$
    condition.

    Now, we prove the assertion in \eqref{eqn:main_result} by applying a
    two-stage induction. The first stage of induction is with respect to $N$.
    When $N=1$, it follows from \Cref{thm:FkW_pure_reg}, \Cref{lem:reg_path},
    and \Cref{thm:reg_Fp} respectively. When $N=2$, it follows from
    \Cref{lem:reg_path}, \Cref{cor:reg_Fan_fan_upper_bound},
    \Cref{prop:fan_path_star_reg}, \Cref{prop:fan_path_circ_reg} and
    \Cref{cor:F*F}. In the following, we assume that $N\ge 3$. Suppose that
    the operation $\circledast_1$ is performed with respect to the marked
    leaves $f_1\in G_1$ and $f_2\in G_2$. Let $v_i$ be the neighbor point of
    $f_i$ in $G_i$ for $i=1,2$. We distinguish into the following four cases.
    \begin{enumerate}[1]
        \item First, suppose that $G_i= P_{n_i}$ for $i=1,2$. Then, $G=P_n
            \circledast_2 G_3\circledast_3\cdots \circledast_{N-1} G_N$, where
            $n=n_1+n_2-3$ if $\circledast_1=\circ$, or $n=n_1+n_2-1$ if
            $\circledast_1=*$. So the expected result follows from the
            inductive hypothesis.
        \item Suppose next that $G_1= P_{n_1}$ and $G_2\ne P_{n_2}$. 
            If
            $\circledast_1=\circ$ and $n_1=2$, then $G$ is an induced subgraph of $G_2 \circledast_2 \cdots
            \circledast_{N-1} G_N$. In this case, the expected inequality follows from induction hypothesis and \Cref{cor:induced_graph}.
            If instead
            $\circledast_1=\circ$ and $n_1\ge 3$, then
            $G=P_{n_1-2} * G_2\circledast_2\cdots \circledast_{N-1} G_N$. 

            Therefore, it suffices  to prove the assertion in the case where
            $G=P_{n_1} * G_2\circledast_2\cdots \circledast_{N-1} G_N$.
            The second stage induction is then with respect to
            $n_1\ge 1$.
            The degenerate case when $n_1=1$ follows from the inductive
            hypothesis, since $G=G_2\circledast_2\cdots \circledast_{N-1} G_N$
            in this case. We can therefore assume that $n_1\ge
            2$.
            It is not difficult to observe the following.
            \begin{itemize}
                \item The graph $G\setminus v_2$ is the disjoint union of
                    $P_{n_1}$ and $(G_2\setminus \{f_2,v_2\})\circledast_2 G_3
                    \circledast_3 \cdots \circledast_{N-1} G_N$. Note that the
                    latter is an induced subgraph of $G_2\circledast_2 G_3
                    \circledast_3 \cdots \circledast_{N-1} G_N$.

                \item \label{proof:full_reg_star_circ_path_2}
                    The graph $G_{v_2}=H\circledast_2 G_3\circledast_3\cdots
                    \circledast_{N-1} G_N$, where $H$ is a graph obtained from
                    $P_{n_1-1}$ and a $(k_2-1)$-pure fan graph by a $*$
                    operation. 
            \end{itemize}
            Therefore, it follows from \Cref{cor:induced_graph}, and
            \Cref{lem:reg_path}, and the inductive hypothesis with respect to
            $N$ and $n_1$ that
            \begin{align*}
                \reg(S/\calJ_{K_m,G\setminus v_2})&\le \ell_1+ \sum_{i=2}^N \ell_i,
                \intertext{and}
                \reg(S/\calJ_{K_m,G_{v_2}})&\le
                (\ell_1-1)+(\ell_2-1)+\sum_{i=3}^N\ell_i.
            \end{align*}
            It remains to apply \Cref{lem:reg_easy_bound}.
        \item Assume symmetrically that  $G_1\ne P_{n_1}$ and $G_2=P_{n_2}$.
            Likewise, we can suppose that $\circledast_1=*$.
            By applying similar arguments as in
            \ref{proof:full_reg_star_circ_path_2} with respect to the point
            $v_1$, we can obtain the desired result.
        \item Finally, suppose that neither $G_1$ nor $G_2$ is a path
            graph.
            In this case, we consider the following two subcases.
            \begin{enumerate}[i]
                \item Suppose that $\circledast_1=\circ$. Assume that $v_1$
                    and $v_2$ are identified as $v$ in $G$. It is not
                    difficult to observe the following.
                    \begin{itemize}
                        \item The graph $G\setminus v$ is the disjoint union
                            of $G_1\setminus \{f_1,v_1\}$ and $(G_2\setminus
                            \{f_2,v_2\})\circledast_2 G_3 \circledast_3 \cdots
                            \circledast_{N-1} G_N$. Note that $G_1\setminus
                            \{f_1,v_1\}$ and $G_2\setminus \{f_2,v_2\}$ are
                            induced subgraphs of $G_1$ and $G_2$ respectively.

                        \item The graph $G_v$ is obtained from a
                            $(k_1+k_2-2)$-pure fan graph and $G_3
                            \circledast_3 \cdots \circledast_{N-1} G_N$ by a
                            $\circledast_2$ operation.
                    \end{itemize}
                    So from \Cref{cor:induced_graph} and the
                    inductive hypothesis with respect to $N$, we obtain that
                    \begin{align*}
                        \qquad \qquad \reg(S/\calJ_{K_m,G\setminus v})& \le
                        \sum\limits_{i=1}^{N}\ell_i,
                        \intertext{and}
                        \reg(S/\calJ_{K_m,G_v}) &\le
                        (m+(k_1+k_2-2)-1)+\sum\limits_{i=3}^{N}\ell_i
                        =-(m+1)+\sum\limits_{i=1}^{N}\ell_i.
                    \end{align*}
                    It remains to apply \Cref{lem:reg_easy_bound}.
                \item Suppose that $\circledast_1=*$. Assume that $f_1$ and
                    $f_2$ are identified as $f$ in $G$.
                    It is not difficult to observe the following.
                    \begin{itemize}
                        \item The graph $G\setminus v_{1}$ is the disjoint
                            union of $G_1\setminus \{f_1,v_1\}$ and
                            $G_2\circledast_2 \cdots \circledast_{N-1} G_N$.
                        \item The graph $G_{v_{1}}\setminus v_{2}$ is the
                            disjoint union of a $(k_1-1)$-pure fan graph and
                            $(G_2\setminus \{f_2,v_2\})\circledast_2 G_3
                            \circledast_3 \cdots \circledast_{N-1} G_N$.
                        \item The graph $(G_{v_1})_{v_2}\setminus f$ is the
                            disjoint union of a $(k_1-1)$-pure fan graph and a
                            graph, which is obtained from a $(k_2-1)$-pure fan
                            graph and $G_3 \circledast_3 \cdots
                            \circledast_{N-1} G_N$ by the $\circledast_2$
                            operation.
                        \item The graph $((G_{v_1})_{v_2})_f$ is a graph
                            obtained from a $(k_1+k_2-2)$-pure fan graph and
                            $G_3 \circledast_3 \cdots \circledast_{N-1} G_N$
                            by a $\circledast_2$ operation.
                    \end{itemize}
                    Thus, by \Cref{cor:induced_graph},
                    \Cref{thm:FkW_pure_reg}, \Cref{thm:reg_Fp}, and the
                    inductive hypothesis with respect to $N$, one has
                    \begin{align*}
                        \qquad \qquad \qquad \qquad
                        \reg(S/\calJ_{K_m,G\setminus v_{1}}) & \le (m+ k_1-1)
                        +\sum_{k=2}^{N}\ell_i, \\
                        \reg(S/\calJ_{K_m,G_{v_{1}}\setminus v_{2}}) & \le
                        (m+(k_1-1)-1) +\sum_{k=2}^{N}\ell_i, \\
                        \reg(S/\calJ_{K_m,(G_{v_1})_{v_2}\setminus f}) & \le
                        (m+(k_1-1)-1)+(m+(k_2-1)-1)+\sum_{k=3}^N\ell_i,
                        \intertext{and}
                        \reg(S/\calJ_{K_m,((G_{v_1})_{v_2})_f}) & \le
                        (m+(k_1+k_2-2)-1)+\sum_{k=3}^{N}\ell_i.
                    \end{align*}
                    It remains to apply \Cref{lem:reg_easy_bound_star} by
                    noting that $m\ge 2$. \qedhere
            \end{enumerate}
    \end{enumerate}
\end{proof}

The following result deals with a special case of
\Cref{thm:full_reg_star_circ_path}. It gives a sharper result, which can be
seen as a generalization of \cite[Theorem 4.6]{MR3991052}. Note that
\cite[Theorem 4.6]{MR3991052} contains an error due to incorrect induction, 
which was later corrected in Arvind Kumar's thesis.


\begin{Corollary}
    \label{thm:multiple_fan_circ}
    Let $m,t\ge 2$ be integers. Suppose that $G=G_1 \circ G_2 \circ \cdots
    \circ G_t$ where each $G_i$ is a $k_i$-pure pseudo fan
    graph with $|V(G_i)|-k_i\ge 1$. For each $i\in [t]$ and $j\in
    [2]$, let $f_{i,j}$ be a marked leaf of $G_i$ and $v_{i,j}$ be its
    neighbor such that  $v_{i,2}$ and $v_{i+1,1}$ are identified
    in $G$ by the $\circ$ operation. Let $\widetilde{G_1}\coloneqq
    G_1\setminus \{v_{1,2},f_{1,2}\}$, $\widetilde{G_t}\coloneqq G_t\setminus
    \{v_{t,1},f_{t,1}\}$ and $\widetilde{G_i}\coloneqq G_i\setminus
    \{v_{i,1},f_{i,1},v_{i,2},f_{i,2}\}$ for all $i=2,\ldots, t-1$.  In
    addition,  suppose that $\widetilde{G_i}$ is a $q_i$-pure pseudo fan graph.
    Then, we have the following results.
    \begin{enumerate}[A]
        \item Firstly, we have
            \begin{align}
                \sum_{i=1}^t \reg(S/\calJ_{K_m,\widetilde{G_i}})
                & \le \reg(S/\calJ_{K_m,G}) \le \sum\limits_{i=1}^{t} (m+k_i-1).
                \label{eqn:multiple_fan_circ-2}
            \end{align}
        \item \label{thm:multiple_circ_b}
            Secondly, suppose that the following technical conditions are satisfied:
            \begin{enumerate}[a]
                \item \label{thm:multiple_circ_b_a}
                    if $G_i=F_{k_i}^{W_i}(K_{n_i})$ is a $k_i$-pure fan graph,
                    then $|V(G_i)|-k_i\ge 4$ when $i=1$ or $t$, and
                    $|V(G_i)|-k_i\ge 6$ when $2\le i\le t-1$;
                \item \label{thm:multiple_circ_b_b}
                    if $G_i=F_{n_i}$, then $n_i\ge 2$ when $i=1$, $n_i\ge 3$
                    when $i=t$, and $n_i\ge 4$ when $2\le i\le t-1$;
                \item \label{thm:multiple_circ_b_c}
                    for each $i$, one has
                    $k_i=q_i$;
                \item \label{thm:multiple_circ_b_d}
                    for each $i$, one has $m\le r_i\coloneqq
                    \begin{cases}
                        |V(G_i)|-k_i, & \text{if $i=1$ and $t$,}\\
                        |V(G_i)|-k_i-4, & \text{if $2\le i\le t-1$.}
                    \end{cases}$
            \end{enumerate}
            Then, the first inequality in \eqref{eqn:multiple_fan_circ-2}
            becomes an equality. Namely, we have
            \[
                \sum_{i=1}^t \reg(S/\calJ_{K_m,\widetilde{G_i}}) =
                \reg(S/\calJ_{K_m,G}).
            \]
    \end{enumerate}
\end{Corollary}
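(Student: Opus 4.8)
The plan is to establish Part~A first and then bootstrap Part~B from the Part~A lower bound by a single induction on $t$. For the lower bound in Part~A, I would observe that deleting from $G$ the $t-1$ identification vertices $v_{1,2}=v_{2,1},\dots,v_{t-1,2}=v_{t,1}$ leaves exactly the disjoint union $\widetilde{G_1}\sqcup\cdots\sqcup\widetilde{G_t}$: on each $G_i$ the removed vertices, together with the leaves already consumed by the gluing, are precisely $\{v_{i,1},f_{i,1},v_{i,2},f_{i,2}\}$ (or one such pair at the two ends). Hence $\bigsqcup_i\widetilde{G_i}$ is an induced subgraph of $G$, and \Cref{cor:induced_graph} combined with the additivity of regularity over disjoint unions gives $\sum_i\reg(S/\calJ_{K_m,\widetilde{G_i}})\le\reg(S/\calJ_{K_m,G})$. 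The upper bound of Part~A is immediate from \Cref{thm:full_reg_star_circ_path} applied with every $\circledast_i=\circ$ and $\ell_i=m+k_i-1$.

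For Part~B it then suffices to prove $\reg(S/\calJ_{K_m,G})\le\sum_i\reg(S/\calJ_{K_m,\widetilde{G_i}})$ under \ref{thm:multiple_circ_b_a}--\ref{thm:multiple_circ_b_d}, and I would induct on $t$, peeling the chain at its right end. The base case $t=2$ is \Cref{rem:reg_fan_fan}, whose hypotheses $q_i=k_i$, $|V(G_2)|-k_2\ge 4$ and $m\le|V(G_1)|-k_1$ are furnished by \ref{thm:multiple_circ_b_c}, by \ref{thm:multiple_circ_b_a}/\ref{thm:multiple_circ_b_b} at $i=2$, and by \ref{thm:multiple_circ_b_d} at $i=1$. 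For $t\ge 3$ I would apply the short exact sequence \eqref{eqn:SES-1} at the terminal identification vertex $v=v_{t-1,2}=v_{t,1}$. Here $G\setminus v=G''\sqcup\widetilde{G_t}$, where $G''=G_1\circ\cdots\circ G_{t-2}\circ\bigl(G_{t-1}\setminus\{v_{t-1,2},f_{t-1,2}\}\bigr)$ is again a chain, of length $t-1$, whose associated deletion graphs are once more $\widetilde{G_1},\dots,\widetilde{G_{t-1}}$. The essential bookkeeping is that $G''$ inherits \ref{thm:multiple_circ_b_a}--\ref{thm:multiple_circ_b_d}: the new right-end piece $G_{t-1}\setminus\{v_{t-1,2},f_{t-1,2}\}$ is a $k_{t-1}$-pure pseudo fan whose size parameter $|V|-k$ drops by exactly $2$, so the interior bound $|V(G_{t-1})|-k_{t-1}\ge 6$ degrades to the endpoint bound $\ge 4$ and the interior bound $m\le|V(G_{t-1})|-k_{t-1}-4$ degrades to $m\le|V(G_{t-1})|-k_{t-1}-2$. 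Induction then gives $\reg(S/\calJ_{K_m,G\setminus v})=\reg(S/\calJ_{K_m,G''})+\reg(S/\calJ_{K_m,\widetilde{G_t}})=\sum_{i=1}^t\reg(S/\calJ_{K_m,\widetilde{G_i}})$.

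The crux is to show the remaining two modules in \eqref{eqn:SES-1} cannot beat $\reg(S/\calJ_{K_m,G\setminus v})$. By \Cref{obs:circ} the graph $G_v$ is the chain $G_1\circ\cdots\circ G_{t-2}\circ M$ with $M$ the merged $(k_{t-1}+k_t-2)$-pure fan, so \Cref{thm:full_reg_star_circ_path} bounds $\reg(S/\calJ_{K_m,G_v})\le U_v:=\sum_{i=1}^{t-2}(m+k_i-1)+(m+k_{t-1}+k_t-3)$, and likewise $\reg(S/\calJ_{K_m,G_v\setminus v})\le\reg(S/\calJ_{K_m,G_v})\le U_v$ since $G_v\setminus v$ is an induced subgraph of $G_v$. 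Writing $d_i:=(m+k_i-1)-\reg(S/\calJ_{K_m,\widetilde{G_i}})\ge 0$ for the deficiencies (which vanish for interior $i$ by \Cref{thm:FkW_pure_reg} and \Cref{thm:reg_Fp} together with \ref{thm:multiple_circ_b_c} and \ref{thm:multiple_circ_b_d}), a direct count gives $\reg(S/\calJ_{K_m,G\setminus v})-U_v=(m+1)-(d_1+d_t)$. The decisive estimate is $d_1+d_t\le m$: condition \ref{thm:multiple_circ_b_d} at $i=1$ forces $d_1\le 2$, while \ref{thm:multiple_circ_b_a}/\ref{thm:multiple_circ_b_b} at $i=t$ force $|V(G_t)|-k_t\ge 4$ and hence $d_t\le m-2$. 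Thus $\reg(S/\calJ_{K_m,G_v\setminus v})\le U_v<\reg(S/\calJ_{K_m,G\setminus v})$, and \Cref{depthlemma}\ref{depthlemma-c} yields $\reg(S/\calJ_{K_m,G})=\reg(S/\calJ_{K_m,G\setminus v})=\sum_i\reg(S/\calJ_{K_m,\widetilde{G_i}})$, completing Part~B.

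The step I expect to be the main obstacle is exactly this asymmetric deficiency bound $d_1+d_t\le m$: it is what dictates that the endpoint hypotheses must be stronger at $i=t$ (the end being peeled) than at $i=1$, and it explains why the interior hypotheses are calibrated at $\ge 6$, so that they degrade to the correct $\ge 4$ endpoint hypotheses for $G''$ under the induction. Carefully verifying that \ref{thm:multiple_circ_b_a}--\ref{thm:multiple_circ_b_d} transfer to $G''$ across every fan/$F_p$ combination, and that each deletion $\widetilde{G_i}$ genuinely remains a $q_i=k_i$-pure pseudo fan, is the routine but delicate part of the argument.
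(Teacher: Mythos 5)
Your overall strategy mirrors the paper's: Part A is proved identically (induced subgraph for the lower bound, \Cref{thm:full_reg_star_circ_path} for the upper bound), and Part B uses the same induction on $t$, peeling the chain at the last identification vertex $v=v_{t-1,2}=v_{t,1}$, showing the technical conditions transfer to $G''=G_1\circ\cdots\circ G_{t-2}\circ(G_{t-1}\setminus\{v_{t-1,2},f_{t-1,2}\})$, and comparing the three terms of \eqref{eqn:SES-1} via \Cref{depthlemma}\ref{depthlemma-c}. Your inductive step does contain a genuinely different ingredient: where the paper applies the induction hypothesis to the merged chain $G_1\circ\cdots\circ G_{t-2}\circ(G_{t-1}\circ G_t)_v$ (re-verifying conditions \ref{thm:multiple_circ_b_a}--\ref{thm:multiple_circ_b_d} for it) and then invokes \Cref{rem:reg_fan_fan} to obtain the strict inequality, you avoid all of that by combining the coarse upper bound $U_v$ from \Cref{thm:full_reg_star_circ_path} with the deficiency count $d_1+d_t\le 2+(m-2)=m<m+1$. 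That count is correct (I verified $\reg(S/\calJ_{K_m,G\setminus v})-U_v=(m+1)-(d_1+d_t)$, $d_1\le 2$ from \ref{thm:multiple_circ_b_d} at $i=1$, and $d_t\le m-2$ from $|V(G_t)|-k_t\ge 4$), and it cleanly explains why the endpoint and interior hypotheses are calibrated asymmetrically.

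There is, however, a genuine gap in your base case. Condition \ref{thm:multiple_circ_b_b} allows $G_1=F_2$ (i.e.\ $n_1=2$), in which case $|V(G_1)|-k_1=4-2=2$. The hypotheses of \Cref{prop:reg_Fan_Fan_circ} require $|V(G_i)|-k_i\ge 3$ for \emph{both} factors, and \Cref{rem:reg_fan_fan} inherits this requirement; so the base case $t=2$ cannot be settled by \Cref{rem:reg_fan_fan} alone, contrary to your claim. (Condition \ref{thm:multiple_circ_b_d} at $i=1$ gives only $m\le|V(G_1)|-k_1$, which does not force $|V(G_1)|-k_1\ge 3$; instead it forces $m=2$ in this subcase.) The paper treats $G_1=F_2$ separately: then $G=F_1*G_2$ and $m=2$, so the claim reduces to the classical binomial edge ideal, where \cite[Theorem 3.1]{MR3941158} and \cite[Theorem 3.4]{MR3991052} give the additivity. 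Alternatively, and more in the spirit of your own argument, you could run your deficiency count directly at $t=2$: there $G_v$ is the merged $(k_1+k_2-2)$-pure fan, so $\reg(S/\calJ_{K_m,G_v})\le m+k_1+k_2-3=U_v$ by \Cref{thm:FkW_pure_reg}, while $d_1+d_2\le m$ gives $U_v<\reg(S/\calJ_{K_m,G\setminus v})$; this closes the base case, including $G_1=F_2$, without any appeal to \Cref{rem:reg_fan_fan}. As written, though, the base case is incomplete.
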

\begin{proof}
    \begin{enumerate}[A]
        \item Since $\widetilde{G_1}\sqcup\cdots\sqcup \widetilde{G_t}$ forms
            an induced subgraph of $G$, the first inequality in
            \eqref{eqn:multiple_fan_circ-2} follows from
            \Cref{cor:induced_graph}. As for the second inequality, we can
            use \Cref{thm:full_reg_star_circ_path}.

        \item In the following, we prove the assertion by induction on $t$.

            The base case is when $t=2$.
            \begin{enumerate}[i]
                \item If $G_1=F_{2}$, then $G=F_1*G_2$. Furthermore, $m=2$ by the assumptions
                    that $m\ge 2$ and $m\le r_1=|V(G_1)|-k_1=2$.  Thus, by
                    \cite[Theorem 3.1]{MR3941158} and \Cref{lem:reg_path},
                    we obtain $\reg(S/\calJ_{K_m,G})=\reg(S/J_G)=
                    \reg(S/J_{F_1})+ \reg(S/J_{G_2})$.
                    On the one hand, it is
                    clear that $\widetilde{G_1}=F_1$. On the other hand,
                    $\reg(S/J_{G_2})=\reg(S/J_{\widetilde{G_2}})$ by
                    \cite[Theorem 3.4]{MR3991052}. The claimed equality
                    is now obvious.
                \item Otherwise, $G_1$ is a $k_1$-pure pseudo fan graph with
                    $|V(G_1)|-k_1\ge 4$, by the assumption
                    \ref{thm:multiple_circ_b}\ref{thm:multiple_circ_b_a} or
                    \Cref{rmk:reg_Fan_Fan_circ}. The desired result is then
                    covered by \Cref{rem:reg_fan_fan}.
            \end{enumerate}
            This completes our proof for the base case.

            Next, we assume that $t\ge 3$. In this case, suppose that
            $v_{t-1,2}$ and $v_{t,1}$ are identified as $v$ in $G$. Then, we
            have $G\setminus v= (G_1 \circ \cdots \circ G_{t-2}\circ
            \widetilde{G_{t-1}}') \sqcup \widetilde{G_t}$, where $
            \widetilde{G_{t-1}}'\coloneqq G_{t-1}\setminus
            \{v_{t-1,2},f_{t-1,2}\}$. Since $m\le |V(G_{t-1})|-k_{t-1}-4$ by
            the assumption, we have $\reg(S/\calJ_{K_m, \widetilde{G_{t-1}}'
            \setminus \{v_{t-1,1},f_{t-1,1}\}}) = \reg(S/\calJ_{K_m,
        \widetilde{G_{t-1}}}) =m+k_{t-1}-1$ from \Cref{thm:FkW_pure_reg} and
        \Cref{thm:reg_Fp}. Furthermore, as $m\le \min\{r_1,\dots,r_t\}$, one
        has $m\le \min\{r_1,\dots,r_{t-2}, r'_{t-1}\}$, where $r'_{t-1}
        \coloneqq |V(\widetilde{G_{t-1}}')|-k_{t-1} = |V(G_{t-1})|-2-k_{t-1} =
        r_{t-1}+2$. The conditions of \ref{thm:multiple_circ_b_a},
        \ref{thm:multiple_circ_b_b}, and \ref{thm:multiple_circ_b_c} can 
        be checked in a similar way. Thus, by induction, we have
        \begin{align*}
            \qquad\qquad \reg(S/\calJ_{K_m,{G_1 \circ \cdots \circ
                    G_{t-2}\circ
            \widetilde{G_{t-1}}'}})&=\sum_{i=1}^{t-2}\reg(S/\calJ_{K_m,
            \widetilde{G_i}})+\reg(S/\calJ_{K_m,\widetilde{G_{t-1}}'
        \setminus \{v_{t-1,1},f_{t-1,1}\}})\\
        &=\sum_{i=1}^{t-1}\reg(S/\calJ_{K_m,\widetilde{G_i}}).
    \end{align*}
    Consequently, we have
    \begin{equation}
        \reg(S/\calJ_{K_m,G\setminus
        v})=\sum_{i=1}^{t}\reg(S/\calJ_{K_m,\widetilde{G_i}}).
        \label{eqn:cor_final_bound_1}
    \end{equation}

    Next, we claim that
    \begin{equation*}
        \reg(S/\calJ_{K_m, G_{v}\setminus v})\le\reg(S/\calJ_{K_m,
        G_{v}})<\reg(S/\calJ_{K_m,G\setminus v}). 
    \end{equation*}
    To confirm this claim, first note  that $G_{v}\setminus v$ is an
    induced subgraph of $G_v$. So we have $\reg(S/\calJ_{K_m,
    G_{v}\setminus v})\le\reg(S/\calJ_{K_m, G_{v}})$ by
    \Cref{cor:induced_graph},  which proves  the first part of the
    claim. Furthermore, $G_{v}$ is the graph $(G_1 \circ \cdots \circ
    G_{t-2}) \circ G'$, where $G'\coloneqq (G_{t-1}\circ G_t)_v$ is a
    $(k_{t-1}+ k_t-2)$-pure fan graph. Under the assumption that  $m\le \min
    \{r_1,r_2,\dots,r_t\}$, one also has $m\le \min \{r_1,\dots,
    r_{t-2},r'\}$, where $r'\coloneqq |V(G')| -(k_{t-1}+ k_t-2)=
    (|V(G_{t-1})|+|V(G_t)|-3)-(k_{t-1}+k_t-2) =r_{t} + r_{t-1}+3$.
    The conditions of \ref{thm:multiple_circ_b_a},
    \ref{thm:multiple_circ_b_b}, and \ref{thm:multiple_circ_b_c} can be checked in a similar way.
    By induction, we now have
    \begin{align}
        \reg(S/\calJ_{K_m, G_v})&=
        \sum_{i=1}^{t-2}\reg(S/\calJ_{K_m,\widetilde{G_i}})
        +\reg(S/\calJ_{K_m,G'\setminus \{v_{t-1,1},f_{t-1,1}\}}),
        \label{eqn:cor_final_1}
    \end{align}
    where $G'\setminus \{v_{t-1,1},f_{t-1,1}\}
    = (G'_{t-1} \circ G_t)_v$
    for $G'_{t-1}\coloneqq G_{t-1}\setminus \{v_{t-1,1},f_{t-1,1}\}$.

    In view of \eqref{eqn:cor_final_bound_1} and
    \eqref{eqn:cor_final_1}, we are reduced to show that
    \begin{equation}
        \reg(S/\calJ_{K_m,(G'_{t-1} \circ G_t)_v})<
        \reg(S/\calJ_{K_m,\widetilde{G_{t-1}}})+
        \reg(S/\calJ_{K_m,\widetilde{G_t}}).
        \label{eqn:cor_final_bound}
    \end{equation}
    For this purpose, let us focus on the graph $G'_{t-1}\circ G_t$.
    Since $G_{t-1}'\setminus \{v_{t-1,2},f_{t-1,2}\}
    =\widetilde{G_{t-1}}$, it is not difficult to see that
    $G'_{t-1}$ is also a $k_{t-1}$-pure pseudo fan graph
    by our assumption. Furthermore, one has
    $m\le\min\{r'_{t-1},r_t\}$ where $r'_{t-1}\coloneqq
    |V(G'_{t-1})|-k_{t-1}=|V(G_{t-1})|-2-k_{t-1}=r_{t-1}+2$. Since
    the conditions of \ref{thm:multiple_circ_b_a},
    \ref{thm:multiple_circ_b_b}, and \ref{thm:multiple_circ_b_c}
    can  be checked in a similar way, it follows from
    \Cref{rem:reg_fan_fan} that \eqref{eqn:cor_final_bound} holds.

    Since we have \eqref{eqn:cor_final_bound}, it follows from
    \Cref{depthlemma}\ref{depthlemma-c} that
    $\reg(S/\calJ_{K_m,G})=\reg(S/\calJ_{K_m,G\setminus v})$,
    which completes the proof. \qedhere
\end{enumerate}
\end{proof}

\begin{Remark}
    Without imposing the condition \ref{thm:multiple_circ_b_d} on $m$ as in
    \Cref{thm:multiple_fan_circ} \ref{thm:multiple_circ_b}, we can obtain strict
    inequalities in \Cref{eqn:multiple_fan_circ-2}. For instance, if $t=2$,
    $G_1=G_2=F_3$ and $m=5$, then $\reg(S/\calJ_{K_m,
    \widetilde{G_1}})+\reg(S/\calJ_{K_m, \widetilde{G_2}})=6<
    \reg(S/\calJ_{K_m,G})=7<2(m+1)=12$, by \Cref{thm:reg_Fp} and
    \Cref{prop:reg_Fan_Fan_circ}.
\end{Remark}

We illustrate the result in \Cref{thm:full_reg_star_circ_path} with the following example.

\begin{Example}
    \begin{enumerate}[a]
        \item Let $G=F_3^{W_1}(K_5)\circ F_3 * F_2^{W_2}(K_4)\circ P_6\circ F_3$ be the graph shown in \Cref{Fig:1}. It can also be recognized as $G=F_3^{W_1}(K_5)\circ F_3 * F_2^{W_2}(K_4)* P_2 *F_3$.
            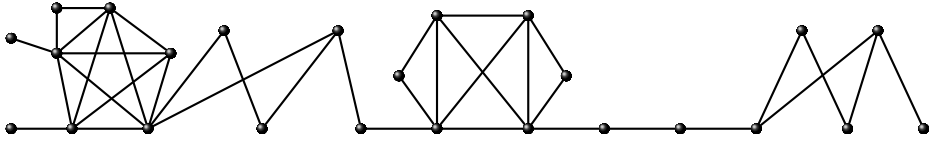
\begin{figure}[htbp]%
                \centering
                \begin{tikzpicture}[thick, scale=1.0, every node/.style={scale=0.98}]]
                    
                    \draw[solid](-5.6,2)--(-4.8,2);
                    \draw[solid](-5.6,3.2)--(-5,3);
                    \draw[solid](-5,3.6)--(-5,3);
                    \draw[solid](-5,3.6)--(-4.3,3.6);
                    \draw[solid](-4.3,3.6)--(-5,3);
                    \draw[solid](-4.3,3.6)--(-4.8,2);
                    \draw[solid](-4.3,3.6)--(-3.8,2);
                    \draw[solid](-5,3)--(-3.5,3);
                    \draw[solid](-5,3)--(-4.8,2);
                    \draw[solid](-5,3)--(-3.8,2);
                    \draw[solid](-4.8,2)--(-3.8,2);
                    \draw[solid](-3.5,3)--(-4.3,3.6);
                    \draw[solid](-3.5,3)--(-4.8,2);
                    \draw[solid](-3.5,3)--(-3.8,2);
                    \draw[solid](-3.8,2)--(-2.8,3.3);
                    \draw[solid](-3.8,2)--(-1.3,3.3);
                    \draw[solid](-2.8,3.3)--(-2.3,2);
                    \draw[solid](-2.3,2)--(-1.3,3.3);
                    \draw[solid](-1.3,3.3)--(-1,2);
                    \draw[solid](0,2)--(-1,2);
                    \draw[solid](0,2)--(0,3.5);
                    \draw[solid](0,2)--(1.2,2);
                    \draw[solid](0,2)--(1.2,3.5);
                    \draw[solid](0,2)--(-0.5,2.7);
                    \draw[solid](0,3.5)--(1.2,2);
                    \draw[solid](0,3.5)--(1.2,3.5);
                    \draw[solid](0,3.5)--(-0.5,2.7);
                    \draw[solid](1.2,3.5)--(1.2,2);
                    \draw[solid](1.7,2.7)--(1.2,2);
                    \draw[solid](1.7,2.7)--(1.2,3.5);
                    \draw[solid](1.2,2)--(2.2,2);
                    \draw[solid](2.2,2)--(3.2,2);
                    \draw[solid](3.2,2)--(4.2,2);
                    \draw[solid](4.2,2)--(4.8,3.3);
                    \draw[solid](4.2,2)--(5.8,3.3);
                    \draw[solid](5.4,2)--(5.8,3.3);
                    \draw[solid](6.4,2)--(5.8,3.3);
                    \draw[solid](4.8,3.3)--(5.4,2);

                    \shade [shading=ball, ball color=black] (-0.5,2.7)  circle (.07);
                    \shade [shading=ball, ball color=black] (-1,2)  circle (.07);
                    \shade [shading=ball, ball color=black] (-1.3,3.3)  circle (.07);
                    \shade [shading=ball, ball color=black] (-2.3,2)  circle (.07);
                    \shade [shading=ball, ball color=black] (-2.8,3.3)  circle (.07);
                    \shade [shading=ball, ball color=black] (-3.5,3)  circle (.07);
                    \shade [shading=ball, ball color=black] (-3.8,2)  circle (.07);
                    \shade [shading=ball, ball color=black] (-4.3,3.6)  circle (.07);
                    \shade [shading=ball, ball color=black] (-4.8,2)  circle (.07);
                    \shade [shading=ball, ball color=black] (-5,3)  circle (.07);
                    \shade [shading=ball, ball color=black] (-5,3.6)  circle (.07);
                    \shade [shading=ball, ball color=black] (-5.6,2)  circle (.07);
                    \shade [shading=ball, ball color=black] (-5.6,3.2)  circle (.07);
                    \shade [shading=ball, ball color=black] (0,2)  circle (.07);
                    \shade [shading=ball, ball color=black] (0,3.5)  circle (.07);
                    \shade [shading=ball, ball color=black] (1.2,2)  circle (.07);
                    \shade [shading=ball, ball color=black] (1.2,3.5)  circle (.07);
                    \shade [shading=ball, ball color=black] (1.7,2.7)  circle (.07);
                    \shade [shading=ball, ball color=black] (2.2,2)  circle (.07);
                    \shade [shading=ball, ball color=black] (3.2,2)  circle (.07);
                    \shade [shading=ball, ball color=black] (4.2,2)  circle (.07);
                    \shade [shading=ball, ball color=black] (4.8,3.3)  circle (.07);
                    \shade [shading=ball, ball color=black] (5.4,2)  circle (.07);
                    \shade [shading=ball, ball color=black] (5.8,3.3)  circle (.07);
                    \shade [shading=ball, ball color=black] (6.4,2)  circle (.07);
                \end{tikzpicture}
                \caption{$F_3^{W_1}(K_5)\circ F_3 * F_2^{W_2}(K_4)\circ P_6\circ F_3$}
                \label{Fig:1}
            \end{figure}
            Using the second viewpoint, \Cref{thm:full_reg_star_circ_path}
            above shows that $\reg(S/\calJ_{K_m,G})\le
            (m+3-1)+(m+2-1)+(m+2-1)+(2-1)+(m+3-1)=4m+7$.

        \item Let $G=F_3*F_4\circ P_7 * F_3\circ F_4$ be the graph shown
            in \Cref{Fig:2}. It is also identified by $F_3*F_4* P_5 *
            F_3\circ F_4$ or $F_3*F_4*F_1*F_1*F_1*F_1*(F_3\circ F_4)$.
            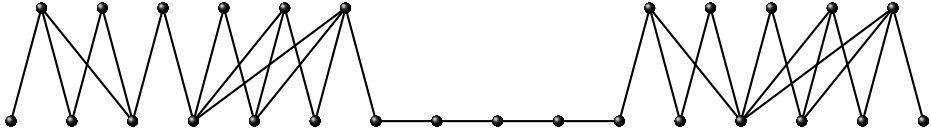
\begin{figure}[htbp]%
                \centering
                \begin{tikzpicture}[thick, scale=1.0, every node/.style={scale=0.98}]]

                    \draw[solid](-6,2)--(-5.6,3.5);
                    \draw[solid](-5.2,2)--(-5.6,3.5);
                    \draw[solid](-5.2,2)--(-4.8,3.5);
                    \draw[solid](-5.6,3.5)--(-4.4,2);
                    \draw[solid](-4.8,3.5)--(-4.4,2);
                    \draw[solid](-4.4,2)--(-4,3.5);
                    \draw[solid](-4,3.5)--(-3.6,2);
                    \draw[solid](-3.6,2)--(-3.2,3.5);
                    \draw[solid](-3.6,2)--(-2.4,3.5);
                    \draw[solid](-3.6,2)--(-1.6,3.5);
                    \draw[solid](-2.8,2)--(-3.2,3.5);
                    \draw[solid](-2.8,2)--(-2.4,3.5);
                    \draw[solid](-2.8,2)--(-1.6,3.5);
                    \draw[solid](-2,2)--(-2.4,3.5);
                    \draw[solid](-2,2)--(-1.6,3.5);
                    \draw[solid](-1.2,2)--(-1.6,3.5);
                    \draw[solid](-1.2,2)--(-0.4,2);
                    \draw[solid](-0.4,2)--(0.4,2);
                    \draw[solid](1.2,2)--(0.4,2);
                    \draw[solid](1.2,2)--(2,2);
                    \draw[solid](2.4,3.5)--(2,2);
                    \draw[solid](2.4,3.5)--(2.8,2);
                    \draw[solid](2.4,3.5)--(3.6,2);
                    \draw[solid](3.2,3.5)--(2.8,2);
                    \draw[solid](3.2,3.5)--(3.6,2);
                    \draw[solid](3.6,2)--(4,3.5);
                    \draw[solid](3.6,2)--(4.8,3.5);
                    \draw[solid](3.6,2)--(5.6,3.5);
                    \draw[solid](4.4,2)--(4,3.5);
                    \draw[solid](4.4,2)--(4.8,3.5);
                    \draw[solid](4.4,2)--(5.6,3.5);
                    \draw[solid](5.2,2)--(4.8,3.5);
                    \draw[solid](5.2,2)--(5.6,3.5);
                    \draw[solid](6,2)--(5.6,3.5);
                    \shade [shading=ball, ball color=black] (-0.4,2) circle (.07);
                    \shade [shading=ball, ball color=black] (-0.4,2)  circle (.07);
                    \shade [shading=ball, ball color=black] (-1.2,2) circle (.07);
                    \shade [shading=ball, ball color=black] (-1.6,3.5)  circle (.07);
                    \shade [shading=ball, ball color=black] (-2,2) circle (.07);
                    \shade [shading=ball, ball color=black] (-2.4,3.5)  circle (.07);
                    \shade [shading=ball, ball color=black] (-2.8,2) circle (.07);
                    \shade [shading=ball, ball color=black] (-3.2,3.5)  circle (.07);
                    \shade [shading=ball, ball color=black] (-3.6,2) circle (.07);
                    \shade [shading=ball, ball color=black] (-3.6,2)  circle (.07);
                    \shade [shading=ball, ball color=black] (-4,3.5) circle (.07);
                    \shade [shading=ball, ball color=black] (-4,3.5)  circle (.07);
                    \shade [shading=ball, ball color=black] (-4.4,2) circle (.07);
                    \shade [shading=ball, ball color=black] (-4.4,2)  circle (.07);
                    \shade [shading=ball, ball color=black] (-4.8,3.5) circle (.07);
                    \shade [shading=ball, ball color=black] (-4.8,3.5)  circle (.07);
                    \shade [shading=ball, ball color=black] (-5.2,2) circle (.07);
                    \shade [shading=ball, ball color=black] (-5.6,3.5) circle (.07);
                    \shade [shading=ball, ball color=black] (-5.6,3.5)  circle (.07);
                    \shade [shading=ball, ball color=black] (-6,2) circle (.07);
                    \shade [shading=ball, ball color=black] (0.4,2)  circle (.07);
                    \shade [shading=ball, ball color=black] (1.2,2) circle (.07);
                    \shade [shading=ball, ball color=black] (2,2)  circle (.07);
                    \shade [shading=ball, ball color=black] (2.4,3.5) circle (.07);
                    \shade [shading=ball, ball color=black] (2.8,2)  circle (.07);
                    \shade [shading=ball, ball color=black] (3.2,3.5) circle (.07);
                    \shade [shading=ball, ball color=black] (3.6,2) circle (.07);
                    \shade [shading=ball, ball color=black] (3.6,2)  circle (.07);
                    \shade [shading=ball, ball color=black] (4,3.5)  circle (.07);
                    \shade [shading=ball, ball color=black] (4.4,2) circle (.07);
                    \shade [shading=ball, ball color=black] (4.8,3.5)  circle (.07);
                    \shade [shading=ball, ball color=black] (5.2,2) circle (.07);
                    \shade [shading=ball, ball color=black] (5.6,3.5) circle (.07);
                    \shade [shading=ball, ball color=black] (5.6,3.5)  circle (.07);
                    \shade [shading=ball, ball color=black] (6,2) circle (.07);
                \end{tikzpicture}
                \caption{$F_3*F_4\circ P_7 * F_3\circ F_4$}
                \label{Fig:2}
            \end{figure}
            Using the second viewpoint, \Cref{thm:full_reg_star_circ_path}
            above shows that $\reg(S/\calJ_{K_m,G})\le
            (m+2-1)+(m+2-1)+(5-1)+(m+2-1)+(m+2-1)=4m+8$.  Note that when
            $m=2$, \cite[Propositions 4.1 and 4.3]{MR3991052} and
            \cite[Theorem 3.1]{MR3941158} show that
            $\reg(S/\calJ_{K_2,G})=\reg(S/J_G)=3+3+1+1+1+1+(3+3)=16$ by using
            the third viewpoint. This value $16$ is exactly $4m+8$ when $m=2$.
    \end{enumerate}
\end{Example}

\section{Dimension}
\label{sec:dim}
In this section, we are interested in the dimension of the generalized binomial edge ideals of graphs in $\mathbf{FFan}$. Unlike the depth or the Castelnuovo--Mumford regularity, the dimension does not behave very well on the short exact sequence of modules. Therefore, we can only deal with a very special case in \Cref{dimension}.  Our result depends on the following simple observations.

\begin{Lemma}
    \label{dim_lem}
    Let $0\rightarrow M \rightarrow N \rightarrow P \rightarrow 0$ be a short exact sequence of finitely generated graded $S$-modules. If $\dim(N)>\dim(P)$, then $\dim(M)=\dim(N)$.
\end{Lemma}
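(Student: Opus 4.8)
The plan is to deduce the statement from the standard fact that, in a short exact sequence, the support of the middle module is the union of the supports of the two outer modules. First I would recall that for any finitely generated graded $S$-module $L$ one has $\dim(L)=\dim(S/\Ann(L))=\dim\Supp(L)$, where $\Supp(L)=\{\fp\in\Spec(S):L_\fp\neq 0\}$. This reduces everything to a computation of supports.

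Next I would localize the given sequence at an arbitrary prime $\fp$. Since localization is an exact functor, the sequence $0\to M_\fp\to N_\fp\to P_\fp\to 0$ remains exact, and hence $N_\fp=0$ if and only if both $M_\fp=0$ and $P_\fp=0$. Consequently $\Supp(N)=\Supp(M)\cup\Supp(P)$, which gives the additivity formula
\[
    \dim(N)=\max\{\dim(M),\dim(P)\}.
\]
Finally I would invoke the hypothesis: because $M$ is a submodule of $N$ we have $\Supp(M)\subseteq\Supp(N)$, so $\dim(M)\le\dim(N)$; and since $\dim(P)<\dim(N)$, the maximum in the displayed identity cannot be attained by the term $\dim(P)$, forcing $\dim(M)=\dim(N)$.

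There is really no serious obstacle here; the only point that requires a word of justification is the equality $\Supp(N)=\Supp(M)\cup\Supp(P)$, which rests on exactness of localization as above. As an alternative that fits the local-cohomology framework used throughout this paper, one could instead argue via Grothendieck's non-vanishing theorem, which identifies $\dim(L)$ with the largest $i$ such that $H^i_{S_+}(L)\neq 0$. Setting $d\coloneqq\dim(N)$, the hypothesis gives $H^i_{S_+}(P)=0$ for all $i\ge d$; feeding this into the long exact sequence of local cohomology attached to $0\to M\to N\to P\to 0$ shows that $H^d_{S_+}(M)\twoheadrightarrow H^d_{S_+}(N)\neq 0$ while $H^j_{S_+}(M)=0$ for $j>d$, whence $\dim(M)=d=\dim(N)$. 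Either route completes the proof with only routine verification.
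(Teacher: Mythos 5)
Your proposal is correct and takes essentially the same approach as the paper: the paper's one-line proof simply invokes the fact $\dim(N)=\max\{\dim(M),\dim(P)\}$, which is exactly the identity you establish via $\Supp(N)=\Supp(M)\cup\Supp(P)$ and exactness of localization. Your alternative route through Grothendieck non-vanishing and the long exact sequence of local cohomology is also valid, but it is an extra; the support argument is the one underlying the paper's proof.
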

\begin{proof}
    This follows easily from the fact that $\dim(N)=\max\{\dim(M),\dim(P)\}$.
\end{proof}

We start with a simple case.

\begin{Lemma}
    \label{lem:dim_Dv}
    Let $G=F_k^W(K_n)$ be a $k$-fan graph. Suppose that $v\in K_n\setminus W$. Then, $\dim(S_G/\calJ_{K_m,G})> \dim(S_{G\setminus v}/\calJ_{K_m,G\setminus v})$.
\end{Lemma}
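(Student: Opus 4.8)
The plan is to reduce everything to the explicit dimension formula in \Cref{thm:fan_Dimension}. The first step is a graph-theoretic observation: because $v$ lies in the base clique $K_n$ but $v\notin W$, it is not shared with any of the attached complete graphs $K_{a_{i,j}}$. Hence $v$ is adjacent precisely to the other vertices of $K_n$, and deleting it removes neither any fan nor any of the attachment vertices in $W$. Thus $G\setminus v = F_k^W(K_{n-1})$, a $k$-fan graph on the \emph{same} set $W$ with base clique of one fewer vertex; in particular $|V(G\setminus v)|=|V(G)|-1$ and $W\subseteq V(K_{n-1})$ remains valid since $v\notin W$.

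Next I would apply \Cref{thm:fan_Dimension} to both graphs. For $G$ one has $\dim(S_G/\calJ_{K_m,G})=m+|V(G)|-1+s(m-2)$ with $s=\min\{|W|,n-1\}$, while for $G\setminus v=F_k^W(K_{n-1})$ one has $\dim(S_{G\setminus v}/\calJ_{K_m,G\setminus v})=m+(|V(G)|-1)-1+s'(m-2)$ with $s'=\min\{|W|,n-2\}$. Subtracting gives
\[
    \dim(S_G/\calJ_{K_m,G})-\dim(S_{G\setminus v}/\calJ_{K_m,G\setminus v})=1+(s-s')(m-2).
\]
Since $s=\min\{|W|,n-1\}\ge \min\{|W|,n-2\}=s'$ and $m\ge 2$, both terms on the right are nonnegative, so the difference is at least $1$, which yields the strict inequality. (Concretely, $v\notin W$ forces $|W|\le n-1$, and the difference equals $1$ when $|W|\le n-2$ and $m-1$ when $|W|=n-1$.)

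The one point requiring a little care is the degenerate base case $n=2$, where $G\setminus v=F_k^W(K_1)$ falls just outside the hypothesis $n\ge 2$ of \Cref{thm:fan_Dimension}. Here $v\notin W$ forces $|W|\le 1$: if $W=\emptyset$ then $G=K_2$ and $G\setminus v=K_1$, while if $|W|=1$ then $G\setminus v$ is a single complete graph. In either situation $\dim(S_{G\setminus v}/\calJ_{K_m,G\setminus v})$ is read off directly from \Cref{lem:generic_CM}, and one checks the difference is positive, in agreement with the formula above. I expect this bookkeeping of the small case to be the only mild obstacle; once the identification $G\setminus v=F_k^W(K_{n-1})$ is in hand, the main computation is immediate.
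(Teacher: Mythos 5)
Your proposal is correct and follows essentially the same route as the paper: the paper's proof likewise observes that $G\setminus v$ is again a fan graph on the same $W$ with $|V(G\setminus v)|<|V(G)|$ and $\min\{|W|,(n-1)-1\}\le\min\{|W|,n-1\}$, and then reads off the strict inequality from the dimension formula of \Cref{thm:fan_Dimension}. Your only additions are the explicit subtraction and the bookkeeping of the degenerate case $n=2$ (where $G\setminus v$ is a complete graph handled by \Cref{lem:generic_CM}), which the paper passes over silently; both are sound.
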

\begin{proof}
    Notice that $|V(G\setminus {v})|<|V(G)|$ and $\min\{|W|,n-1\}\le \min\{|W|,(n-1)-1\}$. Thus, the desired inequality follows from \Cref{thm:fan_Dimension}.
\end{proof}

\begin{Lemma} 
    \label{dim}
    Suppose that $G=G_1\circ_v G_2$ where $G_1=F_{p_1}$ with $p_1\ge 3$.
    \begin{enumerate}[a]
        \item \label{dim_1} If  $G_2=F_{p_2}$ with $p_2\ge 2$, then $\dim(S_G/\calJ_{K_m,G})=\sum_{i=1}^2\dim(S_{G_i}/\calJ_{K_m,G_i})-(m+2)$.
        \item \label{dim_2} Suppose that $G_2=F_k^W(K_n)$ is a $k$-fan graph with respect to the decomposition $W=W_1\sqcup \cdots \sqcup W_r$. If $|W|<n$, or $|W|=n$, $|W_1|\ge 2$ and $v\in W_1$, then
            \[
                \dim(S_G/\calJ_{K_m,G})=\sum_{i=1}^2\dim(S_{G_i}/\calJ_{K_m,G_i})-2m.
            \]
    \end{enumerate} 
\end{Lemma}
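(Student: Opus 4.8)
The plan is to treat both parts uniformly through the short exact sequence \eqref{eqn:SES-1} of \Cref{rem:exactseq} applied at the gluing vertex $v$, reading off $G_v$, $G_v\setminus v$, and $G\setminus v$ from \Cref{obs:circ} and extracting dimensions via \Cref{dim_lem}. Write $M=S_G/\calJ_{K_m,G}$, $N=S_{G_v}/\calJ_{K_m,G_v}\oplus S_{G\setminus v}/\calJ_{K_m,G\setminus v}$, and $P=S_{G_v\setminus v}/\calJ_{K_m,G_v\setminus v}$. The crucial structural input is that in both parts $G_v$ is a fan graph $F_{k'}^{W'}(H)$ whose base clique $H$ contains $v$ while $v\notin W'$; hence \Cref{lem:dim_Dv} gives $\dim(S_{G_v}/\calJ_{K_m,G_v})>\dim(P)$, so that $\dim(N)>\dim(P)$ and \Cref{dim_lem} yields $\dim(M)=\dim(N)=\max\{\dim(S_{G_v}/\calJ_{K_m,G_v}),\dim(S_{G\setminus v}/\calJ_{K_m,G\setminus v})\}$. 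It then remains to compute this maximum and match it with the claimed formula.

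For part \ref{dim_1}, I would use \Cref{obs:circ}\ref{obs:circ_a}: here $G_v=F_2^W(H)$ with $|V(H)|=p_1+p_2-1$ and $|W|=p_1+p_2-2$, while $G\setminus v=F_{p_1-1}\sqcup F_{p_2-1}$. Feeding $|V(G_v)|=|V(G)|=2p_1+2p_2-3$ and $s=\min\{|W|,|V(H)|-1\}=p_1+p_2-2$ into \Cref{thm:fan_Dimension}, and the two $F_p$-dimensions from \Cref{thm:fp_dim_depth}\ref{thm:fp_dim_depth_a} into the disjoint union, a direct computation shows $\dim(S_{G_v}/\calJ_{K_m,G_v})=\sum_{i=1}^2\dim(S_{G_i}/\calJ_{K_m,G_i})-(m+2)$ and $\dim(S_{G\setminus v}/\calJ_{K_m,G\setminus v})=\dim(S_{G_v}/\calJ_{K_m,G_v})-(m-2)$. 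Since $m\ge 2$, the maximum is attained by the $G_v$ term, giving the asserted formula.

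For part \ref{dim_2}, I would use \Cref{obs:circ}\ref{obs:circ_c}: now $G\setminus v=F_{p_1-1}\sqcup G_2'$ with $G_2'=G_2\setminus\{f_2,v_2\}$ a fan on $K_{n-1}$, and $G_v=F_{k}^{W_v}(H)$ with $|W_v|=(p_1-1)+(|W|-|W_1|)$ where $v_2\in W_1$. The key numerical fact is that deleting $v_2$ drops both the vertex count by two and the relevant $s$-parameter by one; concretely the $s$-value of $G_2'$ equals $\min\{|W|,n-1\}-1$ in each of the hypothesis's two regimes. This gives $\dim(S_{G_i'}/\calJ_{K_m,G_i'})=\dim(S_{G_i}/\calJ_{K_m,G_i})-m$ for both pieces of $G\setminus v$, hence $\dim(S_{G\setminus v}/\calJ_{K_m,G\setminus v})=\sum_{i=1}^2\dim(S_{G_i}/\calJ_{K_m,G_i})-2m$, the target. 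To conclude that $\dim(N)$ equals this, I must show $\dim(S_{G_v}/\calJ_{K_m,G_v})\le\dim(S_{G\setminus v}/\calJ_{K_m,G\setminus v})$, which by \Cref{thm:fan_Dimension} reduces to the inequality $s_v\le \min\{|W|,n-1\}+p_1-2$, where $s_v=\min\{|W_v|,|V(H)|-1\}$.

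The main obstacle is precisely this last inequality, and it is where the two hypotheses of \ref{dim_2} are consumed. Since $s_v\le|W_v|=p_1-1+|W|-|W_1|$, when $|W|<n$ we have $\min\{|W|,n-1\}=|W|$ and $|W_1|\ge 1$ forces $|W_v|\le|W|+p_1-2$; when $|W|=n$ we have $\min\{|W|,n-1\}=n-1$ and the hypothesis $|W_1|\ge 2$ with $v\in W_1$ forces $|W_v|=p_1-1+n-|W_1|\le n+p_1-3$. In both regimes the bound holds, so the $G\setminus v$ term dominates and $\dim(M)=\sum_{i=1}^2\dim(S_{G_i}/\calJ_{K_m,G_i})-2m$. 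I would also record that the hypothesis $|W_1|\ge 2$ in the case $|W|=n$ is sharp: if $|W_1|=1$ then $|W_v|$ exceeds $\min\{|W|,n-1\}+p_1-2$ by one, $G_v$ strictly dominates, and the stated formula fails.
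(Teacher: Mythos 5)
Your proposal is correct and takes essentially the same route as the paper's proof: the same short exact sequence \eqref{eqn:SES-1} applied at the identified vertex $v$, the dimension formulas of \Cref{thm:fan_Dimension} and \Cref{thm:fp_dim_depth}, and \Cref{lem:dim_Dv} combined with \Cref{dim_lem} to conclude that $\dim(S_G/\calJ_{K_m,G})=\max\{\dim(S_{G_v}/\calJ_{K_m,G_v}),\dim(S_{G\setminus v}/\calJ_{K_m,G\setminus v})\}$. The only difference is cosmetic: in part \ref{dim_2} the paper computes the exact defect $p$ for $\dim(S_{G_v}/\calJ_{K_m,G_v})$ and notes $p\ge 2m$, whereas you verify the equivalent inequality $s_v\le\min\{|W|,n-1\}+p_1-2$ via $s_v\le |W_v|$; your closing sharpness remark for $|W_1|=1$ (valid when $m>2$) is a correct addition not recorded in the paper.
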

\begin{proof}  
    Let $H$ be the complete graph on the vertex set $N_G[v]$.  In \Cref{obs:circ}, we have seen that ${G}_v=F_{q}^{W'}(H)$ and ${G}_v\setminus v=F_{q}^{W'}(H\setminus v)$ are two fan graphs, where $q=2$ if $G_2=F_{p_{2}}$, and $q=k$ if $G_2=F_k^W(K_n)$. 
\begin{enumerate}[a]
        \item If $G_2=F_{p_2}$, then $W'=N_G(v)$. In this case, we have $|H|=p_1+p_2-1$ and $|W'|=p_1+p_2-2$. Thus, by  \Cref{thm:fan_Dimension} and \Cref{thm:fp_dim_depth}\ref{thm:fp_dim_depth_a} we have 
            \begin{align*}
                \dim(S_{G_{v}}/\calJ_{K_m,G_{v}}) 
                &= \sum_{i=1}^2\dim(S_{G_i}/\calJ_{K_m,G_i})-(m+2),
            \end{align*}
            since $|V(G_v)|=|V(G_1)|+|V(G_2)|-3$. Likewise, since $G\setminus v=F_{p_1-1}\sqcup F_{p_2-1}$, we can verify that
            \begin{align*}
                \dim(S_{G\setminus v}/\calJ_{K_m,G\setminus v}) & =\sum_{i=1}^2\dim(S_{G_i}/\calJ_{K_m,G_i})-2m. 
            \end{align*}
            Meanwhile, it follows from \Cref{lem:dim_Dv} that 
            \[
                \dim(S_{G_{v}\setminus v}/\calJ_{K_m,G_{v}\setminus v}) <\dim(S_{G_{v}}/\calJ_{K_m,G_{v}}). 
            \]
            In conclusion, we have
            \begin{align*} 
                \dim(S_{G_{v}}/\calJ_{K_m,G_{v}})&=\max \{\dim(S_{G_{v}}/\calJ_{K_m,G_{v}}),\dim(S_{G\setminus v_1}/\calJ_{K_m,G\setminus v})\}\\
                &>\dim(S_{G_{v}\setminus v}/\calJ_{K_m,G_{v}\setminus v}),
            \end{align*} 
            since $m\ge 2$. Thus, the desired result follows from applying \Cref{dim_lem} to the short exact sequence \eqref{eqn:SES-1} in \Cref{rem:exactseq}.

        \item Suppose that $G_2=F_k^W(K_n)$, then $W'=N_{G_1\setminus f_1}(v) \sqcup (W\setminus W_{1})$, where  $f_1$ is a leaf in $G_1$ whose neighbor point is $v$. So $|W'|=(p_1-1)+|W|-|W_1|$.  Furthermore, let $\{K_{a_{1,1}},\ldots,K_{a_{1,|W_1|}}\}$ be the branch of fan in $G_2$ on $W_1$ and write $h_{1,j}\coloneqq  a_{1,j}-j\ge 1$. Then, $|H|=(p_1-1)+n+\sum\limits_{j=1}^{|W_1|}h_{1j}-1$.
            By \Cref{thm:fan_Dimension} and \Cref{thm:fp_dim_depth}\ref{thm:fp_dim_depth_a}, we get that
            \[\
                \dim(S_{G_v}/\calJ_{K_m,G_v})=\sum_{i=1}^2\dim(S_{G_i}/\calJ_{K_m,G_i})-p
            \]
            where 
            \[
                p=\begin{cases}
                    (m+2)+|W_1|(m-2), & \text{if  $|W|<n$,} \\
                    (m+2)+(|W_1|-1)(m-2), & \text{if $|W|=n$ with $|W_1|\ge 2$.}
                \end{cases}
            \]
            It is clear that $p\ge 2m$. At the same time, since $G\setminus v=F_{p_1-1}\sqcup F_{q}^{W\setminus \{v\}}(K_{n-1})$ for suitable $q$, we have 
            \[
                \dim(S_{G\setminus v}/\calJ_{K_m,G\setminus v})=\sum_{i=1}^2\dim(S_{G_i}/\calJ_{K_m,G_i})-2m.
            \]
            Meanwhile, it follows from \Cref{lem:dim_Dv} that 
            \[
                \dim(S_{G_{v}\setminus v}/\calJ_{K_m,G_{v}\setminus v}) <\dim(S_{G_{v}}/\calJ_{K_m,G_{v}}). 
            \]
            In conclusion, we have
            \begin{align*}
                \dim(S_{G\setminus v}/\calJ_{K_m,G\setminus v}) &= \max\{\dim(S_{G_v}/\calJ_{K_m,G_v}),\dim(S_{G\setminus v}/\calJ_{K_m,G\setminus v})\}\\
                & >\dim(S_{G_v\setminus v}/\calJ_{K_m,G_v\setminus v}).
            \end{align*}
            Thus, the expected result follows from applying \Cref{dim_lem} to the short exact sequence \eqref{eqn:SES-1} in \Cref{rem:exactseq}. 
            \qedhere
    \end{enumerate} 
\end{proof}

We can generalize the result in \Cref{dim}.

\begin{Theorem}
    \label{dimension}
    Let $G=G_1\circ_{v_1} \cdots\circ_{v_{t-1}} G_t \circ_{v_t} G_{t+1}$, where  $t\ge 2$ and  $G_i=F_{p_i}$ with $p_i\ge 3$ for $1\le i\le t$. Suppose further that 
    \begin{enumerate}[a]
        \item  $G_{t+1}=F_{p_{t+1}}$ with $p_{t+1}\ge 2$, or
        \item  $G_{t+1}=F_k^W(K_n)$ is a $k$-fan graph with respect to the decomposition $W=W_1\sqcup \cdots \sqcup W_r$, such that either $|W|<n$, or $|W|=n$, $|W_1|\ge 2$, and $v\in W_1$.
    \end{enumerate}
    Then, $\dim(S_G/\calJ_{K_m,G})=\sum_{i=1}^{t+1}\dim(S_{G_i}/\calJ_{K_m,G_i})-s$, where
    \[
        s=
        \begin{cases}
            2m\floor{\frac{t}{2}}+(m+2)\ceil{\frac{t}{2}}, & \text{if $G_{t+1}=F_{p_{t+1}}$,} \\
            2m\ceil{\frac{t}{2}}+(m+2)\floor{\frac{t}{2}}, & \text{if $G_{t+1}=F_k^W(K_n)$.}
        \end{cases}
    \]
As usual, $\floor{\frac{t}{2}}$ here is the largest integer $\le \frac{t}{2}$ and $\ceil{\frac{t}{2}}$ is the smallest integer $\ge \frac{t}{2}$.
\end{Theorem}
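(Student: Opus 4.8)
The plan is to induct on $t$, peeling off the rightmost gluing at the vertex $v_t$ that joins $G_t$ and $G_{t+1}$ and feeding the three resulting graphs back into the induction. The base case $t=1$ is exactly \Cref{dim}; note that the closed form for $s$ specializes correctly to $m+2$ in case (a) and to $2m$ in case (b) when $t=1$, so the base case is consistent. For the inductive step I write $G=(G_1\circ\cdots\circ G_t)\circ_{v_t}G_{t+1}$ using the associativity recorded in \Cref{FFan}, and apply the short exact sequence \eqref{eqn:SES-1} of \Cref{rem:exactseq} with $v=v_t$.

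The reason to peel from the right is that completing the neighborhood of $v_t$ is a local operation that does not disturb the attachment of $G_{t-1}$, so merging produces a \emph{fan} at the right end, which is permitted as the last factor. Concretely, \Cref{obs:circ} gives $G_{v_t}=G_1\circ\cdots\circ G_{t-1}\circ\Psi$ and $G_{v_t}\setminus v_t=G_1\circ\cdots\circ G_{t-1}\circ(\Psi\setminus v_t)$, where $\Psi:=(G_t\circ_{v_t}G_{t+1})_{v_t}$ is a fan, while $G\setminus v_t=\bigl(G_1\circ\cdots\circ G_{t-1}\circ F_{p_t-1}\bigr)\sqcup\bigl(G_{t+1}\setminus\{f',v_t\}\bigr)$, with $f'$ the marked leaf of $G_{t+1}$ glued at $v_t$. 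Each of these is (a disjoint union of) a chain of the same shape but with only $t-1$ leading $F_p$-factors: the first two end in the fans $\Psi$, $\Psi\setminus v_t$ (case b), and the summand $G_1\circ\cdots\circ F_{p_t-1}$ ends in the $F_p$-graph $F_{p_t-1}$ (case a, legitimate since $p_t-1\ge 2$). Writing $D_i:=\dim(S_{G_i}/\calJ_{K_m,G_i})$ and letting $s_{\mathrm a}(\tau),s_{\mathrm b}(\tau)$ denote the corrections for a $\tau$-chain ending in an $F_p$ resp.\ a fan, the induction hypothesis, the identity $\dim(S_{F_{p-1}}/\calJ_{K_m,F_{p-1}})=D-m$ (from \Cref{thm:fp_dim_depth}), and the values $\dim\Psi=\sum_{i=t}^{t+1}D_i-(m+2)$ (case a), $\dim\Psi=\sum_{i=t}^{t+1}D_i-p$ with $p\ge 2m$ (case b), and $\dim(G_{t+1}\setminus\{f',v_t\})=D_{t+1}-m$ (case b), all read off from the proof of \Cref{dim}, express each of the three dimensions as $\sum_{i=1}^{t+1}D_i$ minus an explicit correction built from $s_{\mathrm a}(t-1)$ and $s_{\mathrm b}(t-1)$.

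To conclude I would invoke \Cref{dim_lem}. Since $\Psi$ is a fan with $v_t$ in its clique part, \Cref{lem:dim_Dv} gives $\dim(\Psi\setminus v_t)<\dim\Psi$, hence $\dim(S_{G_{v_t}\setminus v_t}/\calJ_{K_m,G_{v_t}\setminus v_t})<\dim(S_{G_{v_t}}/\calJ_{K_m,G_{v_t}})$; thus the middle term of \eqref{eqn:SES-1} strictly dominates the right-hand term and \Cref{dim_lem} yields $\dim(S_G/\calJ_{K_m,G})=\max\{\dim(S_{G_{v_t}}/\calJ_{K_m,G_{v_t}}),\dim(S_{G\setminus v_t}/\calJ_{K_m,G\setminus v_t})\}$. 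Taking this maximum amounts to taking the minimum of the two corrections, and the elementary recursions $s_{\mathrm a}(t)=\min\{(m+2)+s_{\mathrm b}(t-1),\,2m+s_{\mathrm a}(t-1)\}$ (case a) and $s_{\mathrm b}(t)=2m+s_{\mathrm a}(t-1)$ (case b, using $p\ge 2m$ together with $s_{\mathrm b}(\tau)-s_{\mathrm a}(\tau)=(m-2)(\ceil{\tau/2}-\floor{\tau/2})\ge 0$) reproduce the stated closed forms with floors and ceilings for both parities of $t$.

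The genuine difficulty is the bookkeeping at the merge: one must verify that $G_{v_t}$ and $G_{v_t}\setminus v_t$ are honestly chains of the required type, with $\Psi$ and $\Psi\setminus v_t$ marked fans satisfying the case-(b) hypotheses ($|W|<n$, or $|W|=n$ with $|W_1|\ge 2$ and the gluing vertex in $W_1$), so that the induction hypothesis truly applies — this is precisely where the conditions in the statement are consumed. A secondary subtlety, which is why I package the answer as a $\min$ of two corrections rather than committing to one dominant summand, is that the dominating term in \eqref{eqn:SES-1} alternates between $G_{v_t}$ and $G\setminus v_t$ according to the parity of $t$ and the size of $m$; the proof must carry both dimensions and compare them, and this alternation is exactly what manifests as the floor/ceiling split in $s$.
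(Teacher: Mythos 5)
Your proposal is correct and follows the paper's own proof essentially verbatim: the paper likewise inducts on $t$ with \Cref{dim} as the base case, peels at $v_t$ via the exact sequence \eqref{eqn:SES-1}, uses \Cref{obs:circ} and the dimension computations from the proof of \Cref{dim} to recognize $G_{v_t}$, $G_{v_t}\setminus v_t$, and $G\setminus v_t$ as chains with $t-1$ leading $F_p$-factors (the merged fan inheriting the case-(b) hypotheses because $p_t\ge 3$ forces $|W_1'|=p_t-1\ge 2$ with $v_{t-1}\in W_1'$ --- exactly the bookkeeping you flag as the main difficulty), and concludes with \Cref{lem:dim_Dv} and \Cref{dim_lem}. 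Your mutual recursion $s_{\mathrm{a}}(t)=(m+2)+s_{\mathrm{b}}(t-1)$, $s_{\mathrm{b}}(t)=2m+s_{\mathrm{a}}(t-1)$ is just a cleaner packaging of the paper's inline comparisons ($s+(m+2)\le s'+2m$ in case (a), $s'\le s$ in case (b)), so the two arguments are the same.
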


\begin{proof}  
    Let $\widetilde{G}=G_{t}\circ_{v_t} G_{t+1}$ and $H$ be the complete graph on the set $N_{\widetilde{G}}[v_t]$.  Then, by  \Cref{obs:circ} we see that  $\widetilde{G}_{v_t}=F_{q}^{W'}(H)$ and  $\widetilde{G}_{v_t}\setminus v_t=F_{q}^{W'}(H\setminus v_t)$ are two fan graphs, where $q=2$ if $G_{t+1}=F_{p_{t+1}}$, and $q=k$ if $G_{t+1}=F_k^W(K_n)$. Furthermore, since $G_{t}=F_{p_t}$ with $p_t\ge 3$, these two fans are with respect to some partition $W'=W_1'\sqcup \cdots \sqcup W_{r'}'$ such that $|W_1'|=p_t-1\ge 2$ and $v_{t-1}\in W_1'$.  
    We have seen in the proof of \Cref{dim} that
    \begin{equation}
        \dim(S_{\widetilde{G}_{v_t}\setminus v_t}/\calJ_{K_m,\widetilde{G}_{v_t}\setminus v_t})< \dim(S_{\widetilde{G}_{v_t}}/\calJ_{K_m,\widetilde{G}_{v_t}}).
        \label{eqn:GDv_smaller}
    \end{equation} 
    We prove the claimed formula by induction $t$. The base case $t=1$ has been verified in \Cref{dim}. Next, we assume that $t\ge 2$. 
    \begin{enumerate}[a]
        \item Suppose that $G_{t+1}=F_{p_{t+1}}$ with $p_{t+1}\ge 2$.  It follows from \Cref{dim}\ref{dim_1} and its proof that $|W'|<|V(H)|$ and
            \begin{equation}
                \dim(S_{\widetilde{G}_{v_t}}/\calJ_{K_m,\widetilde{G}_{v_t}})= \dim(S_{G_t}/\calJ_{K_m,G_t})+\dim(S_{G_{t+1}}/\calJ_{K_m,G_{t+1}})-(m+2).
                \label{eqn:thm_dimension_1}
            \end{equation}
            Since $G_{v_t}=G_1\circ_{v_1}\cdots \circ_{v_{t-2}} G_{{t-1}} \circ_{v_{t-1}} \widetilde{G}_{v_t}$, by induction hypothesis and \Cref{eqn:thm_dimension_1}, we get that 
            \begin{align*} 
                \dim(S_{G_{v_t}}/\calJ_{K_m,G_{v_t}})&=\sum_{i=1}^{t-1}\dim(S_{G_i}/\calJ_{K_m,G_i})+\dim(S_{\widetilde{G}_{v_t}}/\calJ_{K_m,\widetilde{G}_{v_t}})-s\\
                &=\sum_{i=1}^{t+1}\dim(S_{G_i}/\calJ_{K_m,G_i})-s-(m+2),   
            \end{align*}
            for $s\coloneqq 2m\ceil{\frac{t-1}{2}}+(m+2)\floor{\frac{t-1}{2}}$.
            Since $G_{v_t}\setminus v_t=G_1\circ_{v_1}\cdots \circ_{v_{t-2}} G_{{t-1}} \circ_{v_{t-1}} (\widetilde{G}_{v_t}\setminus v_t)$, it also follows from \Cref{eqn:GDv_smaller} and the induction hypothesis that 
            \begin{align*}
                \dim(S_{G_{v_t}\setminus v_t }/\calJ_{K_m,G_{v_t}\setminus v_t})&=\sum_{i=1}^{t-1}\dim(S_{G_i}/\calJ_{K_m,G_i})+\dim(S_{\widetilde{G}_{v_t}\setminus v_t}/\calJ_{K_m,\widetilde{G}_{v_t}\setminus v_t})-s\\
                &<\dim(S_{G_{v_t}}/\calJ_{K_m,G_{v_t}}).
            \end{align*}
            In the end, $G\setminus v_t= G_1\circ_{v_1}\cdots \circ_{v_{t-2}} G_{{t-1}}\circ_{v_{t-1}} (G_t\setminus \{v_t,f_t\}) \sqcup (G_{t+1}\setminus \{v_t,f_{t+1}\})$, where $f_j$ is the leaf in $G_j$ whose neighbor point is $v_t$ for $j\in \{t,t+1\}$. By induction and \Cref{thm:fp_dim_depth}\ref{thm:fp_dim_depth_a}, we have
            \begin{align*}
                \dim(S_{G\setminus v_t}/\calJ_{K_m,G\setminus v_t})&=\sum_{i=1}^{t-1}\dim(S_{G_i}/\calJ_{K_m,G_i})+\dim(S_{G_t\setminus \{v_t,f_t\}/\calJ_{K_m,G_t\setminus \{v_t,f_t\}}})-s'\\
                &\qquad +\dim(S_{G_{t+1}\setminus\{v_t,f_{t+1}\}}/\calJ_{K_m,G_{t+1}\setminus\{v_t,f_{t+1}\}}) \\
                &=\sum_{i=1}^{t+1}\dim(S_{G_i}/\calJ_{K_m,G_i})-s'-2m,
            \end{align*}
            where $s'\coloneqq 2m\floor{\frac{t-1}{2}}+(m+2)\ceil{\frac{t-1}{2}}$. Since $m\ge 2$, we have $s+m+2\le s'+2m$. 
            In conclusion, we have
            \begin{align*}
                \dim(S_{G_{v_t}}/\calJ_{K_m,G_{v_t}}) & =\max\{\dim(S_{G\setminus v_t}/\calJ_{K_m,G\setminus v_t}), \dim(S_{G_{v_t}}/\calJ_{K_m,G_{v_t}})\}\\
                & > \dim(S_{G_{v_t}\setminus v_t }/\calJ_{K_m,G_{v_t}\setminus v_t}).
            \end{align*}
            Therefore, after applying \Cref{dim_lem} to the short exact sequence \eqref{eqn:SES-1} in \Cref{rem:exactseq}, we obtain that
            \begin{align*}
                \dim(S_{G}/\calJ_{K_m,G})&=\dim(S_{G_{v_t}}/\calJ_{K_m,G_{v_t}})=\sum_{i=1}^{t+1}\dim(S_{G_i}/\calJ_{K_m,G_i})-s-(m+2) \\
                &=\sum_{i=1}^{t+1}\dim(S_{G_i}/\calJ_{K_m,G_i})-2m\floor{\frac{t}{2}}-(m+2)\ceil{\frac{t}{2}}.
            \end{align*}
        
        \item Suppose that $G_{t+1}=F_k^W(K_n)$. It follows from the proof of \Cref{dim}\ref{dim_2} that 
            \begin{equation} 
                \dim(S_{\widetilde{G}_{v_t}}/\calJ_{K_m,\widetilde{G}_{v_t}})= \sum_{j=t}^{t+1}\dim(S_{G_j}/\calJ_{K_m,G_j})-p,
                \label{eqn:thm_dimension_2}
            \end{equation}
            for some $p\ge 2m$.
            Since $G_{v_t}=G_1\circ_{v_1}\cdots \circ_{v_{t-2}} G_{{t-1}} \circ_{v_{t-1}} \widetilde{G}_{v_t}$, by induction hypothesis and \Cref{eqn:thm_dimension_2}, we get that 
            \begin{align*} 
                \dim(S_{G_{v_t}}/\calJ_{K_m,G_{v_t}})&=\sum_{i=1}^{t-1}\dim(S_{G_i}/\calJ_{K_m,G_i})+\dim(S_{\widetilde{G}_{v_t}}/\calJ_{K_m,\widetilde{G}_{v_t}})-s\\
                &\le \sum_{i=1}^{t+1}\dim(S_{G_i}/\calJ_{K_m,G_i})-s-2m,   
            \end{align*}
            where $s\coloneqq 2m\ceil{\frac{t-1}{2}}+(m+2)\floor{\frac{t-1}{2}}$.
            Since $G_{v_t}\setminus v_t=G_1\circ_{v_1}\cdots \circ_{v_{t-2}} G_{{t-1}} \circ_{v_{t-1}} (\widetilde{G}_{v_t}\setminus v_t)$, it also follows from \Cref{eqn:GDv_smaller} and the induction hypothesis that 
            \begin{align*}
                \dim(S_{G_{v_t}\setminus v_t }/\calJ_{K_m,G_{v_t}\setminus v_t})&=\sum_{i=1}^{t-1}\dim(S_{G_i}/\calJ_{K_m,G_i})+\dim(S_{\widetilde{G}\setminus v_t}/\calJ_{K_m,\widetilde{G}\setminus v_t})-s \\
                &< \dim(S_{G_{v_t}}/\calJ_{K_m,G_{v_t}}).
            \end{align*}
            In the end, $G\setminus v_t=G_1\circ_{v_1}\cdots \circ_{v_{t-2}} G_{{t-1}}\circ_{v_{t-1}}  (G_t\setminus \{v_t,f_t\}) \sqcup (G_{t+1}\setminus \{v_t,f_{t+1}\})$, where $f_j$ is the leaf in $G_j$ whose neighbor point is $v_t$ for $j\in \{t,t+1\}$. By the induction and \Cref{thm:fp_dim_depth}\ref{thm:fp_dim_depth_a}, we have 
            \begin{align*}
                \dim(S_{G\setminus v_t}/\calJ_{K_m,G\setminus v_t})&=\sum_{i=1}^{t-1}\dim(S_{G_i}/\calJ_{K_m,G_i})+\dim(S_{G_t\setminus \{v_t,f_t\}/\calJ_{K_m,G_t\setminus \{v_t,f_t\}}})-s'\\
                &\qquad +\dim(S_{G_{t+1}\setminus\{v_t,f_{t+1}\}}/\calJ_{K_m,G_{t+1}\setminus\{v_t,f_{t+1}\}}) \\
                &=\sum_{i=1}^{t+1}\dim(S_{G_i}/\calJ_{K_m,G_i}) -s'-2m,
            \end{align*}
where $s'\coloneqq 2m\floor{\frac{t-1}{2}}+(m+2)\ceil{\frac{t-1}{2}}$. Since $s'\le s$, it is clear that 
            \begin{align*}
                \dim(S_{G\setminus {v_t}}/\calJ_{K_m,G\setminus {v_t}}) & =\max \{ \dim(S_{G\setminus v_t}/\calJ_{K_m,G\setminus v_t}), \dim(S_{G_{v_t}}/\calJ_{K_m,G_{v_t}})\}\\
                & >  \dim(S_{G_{v_t}\setminus v_t }/\calJ_{K_m,G_{v_t}\setminus v_t}).
            \end{align*}
            Therefore, it follows from the short exact sequence \eqref{eqn:SES-1} in \Cref{rem:exactseq} that
            \begin{align*}
                \dim(S_{G}/\calJ_{K_m,G})&= \dim(S_{G\setminus {v_t}}/\calJ_{K_m,G\setminus {v_t}})=\sum_{i=1}^{t+1}\dim(S_{G_i}/\calJ_{K_m,G_i}) -s'-2m\\
                &= \sum_{i=1}^{t+1}\dim(S_{G_i}/\calJ_{K_m,G_i})-2m\ceil{\frac{t}{2}}-(m+2)\floor{\frac{t}{2}}. \qedhere
            \end{align*}
    \end{enumerate}
\end{proof}

\begin{Remark}
    When $m=2$, the result in \Cref{dimension} agrees with the depth result in \Cref{thm:full_depth_star_circ}. This provides an alternate proof for the Cohen--Macaulayness of the classical binomial edge ideal of these graphs.
\end{Remark}

\begin{Question}
    What can be said about the dimension of the other graphs in $\mathbf{FFan}$?
\end{Question}

\begin{acknowledgment*}
    This work is supported by the Natural Science Foundation of Jiangsu Province (No.~BK20221353). In addition,  the first author is partially supported by the Anhui Initiative in Quantum Information Technologies (No.~AHY150200) and the ``Innovation Program for Quantum Science and Technology'' (2021ZD0302902). And the second author is supported by the Foundation of the Priority Academic Program Development of Jiangsu Higher Education Institutions.  
\end{acknowledgment*}

\bibliography{BEI}

@article {MR4233116,
    AUTHOR = {Chaudhry, Faryal and Irfan, Rida},
     TITLE = {On the generalized binomial edge ideals of generalized block graphs},
   JOURNAL = {Math. Rep. (Bucur.)},
  FJOURNAL = {Mathematical Reports (Bucure\c{s}ti)},
    VOLUME = {22(72)},
      YEAR = {2020},
     PAGES = {381--394},
      ISSN = {1582-3067},
   MRCLASS = {13C15 (05E40 16E05)},
  MRNUMBER = {4233116},
}

@article {MR3859970,
    AUTHOR = {Baskoroputro, Herolistra and Ene, Viviana and Ion, Cristian},
     TITLE = {Koszul binomial edge ideals of pairs of graphs},
   JOURNAL = {J. Algebra},
  FJOURNAL = {Journal of Algebra},
    VOLUME = {515},
      YEAR = {2018},
     PAGES = {344--359},
      ISSN = {0021-8693},
   MRCLASS = {05E40 (13D02 13P10 16S37)},
  MRNUMBER = {3859970},
       DOI = {10.1016/j.jalgebra.2018.08.029},
       URL = {https://doi.org/10.1016/j.jalgebra.2018.08.029},
}

@article {MR3040610,
    AUTHOR = {Saeedi Madani, Sara and Kiani, Dariush},
     TITLE = {On the binomial edge ideal of a pair of graphs},
   JOURNAL = {Electron. J. Combin.},
  FJOURNAL = {Electronic Journal of Combinatorics},
    VOLUME = {20},
      YEAR = {2013},
     PAGES = {Paper 48, 13},
   MRCLASS = {13F20 (05E40 13D02)},
  MRNUMBER = {3040610},
MRREVIEWER = {Ali Soleyman Jahan},
}

@article {MR266912,
    AUTHOR = {Hochster, M. and Eagon, John A.},
     TITLE = {A class of perfect determinantal ideals},
   JOURNAL = {Bull. Amer. Math. Soc.},
  FJOURNAL = {Bulletin of the American Mathematical Society},
    VOLUME = {76},
      YEAR = {1970},
     PAGES = {1026--1029},
      ISSN = {0002-9904},
   MRCLASS = {13.20},
  MRNUMBER = {266912},
MRREVIEWER = {D. Laksov},
       DOI = {10.1090/S0002-9904-1970-12543-5},
       URL = {https://doi.org/10.1090/S0002-9904-1970-12543-5},
}

@article {MR4033090,
    AUTHOR = {Kumar, Arvind},
     TITLE = {Regularity bound of generalized binomial edge ideal of graphs},
   JOURNAL = {J. Algebra},
  FJOURNAL = {Journal of Algebra},
    VOLUME = {546},
      YEAR = {2020},
     PAGES = {357--369},
      ISSN = {0021-8693},
   MRCLASS = {13D02 (05E40 13C13)},
  MRNUMBER = {4033090},
MRREVIEWER = {Eduardo S\'{a}enz-de-Cabez\'{o}n},
       DOI = {10.1016/j.jalgebra.2019.10.051},
       URL = {https://doi.org/10.1016/j.jalgebra.2019.10.051},
}

@article {MR3011436,
    AUTHOR = {Rauh, Johannes},
     TITLE = {Generalized binomial edge ideals},
   JOURNAL = {Adv. in Appl. Math.},
  FJOURNAL = {Advances in Applied Mathematics},
    VOLUME = {50},
      YEAR = {2013},
     PAGES = {409--414},
      ISSN = {0196-8858},
   MRCLASS = {13F20 (05C25 13P10)},
  MRNUMBER = {3011436},
MRREVIEWER = {Siamak Yassemi},
       DOI = {10.1016/j.aam.2012.08.009},
       URL = {https://doi.org/10.1016/j.aam.2012.08.009},

}

@Article{MR2643966,
  author   = {Hoa, Le Tuan and Tam, Nguyen Duc},
  journal  = {Arch. Math. (Basel)},
  title    = {On some invariants of a mixed product of ideals},
  year     = {2010},
  issn     = {0003-889X},
  pages    = {327--337},
  volume   = {94},
  doi      = {10.1007/s00013-010-0112-6},
  fjournal = {Archiv der Mathematik},
  mrclass  = {13D45 (13F20)},
  mrnumber = {2643966},
  url      = {https://doi.org/10.1007/s00013-010-0112-6},
}

@book {MR1251956,
    AUTHOR = {Bruns, Winfried and Herzog, J\"{u}rgen},
     TITLE = {Cohen--{M}acaulay rings},
    SERIES = {Cambridge Studies in Advanced Mathematics},
    VOLUME = {39},
 PUBLISHER = {Cambridge University Press, Cambridge},
      YEAR = {1993},
     PAGES = {xii+403},
      ISBN = {0-521-41068-1},
   MRCLASS = {13H10 (13-02)},
  MRNUMBER = {1251956},
MRREVIEWER = {Matthew Miller},
}

@article {MR2669070,
    AUTHOR = {Herzog, J\"{u}rgen and Hibi, Takayuki and Hreinsd\'{o}ttir, Freyja and Kahle, Thomas and Rauh, Johannes},
     TITLE = {Binomial edge ideals and conditional independence statements},
   JOURNAL = {Adv. in Appl. Math.},
  FJOURNAL = {Advances in Applied Mathematics},
    VOLUME = {45},
      YEAR = {2010},
     PAGES = {317--333},
      ISSN = {0196-8858},
   MRCLASS = {13P10 (05C25 13F20)},
  MRNUMBER = {2669070},
MRREVIEWER = {Seth Sullivant},
       DOI = {10.1016/j.aam.2010.01.003},
       URL = {https://doi.org/10.1016/j.aam.2010.01.003},
}

@article {MR2782571,
    AUTHOR = {Ohtani, Masahiro},
     TITLE = {Graphs and ideals generated by some 2-minors},
   JOURNAL = {Comm. Algebra},
  FJOURNAL = {Communications in Algebra},
    VOLUME = {39},
      YEAR = {2011},
     PAGES = {905--917},
      ISSN = {0092-7872},
   MRCLASS = {13C40 (13P10)},
  MRNUMBER = {2782571},
MRREVIEWER = {Marcel Morales},
       DOI = {10.1080/00927870903527584},
       URL = {https://doi.org/10.1080/00927870903527584},
}

@book {MR1613627,
    AUTHOR = {Brodmann, M. P. and Sharp, R. Y.},
     TITLE = {Local cohomology: an algebraic introduction with geometric
              applications},
    SERIES = {Cambridge Studies in Advanced Mathematics},
    VOLUME = {60},
 PUBLISHER = {Cambridge University Press, Cambridge},
      YEAR = {1998},
     PAGES = {xvi+416},
      ISBN = {0-521-37286-0},
   MRCLASS = {13D45 (14B15)},
  MRNUMBER = {1613627},
MRREVIEWER = {L\^{e} Tu\^{a}n Hoa},
       DOI = {10.1017/CBO9780511629204},
       URL = {https://doi.org/10.1017/CBO9780511629204},
}

@article {JKS,
    AUTHOR = {Jayanthan, A. V. and Kumar, Arvind and Sarkar, Rajib},
     TITLE = {Regularity of powers of quadratic sequences with applications to binomial ideals},
   JOURNAL = {J. Algebra},
  FJOURNAL = {Journal of Algebra},
    VOLUME = {564},
      YEAR = {2020},
     PAGES = {98--118},
      ISSN = {0021-8693},
   MRCLASS = {13D02 (05E40 13A70 13C13)},
  MRNUMBER = {4137693},
MRREVIEWER = {Jorge Neves},
       DOI = {10.1016/j.jalgebra.2020.08.004},
       URL = {https://doi.org/10.1016/j.jalgebra.2020.08.004},
}

@article {MR3991052,
    AUTHOR = {Jayanthan, A. V. and Kumar, Arvind},
     TITLE = {Regularity of binomial edge ideals of {C}ohen--{M}acaulay bipartite graphs},
   JOURNAL = {Comm. Algebra},
  FJOURNAL = {Communications in Algebra},
    VOLUME = {47},
      YEAR = {2019},
     PAGES = {4797--4805},
      ISSN = {0092-7872},
   MRCLASS = {13D02 (05E40)},
  MRNUMBER = {3991052},
MRREVIEWER = {Timothy B. P. Clark},
       DOI = {10.1080/00927872.2019.1596278},
       URL = {https://doi.org/10.1080/00927872.2019.1596278},
}

@article {MR4423525,
    AUTHOR = {Bolognini, Davide and Macchia, Antonio and Strazzanti,
              Francesco},
     TITLE = {Cohen--{M}acaulay binomial edge ideals and accessible graphs},
   JOURNAL = {J. Algebraic Combin.},
  FJOURNAL = {Journal of Algebraic Combinatorics. An International Journal},
    VOLUME = {55},
      YEAR = {2022},
     PAGES = {1139--1170},
      ISSN = {0925-9899},
   MRCLASS = {13H10 (05E40 13C05)},
  MRNUMBER = {4423525},
       DOI = {10.1007/s10801-021-01088-w},
       URL = {https://doi.org/10.1007/s10801-021-01088-w},
}

@article {MR3941158,
    AUTHOR = {Jayanthan, A. V. and Narayanan, N. and Raghavendra Rao, B. V.},
     TITLE = {Regularity of binomial edge ideals of certain block graphs},
   JOURNAL = {Proc. Indian Acad. Sci. Math. Sci.},
  FJOURNAL = {Indian Academy of Sciences. Proceedings. Mathematical
              Sciences},
    VOLUME = {129},
      YEAR = {2019},
     PAGES = {Paper No. 36, 10 pp},
      ISSN = {0253-4142},
   MRCLASS = {13D02 (05E40)},
  MRNUMBER = {3941158},
MRREVIEWER = {Dariush Kiani},
       DOI = {10.1007/s12044-019-0480-1},
       URL = {https://doi.org/10.1007/s12044-019-0480-1},
}

@article {MR3779601,
    AUTHOR = {Bolognini, Davide and Macchia, Antonio and Strazzanti,
              Francesco},
     TITLE = {Binomial edge ideals of bipartite graphs},
   JOURNAL = {European J. Combin.},
  FJOURNAL = {European Journal of Combinatorics},
    VOLUME = {70},
      YEAR = {2018},
     PAGES = {1--25},
      ISSN = {0195-6698},
   MRCLASS = {05C25 (13C14 13F20)},
  MRNUMBER = {3779601},
MRREVIEWER = {Ali Reza Naghipour},
       DOI = {10.1016/j.ejc.2017.11.004},
       URL = {https://doi.org/10.1016/j.ejc.2017.11.004},
       }

@article {MR3290687,
    AUTHOR = {Ene, Viviana and Herzog, J\"{u}rgen and Hibi, Takayuki and Qureshi, Ayesha Asloob},
     TITLE = {The binomial edge ideal of a pair of graphs},
   JOURNAL = {Nagoya Math. J.},
  FJOURNAL = {Nagoya Mathematical Journal},
    VOLUME = {213},
      YEAR = {2014},
     PAGES = {105--125},
      ISSN = {0027-7630},
   MRCLASS = {13F20 (05C25 13P10)},
  MRNUMBER = {3290687},
MRREVIEWER = {Monica La Barbiera},
       DOI = {10.1215/00277630-2389872},
       URL = {https://doi.org/10.1215/00277630-2389872},
}

@Article{arXiv:2112.15136,
  title = {Cohen--{M}acaulay generalized binomial edge ideal},
  author = {Amata, Luca and  Crupi, Marilena  and  Rinaldo, Giancarlo },
  year = {2021},
  eprint = {arXiv:2112.15136},
}

@Article{arXiv:2207.02256,
  title = {Arithmetical rank and cohomological dimension of generalized binomial edge ideals},
  author = {Anargyros Katsabekis},
  year = {2022},
  eprint = {arXiv:2207.02256},
}

@incollection {MR1627343,
    AUTHOR = {Diaconis, Persi and Eisenbud, David and Sturmfels, Bernd},
     TITLE = {Lattice walks and primary decomposition},
 BOOKTITLE = {Mathematical essays in honor of {G}ian-{C}arlo {R}ota
              ({C}ambridge, {MA}, 1996)},
    SERIES = {Progr. Math.},
    VOLUME = {161},
     PAGES = {173--193},
 PUBLISHER = {Birkh\"{a}user Boston, Boston, MA},
      YEAR = {1998},
   MRCLASS = {13P10 (05A15 05E99)},
  MRNUMBER = {1627343},
MRREVIEWER = {Heinrich Niederhausen},
}

\end{document}